\newcolumntype{L}{>{$}l<{$}}
\newtheorem{theorem}{Theorem}[subsection]
\newtheorem{lemma}{Lemma}[subsection]
\newtheorem{corollary}{Corollary}[subsection]
\newtheorem{proposition}{Proposition}[subsection]
\newtheorem{remark}{Remark}[subsection]
\newtheorem{conjecture}{Conjecture}[subsection]
\theoremstyle{definition}
\newtheorem{definition}{Definition}[subsection]
\newtheorem{assumption}{Assumption}[subsection]
\def\*{\times}
\def\1{\mathbbm{1}}
\def\one{\mathbf{1}}
\def\a{\mathfrak{a}}
\def\A{\mathbb{A}}
\def\B{\mathcal{B}}
\def\back{\backslash}
\def\C{\mathbb{C}}
\def\ds{\delta_\sigma}
\def\F{\mathbb{F}}
\def\Gen{\mathcal G}
\def\GmodZ{\overline{G}}
\def\GL{\text{GL}}
\def\GSp{\text{GSp}}
\def\H{\mathscr H}
\def\Hecke{\mathcal H}
\def\I{\mathcal I}
\def\Ind{\text{Ind}}
\def\Kloos{\text{Kl}}
\def\m{\mathbf m}
\def\Mat{\text{Mat}}
\def\Ak{A_{\text{K}}}
\def\As{A_{\text{S}}}
\def\Mk{M_{\text{K}}}
\def\Nk{N_{\text{K}}}
\def\Ms{M_{\text{S}}}
\def\Ns{N_{\text{S}}}
\def\Pk{P_{\text{K}}}
\def\Ps{P_{\text{S}}}
\def\Mat{\text{Mat}}
\def\n{\texttt{n}}
\def\O{\text{O}}
\def\PW{\mathscr H}
\def\Q{\mathbb{Q}}
\def\R{\mathbb{R}}
\def\res{\text{res }}
\def\Res{\text{Res }}
\def\SO{\text{SO}_2}
\def\Sp{\text{Sp}}
\def\PGL{\text{PGL}}
\def\TmodZ{\overline{T}}
\def\W{\mathcal{W}}
\def\Z{\mathbb{Z}}
\newcommand{\trans}[1]{{}^\top#1}
\newcommand{\Lie}[1]{\mathfrak{#1}}
\def\R{\mathbb{R}}
\newcommand{\mat}[4]{{\setlength{\arraycolsep}{0.5mm}\left[
		\begin{smallmatrix}#1&#2\\#3&#4\end{smallmatrix}\right]}}
\newcommand{\rvline}{\hspace*{-\arraycolsep}\vline\hspace*{-\arraycolsep}}
\newcommand{\bigmat}[4]{\left[
	\begin{smallmatrix}
		#1
		& \rvline &#2 \\
		\hline
		#3 & \rvline &
		#4
	\end{smallmatrix}
	\right]
}
\newcommand{\block}[4]{\begin{smallmatrix}#1&#2\\#3&#4\end{smallmatrix}}
\newcommand{\diag}[4]{\mat{\block{#1}{}{}{#2}}{}{}{\block{#3}{}{}{#4}}}
\newcommand{\scal}[2]{\langle #1,#2\rangle}
\title[Kuznetsov formula]{A relative trace formula approach to the Kuznetsov formula on $\GSp_4$}
\author{F\'{e}licien Comtat}
\subjclass[2010]{11F72, 11F46, 11F30, 11F60, 11F70}
\def\l@subsection{\@tocline{2}{0pt}{2.5pc}{5pc}{}}
\def\l@subsubsection{\@tocline{2}{0pt}{3.5pc}{5pc}{}}
\begin{document}

\begin{abstract}
	Relative trace formulas play a central role in studying automorphic forms. 
	In this paper, we use a relative trace formula approach to derive a Kuznetsov type formula for the group $\GSp_4$.
	We focus on giving a final formula that is as explicit as possible, and we plan on returning to applications elsewhere.
\end{abstract}

\maketitle
\tableofcontents

\section{Introduction}

In this work we develop a Kuznetsov formula for the group $\GSp_4$.  
To motivate our results, we first recall the Kuznetsov formula for $\GL_2$, an identity relating spectral information about the quotient space 
$\Gamma \back \mathbb H$ (where $\Gamma$ is a congruence subgroup) to some arithmetic input.

For arbitrarily chosen nonzero integers $n$ and $m$ and any reasonable test function $h$, the spectral side involves 
the quantity 
\begin{equation}\label{BasicQuantity}
	h(t_u)a_m(u)\overline{a_n(u)},
\end{equation} 
where $u$ ranges over eigenfunctions of the Laplace operator involved in the
spectral decomposition of $L^2(\Gamma \back \mathbb H)$, $a_m(u)$ is the $m$-th Fourier coefficient of $u$, 
and $t_u$ is the corresponding spectral parameter.
More precisely, the spectrum of $L^2(\Gamma \back \mathbb H)$ can be described as the direct sum of the 
\textit{discrete spectrum} and the \textit{continuous spectrum}. 
The discrete spectrum is the direct sum of $1$-dimensional subspaces spanned by cuspidal Maa{\ss} forms 
(the \textit{cuspidal spectrum}) plus the constant function (the \textit{residual spectrum}).
The continuous spectrum is a direct integral of $1$-dimensional subspaces spanned by the Eisenstein series.
The spectral side of the Kuznetsov formula correspondingly splits as a discrete sum over Maa{\ss} forms plus a 
continuous integral over Eisenstein series.

The arithmetic-geometric side is a sum of two contributions, that may be seen as the contributions from the 
two elements of the Weyl group of $\GL_2$.
The identity contribution is given by the delta symbol $\delta_{n,m}$ times the integral of the spectral test function
$h$ against the spectral measure $\frac{t}{\pi^2} \tanh(\pi t) dt$.
For this reason, the Kuznetsov formula may be viewed as a result of quasi-orthogonality for the Fourier coefficients $a_m(\cdot)$
and $a_n(\cdot)$, provided the remaining contribution can be controlled. 
The latter consists of a sum of Kloosterman sums weighted by some integral transform of the test function $h$, involving Bessel
functions.

Applications of the Kuznetsov formula involve using known results about any of the two sides to derive information
about the other side. 
On one hand, the flexibility allowed by the choice of the test function $h$ enables one to use known bounds about the Kloosterman 
sums to study the distribution of the discrete spectrum and the size of the Fourier coefficients of Maa{\ss} forms.
On the other hand, understanding the Fourier coefficients of Maa{\ss} forms as well as the integral transform appearing on
the geometric side yields strong bounds for sums of Kloosterman sums. 

Recently, Kuznetsov formulae for $\GL_3$ have been developed by Blomer and Buttcane, with similar applications as described above.
It would thus be interesting to establish the corresponding formulae for other groups.

In the classical proof of the $\GL_2$ Kuznetsov formula, one computes the inner product of two Poincar\'e series in two
different ways, one involving the spectral decomposition of $L^2(\Gamma \back \mathbb H)$, and the other one by computing 
the Fourier coefficients of the Poincar\'e series and unfolding. This gives a ``pre-Kuznetsov formula", that one then 
proceeds to integrate against the test function $h$, obtaining on the geometric side the integral transforms of $h$ 
mentioned above.

Another approach, which one may call the relative trace formula approach to the Kuznetsov formula, builds upon the relative trace formula introduced by
Jacquet~\cite{JacquetLai}. 
In the case of $\GL_2$, the relative trace formula approach to the Kuznetsov formula is apparently based on unpublished work of Zagier, 
detailed in~\cite{Joyner}. 
This approach is developed in the adelic framework in~\cite{KL}for the congruence subgroup $\Gamma=\Gamma_1(N)$.
It proceeds by integrating an automorphic kernel 
$$K_f(x,y)=\sum_{\gamma \in \PGL_2(\Q)} f(x^{-1}\gamma y),$$ where $f$ is a suitable test function.
The spectral expansion of the kernel will then involve the quantity $\tilde{f}(t_u)u(x)\overline{u(y)}$, where $u$ ranges 
over the eigenfunctions involved in the spectral decomposition of $L^2(\Gamma(N) \back \mathbb H)$, 
$t_u$ is the spectral parameter of $u$, and $\tilde{f}$ is the \textit{spherical transform} of $f$. 
Thus integrating $K_f(x,y)$ against a suitable character on $U \times U$, where $U=\mat{1}{*}{}{1}$, one gets the
quantity~(\ref{BasicQuantity}) with $h=\tilde{f}$. 
On the other hand, using the Bruhat decomposition for $\PGL_2(\Q)$, one can decompose the integral over $U \times U$ as 
a sum over elements of the Weyl group and over diagonal matrices in $\PGL_2(\Q)$ of some orbital integrals.
In the case of the identity element, at most one diagonal matrix will have a non-zero contribution, which will turn out
to be a delta symbol times some integral transform of the function $f$. In the case of the longest element in the Weyl
group, each positive integer in $N\Z$ will have a nonzero contribution, given by a Kloosterman sum times a second kind of
integral transform of $f$. A more refined version is then obtained by taking the Mellin transform of the primitive formula 
obtained.
Note that in this approach, one gets on the geometric side some integral transforms of the function $f$, hence one has to do
some extra work to relate these to the test function $h=\tilde{f}$ appearing in the spectral side. 

A couple of remarks are in order about the choice of $f$. Firstly, the spectral expansion of the kernel involves the spectral
decomposition of $L^2(\R_{>0}\GL_2(\Q) \back \GL_2(\A))$ rather than $L^2(\Gamma \back \mathbb H)$. 
By restricting $f$ to be left and right $K_\infty$-invariant 
(where $K_\infty=\SO$), only right-$K_\infty$-invariant automorphic forms $\phi$ 
(thus corresponding to adelization of functions on the homogeneous space $\mathbb H = SL_2(\R) / K_\infty$)
will show up in the spectral expansion of the kernel, but other choices are possible.
Also one may choose the test function $f$ at unramified places so as to get a final formula that include the Hecke eigenvalues
of a fixed Hecke operator of index coprime to the level $N$.

Our plan is to implement the relative trace formula approach in the case of $\GSp_4$. 
In contrast to the case of $\GL_2$, there is more than one non-conjugate unipotent subgroups~$U$.
Choosing $U$ to be the unipotent radical of the Borel subgroup (that is the minimal parabolic subgroup)
will yield Whittaker coefficients of the automorphic forms involved (instead of the Fourier coefficients).
The Whittaker coefficients have a ``multiplicity one" property, which ensures that the global Whittaker coefficients factor
 into a product of local coefficients.
These local Whittaker coefficients can be written down in terms of local Satake parameters, which is important for applications. 
Also in contrast to the case of $\GL_2$, not every automorphic form has non-identically zero Whittaker coefficients. 
For instance, Siegel modular form give rise to automorphic forms whose Whittaker coefficients vanish identically.
Thus, only \textit{generic} automorphic forms (\textit{i.e,} with non-identically zero Whittaker coefficients) 
will survive the integration on $U \times U$ and contribute to the final formula.

Let us briefly sketch some similarities and differences with the Kuznetsov formula for $\GL_3$, which also has
rank $2$.
On the spectral side, the continuous contribution is in both cases given on the one hand by \textit{minimal Eisenstein series},
(that is, attached to the minimal parabolic subgroup), and on the other hand by Eisenstein series induced from non-minimal parabolics by Maa{\ss} forms
on $\GL_2$. However, in the case of $\GL_3$, the two non-minimal proper standard parabolic subgroup are \textit{associated},
hence by Langlands theory their Eisenstein series are essentially the same. On the other hand, for $\GSp_4$, we have 
two distinct non-associated such parabolic subgroups, giving rise to two distinct kinds of Eisenstein series.
As for as the geometric side, the Weyl group of $\GL_3$ has six elements, while the Weyl goup of $\GSp_4$ has eight.
However, it seems interesting to notice that in both case, only the identity element and the longest three elements
in the Weyl group have a non-zero contribution, thus eventually giving in total four distinct terms.

Finally, let us mention that Siu Hang Man has independently derived a Kuznetsov formula for $\Sp_4$ using the more classical technique
of computing the inner product of Poincar\'e series, and has derived some applications towards the Ramanujan Conjecture~\cite{SHMKuznetsov}.
However, because the techniques employed and the final formulae differ, the author believes that our works are complementary rather than redundant.
Indeed, the flexibility offered by the adelic framework enables us to treat the test function differently at each place.
As a result, by choosing an appropriate test function at finite places, our formula might incorporate the eigenvalues of an arbitrary Hecke operator.
Furthermore, at the Archimedean place, we make use of two deep theorems of functional analysis on real reductive groups (namely Harish-Chandra inversion theorem
and Wallach's Whittaker inversion theorem) in order to produce an arbitrary Paley-Wiener test function on the spectral side, and relate it explicitly to its transform
appearing on the arithmetic side.
As a last point, working with $\GSp_4$ instead of $\Sp_4$ enables us to work with a central character.

The primary goal of this work is to write down the main Kuznetsov formula (in Theorem~\ref{MainTheorem}) as explicitly as possible. 
Applications of Kuznetsov formulae classically include counts of the cuspidal spectrum,
bounds towards the Ramanujan conjecture and bounds for the number of automorphic forms violating the Ramanujan conjecture,
large sieve inequalities involving the Hecke eigenvalues,
estimates for moments of $L$-functions,
and distribution of the Whittaker coefficients. 
Some of these have been developed in~\cite{SHMKuznetsov}, and we hope to work out further applications in future work. 

\addtocontents{toc}{\protect\setcounter{tocdepth}{1}}

\subsection*{Acknowledgments} I wish to thanks Abhishek Saha for suggesting me this problem and for helpful comments, 
Ralf Schmidt for communicating a proof of Proposition~\ref{MustBePrincipalSeries} to me,
and Jack Buttcane for being in touch about the interchange of integrals conjecture.  
\addtocontents{toc}{\protect\setcounter{tocdepth}{2}}

\section{Generalities}
\begin{definition}\label{classical}
	The general symplectic group of degree 2 over a field $\F$ is the group
	$$\GSp_4(\F)=\{M \in \Mat_4(\F) : \exists \mu \in \F^\* , \trans{M}JM=\mu J\},$$
	where 
	$J=\mat{}{I_2}{-I_2}{}$ and $\trans{M}$ denotes the transpose matrix of $M$.
\end{definition}
Note that some authors use different realizations of $\GSp_4$, for instance the realization used in~\cite{RS2} 
(to which we refer, along with~\cite{RS1}, for expository details) is conjugated
in $\GL_4$ to ours by the matrix $\mat{\block{}{1}{1}{}}{}{}{\block{1}{}{}{1}}$.
From now on we denote $G=\GSp$.
The {\bf Cartan involution} of $G$ is given by $\theta(M)=J^{-1}MJ=\trans{M^{-1}}$.
The scalar $\mu=\mu(M)$ in the definition is called the {\bf multiplier system}.
The centre of $G$ consists in all the invertible scalar matrices.
We fix a maximal torus in $G(\F)$
$$T(\F)=\left\{\mat{\block{x}{}{}{y}}{}{}{\block{tx^{-1}}{}{}{ty^{-1}}} : x,y,t \in \F^\* \right\}.$$
\begin{definition}
	The symplectic group of degree 2 over a field $\F$ is the group
	$$\Sp_4(\F)=\{M \in G(\F) : \mu(M)=1\}.$$
\end{definition}
The centre of $\Sp_4$ is $\{\pm 1\}$, and a maximal torus in $\Sp_4(\F)$ is given by
$$A(\F)=T(\F) \cap \Sp_4(\F) =\left\{\mat{\block{x}{}{}{y}}{}{}{\block{x^{-1}}{}{}{y^{-1}}} : x,y \in \F^\* \right\}.$$
\subsection{Weyl group}
Let $N(T)$ be the normalizer of $T$.
The Weyl group $\Omega=N(T)/T$ is generated by (the images of)
$s_1=\mat{\block{}{1}{1}{}}{}{}{\block{}{1}{1}{}}$ and 
$s_2=\left[ \begin{smallmatrix}&&1&\\ &1&&\\-1&&&\\&&&1\end{smallmatrix}\right]$, and consists of the (images of the) eight elements 
$$1, s_1, s_2, 
s_1s_2=\left[ \begin{smallmatrix}&1&&\\ &&1&\\&&&1\\-1&&&\end{smallmatrix}\right],
s_2s_1=\left[ \begin{smallmatrix}&&&1\\1&&&\\&-1&&\\&&1&\end{smallmatrix}\right],$$
$$s_1s_2s_1=\left[ \begin{smallmatrix}1&&&\\ &&&1\\&&1&\\&-1&&\end{smallmatrix}\right],
s_2s_1s_2=\mat{}{\block{}{1}{1}{}}{\block{}{-1}{-1}{}}{},(s_1s_2)^2=J.$$
\subsection{Compact subgroups}
A choice of maximal compact subgroup of $G(\R)$ is given by the set $K_0$ of fixed points of the Cartan involution $\theta$.
An easy computation shows
$$K_0=K_\infty \sqcup \diag{1}{1}{-1}{-1}K_\infty,$$
where
$$K_\infty = \left\{ \mat{A}{B}{-B}{A}:
A\trans{A}+B\trans{B}=1,
A\trans{B}=B\trans{A}. \right\}.$$
The condition 
$$\begin{cases}	
	A\trans{A}+B\trans{B}=1\\
	A\trans{B}=B\trans{A}\end{cases}$$ is equivalent to $A+iB \in U(2)$, hence $K_\infty$ is isomorphic to $U(2)$.

For each prime $p$ we also consider a (compact open) congruence subgroup $\Gamma_p \subset G(\Z_p)$,
with the properties that $\Gamma_p=G(\Z_p)$ for all but finitely many $p$ and the multiplier system $\mu$ is
surjective from $\Gamma_p$ to $\Z_p^\*$ for all $p$. 
This implies we have the 
{\bf strong approximation}: setting $\Gamma=K_\infty \prod_p \Gamma_p$, we have
$$G(\A)=G(\R)^\circ G(\Q) \Gamma,$$
where $G(\R)^\circ$ is the connected component of the identity and $\A$ is the ring of ad\`eles of $\Q$.
Moreover we have the {\bf Iwasawa decomposition}
$G(\A)=P(\A)K$ for all standard parabolic subgroup $P$, where $K=K_\infty \times \prod_p G(\Z_p)$.
\subsection{Parabolic subgroups}
Parabolic subgroups are subgroups such that $G/P$ is a projective variety.
Given a minimal parabolic subgroup $P_0$,
{\bf standard parabolic subgroups} (with respect to $P_0$) are those parabolic subgroups that contain $P_0$.
If $P$ is a standard parabolic subgroup defined over $\Q$,
the {\bf Levi decomposition} of $P$ is a semidirect product $P=N_PM_P$ where $M_P$ is a reductive subgroup and
$N_P$ is a normal unipotent subgroup. 
We give here the three non-trivial standard (with respect to our choice of $P_0=B$) parabolic subroups and their Levi decomposition.
\subsubsection{Borel subgroup} \label{Borelsbgp}
The {\bf Borel subgroup} is the minimal standard parabolic subgroup.
It is given by 
$$B=\left[ \begin{smallmatrix}*&&*&*\\ *&*&*&*\\&&*&*\\&&&*\end{smallmatrix}\right] \cap \GSp_4$$
and has Levi decomposition $B=U T=TU$, where
$$U=\left\{\bigmat{\block{1}{}{x}{1}}{\block{c}{a-cx}{a}{b}}{}{\block{1}{-x}{}{1}},a,b,c,x \in \F\right\}.$$
We have the {\bf Bruhat decomposition}
\begin{align*}
	G=\coprod_{\sigma \in \Omega} B \sigma B=\coprod_{\sigma \in \Omega} UT \sigma U.
\end{align*}
We write the Iwasawa decomposition for $\Sp_4(\R)$ as follows.
\begin{definition}
	For every $g \in \Sp_4(\R)$ there is a unique element $A(g) \in \Lie{a}$, such that 
	$$g \in U \exp(A(g))K_\infty.$$
\end{definition} 
Later on we shall need the following technical lemma.
\begin{lemma}\label{A(Ju)}
	With the notation $$u(x,a,b,c)=\left[
	\begin{smallmatrix}
		1	& 		&	c	& 	a-cx	\\
		x 	& 	1	& a	& b			\\
		& 		&	 1	& 	-x		\\
		&		&		&	1			\\
	\end{smallmatrix}
	\right]$$
	we have for all $a,b,c,x \in \R$
	$$A(Ju(x,a,b,c))=A(Ju(-x,a,-b,-c)).$$
\end{lemma}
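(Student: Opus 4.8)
The plan is to reduce the claimed identity to a statement about the Iwasawa decomposition of a single explicit matrix. Write $g = Ju(x,a,b,c)$ and $g' = Ju(-x,a,-b,-c)$. Since $A(g)$ is defined via $g \in U\exp(A(g))K_\infty$ and the $\Lie{a}$-component is unique, it suffices to exhibit an element $k \in K_\infty$ and an element $n \in U$ such that $g' = n\, g\, k$; then uniqueness forces $A(g') = A(g)$. So the task becomes: find the right conjugating/twisting elements. The natural candidate for $k$ is a diagonal sign matrix in $K_\infty$ — note $\diag{1}{-1}{1}{-1}$ and $\diag{-1}{1}{-1}{1}$ lie in $K_\infty$ (they are of the form $\mat{A}{B}{-B}{A}$ with $B=0$ and $A \in \O(2) \cap$ the relevant conditions, in fact $A\trans A = 1$). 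Conjugating $u(x,a,b,c)$ by such a sign matrix, and separately checking how $J$ interacts with it, should produce $u(-x,a,-b,-c)$ up to a left factor in $U$.

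Concretely, the key step is a direct computation: let $\epsilon = \diag{1}{-1}{1}{-1}$ (or whichever sign pattern works). First compute $\epsilon^{-1} u(x,a,b,c)\, \epsilon$ and observe that it equals $u(x', a', b', c')$ for some signs on the entries; the symplectic form of $u$ in the excerpt shows the $(1,3),(1,4),(2,3),(2,4)$ block is $\block{c}{a-cx}{a}{b}$, so conjugation by a diagonal matrix rescales these entries by ratios of the diagonal signs, and one reads off which sign pattern sends $(x,a,b,c) \mapsto (-x, a, -b, -c)$ up to a residual unipotent. Then one checks that $J \epsilon = \pm \epsilon' J$ for a suitable sign matrix $\epsilon' \in K_\infty$, so that $J u(x,a,b,c) = J \epsilon \cdot \epsilon^{-1} u(x,a,b,c) \epsilon \cdot \epsilon^{-1}$ can be rearranged into the form $n \cdot J u(-x,a,-b,-c) \cdot k$ with $n \in U$, $k \in K_\infty$. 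Because $U$ normalizes nothing problematic here and $\exp(A(g))$ is sandwiched in the middle, the $\Lie a$-parts coincide.

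The main obstacle I anticipate is bookkeeping rather than conceptual: one must be careful that the diagonal sign matrix actually lands in $K_\infty$ (the multiplier/symplectic constraints are restrictive — only sign matrices with an even number of $-1$'s in symplectically-paired positions, i.e. of the form $\mat{A}{0}{0}{A}$ with $A\trans A = I_2$, qualify, which means $A$ must itself be a signed permutation matrix in $\O(2)$), and that after conjugation the residual factor genuinely lies in $U$ and not merely in $B$. It may be that a single diagonal conjugation does not suffice and one needs to combine it with the element $s_2 s_1 s_2 = \mat{}{\block{}{1}{1}{}}{\block{}{-1}{-1}{}}{}$ or another Weyl representative that already lies in $K_0$; but since $Ju(-x,a,-b,-c)$ and $Ju(x,a,b,c)$ differ only by an \emph{inner} symmetry (the sign flip is visibly an involution of the parameter space), such an element must exist, and verifying $A(g') = A(g)$ then follows from uniqueness in the Iwasawa decomposition. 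An alternative, cleaner route is to compute $A(Ju(x,a,b,c))$ explicitly as a function of $(x,a,b,c)$ — this amounts to a Gram–Schmidt / $QR$-type computation on the rows of the $4\times 4$ matrix — and simply observe that the resulting formula is invariant under $(x,b,c) \mapsto (-x,-b,-c)$; if the formula is not too unwieldy this is the most transparent proof, though it risks being the more laborious of the two.
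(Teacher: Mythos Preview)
Your second route---explicitly computing $A(Ju(x,a,b,c))$ and checking invariance---is precisely what the paper does. The paper observes that $A(Ju)$ can be read off from the $U D\,\trans{U}$ factorisation of $u\trans{u}$ (since if $Ju=u_1e^{A}k$ with $k\trans{k}=I$ then $(u\trans{u})^{-1}=\trans{u_1}^{-1}e^{-2A}u_1^{-1}$), and finds
\[
A(Ju)=-\tfrac12\diag{a_1-a_2}{a_2}{a_2-a_1}{-a_2},
\]
with $a_1=\log(1+x^2+a^2+b^2)$ and $a_2=\log\bigl((a(a-cx)+1-bc)^2+(x(a-cx)-b-c)^2\bigr)$; invariance under $(x,a,b,c)\mapsto(-x,a,-b,-c)$ is then immediate (note $a-cx$ is itself invariant).

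Your first route can also be made to work, but not with the sign matrix you propose: conjugating $u(x,a,b,c)$ by $\diag{1}{-1}{1}{-1}\in K_\infty$ yields $u(-x,-a,b,c)$, which is the wrong involution. The matrix that does the job is $\epsilon=\diag{1}{-1}{-1}{1}$: one checks directly that $\epsilon\,u(x,a,b,c)\,\epsilon=u(-x,a,-b,-c)$ and $\epsilon J\epsilon=-J$, hence
\[
Ju(-x,a,-b,-c)=-\epsilon\cdot Ju(x,a,b,c)\cdot\epsilon.
\]
Since $\epsilon$ is diagonal it normalises $U$ and commutes with $A^+$, and since $\epsilon\in O(4)$ with $\epsilon\,\Sp_4\,\epsilon=\Sp_4$ it normalises $K_\infty$; together with $-I\in K_\infty$ this forces the $\Lie a$-parts to agree. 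The subtlety your own analysis would have caught is that this $\epsilon$ has $\mu(\epsilon)=-1$, so it lies in $K_0\setminus K_\infty$ and is \emph{not} of the form $\mat{A}{0}{0}{A}$; hence it cannot appear as the right factor $k$ in a relation $g'=ngk$ with $k\in K_\infty$, and one must use the conjugation viewpoint instead. That route is conceptually cleaner, while the paper's direct computation has the side benefit of producing the explicit formula for $A(Ju)$.
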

\begin{proof}
	To alleviate notations, set $u=u(x,a,b,c)$.
	By definition we have $u=u_1A(Ju)k$ for some $u_1 \in U(\R)$ and $\trans{k}=k^{-1}$.
	Then $$	u \trans{u} =J\trans{u}^{-1}u^{-1}J^{-1}\\
		=\trans{u_1}^{-1}\exp(-2A(Ju))u_1^{-1}.$$
The matrices $u_1$ and $A(Ju)$ can then be recovered from the entries of $u \trans{u}$ by an elementary calculation (see Lemma~\ref{uoppu}) below).
In particular, we find $$A(Ju)=-\frac12\diag{a_1-a_2}{a_2}{a_2-a_1}{-a_2},$$ where
$a_1 =\log(1+x^2+a^2+b^2)$ and $a_2=\log((a(a-cx)+1-bc)^2+(x(a-cx)-b-c)^2)$.
\end{proof}
\subsubsection{Klingen subgroup}
The {\bf Klingen subgroup} is
$$\Pk=\left[ \begin{smallmatrix}*&&*&*\\ *&*&*&*\\ *&&*&*\\&&&*\end{smallmatrix}\right] \cap \GSp_4.$$
It has Levi decomposition $\Pk=\Nk\Mk$, where
\begin{align*}
	\Nk=\left\{\mat{\block{1}{}{x}{1}}{\block{}{y}{y}{z}}{}{\block{1}{-x}{}{1}},x,y,z \in \F\right\}
	\text{ and } 
	\Mk=\left\{\left[ \begin{smallmatrix}a&&b&\\ &t&&\\ c&&d&\\&&&t^{-1}\delta\end{smallmatrix}\right], t \in \F^\*, 
	\delta = \det\left(\mat{a}{b}{c}{d}\right) \neq 0\right\}.
\end{align*}
The centre of $\Mk$ is
$\Ak=\left\{\left[ \begin{smallmatrix}u&&&\\&t&&\\&&u&\\&&&t^{-1}u^2\end{smallmatrix}\right], t,u
\in \F^\* \right\}.$
\subsubsection{Siegel subgroup}
The {\bf Siegel subgroup} is
$$\Ps=\left[ \begin{smallmatrix}*&*&*&*\\ *&*&*&*\\&&*&*\\&&*&*\end{smallmatrix}\right] \cap \GSp_4.$$
It has Levi decomposition $\Ps=\Ns\Ms$, where
$$\Ns=\left\{\mat{\block{1}{}{}{1}}{\block{x}{y}{y}{z}}{}{\block{1}{}{}{1}},x,y,z \in \F\right\}
\text{ and } 
\Ms=\left\{ \mat{A}{}{}{t\trans{A^{-1}}}, A \in \GL_2(\F), t \in \F^\* \right\}.$$
The centre of $\Ms$ is
$\As=\left\{\left[ \begin{smallmatrix}u&&&\\&u&&\\&&tu^{-1}&\\&&&tu^{-1}\end{smallmatrix}\right], t,u \in \F^\* \right\}.$
\subsection{Lie algebras and characters}
Following Arthur~\cite{ArthurIntro}, we parametrize the characters of the Levi component of the parabolic subgroups by the dual of the Lie algebras of their centre.
We fix  $|\cdot|_\A=\prod_v|\cdot|_v$ the {\bf standard adelic absolute value}.
Let $P=M_PN_P$ be a standard parabolic subgroup, and $A_P$ be the centre of $M_P$.
Then there is a surjective homomorphism 
$$H_P: M_P(\A) \to \text{Hom}_\Z(X(M_P),\R)$$ defined by 
\begin{equation}\label{DefinitionOfHp}
	\left(H_P(m)\right)(\chi)=\log(|\chi(m)|_\A),
\end{equation}
where we write $X(H)$ for the group of homomorphisms (of \textit{algebraic groups}) $H \to \GL_1$ that are defined over $\Q$. 
On the other hand, we may identify the vector space $\text{Hom}_\Z(X(M_P),\R)$ with the Lie algebra 
$\Lie{a}_P \oplus \Lie{z}$ of $A_P(\R)$ (where $\Lie{a}_P$ is the Lie algebra of $A_P(\R) \cap \Sp_4(\R)$ and $\Lie{z}$ is the Lie algebra of the centre).
If $\nu \in \Lie{a}_P^*(\C) \oplus \Lie{z}^*(\C)$, then the map
\begin{equation}\label{DefOfChar}
	m \mapsto \exp(\nu(H_P(m))
\end{equation}
defines a character of $M_P(\A)$. Moreover characters of $Z(\A)$
correspond to $\Lie{z}^*(\C)$ while characters that are trivial on $Z(\A)$ correspond to $\Lie{a}_P^*(\C)$.

\section{Representations}
	\subsection{Generic representations}

		\subsubsection{Generic characters}  \label{GenericCharacters}
A character $\psi$ of $U(\Q) \back U(\A)$ is said to be {\bf generic} if its differential is non-trivial 
on each of the eigenspaces $\Lie{n}_\alpha$ corresponding to simple roots $\alpha$.
Explicitly, If $\theta$ is the standard additive character of $\A/\Q$ and $\m=(m_1, m_2) \in (\Q^\*)^2$, 
generic characters of $U(\A)$ are given by 
\begin{equation}\label{genchar}
	\psi_\m\left(\left[\begin{smallmatrix}
		1	& 		&	c	& 	a-cx	\\
		x 	& 	1	& a	& b			\\
		& 		&	 1	& 	-x		\\
		&		&		&	1			\\
	\end{smallmatrix}\right]\right)=\theta(m_1x+m_2c).
\end{equation}
Note that all generic characters may be obtained from each other by conjugation by an element of $T/Z$,
as we have	for all $u \in U(\A)$
\begin{equation}\label{gencharconj}
	\psi_\m\left(u \right)=\psi_{\mathbf{1}}\left( t_\m^{-1} u t_\m\right),
\end{equation}
where 
\begin{equation}\label{tm}
	t_\m=\diag{m_1}{1}{m_1m_2}{m_1^2m_2}.
\end{equation}

\subsubsection{Whittaker coefficients and generic representations}
If $\phi$ is any automorphic form on $G(\A)$ and $\psi$ a generic character, the {\bf $\psi$-Whittaker coefficient}
of $\phi$ is by definition
\begin{equation} \label{Whittaker}
	\W_{\psi}(\phi) (g) = \int_{U(\Q) \backslash U(\A)} \phi(ug) \psi(u)^{-1} du.
\end{equation}
$\phi$ is called $\psi$-generic if $\W_\psi$ is not identically zero as a function of $g$.
Changing variable and using the left-$G(\Q)$-invariance of $\phi$, note that we have
$$\W_{\psi_{\m}}(\phi) (g)=\frac1{|m_1^4m_2^3|}\W_{\psi_{\mathbf 1}}(\phi) (t_{\m}^{-1}g)$$
In particular, $\phi$ is $\psi$-generic for some generic character $\psi$ if and only if it is $\psi$-generic for any
generic character $\psi$, henceforth we shall just say $\phi$ is {\bf generic}.
An irreducible automorphic  representation $(\pi, V_\pi)$ is called generic if $V_\pi$ contains a generic automorphic form
$\phi$. 
Equivalently, every automorphic form in the space of a generic irreducible automorphic representation $\pi$ is generic,
since otherwise the kernel of the map $\phi \mapsto W_\psi(\phi)$ would be lead to a non-trivial invariant subspace of $\pi$,
contradicting the irreducibility of $\pi$.
Since $U$ may as well be viewed as the unipotent part of the minimal parabolic subgroup of $\Sp_4$,
we can define the Whittaker coefficients of automorphic forms $\phi$ on $\Sp_4$ in the exact same way as~(\ref{Whittaker}), except the argument is restricted 
to $\Sp_4(\A)$. This gives a similar notion of generic automorphic forms and generic representations for $\Sp_4$.
Later on, we shall restrict automorphic forms on $\GSp_4$ to $\Sp_4$. 
Let us briefly explain the corresponding operations on automorphic representations.
\begin{definition}\label{DefOfRes}
	Let $(\pi,V_\pi)$ be an automorphic representation of $\GSp_4(\A)$ realized by right translation on a subspace of $L^2(G(\Q)Z(\R) \backslash G(\A))$.
	We define a representation $\res \pi$ of $\Sp_4(\A)$ as the action of $\Sp_4(\A)$ on $\left\{ \phi_{|\Sp_4(\A)}: \phi \in V_\pi\right\}$. 
	It is a quotient of the restriction $\Res \pi=\pi_{|\Sp_4(\A)}$.  
\end{definition}
The representation $\res \pi$ does not have finite length in general. However, the following shall be useful later on.
\begin{lemma}\label{genericrestriction}
	Let $\pi$ be an irreducible automorphic representation $\pi$ of $G(\A)$ that occurs discretely in $L^2(G(\Q)Z(\R) \backslash G(\A))$. 
	Then $\pi$ is generic if and only if $\res \pi$ has a generic constituent. 
\end{lemma}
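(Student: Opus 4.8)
The plan is to reduce everything to one observation about the Whittaker integral~(\ref{Whittaker}): since $U\subset\Sp_4$, for $h\in\Sp_4(\A)$ the integrand $\phi(uh)\psi(u)^{-1}$ only involves values of $\phi$ on $\Sp_4(\A)$, so $\W_\psi(\phi)(h)$ depends only on $\phi|_{\Sp_4(\A)}$ and equals the $\Sp_4$-Whittaker coefficient of $\phi|_{\Sp_4(\A)}$ at $h$. Moreover, by~(\ref{gencharconj})--(\ref{tm}) the (non-)vanishing of $\W_\psi(\phi)$ is independent of the choice of generic character, so I would fix $\psi=\psi_\one$ throughout. Then $\phi$ is generic on $G$ iff $\W_{\psi_\one}(\phi)$ does not vanish identically on $G(\A)$, while $\phi|_{\Sp_4(\A)}$ is generic on $\Sp_4$ iff $\W_{\psi_\one}(\phi)$ does not vanish identically on $\Sp_4(\A)$.

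With this in hand the implication $(\Leftarrow)$ is essentially formal: if $\res\pi$ has a generic constituent, that constituent contains a generic automorphic form, which is the restriction $\phi|_{\Sp_4(\A)}$ of some $\phi\in V_\pi$; hence $\W_{\psi_\one}(\phi)$ is nonzero somewhere on $\Sp_4(\A)\subset G(\A)$, so $\phi$, and therefore $\pi$, is generic. For $(\Rightarrow)$ I would use a right translate to move a nonvanishing point into $\Sp_4(\A)$: if $\pi$ is generic, choose $\phi\in V_\pi$ and $g_0\in G(\A)$ with $\W_{\psi_\one}(\phi)(g_0)\neq 0$ and set $\phi_0=\pi(g_0)\phi\in V_\pi$; unwinding~(\ref{Whittaker}) gives $\W_{\psi_\one}(\phi_0)(1)=\W_{\psi_\one}(\phi)(g_0)\neq 0$, so $f:=\phi_0|_{\Sp_4(\A)}$ is a generic element of $\res\pi$. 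What remains is to pass from a generic \emph{vector} in $\res\pi$ to a generic \emph{constituent}, and this is where the real work lies.

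To carry that out I would first establish that, although $\res\pi$ need not have finite length, it nonetheless decomposes as a discrete Hilbert direct sum $\bigoplus_\alpha\sigma_\alpha$ of irreducible unitary representations of $\Sp_4(\A)$ realized by automorphic forms. Indeed, a discrete $\pi$ is cuspidal or residual; when $\pi$ is cuspidal the restriction of a cusp form is a cusp form, since every proper parabolic of $\Sp_4$ is the intersection with $\Sp_4$ of a proper parabolic $P$ of $G$, the unipotent radical of $P$ already lies in $\Sp_4$ (unipotent matrices have trivial multiplier), and the constant term along the smaller group is the restriction of the constant term along $P$; thus $\res\pi\subset L^2_{\mathrm{cusp}}(\Sp_4(\Q)\backslash\Sp_4(\A))$, which is discretely decomposable with finite multiplicities. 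When $\pi$ is residual one argues instead that $\phi|_{\Sp_4(\A)}$ is still square integrable and lies in the discrete spectrum, e.g.\ by integrating $|\phi|^2$ over the fibration $\mu\colon G(\Q)Z(\R)\backslash G(\A)\to\Q^\* \R_{>0}\backslash\A^\*$, whose base is compact and whose fibres are copies of $\Sp_4(\Q)\backslash\Sp_4(\A)$. In particular a generic constituent may be taken to be one of the summands $\sigma_\alpha$.

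Given the decomposition, I would finish $(\Rightarrow)$ as follows: write the generic $f$ above as $f=\sum_\alpha f_\alpha$ with $f_\alpha$ the orthogonal projection onto $\sigma_\alpha$ (again an automorphic form); a Fubini argument over the fibration cut out by $U(\Q)\backslash U(\A)$ shows that the $\Sp_4$-Whittaker coefficient is computed fibrewise, so $\W_{\psi_\one}(f)=\sum_\alpha\W_{\psi_\one}(f_\alpha)$ almost everywhere on $\Sp_4(\A)$; since the left-hand side is not identically zero, some $\W_{\psi_\one}(f_\alpha)$ is not identically zero, and the corresponding $\sigma_\alpha$ is then a generic constituent of $\res\pi$. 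The main obstacle is thus the structural input of the third paragraph---that $\res\pi$ is discretely decomposable, together with the compatibility of the Whittaker coefficient with that decomposition; once it is granted, both implications follow from the opening observation by elementary manipulation of~(\ref{Whittaker}).
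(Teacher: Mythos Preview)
Your core argument is exactly the paper's: the identity $\W_\psi(\phi|_{\Sp_4})=(\W_\psi(\phi))|_{\Sp_4}$ together with the right-translation trick $\W_\psi(\phi)(g)=\W_\psi(\pi(g)\phi)(1)$. The paper runs $(\Rightarrow)$ by contrapositive in two lines: assuming no constituent of $\res\pi$ is generic, it simply asserts that $\W_\psi(\phi)|_{\Sp_4}\equiv 0$ for every $\phi\in V_\pi$, and then the translation identity forces $\W_\psi(\phi)\equiv 0$ on all of $G(\A)$. In other words, the paper takes for granted the equivalence between ``$\res\pi$ has a generic constituent'' and ``some vector $\phi|_{\Sp_4}$ is generic,'' whereas you correctly flag this as the substantive point and supply a justification via a discrete Hilbert decomposition of $\res\pi$ and termwise Whittaker coefficients. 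Your extra work is sound in outline (and indeed the paper itself appeals to the discrete decomposability of $\res\pi$ a few lemmas later, when treating the residual spectrum), though the pointwise identity $\W_\psi(f)=\sum_\alpha\W_\psi(f_\alpha)$ from an $L^2$ sum, and the claim that residual $\phi|_{\Sp_4}$ lands in the \emph{discrete} spectrum of $\Sp_4$, each deserve another sentence of justification. Bottom line: same proof, but you are more scrupulous about a step the paper leaves implicit.
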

\begin{proof}
	Fix a generic character $\psi$. 
	Note that for any automorphic form $\psi$ on $G(\A)$ we have 
	$\W_{\psi}(\phi_{|\Sp_4})=\left(\W_{\psi}(\phi)\right)_{|\Sp_4}$.
	From this, it is clear that if $\res \pi$ has a generic constituent then $\pi$ is generic. 
	Let us show the converse.
	Assume no constituent of $\res \pi$ is generic, so for all $\phi \in V_\pi$,
	$$\W_\psi(\phi)_{|\Sp_4(\A)}=0.$$
	Let $\phi \in \pi$ and $g \in G(\A)$. Then $\pi(g)\phi\in V_\pi$ hence 
	$$\W_{\psi}(\phi)(g)=\W_{\psi}(\pi(g)\phi)(1)=0.$$
	Thus $\pi$ is not generic.
\end{proof}
We now prove a similar lemma for the restriction of non-cuspidal representations.
\begin{lemma}\label{cuspidalrestriction}
	Let $\pi$ be an irreducible automorphic representation $\pi$ of $G(\A)$ that occurs discretely in $L^2(G(\Q)Z(\R) \backslash G(\A))$. 
	Then $\pi$ is non-cuspidal if and only if $\res \pi$ has no cuspidal constituent. 
\end{lemma}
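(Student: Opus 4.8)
The plan is to prove both directions of the equivalence, following the template of Lemma~\ref{genericrestriction} but tracking cuspidality rather than genericity. The key elementary fact is that cuspidality is detected by the vanishing of constant terms along proper parabolics, and that the standard parabolic subgroups of $\Sp_4$ are precisely the intersections with $\Sp_4$ of the standard parabolic subgroups of $\GSp_4$; moreover the unipotent radicals agree, since $N_P \subset \Sp_4$ already (the multiplier is trivial on unipotents). Thus for an automorphic form $\phi$ on $G(\A)$ and a proper standard parabolic $P$ one has the compatibility
\begin{equation*}
	\left(\phi_P\right)_{|\Sp_4(\A)} = \left(\phi_{|\Sp_4(\A)}\right)_{P \cap \Sp_4},
\end{equation*}
where $\phi_P(g) = \int_{N_P(\Q)\backslash N_P(\A)} \phi(ng)\, dn$ denotes the constant term. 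This is the analogue of the identity $\W_\psi(\phi_{|\Sp_4}) = (\W_\psi(\phi))_{|\Sp_4}$ used in the previous lemma.

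First I would treat the direction: if $\pi$ is cuspidal, then no constituent of $\res\pi$ is cuspidal. Suppose for contradiction some irreducible constituent $\sigma$ of $\res\pi$ is cuspidal. Since $\res\pi$ is the space $\{\phi_{|\Sp_4(\A)} : \phi \in V_\pi\}$ with the $\Sp_4(\A)$-action, $\sigma$ is a subquotient; but because $\Sp_4(\A)$ acts and all constant terms $\phi_P$ vanish identically for $\phi \in V_\pi$ (as $\pi$ is cuspidal), the compatibility identity above forces $(\phi_{|\Sp_4(\A)})_{P\cap\Sp_4} = 0$ for every $\phi \in V_\pi$ and every proper $P$. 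Hence every vector in $\res\pi$ — and therefore in any subquotient — is cuspidal on $\Sp_4(\A)$; that is consistent with $\sigma$ being cuspidal but does not yet give a contradiction, so in fact this direction says: \emph{every} constituent of $\res\pi$ is cuspidal. So the clean statement to prove is really ``$\pi$ cuspidal $\iff$ every constituent of $\res\pi$ is cuspidal $\iff$ $\res\pi$ has no \emph{non-cuspidal} constituent'', and I would restate carefully so that the forward direction is: $\pi$ cuspidal $\Rightarrow$ all constant terms of restrictions vanish $\Rightarrow$ $\res\pi$ has no non-cuspidal constituent; hmm — I should double-check the precise phrasing of the lemma. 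Reading it again, the claim is ``$\pi$ non-cuspidal $\iff$ $\res\pi$ has no cuspidal constituent'', so the contrapositive form I actually need is: $\pi$ cuspidal $\iff$ $\res\pi$ has a cuspidal constituent.

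For $\pi$ cuspidal $\Rightarrow$ $\res\pi$ has a cuspidal constituent: since $\phi_{P}\equiv 0$ for all $\phi \in V_\pi$ and all proper standard $P$, the compatibility identity gives that every $\phi_{|\Sp_4(\A)}$ is cuspidal on $\Sp_4(\A)$, so the whole $\Sp_4(\A)$-module $\res\pi$ lies in the cuspidal part of $L^2$, which is a direct sum of irreducibles; any irreducible summand is then a cuspidal constituent. For the converse, $\res\pi$ has a cuspidal constituent $\Rightarrow$ $\pi$ cuspidal: I would argue contrapositively. If $\pi$ is not cuspidal, then some constant term $\phi_P$ is not identically zero for some $\phi \in V_\pi$ and some proper standard $P$. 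Using right translation by $G(\A)$ inside $V_\pi$ (exactly as in the proof of Lemma~\ref{genericrestriction}, via $\phi_P(g) = (\pi(g)\phi)_P(1)$) and the compatibility identity, I would produce a vector in $\res\pi$ with nonvanishing $(P\cap\Sp_4)$-constant term; then I must argue this non-cuspidality survives to \emph{some} constituent. Here the subtlety is that $\res\pi$ need not have finite length (as the paper warns), so I cannot just say ``a non-cuspidal vector forces a non-cuspidal constituent'' cheaply — but since $\pi$ is irreducible and occurs discretely, its central character is fixed and $\res\pi$ still decomposes into an orthogonal direct sum (discrete), and one of those irreducible pieces must have a nonzero constant term, hence be non-cuspidal; therefore $\res\pi$ fails to have \emph{all} constituents cuspidal. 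Combined with the first part this gives the dichotomy: either $\pi$ is cuspidal and \emph{every} constituent of $\res\pi$ is cuspidal, or $\pi$ is non-cuspidal and \emph{no} constituent of $\res\pi$ is cuspidal — so in particular ``$\res\pi$ has a cuspidal constituent $\iff$ $\pi$ cuspidal'', which is the lemma. The main obstacle I anticipate is the bookkeeping in this last step: making rigorous that for a non-cuspidal $\pi$, the (possibly infinitely-generated, non-finite-length) module $\res\pi$ genuinely contains a non-cuspidal \emph{irreducible} constituent, rather than merely a non-cuspidal vector; I expect this follows from the discreteness of $\pi$ in $L^2$ guaranteeing that $\res\pi$ sits inside a discrete part of $L^2(\Sp_4(\Q)\backslash\Sp_4(\A))$-type space compatible with a fixed central character, so it is an orthogonal sum of irreducibles and the non-cuspidal vector has nonzero projection to a non-cuspidal summand.
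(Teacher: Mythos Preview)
There is a genuine logical gap. You want to prove ``$\res\pi$ has a cuspidal constituent $\Rightarrow$ $\pi$ cuspidal'', whose contrapositive is ``$\pi$ non-cuspidal $\Rightarrow$ $\res\pi$ has \emph{no} cuspidal constituent''. But what you actually argue is ``$\pi$ non-cuspidal $\Rightarrow$ $\res\pi$ has \emph{some non-cuspidal} constituent'', which is a different (and weaker) statement. Combining this with your correct first direction, what you have really established is only ``$\pi$ cuspidal $\iff$ \emph{every} constituent of $\res\pi$ is cuspidal''. The claimed dichotomy does not follow: you have not excluded the possibility that for a non-cuspidal $\pi$, the module $\res\pi$ contains both cuspidal and non-cuspidal constituents. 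Your final paragraph worries about the wrong bookkeeping issue (finding an irreducible non-cuspidal piece), whereas the real obstruction is ruling out any irreducible \emph{cuspidal} piece.

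The paper closes this gap by a direct argument that does not appear in your sketch. Suppose some $\phi\in V_\pi$ restricts to a cuspidal form on $\Sp_4(\A)$, so $C_\phi|_{\Sp_4(\A)}=0$. One then shows $C_\phi\equiv 0$ on all of $\GSp_4(\A)$ by tracking the multiplier: left-invariance under $T(\Q)$ gives vanishing whenever $\mu(g)\in\Q^\times$; the central character gives vanishing whenever $\mu(g)\in\Q^\times(\A^\times)^2$; and right-invariance under $\GSp_4(\Z_p)$ at almost all $p$ (admissibility) extends this to $\mu(g)\in\Q^\times(\A^\times)^2\prod_{p\notin S}\Z_p^\times$. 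The key arithmetic input is that this last set equals $\A^\times$, which one checks via strong approximation and the Chinese Remainder Theorem. Hence $\phi$ itself is cuspidal, and by irreducibility so is $\pi$. This is precisely the step missing from your argument.
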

\begin{proof}
	Recall $\pi$ is cuspidal if the constant term
	$$C_\phi(g)=\int_{U(\Q) \back U(\A)} \phi(ug)du$$
	of some (equivalently, any, since $\pi$ is irreducible) function $\phi$ in the space of $\pi$ vanishes identically.
	The exact same proof as Lemma~\ref{genericrestriction}, replacing the generic character $\psi$ by $1$, shows that $\pi$ is 
	non-cuspidal if and only if $\res \pi$ has a non-cuspidal component.
	However, we want to show that if $\pi$ is non-cuspidal, then $\res \pi$ has no cuspidal component.
	So suppose that $\res \pi$ has a cuspidal component.
	This means there is $\phi \in V_\pi$ such that $(C_\phi)_{|\Sp_4(\A)}=0$. We want to show that $C_\phi$ is identically zero on $\GSp_4(\A)$.
	Now changing variables and using the left-invariance of $\phi$ under $\GSp_4(\Q)$, if $t \in T(\Q)$ then we have $C_\phi(tg)=C_\phi(g)$.
	In addition, if $z\in Z(\A)$ then $C_\phi(zg)=\omega_\pi(z)C_\phi(g)$. 	Moreover, since $\pi$ is an admissible representation, $\phi$~is right-invariant by $\GSp_4(\Z_p)$ for almost all prime $p$.
	It follows that there exists a finite set of places $S$ such that for any $g \in \GSp_4(\A)$, 
	if $\mu(g) \in \Q^\times (\A^\times)^2 \prod_{p \not \in S}\Z_p^\*$ then $C_\phi(g)=0$.
	The following lemma concludes the proof.
\end{proof}

\begin{lemma}
 Let $S$ be any finite set of places containing $\infty$.
 We have $\Q^\times (\A^\times)^2 \prod_{p \not \in S} \Z_p^\times=\A^\times$.
\end{lemma}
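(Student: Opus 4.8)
The plan is to prove this by combining the class number one property of $\Q$ with weak approximation.

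The first step is to reduce to ideles with unit components. Given $x=(x_v)_v\in\A^\times$, the rational number $q_0=\sgn(x_\infty)\prod_p p^{v_p(x_p)}$ is a finite product, and $xq_0^{-1}$ lies in $\R_{>0}\times\prod_p\Z_p^\times$. Since $\R_{>0}=(\R_{>0})^2$ is contained in $(\A^\times)^2$, and since $\prod_p\Z_p^\times$ factors as $(\prod_{p\in S,\,p\neq\infty}\Z_p^\times)\times\prod_{p\notin S}\Z_p^\times$ with the second factor already lying in the subgroup in question, it suffices to show that any idele supported on the finite places of $S$ with unit components lies in $\Q^\times(\A^\times)^2\prod_{p\notin S}\Z_p^\times$.

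So fix such an idele $\xi$, with components $\xi_p\in\Z_p^\times$ for $p\in S\setminus\{\infty\}$ and trivial elsewhere. It would be enough to produce $q\in\Q^\times$ that is positive, that satisfies $q^{-1}\xi_p\in(\Q_p^\times)^2$ for every finite $p\in S$, and for which $v_\ell(q)$ is even for every prime $\ell\notin S$; for then $q^{-1}\xi$ is a positive real at the archimedean place, a local square at every finite place of $S$, and of even valuation at every prime outside $S$, so that $q^{-1}\xi=y^2u$ with $y\in\A^\times$ (taking the evident components) and $u\in\prod_{p\notin S}\Z_p^\times$, whence $\xi=q\,y^2u$. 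The existence of a $q\in\Q^\times$ realizing any prescribed family of square classes at the finitely many places of $S$ is immediate from weak approximation: $\Q^\times$ is dense in $\prod_{v\in S}\Q_v^\times$, and each subgroup $(\Q_v^\times)^2$ is open.

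The step I expect to be the main obstacle is arranging that this $q$ also be positive and have even valuation at the finitely many primes outside $S$ where it is not, \emph{without} disturbing the square classes already fixed at the places of $S$ — the difficulty being that a prime occurring in $q$ to an odd power affects both the square class of $q$ at the places of $S$ and its valuation at that prime. Equivalently, one must show that the image of $\Q^\times$ in the elementary abelian $2$-group $\A^\times/(\A^\times)^2\prod_{p\notin S}\Z_p^\times$ is everything. I would approach this by identifying that quotient explicitly — as the product of the local groups $\Q_p^\times/(\Q_p^\times)^2$ over the finite $p\in S$, together with a copy of $\Z/2\Z$ at the archimedean place and one for each prime outside $S$ — and then exhibiting, generator by generator, a rational number mapping onto it, while keeping track of the interaction between the $p$-adic square conditions for $p\in S$ and the valuation conditions at the remaining primes. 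This final bookkeeping over $\F_2$ is where genuine care is needed.
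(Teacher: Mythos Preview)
Your proposal stops short of a complete proof: you correctly isolate the key difficulty --- arranging for the rational $q$ to have the prescribed local square class at each finite place of $S$ while simultaneously having even valuation at every prime outside $S$ --- but you do not actually carry out the ``final bookkeeping over $\F_2$'' that you say is needed. In fact this bookkeeping cannot be completed, because the statement as written is false. Take $S=\{\infty,3\}$ and let $\xi\in\A^\times$ have $\xi_3=2$ and $\xi_v=1$ for every $v\neq 3$. If $\xi=q\,y^2\epsilon$ with $q\in\Q^\times$, $y\in\A^\times$, and $\epsilon\in\prod_{p\neq 3}\Z_p^\times$, then the component at $\infty$ forces $q>0$; the component at each finite $p\neq 3$ forces $v_p(q)$ to be even; and the component at $3$ forces $q\in 2(\Q_3^\times)^2$, whence $v_3(q)$ is even as well. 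Thus $q$ is a positive rational square, contradicting $q\in 2(\Q_3^\times)^2$ since $2$ is a non-square in $\Q_3^\times$. In the language of your last paragraph: the map from $\Q^\times/(\Q^\times)^2$ to $\A^\times\big/\bigl((\A^\times)^2\prod_{p\notin S}\Z_p^\times\bigr)$ is not onto, so the ``generator by generator'' approach cannot succeed.

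The paper's own argument conceals the same gap. It picks an integer $n>0$ by the Chinese Remainder Theorem so that $nu_p\in(\Z_p^\times)^2$ for every finite $p\in S$, and then asserts that for each $p\notin S$ one may choose $\epsilon_p\in\Z_p^\times$ with $\epsilon_p nu_p\in(\Z_p^\times)^2$. But the congruence conditions force $n$ to be coprime to every prime in $S$, so unless $n=1$ (which requires each $u_p$ to already be a local square) $n$ has a prime factor $\ell\notin S$; at that prime $nu_\ell$ has positive valuation and no unit $\epsilon_\ell$ can bring it into $(\Z_\ell^\times)^2$. Your hesitation at exactly this step was well founded.
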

\begin{proof}
	Let $x \in \A^\times$.
	By strong approximation, we have $x=qu$, with  $q \in \Q^\times$ and $u \in \R_{>0}\prod_{p < \infty} \Z_p^\times$.
	Now by the Chinese Remainders Theorem, there exists an integer $n>0$ such that for all finite $p \in S$, we have $n u_p \in (\Z_p^\times)^2$.
	For all $p \not \in S$, let $\epsilon_p \in  \Z_p^\times$ such that $\epsilon_p n u_p \in (\Z_p^\times)^2$.
	Define $\epsilon_p=1$ for $p \in S$.
	Then $n \epsilon u \in (\A^\times)^2$, and $x=(qn^{-1})(n \epsilon u)\prod_{p \not \in S}\epsilon_p^{-1}$.
\end{proof}

	\subsection{The basic kernel}\label{basickernel}
The group $Z(\Q)Z(\R) \backslash Z(\A)$ is compact and acts on the Hilbert space $L^2(G(\Q)Z(\R) \backslash G(\A))$ by 
right translation. 
Since $Z(\Q)Z(\R) \backslash Z(\A)$ is abelian, its irreducible representations are characters, thus
by Peter-Weyl theorem we have 
$$L^2(G(\Q)Z(\R) \backslash G(\A))=\bigoplus_{\omega}L^2(G(\Q)Z(\R) \backslash G(\A),\omega),$$
where the orthogonal direct sum ranges characters of $Z(\A)$ that are trivial on $Z(\Q)Z(\R)$, and
$L^2(G(\Q)Z(\R) \backslash G(\A),\omega)$ is the subspace of $L^2(G(\Q)Z(\R) \backslash G(\A))$
of functions $\phi$ satisfying $$\phi(gz)=\omega(z)\phi(g)$$ for all $z \in Z(\A)$.
Fix such a character $\omega$. If $f:G(\A) \to \C$ is a measurable function that satisfies
\begin{itemize}
	\item  $f(gz)=\overline{\omega}(z)f(g)$ for all $z \in Z(\A)$,
	\item $f$ is compactly supported modulo $Z(\A)$,
\end{itemize}
then we define an operator $R(f)$ on $L^2(G(\Q)Z(\R) \backslash G(\A),\omega)$ by 
$$R(f) \phi(x)=\int_{\GmodZ(\A)}f(y)\phi(xy)dy,$$
where $\overline{G}$ denotes $G / Z$. 
By $G(\Q)$-invariance of $\phi$, we have
\begin{align*}
	R(f) \phi(x)=\int_{\GmodZ(\A)}f(x^{-1}y)\phi(y)dy
	=\sum_{\gamma \in \GmodZ(\Q)}\int_{\GmodZ(\Q) \backslash \GmodZ(\A)}f(x^{-1}\gamma y)\phi(y)dy
\end{align*}
Hence, setting
\begin{equation}\label{TheKernel}
	K_f(x,y)=\sum_{\gamma \in \GmodZ(\Q)}f(x^{-1}\gamma y),
\end{equation}
we have 
\begin{equation}\label{basicequation}
	R(f) \phi(x)=\int_{\GmodZ(\Q) \backslash \GmodZ(\A)}K_f(x,y)\phi(y)dy.
\end{equation}

Now let us argue informally to motivate the more technical actual reasoning.
Let us pretend that $K_f(x,.)$ is an element of $L^2(G(\Q)Z(\R) \backslash G(\A),\omega)$, and that
$L^2(G(\Q)Z(\R) \backslash G(\A),\omega)$ has a Hilbert orthonormal base $\B$.
Then we would have 
$$K(x,.)=\sum_{\phi \in \B} \langle K(x,.) | \overline{\phi} \rangle \overline{\phi}.$$
But equation~(\ref{basicequation}) says that $\langle K(x,.) | \overline{\phi} \rangle=R(f) \phi(x)$.
Thus we might expect a spectral expansion of the kernel of the form
\begin{equation}\label{informal}
	K(x,y)=\sum_{\phi \in \B}R(f) \phi(x) \overline{\phi(y)}.
\end{equation}
If moreover each element $\phi$ of our base $\B$ is an eigenfunction of the operator $R(f)$, say
\begin{equation}\label{evalue}
	R(f)\phi=\lambda_f(\phi)
\end{equation}
then the above expansion becomes 
$$K(x,y)=\sum_{\phi \in \B}\lambda_f(\phi) \phi(x) \overline{\phi(y)}.$$
Finally, integrating $K(x,y)$ on $U \times U$ against a character $\overline{\psi_1(x)}\psi_2(y)$ would then yield 
a spectral equality involving the Whittaker coefficients and the eigenvalues $\lambda_f(\phi)$, of the form
\begin{equation}\label{informalspectral}
	\int_{(U(\Q) \back U(\A))^{2}}K(x,y)\overline{\psi_1(x)}\psi_2(y)dxdy
	=\sum_{\phi \in \B}\lambda_f(\phi) \W_{\psi_1}(\phi) \overline{\W_{\psi_2}(\phi)}.
\end{equation}
Note that in the last step we need~(\ref{informal}) to hold not only in the $L^2$ sense, but pointwise, as 
$(U(\Q) \back U(\A))^{2}$ has measure zero.

Of course, $L^2(G(\Q)Z(\R) \backslash G(\A),\omega)$ \textit{does not} have a Hilbert orthonormal base, due to the presence
of continuous spectrum. However, after adding the proper continuous contribution,
a spectral expansion of the form~(\ref{informal}) has been proved by Arthur~\cite{ArthurSpectralExpansion}*{pages 928-934},
building on the spectral decomposition of $L^2(G(\Q)Z(\R) \backslash G(\A),\omega)$ by Langlands.
We may then reduce from global to local as follows.
By general theory, we may choose automorphic forms $\phi$ appearing in the spectral expansion of the kernel
to be factorizable vectors $\phi_\infty \otimes \bigotimes_p \phi_p$.
If moreover we take $f$ factorizable, say $f =f_\infty \prod_p f_p$, then the computation of $R(f) \phi$ reduces
to the computation of the action of each local component $f_v$ on $\phi_v$.
By choosing the local components $f_v$ appropriately, we can ensure that each $\phi_v$ is an eigenvector of the 
operator corresponding to $f_v$, so that~(\ref{evalue}) holds.  
The determination of $\lambda_f(\phi)$ then amounts, at the infinite place, to the study 
of the spherical transform of $f_\infty$, and at finite places $p$, of the action of the local Hecke algebra.
Specifically, from now on we assume $f$ is as follows.
\begin{assumption}\label{testfunction}
	From now on we assume $f = f_\infty \prod_p f_p$ where
	\begin{itemize}
		\item $f_\infty$ is any smooth, left and right $K_\infty$-invariant and $Z(\R)$-invariant function on $G(\R)$,
		whose support is compact modulo the centre and contained in $G^+(\R)=\{g \in G(\R) : \mu(g)>0 \}.$
		\item for all prime $p$, $f_p$ is a left and right $\Gamma_p$-invariant function on $G(\Q_p)$, satisfying
		$f_p(gz)=\overline{\omega_p}(z)f(g)$ for all $z \in Z(\Q_p)$, and compactly supported modulo the centre,
		\item whenever $\Gamma_p \neq G(\Z_p)$, we have 
		$$f_p(g)=\begin{cases}
			\frac{\overline{\omega_p}(z)}{Vol(\overline{\Gamma_p})} \text{ if there exists } z \in Z(\Q_p) \text{ such that } g \in z \Gamma_p\\
			0 \text{ otherwise.}
		\end{cases}$$
	\end{itemize}
\end{assumption}
Note that this assumption can be fulfilled if and only if we have the following compatibility condition
\begin{assumption}
	For all prime $p$, the resriction of $\omega_p$ to $\Gamma_p \cap Z(\Q_p)$ is trivial.
\end{assumption}

Let us remind the following result~\cite{KL}*{Lemma~3.10}
\begin{proposition}\label{Kfixed}
	Let $G$ be a locally compact group, let $K \subset G$ be a closed subgroup, and let $\pi$ be a unitary representation of $G$
	on a Hilbert space $V$ with central character $\omega$. Let $f$ be any left and right $K$-invariant function satisfying
	\begin{itemize}
		\item $f(gz)=\overline{\omega}(z)f(g)$ for all $z$ in the centre $Z$ of $G$,
		\item $f$ is integrable on $G/Z$.
	\end{itemize} 
Then the operator $\overline{\pi}(f)$ on $V$ defined by
$$\overline{\pi}(f)v=\int_{G/Z}f(g)\pi(g)vdg$$
has its image in the $K$-fixed subspace $V^K$ and annihilates the orthogonal complement of this subspace.
\end{proposition}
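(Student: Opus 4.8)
The plan is to establish the two assertions in turn, deducing the annihilation statement from the image statement by an adjointness argument. First I would record that $\overline{\pi}(f)$ is a well-defined bounded operator: the compatibility $f(gz)=\overline{\omega}(z)f(g)$ together with $\pi(gz)v=\omega(z)\pi(g)v$ makes the integrand $g\mapsto f(g)\pi(g)v$ descend to a function on $G/Z$, and since $\|\pi(g)v\|=\|v\|$ while $f$ is integrable on $G/Z$, this function is Bochner integrable with $\|\overline{\pi}(f)v\|\le\|f\|_{L^{1}(G/Z)}\,\|v\|$.

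To see that the image lies in $V^{K}$, I would fix $k\in K$, move the bounded operator $\pi(k)$ inside the Bochner integral, and then substitute $g\mapsto k^{-1}g$. Since $Z$ is central, left translation by $k$ preserves the chosen invariant measure on $G/Z$, and the left $K$-invariance $f(k^{-1}g)=f(g)$ then gives $\pi(k)\,\overline{\pi}(f)v=\overline{\pi}(f)v$. Hence $\overline{\pi}(f)v\in V^{K}$ for every $v$.

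For the annihilation statement I would pass to the adjoint. Pulling the continuous functional $\langle\,\cdot\,,w\rangle$ through the integral and using $\pi(g)^{*}=\pi(g^{-1})$ shows that $\overline{\pi}(f)^{*}$ is again an operator of the same type, attached to the function $g\mapsto\overline{f(g^{-1})}$ (here the substitution $g\mapsto g^{-1}$ is harmless because $G/Z$ is unimodular in the settings where the proposition is applied). One checks that this new function still satisfies the hypotheses of the proposition: its left (resp.\ right) $K$-invariance comes from the right (resp.\ left) $K$-invariance of $f$, and the central-character identity survives because $\omega$ is unitary. Applying the first part of the argument to this function shows that the range of $\overline{\pi}(f)^{*}$ is also contained in $V^{K}$. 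Since for any bounded operator $T$ on a Hilbert space the kernel of $T$ equals the orthogonal complement of the range of $T^{*}$, this yields $\ker\overline{\pi}(f)\supseteq(V^{K})^{\perp}$, which is exactly the assertion that $\overline{\pi}(f)$ annihilates the orthogonal complement of $V^{K}$.

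The only genuinely delicate points are measure-theoretic: the existence of the invariant measure on $G/Z$, its invariance under left translations and under inversion, and the legitimacy of interchanging bounded operators and bounded functionals with the Bochner integral — all routine for the (unimodular) groups appearing in this paper, but worth stating. An alternative to the adjoint step, which avoids inversion on $G/Z$, is to observe that $(V^{K})^{\perp}$ is the closed linear span of the vectors $\pi(k)v-v$ with $k\in K$ and $v\in V$, and that right $K$-invariance of $f$ (via the substitution $g\mapsto gk$) gives $\overline{\pi}(f)\pi(k)=\overline{\pi}(f)$, so that each such vector lies in $\ker\overline{\pi}(f)$; boundedness of $\overline{\pi}(f)$ then extends this to the whole closed span.
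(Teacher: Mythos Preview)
The paper does not actually prove this proposition; it merely quotes it from Knightly--Li \cite{KL}*{Lemma~3.10}. Your argument is correct and is the standard one: left $K$-invariance of $f$ together with the substitution $g\mapsto k^{-1}g$ gives the image statement, and the annihilation statement follows either by computing the adjoint or by using right $K$-invariance to kill the generators $\pi(k)v-v$ of $(V^{K})^{\perp}$.

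One small remark on your alternative route: you present it as avoiding the unimodularity hypothesis needed for the inversion $g\mapsto g^{-1}$, but the substitution $g\mapsto gk$ still requires that right translation by $k\in K$ preserve the (left) Haar measure on $G/Z$, i.e.\ that the modular function of $G/Z$ be trivial on $K$. This is automatic whenever $K$ is compact (a continuous homomorphism from a compact group to $\R_{>0}$ is trivial), and in every instance where the paper invokes the proposition $K$ is indeed compact, so nothing is lost; but strictly speaking the proposition as stated only assumes $K$ closed, so it is worth making this hypothesis explicit.
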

Because of Assumption~\ref{testfunction}, this result implies only $\Gamma$-fixed automorphic forms having central character $\omega$ will appear
in the spectral decomposition of $K_f$. 
These automorphic forms come from admissible irreducible representations with central character $\omega$ and having a 
$\Gamma$-fixed vector.
In turn, these representations factor as restricted tensor products of local representations having similar local properties.
Furthermore, only those automorphic forms $\phi$ that are generic will survive the integration against a generic character on $U$, hence we may restrict
attention to local representations that are generic.

	\subsection{Non-Archimedean Hecke algebras}
Let $p$ be a prime number, and $f_p$ be the local component of the function $f$ in Assumption~\ref{testfunction}.
Let $(\pi,V)$ be a unitary representation of $G(\Q_p)$ with central character $\omega_p$.
Throughout this section the Haar measure on $G(\Q_p)$ is normalised so that $K_p=G(\Z_p)$ has volume one.
By Proposition~\ref{Kfixed} we have an operator
\begin{equation}\label{localR}
	\overline{\pi}(f_p)v=\int_{\GmodZ(\Q_p)} f(g)\pi(g)vdg.
\end{equation}
acting on the $\Gamma_p$-fixed subspace $V^{\Gamma_p}$ and annihilating the orthogonal complement of this subspace.

First, let us consider the case $\Gamma_p \neq G(\Z_p)$.
Then any $\Gamma_p$-fixed vector $v \in V$ is also fixed by 
$\overline{\pi}(f_p)$, since in this case $$\overline{\pi}(f_p)v=\frac1{Vol(\overline{\Gamma_p})}\int_{\overline{\Gamma}_p} \pi(g)vdg=v.$$

We now turn to the situation $\Gamma_p=K_p=G(\Z_p)$ (in particular, the character $\omega_p$ must be unramified).
We have have the following~\cite{RS2}*{Theorem~7.5.4}. 
\begin{proposition}
	Let $(\pi,V)$ be generic, irreducible, admissible, representation of $G(\Q_p)$. 
	Assume $\pi$ has a non-zero $K_p$-fixed vector.
	Then $V^{K_p}$ has dimension $1$.
\end{proposition}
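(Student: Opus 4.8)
The plan is to deduce the statement from the commutativity of the spherical Hecke algebra, via the classical argument of Satake and Gelfand; in fact the genericity hypothesis plays no role, and the same conclusion holds for any irreducible admissible representation of $G(\Q_p)$ with a nonzero $K_p$-fixed vector. First I would note that $V^{K_p}\neq 0$ forces $\omega_p$ to be unramified: the subgroup $Z(\Z_p)=Z(\Q_p)\cap K_p$ fixes every vector of $V^{K_p}$ while acting through $\omega_p$. It then makes sense to introduce the convolution algebra $\Hecke$ of locally constant functions on $G(\Q_p)$ that are compactly supported modulo $Z(\Q_p)$ and transform under $Z(\Q_p)$ by $\overline{\omega_p}$, together with its subalgebra $\Hecke_p=e_{K_p}*\Hecke*e_{K_p}$ of left- and right-$K_p$-invariant such functions, where $e_{K_p}$ is the normalised characteristic function of $K_p$. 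By Proposition~\ref{Kfixed} the operators $\overline{\pi}(\phi)$ for $\phi\in\Hecke_p$ preserve $V^{K_p}$, so $V^{K_p}$ is an $\Hecke_p$-module, and it is finite-dimensional since $\pi$ is admissible.

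Second, I would show that $V^{K_p}$ is an \emph{irreducible} $\Hecke_p$-module by the usual idempotent argument. If $0\neq W\subseteq V^{K_p}$ is an $\Hecke_p$-submodule, then the $G(\Q_p)$-subrepresentation generated by $W$, namely $\overline{\pi}(\Hecke)W$, is nonzero and hence equals $V$ by irreducibility of $\pi$; applying $\overline{\pi}(e_{K_p})$ and using $e_{K_p}*\Hecke*e_{K_p}=\Hecke_p$ together with $\overline{\pi}(e_{K_p})w=w$ for $w\in W$, one gets $V^{K_p}=\overline{\pi}(e_{K_p})V=\overline{\pi}(\Hecke_p)W\subseteq W$, whence $W=V^{K_p}$.

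Third — the substantive step — I would prove that $\Hecke_p$ is commutative, i.e.\ that $(G(\Q_p),K_p)$ is a Gelfand pair, using Gelfand's trick with the anti-automorphism $g\mapsto\trans{g}$. From $\trans{M}JM=\mu J$ one checks that $MJ\trans{M}=\mu J$ (using $J^2=-I$), so transposition is an anti-automorphism of $\GSp_4$ preserving the multiplier; it clearly preserves $K_p=G(\Z_p)$; and in the Cartan decomposition $G(\Q_p)=\coprod_t K_p\,t\,K_p$ with diagonal representatives $t\in T(\Q_p)$, writing $g=k_1tk_2$ gives $\trans{g}=\trans{k_2}\,t\,\trans{k_1}\in K_p t K_p=K_p g K_p$ since $\trans{t}=t$. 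Hence $\phi\mapsto(g\mapsto\phi(\trans{g}))$ is an anti-automorphism of $\Hecke_p$ fixing every element, which forces $\Hecke_p$ to be commutative. (Equivalently one may cite the Satake isomorphism identifying $\Hecke_p$ with a ring of invariants $\C[X_*(T)]^{\Omega}$, twisted according to $\omega_p$, which is manifestly commutative; this is essentially the content of the reference~\cite{RS2}.)

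Finally I would conclude by linear algebra: a finite-dimensional irreducible module over a commutative $\C$-algebra is one-dimensional, because the pairwise commuting operators $\overline{\pi}(\phi)|_{V^{K_p}}$, $\phi\in\Hecke_p$, admit a common eigenvector, which spans an $\Hecke_p$-submodule and therefore equals $V^{K_p}$ by the previous step. Since $V^{K_p}\neq 0$ by hypothesis, $\dim V^{K_p}=1$. The main obstacle is the commutativity established in the third step (which ultimately rests on the Cartan decomposition for $\GSp_4(\Q_p)$); everything else is formal. In particular the argument makes clear that genericity is not needed for this statement — it is natural to keep it here only for consistency with the newform theory of~\cite{RS2}.
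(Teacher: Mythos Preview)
Your argument is correct. In fact it proves more than stated: as you observe, genericity is irrelevant, and the conclusion holds for every irreducible admissible representation of $\GSp_4(\Q_p)$ with a nonzero $K_p$-fixed vector.

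The paper, however, does not argue this way at all: it simply invokes \cite{RS2}*{Theorem~7.5.4}, the newform theory of Roberts--Schmidt, which for generic $\pi$ produces a canonical line of paramodular vectors at each level $\ge$ the conductor, the level-$1$ case being the spherical line. That is a much deeper result, relying on the full classification of generic representations of $\GSp_4(\Q_p)$ and their paramodular fixed vectors; genericity is genuinely used there. Your route --- the Gelfand trick with $g\mapsto\trans g$ together with the Cartan decomposition to establish commutativity of the spherical Hecke algebra, followed by the standard idempotent argument that $V^{K_p}$ is an irreducible module over it --- is the classical Satake argument, entirely elementary and valid for any split reductive $p$-adic group. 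What you lose is only the connection to the newform framework that the paper leans on elsewhere; what you gain is a self-contained proof that makes transparent why the hypothesis ``generic'' in the statement is superfluous for this particular claim.
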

\begin{remark}
	In~\cite{RS2}*{Theorem~7.5.4} it is assumed $\pi$ has trivial central character. 
	However, in our situation, the fact that $\pi$ has a non-zero $K_p$-fixed vector forces the central character 
	to be unramified. We can thus twist our representation by an unramified character to reduce to the hypothesis of~\cite{RS2}. 
\end{remark}
By definition, any non-zero vector $\phi$ in $V^{K_p}$ is then called the {\bf spherical vector}.
Since $\overline{\pi}(f_p)$ acts on $V^{K_p}$ which is one-dimensional, the spherical vector is an eigenvector of 
$\overline{\pi}(f_p)$. Finally, let us relate the operator $\overline{\pi}(f_p)$ to the action of the unramified Hecke algebra. 
The {\bf local Hecke algebra} $\Hecke(K_p)$ is the vector space of left and right-$K_p$ invariant 
compactly supported functions $f:G(\Q_p) \to \C$, endowed with the convolution product
$$(f * h) (g)=\int_{G(\Q_p)}f(gx^{-1})h(x)dx.$$
If $(\pi,V)$ is a smooth representation of $G(\Q_p)$, then the Hecke algebra $\Hecke(K_p)$ acts on the $K_p$-invariant subspace $V^{K_p}$ by 
$$\pi(f)v=\int_{G(\Q_p)} f(g)\pi(g)vdg.$$
\begin{lemma}\label{ModingOutCentre}
	Let $f$ be a bi-$K_p$ invariant function on $G(\Q_p)$, with a (unramified) central character, and compactly supported modulo the centre.
	There exists a compactly supported bi-$K_p$-invariant function $\tilde{f}$ on $G(\Q_p)$ 
	and a complete set of representatives $\GmodZ$ of $G(\Q_p)/\Q_p^\*$ satisfying
	$\tilde{f}(gz)=f(g)\1_{\Z_p^{\*}}(z)$ for all $g \in \GmodZ$ and $z \in \Q_p^\*$.
\end{lemma}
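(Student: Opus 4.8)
The plan is to produce an honest system of coset representatives for $G(\Q_p)/\Q_p^\times$ that is stable (up to unit scalars) under left and right translation by $K_p$, and then transport $f$ onto it while killing everything else. The idea is that, because $\omega_p$ is unramified, $f$ is already invariant under $\Z_p^\times$ acting by scalar matrices (this subgroup being $Z(\Q_p)\cap K_p$), so $f$ descends to $G(\Q_p)/\Z_p^\times$ and the only obstruction to compact support is the residual copy of $\Z\cong\Q_p^\times/\Z_p^\times$; one wants to truncate this $\Z$ in a manner compatible with $K_p$ on both sides, which I would do using the multiplier $\mu$. Writing $v_p$ for the $p$-adic valuation and $gz$ for the product of $g\in G(\Q_p)$ with the scalar matrix $z\in\Q_p^\times$, one has $\mu(gz)=\mu(g)z^2$, hence $v_p(\mu(gz))=v_p(\mu(g))+2v_p(z)$; and $\mu(K_p)\subseteq\Z_p^\times$ (as $\mu$ is defined over $\Z$ and $\mu(k)^{-1}=\mu(k^{-1})\in\Z_p$ for $k\in K_p$), so $g\mapsto v_p(\mu(g))$ is invariant under left and right translation by $K_p$.

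First I would set $Y=\{g\in G(\Q_p):v_p(\mu(g))\in\{0,1\}\}$, which is stable under left and right translation by $K_p$. For any $g$, the integers $v_p(\mu(gz))$ with $z\in\Q_p^\times$ fill out the residue class $v_p(\mu(g))+2\Z$, which meets $\{0,1\}$ in exactly one element; hence $Y$ meets each coset $g\Q_p^\times$ in precisely one coset of $\Z_p^\times$. I would then fix a complete set $\GmodZ\subseteq Y$ of representatives for the $\Z_p^\times$-orbits in $Y$. By the preceding remark $\GmodZ$ is simultaneously a complete set of representatives for $G(\Q_p)/\Q_p^\times$, and $Y=\GmodZ\cdot\Z_p^\times$ with uniqueness of the decomposition.

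Since $\GmodZ$ represents $G(\Q_p)/\Q_p^\times$, every $g\in G(\Q_p)$ is uniquely $g=g_0z$ with $g_0\in\GmodZ$ and $z\in\Q_p^\times$, and I define $\tilde f(g)=f(g_0)\1_{\Z_p^\times}(z)$. Then the required identity $\tilde f(g_0z)=f(g_0)\1_{\Z_p^\times}(z)$ holds by construction, and $\tilde f$ is supported in $Y$ because $\1_{\Z_p^\times}(z)\neq0$ forces $z\in\Z_p^\times$. For bi-$K_p$-invariance, take $k_1,k_2\in K_p$ and write $k_1gk_2=(k_1g_0k_2)z$; since $k_1g_0k_2\in Y$ we may write $k_1g_0k_2=g_0'z_0'$ with $g_0'\in\GmodZ$ and $z_0'\in\Z_p^\times$, so $\tilde f(k_1gk_2)=f(g_0')\1_{\Z_p^\times}(z_0'z)=f(g_0')\1_{\Z_p^\times}(z)$. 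On the other hand $f(g_0)=f(k_1g_0k_2)=f(g_0'z_0')=\overline{\omega_p}(z_0')f(g_0')=f(g_0')$, using that $f$ has central character $\omega_p$ and that $\overline{\omega_p}$ is trivial on $\Z_p^\times$ because $\omega_p$ is unramified; hence $\tilde f(k_1gk_2)=f(g_0)\1_{\Z_p^\times}(z)=\tilde f(g)$.

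Finally, for compact support, choose a compact $C\subseteq G(\Q_p)$ with the support of $f$ contained in $\Q_p^\times C$. If $\tilde f(g)\neq0$ then $g=g_0z_0$ with $z_0\in\Z_p^\times$ and $g_0\in\GmodZ\cap\Q_p^\times C$, say $g_0=\nu c$ with $\nu\in\Q_p^\times$, $c\in C$; then $2v_p(\nu)+v_p(\mu(c))=v_p(\mu(g_0))\in\{0,1\}$ pins $v_p(\nu)$ to a finite set, since $v_p$ is bounded on the compact set $\mu(C)\subseteq\Q_p^\times$. Hence the support of $\tilde f$ lies in a finite union of sets $p^j\Z_p^\times C$, so in a compact set; and $\tilde f$, being bi-$K_p$-invariant, is locally constant, so its support is clopen, hence compact, i.e.\ $\tilde f\in\Hecke(K_p)$. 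The point needing the most care is the bi-$K_p$-invariance of $\tilde f$: the whole reason for defining $Y$ via a congruence condition on $v_p\circ\mu$ — which $K_p$ cannot disturb — is to ensure that translating a representative by $K_p$ introduces only a \emph{unit} scalar $z_0'$, and it is there that one must, and can, invoke the unramifiedness of $\omega_p$ to discard the factor $\overline{\omega_p}(z_0')$; without that hypothesis the construction fails. The remaining verifications are routine bookkeeping.
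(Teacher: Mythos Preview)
Your proof is correct. Both you and the paper ultimately set $\tilde f = f\cdot\1_Y$ for a bi-$K_p$-stable subset $Y\subset G(\Q_p)$ that meets every $\Q_p^\times$-coset in a single $\Z_p^\times$-orbit, and then choose $\GmodZ$ inside $Y$. The difference is in how $Y$ is built: the paper invokes the Cartan decomposition $G(\Q_p)=\coprod_{i\le j,\,t}K_p\,\diag{p^i}{p^j}{p^{t-i}}{p^{t-j}}\,K_p$ and normalises the first Cartan exponent to $i=0$, taking $Y=\coprod_{j\ge 0,\,t}K_p\,\diag{1}{p^j}{p^t}{p^{t-j}}\,K_p$; you instead normalise the valuation of the multiplier, taking $Y=\{g:v_p(\mu(g))\in\{0,1\}\}$. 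Your route is a bit more elementary (no Cartan decomposition needed) and transparently generalises to any similitude group, since it only uses that $\mu$ is a $K_p$-bi-invariant character with $\mu(z)=z^2$ on the centre. The paper's route has the minor advantage that $Y$ is written explicitly as a union of double cosets, which makes the compact-support verification immediate; you had to supply a short (and correct) argument bounding $v_p(\nu)$ on $Y\cap\Q_p^\times C$. It is worth noting, as you implicitly discovered in the invariance check, that your $\tilde f$ is simply $f\cdot\1_Y$ independently of the choice of $\GmodZ\subset Y$ (using that $\omega_p$ is unramified), which would have let you shortcut the bi-$K_p$-invariance argument.
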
 
\begin{proof}
	By the Cartan decomposition we have $G(\Q_p)=\coprod_{\substack{i \le j \in \Z\\ t \in \Z}}K_p \diag{p^i}{p^j}{p^{t-i}}{p^{t-j}}K_p$.
	Thus we have
	$$G(\Q_p) / \Q_p^\*=\left.
	\left(\coprod_{\substack{j \ge 0 \\ t \in \Z}} K_p \diag{1}{p^j}{p^t}{p^{t-j}}K_p\right) \middle/ \Z_p^\*. \right.$$
	Fix  a complete set of representatives $\overline{K_p}$ of $K_p/\Z_p^\*$.
	Then $\GmodZ= \coprod_{\substack{j \ge 0 \\ t \in \Z}} \overline{K_p} \diag{1}{p^j}{p^t}{p^{t-j}} \overline{K_p}$ 
	is a complete set of representatives of $G(\Q_p)/\Q_p^\*$.
	Moreover, defining $$S= \coprod_{\substack{j \ge 0 \\ t \in \Z}} K_p \diag{1}{p^j}{p^t}{p^{j+t}}K_p \cap Supp(f)=(\Z_p^\* \GmodZ )\cap Supp(f),$$
	the function $\tilde{f}=\1_S \times f$ has the desired properties.
\end{proof}
Now the function $\tilde{f_p}$ attached to $f_p$ by Lemma~(\ref{ModingOutCentre}) is an element of the Hecke algebra, 
and we have $\pi(\tilde{f_p})=\overline{\pi}(f_p)$, as
$$\pi(\tilde{f_p})v=\int_{G(\Q_p)} \tilde{f}(g)\pi(g)vdg
=\int_{G(\Q_p)/\Q_p^\*}\int_{\Q_p^\*} f_p(g)\1_{\Z_p^{\*}}(z)\pi(g)vdg
=\overline{\pi}(f_p)v.$$

 We summarize the above discussion in the following proposition.
\begin{proposition}\label{localevalue}
Let $p$ be a prime number, and $f_p$ be the local component of the function $f$ in Assumption~\ref{testfunction}.
Let $(\pi,V)$ be a unitary representation of $G(\Q_p)$ with central character $\omega_p$.
Then the operator $\overline{\pi}(f_p)$ from Proposition~\ref{Kfixed} acts by a scalar $\lambda_\pi(f_p)$ on the $\Gamma_p$ fixed
subspace $V^{\Gamma_p}$ and annihilates the orthogonal complement of this subspace.
Moreover, if $\Gamma_p \neq G(\Z_p)$ then $\lambda_\pi(f_p)=1$, and if $\Gamma_p = G(\Z_p)$ then $\overline{\pi}(f_p)$ 
equals the Hecke operator $\pi(\tilde{f_p})$, where $\tilde{f_p}$ is given by Lemma~\ref{ModingOutCentre}.
\end{proposition}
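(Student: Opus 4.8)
The plan is to assemble the facts established in this section; the statement is essentially a summary. First I would verify that $f_p$ meets the hypotheses of Proposition~\ref{Kfixed} with the subgroup $K$ there taken to be $\Gamma_p$: by Assumption~\ref{testfunction} the function $f_p$ is left and right $\Gamma_p$-invariant and satisfies $f_p(gz)=\overline{\omega_p}(z)f_p(g)$ for all $z\in Z(\Q_p)$, and being compactly supported modulo the centre it is integrable on $\GmodZ(\Q_p)$. Proposition~\ref{Kfixed} then yields at once that $\overline{\pi}(f_p)$ has image in $V^{\Gamma_p}$ and annihilates its orthogonal complement, which is the first assertion.

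Next I would split into the two cases. If $\Gamma_p\neq G(\Z_p)$, I insert the explicit description of $f_p$ from Assumption~\ref{testfunction} into the operator~(\ref{localR}); after collapsing the sum over $z\in Z(\Q_p)$ one is left with
\[
\overline{\pi}(f_p)v=\frac{1}{Vol(\overline{\Gamma_p})}\int_{\overline{\Gamma}_p}\pi(g)v\,dg,
\]
and for $v\in V^{\Gamma_p}$ one has $\pi(g)v=v$ for every $g\in\Gamma_p$, so the right-hand side equals $v$. Hence $\overline{\pi}(f_p)$ is the identity on all of $V^{\Gamma_p}$ and $\lambda_\pi(f_p)=1$.

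In the remaining case $\Gamma_p=G(\Z_p)$ — in which the presence of a $K_p$-fixed vector forces $\omega_p$ to be unramified — I would apply Lemma~\ref{ModingOutCentre} to $f_p$ to produce a function $\tilde{f_p}\in\Hecke(K_p)$ together with a set of representatives $\GmodZ$ of $G(\Q_p)/\Q_p^\*$ for which $\tilde{f_p}(gz)=f_p(g)\1_{\Z_p^{\*}}(z)$. Unfolding the defining integral of $\pi(\tilde{f_p})$ along the fibration $G(\Q_p)\to G(\Q_p)/\Q_p^\*$ and using the central-character relations for $f_p$ and for $\pi$, one checks $\pi(\tilde{f_p})=\overline{\pi}(f_p)$ as operators, which is exactly the displayed computation preceding the statement. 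Finally, that this operator acts by a scalar reduces to knowing $\dim V^{K_p}=1$; this holds for $\pi$ irreducible, admissible and generic with $V^{K_p}\neq 0$ by~\cite{RS2}*{Theorem~7.5.4}, after twisting by an unramified character to make the central character trivial (cf.\ the Remark above). The resulting scalar is $\lambda_\pi(f_p)$, namely the Hecke eigenvalue of $\tilde{f_p}$ on the spherical vector.

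I expect the only delicate point to be the measure-theoretic bookkeeping in this last case: one must match the normalisation of Lemma~\ref{ModingOutCentre} (Haar measure giving $K_p$ volume one) with the quotient measure on $\GmodZ(\Q_p)$ used to define $\overline{\pi}(f_p)$, so that the identity $\pi(\tilde{f_p})=\overline{\pi}(f_p)$ carries no spurious volume factor. Everything else is a direct transcription of the results proved earlier in the section.
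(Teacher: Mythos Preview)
Your proposal is correct and follows essentially the same approach as the paper: the proposition is explicitly introduced as a summary of the preceding discussion, and you have reproduced that discussion faithfully (Proposition~\ref{Kfixed} for the first assertion, the direct computation for $\Gamma_p\neq G(\Z_p)$, and Lemma~\ref{ModingOutCentre} together with the one-dimensionality of $V^{K_p}$ for $\Gamma_p=G(\Z_p)$). Your observation that the scalar action in the unramified case requires the extra hypotheses of irreducibility, admissibility and genericity is well taken---the paper uses exactly this in the surrounding text even though the proposition's hypothesis list omits it.
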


\subsection{The Archimedean representation}

We first show that in our situation the  representation at the Archimedean place must be an irreducible principal series representation, that is full induced from the Borel subgroup.
A representation of $G(\R)$ which has a non-zero $K_\infty$-fixed vector is called {\bf spherical}.
\begin{proposition}\label{MustBePrincipalSeries}
	Any generic irreducible spherical representation $(\pi, V)$ of $G(\R)$ vector is a principal series representation.
\end{proposition}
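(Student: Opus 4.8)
The plan is to classify the generic irreducible spherical representations of $G(\R)=\GSp_4(\R)$ and observe that only principal series can be simultaneously generic and spherical. First I would recall that any irreducible admissible representation $\pi$ of $G(\R)$ embeds, by Casselman's subrepresentation theorem (or equivalently is a Langlands quotient of an induced representation from a parabolic $P=M_PN_P$ with a tempered--twisted-by-positive-character inducing datum $\sigma$ on $M_P$). If $\pi$ is spherical, then $\pi$ has a $K_\infty$-fixed vector; by Frobenius reciprocity / the theory of the spherical function this forces the inducing data to itself be spherical on the Levi, and in fact forces $\sigma$ to be a (possibly non-unitary) \emph{unramified character} of the Levi — so without loss of generality we may take $P=B$ and $\pi$ a subquotient of a full principal series $\Ind_B^G(\chi)$ for some character $\chi$ of $T(\R)$. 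The point is then that a spherical representation is the \emph{unique} spherical subquotient of such a principal series, the one containing the spherical vector.

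The heart of the argument is then the following dichotomy: if $\Ind_B^G(\chi)$ is irreducible, then $\pi=\Ind_B^G(\chi)$ is itself a (full) principal series and we are done. If $\Ind_B^G(\chi)$ is reducible, then I claim its spherical constituent $\pi$ is \emph{not} generic. Indeed, a standard fact (going back to Rodier, and Casselman--Shalika in the relevant form) is that a principal series $\Ind_B^G(\chi)$ has a \emph{unique} generic constituent, namely its "large" constituent (the one for which the Jacquet module / Whittaker functional is realized), and this generic constituent coincides with the spherical one if and only if the full induced representation is irreducible. Concretely: the space of Whittaker functionals on $\Ind_B^G(\chi)$ is one-dimensional (uniqueness of Whittaker models for the principal series, valid over $\R$ by work of Shalika/Kostant), and the corresponding Whittaker functional does not factor through the spherical quotient/sub when the induced representation is reducible, because the spherical constituent is then a proper subquotient on which the Jacquet functor behaves differently from the whole. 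Hence generic $+$ spherical forces irreducibility of $\Ind_B^G(\chi)$, which is exactly the statement that $\pi$ is a (full) principal series, i.e.\ fully induced from the Borel.

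The main obstacle I expect is the bookkeeping in the first paragraph: rigorously reducing from "$\pi$ is a subquotient of something induced from a general parabolic $P$" to "$\pi$ is a subquotient of a full principal series from $B$", and checking that sphericality of $\pi$ really does pin down $\sigma$ to be an unramified character of $M_P$ rather than, say, a spherical tempered representation of a $\GL_2$-type Levi twisted suitably. This is where one must be careful: a priori $\pi$ could be a Langlands quotient induced from the Siegel or Klingen parabolic by a spherical (e.g.\ tempered) representation of the $\GL_2$-factor. The resolution is that such a representation of the Levi, being spherical, is itself a constituent of a principal series of that $\GL_2$, and inducing in stages exhibits $\pi$ as a subquotient of a full principal series of $G(\R)$ from $B$; then the uniqueness-of-spherical-subquotient remark applies. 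Once that reduction is clean, the generic$\Leftrightarrow$irreducible dichotomy for the spherical constituent of a principal series is the decisive input, and the conclusion follows. (In the paper's setting one could alternatively invoke the classification of the unitary/admissible dual of $\GSp_4(\R)$ directly, matching generic spherical representations against the list, but the Jacquet-module argument above is cleaner and is presumably the route indicated by the acknowledgment to Schmidt.)
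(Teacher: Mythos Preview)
Your route is genuinely different from the paper's. The paper does \emph{not} argue via Casselman subrepresentation and Whittaker functionals; instead it invokes Vogan's characterisation of generic representations as those of maximal Gelfand--Kirillov dimension, then cites an explicit classification (VerNooy's thesis) of all irreducible representations of $\GSp_4(\R)$ by GK dimension, and finally reads off from the $K$-type tables in the same thesis that among the ``large'' representations only full principal series contain the trivial $K_\infty$-type. So the paper's proof is a lookup in a complete classification, not the structural argument you sketch. (Amusingly, your closing parenthetical has it backwards: the paper takes the classification route, not the Jacquet-module route.)

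Your approach is plausible and would be more conceptual if it went through, but the key step has a real gap. You assert that when $\Ind_B^G(\chi)$ is reducible its spherical constituent is never generic, and justify this only by saying the Whittaker functional ``does not factor through the spherical quotient/sub \dots\ because the spherical constituent is then a proper subquotient on which the Jacquet functor behaves differently from the whole''. That sentence is not a proof. Uniqueness of the Whittaker functional on the full induced module tells you at most one constituent is generic, but it does not by itself tell you \emph{which} one, nor that it is disjoint from the spherical one. Over $p$-adic fields one has the Casselman--Shalika formula for the spherical Whittaker function, whose vanishing locus one can analyse; over $\R$ the analogous statement is more delicate, and in practice one falls back either on explicit composition-series computations (e.g.\ Mui\'c for $\Sp(4,\R)$) or on the Gelfand--Kirillov/$K$-type information --- which is exactly what the paper uses. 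So to close your argument you would need an independent reason, specific to $\GSp_4(\R)$ or to real groups in general, why the spherical subquotient of a reducible unramified principal series is small; absent that, the reduction you propose does not yet yield the conclusion.
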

The author wishes to thank Ralf Schmidt for communicating the following argument.
\begin{proof}
	As explained at the end of~\cite{Vogan}, the generic representations are exactly the ``large" ones, 
	\textit{i.e.}, those with maximal Gelfand-Kirillov dimension.
	The Gelfand-Kirillov dimension of all irreducible representations of $\GSp_4(\R)$ have been calculated in~\cite{thesis}*{Appendix~A}.
	In particular the maximal Gelfand-Kirillov dimension is 4, and the irreducible large representations are either discrete series 
	or limit of discrete series, induced from the Siegel parabolic subgroup, Langlands quotient of representation induced from the
	Klingen subgroup, or principal series representations.
	Now the multiplicity of each possible $K_{\infty}$-type are described in~\cite{thesis}*{Chapter~4}, 
	and among large representations of $\GSp_4(\R)$ only principal series representations contain the trivial $K_{\infty}$-type. 	
\end{proof}
It is then known that the trivial $K_\infty$-type occurs in $\pi$ with multiplicity one~\cite{MiyOda}, that is to say there is 
a unique $K_\infty$-fixed vector in the space $V$. 
Moreover, $\pi$ has a unique {\bf Whittaker model}, and the image of a non-zero $K_\infty$-fixed vector is by definition given by the 
{\bf Whittaker function}. The Whittaker function is an eigenfunction of the centre of the universal enveloping algebra,
which acts as an algebra of differential operators. One may then obtain a system of partial differential equations characterizing 
the Whittaker function, and compute it explicitly. The Whittaker function may also be computed by the mean of the Jacquet integral.
This has been done by Niwa~\cite{Niwa} and Ishii~\cite{Ishii}.
\subsubsection{The spherical transform} 
In this section we normalize the Haar measure on $\Sp_4(\R)$ so that $K_\infty$ has measure $1$.
If $h$ is any bi-$K_\infty$-invariant compactly supported function on $\Sp_4(\R)$, its {\bf spherical transform} is the function $\tilde{h}$
defined on $\Lie{a}^*(\C)$ by
\begin{equation}\label{SphericalTransform}
	\tilde{h}(\nu)=\int_{\Sp_4(\R)}h(g)\phi_{-\nu}(g)dg,
\end{equation}
where 
\begin{equation}\label{sphericalfunction}
	\phi_{-\nu}(g)=\int_{K_\infty}e^{(\rho-\nu)(A(kg))}dk
\end{equation}
is the {\bf spherical function} with parameter $-\nu$ (here $\rho$ is the half-sum of positive roots).

\begin{proposition}\label{archimedeanevalue}
	Let $f_\infty$ be the Archimedean component of the function $f$ in Assumption~\ref{testfunction}.
	Let $(\pi,V)$ be a generic irreducible unitary representation representation of $G(\R)$ with trivial central character.
	Then the operator $\overline{\pi}(f_\infty)$ from Proposition~\ref{Kfixed} acts by a scalar $\lambda_\pi(f_\infty)$ on the $K_{\infty}$ fixed
	subspace $V^{K_{\infty}}$ and annihilates the orthogonal complement of this subspace.
	Moreover, provided this subspace $V^{K_\infty}$ is non zero, then $\pi$ is a principal series representation,
	and $\lambda_\pi(f_\infty)=\tilde{f_\infty}(-\nu)$, where $\tilde{f_\infty}$ is the spherical transform of $f_\infty$ and $\nu$ is
	the spectral parameter of $\pi$.
\end{proposition}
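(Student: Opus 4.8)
The first assertion follows directly from Proposition~\ref{Kfixed}, applied with the locally compact group $G(\R)$, the closed subgroup $K_\infty$, the centre $Z(\R)$ and the trivial central character; this is legitimate because $\pi$ has trivial central character and, by Assumption~\ref{testfunction}, $f_\infty$ is $Z(\R)$-invariant and, being smooth and compactly supported modulo the centre, integrable on $\GmodZ(\R)$. Hence $\overline{\pi}(f_\infty)$ maps $V$ into $V^{K_\infty}$ and annihilates $(V^{K_\infty})^{\perp}$. If $V^{K_\infty}=0$ there is nothing further to prove. If $V^{K_\infty}\neq 0$, then $\pi$ is an irreducible principal series representation by Proposition~\ref{MustBePrincipalSeries}, and by~\cite{MiyOda} the trivial $K_\infty$-type occurs in $\pi$ with multiplicity one, so $\dim V^{K_\infty}=1$ and $\overline{\pi}(f_\infty)$ acts on $V^{K_\infty}$ by a scalar $\lambda_\pi(f_\infty)$. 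Fixing a non-zero $\phi\in V^{K_\infty}$ normalised by $\langle\phi,\phi\rangle=1$, it remains to prove that $\lambda_\pi(f_\infty)=\langle\overline{\pi}(f_\infty)\phi,\phi\rangle$ equals $\tilde{f_\infty}(-\nu)$.

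The plan is to transfer this computation to $\Sp_4(\R)$ and recognise the scalar as the spherical transform of $f_\infty$. Because $\mathrm{supp}(f_\infty)\subset G^+(\R)$, every $g\in G^+(\R)$ is uniquely $zg_1$ with $z=\sqrt{\mu(g)}\,I_4\in Z(\R)^{\circ}$ and $g_1\in\Sp_4(\R)$, and $f_\infty$ is $Z(\R)$-invariant, the restriction $h:=f_\infty|_{\Sp_4(\R)}$ is a smooth, compactly supported, bi-$K_\infty$-invariant function on $\Sp_4(\R)$ which completely determines $f_\infty$. Since $\pi$ has trivial central character, $\pi(-I_4)=\mathrm{id}$, so only the identity component of $\GmodZ(\R)$ — the image of $G^+(\R)$, canonically isomorphic to $\Sp_4(\R)/\{\pm I_4\}$ — contributes to the integral defining $\overline{\pi}(f_\infty)\phi$. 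With the Haar measures normalised compatibly, so that $K_\infty$ has volume $1$ in $\Sp_4(\R)$, one obtains $\overline{\pi}(f_\infty)\phi=\int_{\Sp_4(\R)}h(g)\,\pi(g)\phi\,dg$, and hence
\[
\lambda_\pi(f_\infty)=\int_{\Sp_4(\R)}h(g)\,\Phi(g)\,dg,\qquad \Phi(g):=\langle\pi(g)\phi,\phi\rangle.
\]

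It then remains to identify $\Phi$. Restricted to $\Sp_4(\R)$ it is bi-$K_\infty$-invariant and satisfies $\Phi(1)=1$; moreover, since $V^{K_\infty}$ is one-dimensional one has $\int_{K_\infty}\pi(k)\pi(g)\phi\,dk=\Phi(g)\phi$, and this yields the functional equation $\int_{K_\infty}\Phi(g_1kg_2)\,dk=\Phi(g_1)\Phi(g_2)$ characterising the zonal spherical functions of $\Sp_4(\R)$. As $\phi$ is $K_\infty$-finite in an admissible representation it is a smooth vector, so $\Phi$ is also an eigenfunction of the centre of the universal enveloping algebra of the Lie algebra of $\Sp_4(\R)$, with eigencharacter the restriction of the infinitesimal character of $\pi$. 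Writing $\pi=\Ind_{B(\R)}^{G(\R)}\chi_\nu$ (normalised induction), where $\chi_\nu$ is the character of $T(\R)$ of spectral parameter $\nu\in\Lie{a}^*(\C)$ — its $\Lie{z}^*$-component forced to vanish because the central character is trivial — this eigencharacter corresponds to $\nu$, so $\Phi$ is the zonal spherical function with parameter $\nu$, i.e.\ $\Phi=\phi_\nu$ in the notation of~(\ref{sphericalfunction}). Substituting this into the displayed formula and invoking the definition~(\ref{SphericalTransform}) of the spherical transform gives $\lambda_\pi(f_\infty)=\int_{\Sp_4(\R)}h(g)\,\phi_\nu(g)\,dg=\tilde{f_\infty}(-\nu)$ — equivalently $\tilde{f_\infty}(\nu)$, since $-1$ lies in the Weyl group of $\Sp_4$ (the element $(s_1s_2)^2$ acts on $A$ by inversion), so $\tilde{f_\infty}$ is invariant under $\nu\mapsto-\nu$. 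This last step is the elementary direction of Harish-Chandra's theory of spherical functions and may be carried out exactly as the corresponding $\GL_2$ computation in~\cite{KL}.

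The genuinely substantive inputs are all imported: Proposition~\ref{MustBePrincipalSeries} (the Archimedean component is a principal series), the multiplicity-one statement of~\cite{MiyOda}, and the classical identity expressing the eigenvalue of a bi-$K_\infty$-invariant convolution operator on the spherical vector through the zonal spherical function. The part needing genuine care, and the main obstacle, is the bookkeeping in the reduction to $\Sp_4(\R)$: tracking the normalisation of Haar measures along $\GmodZ(\R)\supset G^+(\R)/Z(\R)\cong\Sp_4(\R)/\{\pm I_4\}$ against the convention that $K_\infty$ has volume $1$, so that no stray constant is introduced, and checking that the parameter carried by $\Phi$ is the \emph{same} $\nu\in\Lie{a}^*(\C)$ that labels $\pi$ on the spectral side — which is transparent from the parametrisation~(\ref{DefOfChar}) of characters of the Levi components. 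The sign of $\nu$ in the final formula is immaterial by the Weyl-invariance noted above, so only these normalisations need to be pinned down consistently.
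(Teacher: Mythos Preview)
Your argument is correct and arrives at the same conclusion, but the final computation takes a different path from the paper's. The paper realises $\pi$ concretely in its induced model (functions $\phi$ satisfying the transformation law~(\ref{stdspace})), evaluates $\overline{\pi}(f_\infty)\phi$ at the identity via the Iwasawa integration formula to obtain $\lambda_\pi(f_\infty)=\int_{\Sp_4(\R)}f_\infty(g)\,e^{(\rho+\nu)(A(g))}\,dg$, and then inserts an average over $K_\infty$ (using left-$K_\infty$-invariance of $f_\infty$) to recognise the spherical function $\phi_\nu$ inside the integrand. You instead write $\lambda_\pi(f_\infty)=\int_{\Sp_4(\R)}f_\infty(g)\,\Phi(g)\,dg$ with $\Phi(g)=\langle\pi(g)\phi,\phi\rangle$ and identify $\Phi$ abstractly as the zonal spherical function with parameter $\nu$, via its functional equation and infinitesimal character. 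Both routes are standard and short: the paper's is entirely self-contained and needs no outside facts about spherical functions beyond the definition~(\ref{sphericalfunction}), while yours is more conceptual and avoids any computation in the induced model at the cost of invoking Harish-Chandra's characterisation of spherical functions. Your closing remark that $-1\in\Omega$ (so $\tilde{f_\infty}(\nu)=\tilde{f_\infty}(-\nu)$) is correct but unnecessary once $\Phi=\phi_\nu$ is established, since then $\int f_\infty\,\phi_\nu=\tilde{f_\infty}(-\nu)$ directly by~(\ref{SphericalTransform}).
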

\begin{proof}
	If $V^{K_\infty}$ is zero then by Proposition~\ref{Kfixed} the statement is vacuous.
	Assume now $\pi$ has a non-zero fixed vector.
	By Proposition~\ref{MustBePrincipalSeries}, $\pi$ is then a principal series.
	Then $V^{K_\infty}$ is one-dimensional, so if $v$ is any $K_\infty$-fixed vector in $V$ then
	we have 
	\begin{equation}\label{sptr}
		\pi(f_\infty)v=\lambda_\pi(f_\infty)v
	\end{equation} 
	for some complex number $\lambda_\pi(f)$.
	Since $\pi$ is induced by a character of the Borel subgroup, 
	to compute the eigenvalue $\lambda_\pi(f)$, we may realize $\pi$ as acting by right translation on
	a space of functions $\phi$ satisfying for all $g \in G(\R)$, $n \in U(\R)$ and $a \in T^{+}(\R)$
\begin{equation}\label{stdspace}
	\phi(nag)=e^{(\rho+\nu)(\log(a))}\phi(g),
\end{equation}
where $\nu \in \Lie{a}^*(\C)$ is the {\bf spectral parameter} of $\pi$.
We may view a $Z(\R)$-invariant function supported on $G(\R)^+$ as a function on $\Sp_4(\R)$, so 
the operator $\overline{\pi}(f)$ of Proposition~\ref{Kfixed} is given by
\begin{equation}\label{infinitR}
	\overline{\pi}(f_\infty)v=\int_{\GmodZ(\R)} f_\infty(g)\pi(g)vdg=\int_{\Sp_4(\R)} f_\infty(g)\pi(g)vdg.
\end{equation}
If $\phi$ is a non-zero $K_\infty$-fixed function satisfying~(\ref{stdspace}) then because of the Iwasawa decomposition we must have 
$\phi(1) \neq 0$. 
Using the integration formula~\cite{Helgason}*{Ch.~I~Corollary~5.3} and right-$K_\infty$ invariance we may compute
\begin{align*}
	\pi(f_\infty) \phi(1) &=\int_{\Sp_4(\R)} f_\infty(g)\pi(g) \phi(1) dg \\
	&=\int_{K_{\infty}}\int_{U A^+}f_\infty(an)\phi(an)dadndk
	=\int_{U A^+}f_\infty(an)e^{(\rho_{\text{B}}+\nu)(\log(a))}dadn\phi(1),
\end{align*}
where $A^+$ is the subgroup of $A(\R)$ with positive diagonal entries.
Therefore, using the Iwasawa decomposition and left-$K_\infty$ invariance of $f_\infty$, 
the eigenvalue $\lambda_\pi(f)$ is given by
\begin{align*}
	\lambda_\pi(f)&=\int_{\Sp_4(\R)} f_\infty(g)e^{(\rho+\nu)(A(g))}dg\\
	&=\int_{K_\infty}\int_{\Sp_4(\R)}	f_\infty(g)e^{(\rho+\nu)(A(kg))}dgdk
	=\int_{\Sp_4(\R)}f_\infty(g)\phi_{\nu}(g)dg=\tilde{f_\infty}(-\nu).
\end{align*}
\end{proof}

The spherical transform $\tilde{f_\infty}$ will thus  play the role of the test function on the spectral side of our formula.
On the other hand, the geometric side will involve some different integral transform of our test function $f_\infty$.
It is therefore natural to investigate the analytic properties of $\tilde{f_\infty}$, and to seek to recover $f_\infty$ from $\tilde{f_\infty}$.
This can be achieved by the Paley-Wiener theorem and Harish-Chandra inversion theorem.
\subsubsection{The Paley-Wiener theorem and Harish-Chandra inversion theorem.}
The material in this section is taken from~\cite{Helgason}.
Let us introduce a bit of notation.
We denote by $\langle, \rangle$ the Killing form on the Lie algebra of $\Sp_4(\R)$,
and we define for each $\nu \in \Lie{a}^*$ a vector $A_\nu \in \Lie{a}$ by 
$\nu(H)=\langle A_\nu, H\rangle$ for all $H \in \Lie{a}$.
We then define $\scal{\lambda}{\nu}=\scal{A_\lambda}{A_\nu}$.
We define $\Lie{a}_+$ as the subset of elements $H \in \Lie{a}$ satisfying $\alpha(H) > 0$ for all $\alpha \in \Phi_{\text{B}}$,
and $\Lie{a}^*_+=\{\nu \in \Lie{a} : A_\nu \in \Lie{a}_+ \}$.
Explicitly the Killing form is given by $\langle X, Y \rangle =6Tr(XY)$ and 
$\Lie{a}_+=\left\{\diag{x}{y}{-x}{-y}: 0<x<y \right\}$.

Harish-Chandra's {\bf $c$-function} captures the asymptotic behaviour of the spherical function and it gives the Plancherel measure.
More precisely, by Theorem~6.14 of~\cite{Helgason}*{Chap. IV}, if $H \in \Lie{a}^+$ and $\nu \in \Lie{a}^*_+$
then we have 
$$\lim_{t \to +\infty}e^{(-\nu+\rho)(tH)}\phi_{-i\nu}(\exp(tH))=c(-i\nu).$$
Moreover, $c(\nu)$ is given, for $\nu \in \Lie{a}^*_+$, by the absolutely convergent integral
\begin{equation}\label{cfunction}
	c(\nu)
	=\int_{U(\R)}e^{(\nu+\rho)(A(Ju))}du,
\end{equation}
where the measure $du$ is normalized so that $c(\rho)=1$,
and has meromorphic continuation to $\Lie{a}^*(\C)$ given in our situation by the expression
$$c(-i\nu)=
c_0\prod_{\alpha \in \Phi}
\frac{2^{-\scal{i\nu}{\alpha_0}}\Gamma(\scal{i\nu}{\alpha_0})}
{	\Gamma\left(\frac{\frac32+\scal{i\nu}{\alpha_0}}2\right)
	\Gamma\left(\frac{\frac12+\scal{i\nu}{\alpha_0}}2\right)},$$
where $\Phi$ is the set of roots,
$\alpha_0=\frac{\alpha}{\scal{\alpha}{\alpha}}$ and the constant $c_0$
is such that $c(\rho)=1$.
Using the duplication formula $\Gamma(z)\Gamma(z+\frac12)=\pi^{\frac12}2^{1-2z}\Gamma(2z)$, we can rewrite this as
$$c(-i\nu)=
\frac{c_0}{4\pi^2}\prod_{\alpha \in \Phi}
\frac{\Gamma(\scal{i\nu}{\alpha_0})}{\Gamma(\frac12+\scal{i\nu}{\alpha_0})},$$

We then have the following theorems
\begin{theorem}[Paley-Wiener theorem]\label{PW}
	Let $\PW^R(\Lie{a}^*_\C)$ the set of $\Omega$-invariant entire functions $h$ on $\Lie{\a}^*_\C$ such that for all
	$N \ge 0$ we have 
	$$h(\nu) \ll_N (1+|\nu|)^{-N}e^{R|\Re(\nu)|}.$$
	Let $$ \PW(\Lie{a}^*_\C)=\bigcup_{R>0}\PW^R(\Lie{a}^*_\C).$$
	Then the spherical transform $f \mapsto \tilde{f}$ is a bijection
	from $C^\infty_c(K_\infty\back \Sp_4(\R) / K_\infty)$ to  $\PW(\Lie{a}^*_\C)$.
\end{theorem}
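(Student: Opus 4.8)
The plan is to invoke the general Paley-Wiener theorem for the spherical transform on a connected semisimple Lie group with finite center, which is precisely Theorem~7.1 in Chapter~IV of~\cite{Helgason}, and to check that the group $\Sp_4(\mathbb R)$ and the normalizations fixed above fit its hypotheses. The group $\Sp_4(\mathbb R)$ is connected and semisimple with finite center $\{\pm 1\}$, the subgroup $K_\infty \cong U(2)$ is a maximal compact subgroup, and $\Lie a$ together with the chosen positive system $\Phi_{\mathrm B}$ gives the Iwasawa data $U A K_\infty$ already recorded in the excerpt. With the Haar measure on $\Sp_4(\mathbb R)$ normalized so that $K_\infty$ has measure $1$ and the measure $du$ on $U(\mathbb R)$ normalized so that $c(\rho)=1$, the definition~(\ref{SphericalTransform}) of $\tilde h$ is exactly Helgason's, so the abstract theorem applies verbatim.

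The steps I would carry out are as follows. First, recall that Helgason's theorem states that $h \mapsto \tilde h$ is a linear topological isomorphism from $C^\infty_c(K_\infty \backslash \Sp_4(\mathbb R) / K_\infty)$ onto the space of $\Omega$-invariant entire functions of exponential type with uniform polynomial decay in vertical strips; one must match his ``exponential type'' bound, stated in terms of $\sup$ over the (closure of the) Weyl chamber in $\Lie a^*$, with the intrinsic norm $|\Re(\nu)|$ appearing in the definition of $\PW^R(\Lie a^*_{\mathbb C})$, using that the Weyl group $\Omega$ acts by isometries for the Killing-form inner product $\scal{\cdot}{\cdot}$ so the two notions of ``radius'' agree up to the identification between a function supported in a ball of radius $R$ about the origin in $\Lie a$ and the exponential type $R$. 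Second, verify that $\tilde h$ is $\Omega$-invariant: this is immediate from Harish-Chandra's functional equation $\phi_{-\nu} = \phi_{-s\nu}$ for $s \in \Omega$, which holds because $\phi_{-\nu}$ is, up to the $c$-function factors, the spherical matrix coefficient and the spherical principal series $\pi_\nu$ and $\pi_{s\nu}$ are infinitesimally equivalent. Third, for the holomorphy and growth, observe that if $\mathrm{Supp}(h)$ is contained in $K_\infty \exp(\overline{B_R}) K_\infty$ then $\tilde h(\nu) = \int h(g)\phi_{-\nu}(g)\,dg$ extends to an entire function because $\nu \mapsto \phi_{-\nu}(g)$ is entire, and the elementary estimate $|\phi_{-\nu}(\exp H)| \le e^{|\Re(\nu)|\,|H|}$ for $H \in \Lie a$ (which follows from~(\ref{sphericalfunction}) and the fact that $A(k \exp H)$ lies in the convex hull of $\{sH : s\in\Omega\}$, together with integration against the finite measure $dk$) gives the bound $|\tilde h(\nu)| \ll e^{R|\Re(\nu)|}$; the polynomial decay in $N$ comes from integrating by parts against the radial part of the Casimir (more generally, of invariant differential operators), which multiplies $\phi_{-\nu}$ by polynomials in $\scal{\nu}{\nu}$. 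Finally, for surjectivity and injectivity I would cite Helgason directly rather than reprove it, since the inverse is given by the Harish-Chandra inversion integral against the Plancherel density $|c(\nu)|^{-2}\,d\nu$, whose convergence and range are exactly the content of the cited theorem.

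The main obstacle — really the only nontrivial point — is the bookkeeping that reconciles Helgason's formulation (stated with a possibly different convention for the exponential-type radius, and for his ``$W$-invariant'' functions on $\Lie a^*$ with a specific choice of Weyl-chamber seminorm) with the clean intrinsic statement in terms of $|\Re(\nu)| = \scal{\Re(\nu)}{\Re(\nu)}^{1/2}$ and the decay $h(\nu) \ll_N (1+|\nu|)^{-N} e^{R|\Re(\nu)|}$ used here. Concretely one must check: (i) that the two seminorms on $\Lie a^*_{\mathbb C}$ are equivalent and in fact that the sharp constant is $1$ when one uses the Killing form, so that the ``$R$'' in the support condition and the ``$R$'' in the growth condition literally coincide; and (ii) that $\Omega$-invariance in Helgason's sense matches the $\Omega$-action we wrote down via $s_1, s_2$. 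Both are routine given the explicit description $\Lie a_+ = \{\diag{x}{y}{-x}{-y} : 0 < x < y\}$ and $\scal{X}{Y} = 6\,\mathrm{Tr}(XY)$ already in hand, so I would state them as a short remark and move on. No genuinely new argument is needed beyond citing~\cite{Helgason}*{Ch.~IV}.
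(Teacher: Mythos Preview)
Your approach is correct and matches the paper's: the paper gives no proof at all for this theorem, simply stating it as a result taken from \cite{Helgason} (as announced at the start of the subsection). Your proposal to invoke \cite{Helgason}*{Ch.~IV, Theorem~7.1} and verify that $\Sp_4(\R)$ satisfies the hypotheses is exactly the intended justification, and in fact supplies more detail than the paper does.
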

\begin{theorem}[Inversion theorem]\label{sphericalinversion}
	There is a constant $c$ such that for every function $f \in C^\infty_c(K\back \Sp_4(\R) / K)$ we have
	for all $g\in \Sp_4(\R)$
	\begin{equation}\label{inversion}
		cf(g)=\int_{\Lie{a}^*} \tilde{f}(-i\nu)\phi_{-i\nu}(g)\frac{d\nu}{c(i\nu)c(-i\nu)}.
	\end{equation}
\end{theorem}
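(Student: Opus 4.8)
The plan is to obtain Theorem~\ref{sphericalinversion} as a special case of the Harish-Chandra inversion theorem for the spherical transform on a connected semisimple Lie group with finite centre, as developed in~\cite{Helgason}. Since $\Sp_4(\R)$ is connected and semisimple with centre $\{\pm 1\}$, and $K_\infty \cong U(2)$ is a maximal compact subgroup, the general theory applies directly; the only real work is to confirm that our normalisations agree with those of~\cite{Helgason} and to keep track of the resulting constant.

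First I would check that the objects set up above are the standard ones: $\phi_{-\nu}$ in~\eqref{sphericalfunction} is the elementary spherical function with parameter $-\nu$ (defined via the Iwasawa $A$-projection $A(\cdot)$ and integration over $K_\infty$), the transform~\eqref{SphericalTransform} is the Harish-Chandra spherical transform, and~\eqref{cfunction} is the Harish-Chandra $c$-function, whose Gindikin--Karpelevich evaluation we have already recorded after noting that the restricted root system of $\Sp_4(\R)$ is of type $C_2$ with all root multiplicities equal to $1$. Granting these identifications, the inversion theorem of~\cite{Helgason}*{Ch.~IV} asserts the existence of a constant $c'$ with
\[
	c'\,f(g) = \frac{1}{|\Omega|}\int_{\Lie{a}^*} \tilde f(i\nu)\,\phi_{i\nu}(g)\,\frac{d\nu}{|c(i\nu)|^{2}}
\]
for every $f \in C^\infty_c(K_\infty\backslash \Sp_4(\R)/K_\infty)$, the integral converging absolutely because of the Paley-Wiener bound of Theorem~\ref{PW}, the polynomial growth of $|c(i\nu)|^{-2}$, and the uniform boundedness of the tempered spherical functions $\phi_{i\nu}$ for real $\nu$.

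It then remains to bring this into the stated shape~\eqref{inversion}. For real $\nu$ the $c$-function is real on the axis $i\nu$, whence $\overline{c(i\nu)} = c(-i\nu)$ and $|c(i\nu)|^{2} = c(i\nu)c(-i\nu)$; moreover both $\phi_{i\nu}(g)$ and $\tilde f(i\nu)$ are invariant under the action of $\Omega$ on the parameter (the latter being part of the Paley-Wiener description in Theorem~\ref{PW}), and $-1 \in \Omega$ for the root system $C_2$. Hence the integrand is unchanged under $\nu \mapsto -\nu$, and after this substitution we may rewrite the displayed identity as~\eqref{inversion}, folding the factor $|\Omega| = 8$, the discrepancy between the Haar measure on $\Sp_4(\R)$ normalised by $\mathrm{vol}(K_\infty)=1$ and Helgason's normalisation, and the choice of Lebesgue measure $d\nu$ on $\Lie{a}^*$, into the single constant $c$.

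I expect the only delicate point to be this bookkeeping of measure normalisations and of the Plancherel density $|c(i\nu)|^{-2}$; there is no conceptual obstacle, since the Plancherel theorem, the explicit $c$-function, and Paley-Wiener are all in hand. If a self-contained argument were wanted instead, one would first establish the spherical Plancherel formula for $\Sp_4(\R)$, then define the candidate inverse $h \mapsto \int_{\Lie{a}^*} h(-i\nu)\,\phi_{-i\nu}(\cdot)\,\tfrac{d\nu}{c(i\nu)c(-i\nu)}$ on $\PW(\Lie{a}^*_\C)$, verify using the orthogonality relations for spherical functions that composing it with $f \mapsto \tilde f$ returns a fixed scalar multiple of $f$, and invoke the injectivity of the spherical transform — itself a consequence of Theorem~\ref{PW} — to conclude.
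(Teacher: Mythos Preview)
Your proposal is correct and matches the paper's approach: the paper does not prove Theorem~\ref{sphericalinversion} at all but simply quotes it from~\cite{Helgason} (as announced at the start of the subsection, ``The material in this section is taken from~\cite{Helgason}''), with a remark that the constant can be extracted from an exercise there. Your write-up in fact supplies more detail than the paper does, carefully checking that $\Sp_4(\R)$ satisfies the hypotheses, that the objects $\phi_{-\nu}$, $\tilde f$, $c(\nu)$ agree with Helgason's, and that $|c(i\nu)|^2 = c(i\nu)c(-i\nu)$ together with the Weyl-invariance allows one to recast the general inversion formula in the form~\eqref{inversion}. One small slip: the $c$-function is not literally \emph{real} on the imaginary axis; what holds (and what you need) is $\overline{c(i\nu)} = c(-i\nu)$ for real $\nu$, which follows from $\overline{\Gamma(z)} = \Gamma(\bar z)$ and the Gindikin--Karpelevich formula.
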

\begin{remark}
	The constant $c$ may be worked out by Exercise~C.4 of~\cite{Helgason}*{Chap.~IV}.
\end{remark}
\begin{remark}
	Using formulae $\Gamma(iz)\Gamma(-iz)=\frac{\pi}{z\sinh{\pi z}}$ and $\Gamma(\frac12-iz)\Gamma(\frac12+iz)=\frac{\pi}{\cosh{\pi z}}$, 
	the Plancherel measure is given by
	\begin{equation}\label{Plancherel}
		\frac{d\nu}{c(i\nu)c(-i\nu)}=\frac{16\pi^4}{c_0^2}\prod_{\alpha \in \Phi}\scal{\nu}{\alpha_0}\tanh(\pi\scal{\nu}{\alpha_0})d\nu.
	\end{equation}
\end{remark}

\subsubsection{The Whittaker function and the Jacquet integral}
As mentioned above, the Whittaker function is a non-zero $K_\infty$-fixed vector in the Whittaker model, and it is unique up to scaling.
It is given by (meromorphic continuation of) the Jacquet integral.
Namely, if $\psi$ is a generic character of $U(\R)$, we have the {\bf Jacquet integral}
\begin{equation}\label{jacquet}
	W(\nu, g, \psi)=\int_{U(\R)} e^{(\rho+\nu)(A(Jug))}\overline{\psi(u)}du.
\end{equation}

The Jacquet integral converges absolutely for $\Re(\nu) \in \Lie{a}^*_+$, as may be seen by 
using the absolute convergence of~(\ref{cfunction}) and computing
\begin{equation}\label{majoration}
	|W(\nu, g, \psi)| \le 
	\int_{U(\R)}|e^{(\nu + \rho)(A(Jug))}| du =
	e^{(\rho-\Re(\nu))(A(g))}c(\Re(\nu))
\end{equation}
Moreover, it has meromorphic continuation to all $\nu \in \Lie{a}^*_\C$.
Ishii~\cite{Ishii} computed explicit integral representations for the normalized Jacquet integral
$$\mathcal{W}(\nu,g,\psi)=\frac1{4\pi^2}\prod_{\alpha \in \Phi} \Gamma\left(\frac12+\scal{\nu}{\alpha_0}\right)W(\nu,g,\psi),$$
namely (note the different choice of minimal parabolic subgroup) if $a=\diag{a_1}{a_2}{a_1^{-1}}{a_2^{-1}} \in A^+(\R)$ then for any 
$\nu \in \Lie{a^*}_\C$
\begin{equation}\label{NiwaIntegral}
	\begin{aligned}
	\mathcal{W}(\nu,a,\psi)=2a_1a_2^2 &\int_0^\infty \int_0^\infty K_{\frac{\nu_2-\nu_1}{2}}(2\pi v_1)K_{\frac{\nu_1+\nu_2}2}(2 \pi v_2)\\
	& \times \exp\left(-\pi\left(\frac{a_2^2}{v_1v_2}+\frac{v_1v_2}{a_1^2}+a_1^2\left(\frac{v_1}{v_2}+\frac{v_2}{v_1}\right)\right)\right)\frac{dv_1dv_2}{v_1v_2}.
	\end{aligned}
\end{equation}
This implies in particular that the normalized Jacquet integral satisfies the functional equations
\begin{equation}\label{whittakerftneq}
	\mathcal{W}(\sigma \cdot \nu,g,\psi)=\mathcal{W}(\nu,g,\psi)
\end{equation}
for all $\sigma \in \Omega$.
If $t \in A^+(\R)$ and if we denote by $\psi_t$ the character $\psi_t(u)=\psi(t^{-1}ut)$, then it is easy to see 
(first by a change of variable in the domain where the Jacquet integral is absolutely convergent, then by meromorphic continuation)
that 
\begin{equation}\label{whittakertorus}
	W(\nu,g,\psi_t)=e^{(\rho-\nu)(\log(t))}W(\nu, t^{-1}g,\psi).
\end{equation}

\begin{remark}\label{WonA}
	By Lemma~\ref{A(Ju)} and changing variables $u(x,a,b,c) \mapsto u(-x,a,-b,-c)$, if $t \in A^+(\R)$then we have 
	$W(\nu, t,\psi)=W(\nu, t, \overline{\psi}).$
\end{remark}

\subsubsection{Wallach's Whittaker transform}
The following theorem is a consequence of~\cite{Wallach}*{Ch.~15}.
Let $L^2(U \back \Sp_4(\R) /K, \psi)$ be the space of functions $f$ on $\Sp_4(\R)$ satisfying for all 
$u \in U(\R)$, for all $k \in K_\infty$
and for all $g \in \Sp_4(\R)$
$$f(ugk)=\psi(u)f(g) \text{ and } \int_{U \back \Sp_4(\R)} |f(g)|^2dg < \infty.$$
\begin{theorem}[Wallach's Whittaker inversion]\label{WallWhit}
	Define for $\alpha \in C^{\infty}_c(\Lie{a^*})$ 
	\begin{equation*}
		\mathscr{T} (\alpha)(a)=\int_{\Lie{a}^*}\alpha(\nu)W(-i\nu,a,\psi)\frac{d\nu}{c(i\nu)c(-i\nu)}.
	\end{equation*}
	Then the image of the linear map $\mathscr{T}$ is a dense subset $\mathscr W \subset L^2(U \back \Sp_4(\R) /K, \psi)$
	containing $C^\infty_c(U \back \Sp_4(\R)/K, \psi)$, and $\mathscr{T}$ extends to a unitary operator onto 
	$L^2(U \back \Sp_4(\R) /K, \psi)$.
	Moreover, the inverse of $\mathscr{T}$ is given for all $f \in \mathscr W$ by the {\bf Whittaker transform}
	\begin{equation*}
		W(f)(\nu)=c \int_{A^+} f(a)W(i\nu,a,\psi)e^{-2\rho \log a}da,
	\end{equation*}
	where the constant $c$ is the same as in Theorem~\ref{sphericalinversion}.
\end{theorem}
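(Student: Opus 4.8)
The plan is to obtain the statement by specialising the general Whittaker--Plancherel theorem for real reductive groups, established in~\cite{Wallach}*{Ch.~15}, to the split group $\Sp_4(\R)$ and then passing to $K_\infty$-invariants. Applied to $\Sp_4(\R)$ and the generic character $\psi$, Wallach's theorem provides a unitary $\Sp_4(\R)$-equivariant isomorphism of $L^2(U(\R)\back\Sp_4(\R),\psi)$ with a direct integral, over the generic tempered part of the unitary dual of $\Sp_4(\R)$, of multiplicity spaces, carrying a vector to the family of its pairings against the continuous Whittaker functionals of the constituent representations and equipped with the Whittaker--Plancherel measure; moreover the Whittaker--Schwartz space, and in particular $C^\infty_c(U(\R)\back\Sp_4(\R),\psi)$, is carried into the natural Paley--Wiener space on the spectral side. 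First I would take $K_\infty$-fixed vectors on both sides, which yields a unitary isomorphism of $L^2(U(\R)\back\Sp_4(\R)/K_\infty,\psi)$ with the $K_\infty$-fixed part of the spectral side.

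The next step is to identify this $K_\infty$-fixed spectral side. A representation contributing to the Whittaker--Plancherel decomposition and possessing a nonzero $K_\infty$-fixed vector must be spherical and tempered; since $\Sp_4(\R)$ is split, such a representation is, for generic parameter, an irreducible unitary principal series $\pi_\nu$ with $\nu\in\Lie{a}^*$ (the split analogue of Proposition~\ref{MustBePrincipalSeries}), which is generic with one-dimensional $K_\infty$-fixed space spanned by the $K_\infty$-fixed vector of its Whittaker model, i.e. the Jacquet integral $a\mapsto W(-i\nu,a,\psi)$ of~(\ref{jacquet}). Thus the transform in question has kernel $W(-i\nu,a,\psi)$, its spectral variable runs over $\Lie{a}^*$ modulo the Weyl group $\Omega$, and the relevant measure is the Whittaker--Plancherel density restricted to the spherical principal series. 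I would then argue that this density is, as a function of $\nu$, a constant multiple of $\frac{1}{c(i\nu)c(-i\nu)}$: both it and the spherical Plancherel density of Theorem~\ref{sphericalinversion} are governed by the Harish-Chandra $c$-function attached to $\pi_\nu$, and matching the normalisations of the Haar measures pins the proportionality constant to the constant $c$ of Theorem~\ref{sphericalinversion}. This identifies the synthesis map of the restricted decomposition with $\mathscr{T}$, whence $\mathscr{T}$ has dense image containing $C^\infty_c(U(\R)\back\Sp_4(\R)/K_\infty,\psi)$ and extends to a unitary operator. In carrying this out I would use the functional equation~(\ref{whittakerftneq}) for the normalised Jacquet integral to check that the integrand $W(-i\nu,a,\psi)\,\frac{d\nu}{c(i\nu)c(-i\nu)}$ is $\Omega$-invariant in $\nu$, so that integrating over all of $\Lie{a}^*$ rather than over $\Lie{a}^*/\Omega$ merely introduces a factor $|\Omega|$, which is absorbed into $c$.

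Finally, to make the inverse map explicit I would use the Iwasawa decomposition $\Sp_4(\R)=U(\R)A^+K_\infty$, so that $U(\R)\back\Sp_4(\R)/K_\infty$ is parametrised by $A^+$ with $\int_{U(\R)\back\Sp_4(\R)/K_\infty}F(g)\,dg=\int_{A^+}F(a)\,e^{-2\rho\log a}\,da$ for a suitable normalisation of measures. Pairing $f\in\mathscr{W}$ against the kernel then gives $W(f)(\nu)=c\int_{A^+}f(a)\,\overline{W(-i\nu,a,\psi)}\,e^{-2\rho\log a}\,da$, and since $\psi$ is unitary and $a\in A^+(\R)$ one has $\overline{W(-i\nu,a,\psi)}=W(i\nu,a,\overline{\psi})=W(i\nu,a,\psi)$, the last equality by Remark~\ref{WonA}; this produces the stated formula.

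The hard part will be the step where Wallach's abstract Whittaker--Plancherel measure is specialised. One must check simultaneously that the $K_\infty$-fixed part of the decomposition sees only the spherical unitary principal series --- which rests on the classification of generic spherical representations of $\Sp_4(\R)$ together with the fact that continuous Whittaker functionals exist only on tempered representations --- and that the resulting density agrees, up to the single constant $c$, with $\frac{d\nu}{c(i\nu)c(-i\nu)}$; the chief delicacy of the latter is the correct bookkeeping of the Weyl-group quotient, for which the functional equation~(\ref{whittakerftneq}) is the essential input.
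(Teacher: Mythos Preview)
The paper does not give a proof of this theorem: it simply states it as ``a consequence of~\cite{Wallach}*{Ch.~15}'' and moves on. Your proposal is therefore not competing with any argument in the paper; rather, you are sketching in detail how one would actually extract the statement from Wallach's general Whittaker--Plancherel theory, which is exactly the derivation the paper leaves implicit.

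Your outline is reasonable and follows the expected route: specialise Wallach's decomposition to $\Sp_4(\R)$, pass to $K_\infty$-invariants, identify the surviving spectrum with the spherical unitary principal series, match the Whittaker--Plancherel density with $\frac{d\nu}{c(i\nu)c(-i\nu)}$, and read off the inverse via the Iwasawa integration formula. The use of Remark~\ref{WonA} to convert $\overline{W(-i\nu,a,\psi)}$ into $W(i\nu,a,\psi)$ and of the functional equation~(\ref{whittakerftneq}) to handle the $\Omega$-quotient are both appropriate. The one point you flag as delicate --- pinning down the constant so that it agrees with the $c$ of Theorem~\ref{sphericalinversion} --- is genuinely the subtle bookkeeping step, and the paper does not supply it either; it is simply asserted. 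So your sketch is, if anything, more informative than what the paper provides.
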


\subsubsection{An integral transform}
Let $g \in G(\R)$, $t \in A^+(\R)$ and $\psi$ a generic character of $U(\R)$.
When dealing with the geometric side of the relative trace formula,
we shall be interested in the integral $$I(f_\infty)=\int_{U(\R)}f_\infty(tug)\overline{\psi}(u)du.$$

Using expression~(\ref{sphericalfunction}) and applying Theorem~5.20 of~\cite{Helgason}*{Ch.I} that relates integration
on $K_\infty$ to integration on $U(\R)$, one may establish the following identity for all $\nu \in \Lie{a}^*_\C$
\begin{equation}\label{sphericalU}
	\phi_{\nu}(g)=\int_{U(\R)}
	e^{(\rho+\nu)(A(Jug)}e^{(\rho-\nu)(A(Ju))}du.
\end{equation}
From this identity, ignoring convergence issues and treating integrals as if they were 
absolutely convergent, one may heuristically expect the following 
\begin{equation}\label{heuristics}
	\int_{U(\R)}\phi_\nu(tug)\overline{\psi(u)}du
	=W(\nu,g,\psi)W(-\nu,t^{-1},\overline{\psi}).
\end{equation}
However, the domain of absolute convergence of the two Jacquet integral in the right hand side are complementary from each other, 
and the integral in the left hand side is likely not absolutely convergent, making such a result,
where the left hand side is (optimistically) a semi-convergent integral and the right-hand side is defined by meromorphic continuation,
likely difficult to prove.

Carrying on with this heuristic and using Theorem~\ref{sphericalinversion}, let us write
\begin{align*}
	cI(f_\infty)&=\int_{U(\R)}\int_{\Lie{a}^*} \tilde{f_\infty}(-i\nu)\phi_{-i\nu}(tug)\frac{d\nu}{c(i\nu)c(-i\nu)}\overline{\psi}(u)du \\
	&=\int_{\Lie{a}^*}\tilde{f_\infty}(-i\nu)\int_{U(\R)} \phi_{-i\nu}(tug)\overline{\psi}(u)du\frac{d\nu}{c(i\nu)c(-i\nu)}\\
	&=\int_{\Lie{a}^*}\tilde{f_\infty}(-i\nu)W(-i\nu,g,\psi)W(i\nu,t^{-1},\overline{\psi})\frac{d\nu}{c(i\nu)c(-i\nu)}.
\end{align*}
Unlike~(\ref{heuristics}), this equality seems more reasonable.
Indeed, the left hand side is absolutely convergent because $f_\infty$ is compactly supported,
and in the right hand side $\tilde{f_\infty}$ has rapid decay.
We now give a rigorous proof of the following.
\begin{theorem}\label{GeomTransform}
	Let $f_\infty$ be a smooth, bi-$K_\infty$, compactly supported function on $\Sp_4(\R)$.
	Let $g \in G(\R)$, $t \in A^+(\R)$ and $\psi$ a generic character of $U(\R)$.
	Then we have 
	$$c\int_{U(\R)}f_\infty(tug)\overline{\psi}(u)du=\int_{\Lie{a}^*}\tilde{f_\infty}(-i\nu)W(-i\nu,g,\psi)W(i\nu,t^{-1},
	\overline{\psi})\frac{d\nu}{c(i\nu)c(-i\nu)},$$
	where $W(\nu,\cdot,\psi)$ is the $\psi$-Whittaker function of the principal series with spectral parameter~$\nu$.
\end{theorem}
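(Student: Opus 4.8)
The plan is to make the heuristic computation preceding the statement rigorous by controlling the interchange of the $U(\R)$-integral with the spectral integral coming from the inversion theorem (Theorem~\ref{sphericalinversion}). The one genuinely delicate point is that $\int_{U(\R)}\phi_{-i\nu}(tug)\overline{\psi}(u)\,du$ need not be absolutely convergent, so we cannot literally apply Fubini in the middle display. First I would fix $g$ and $t$ and restrict attention to the compact set $C = t^{-1}\,\mathrm{Supp}(f_\infty)\,g^{-1}$ (compact modulo nothing here since $f_\infty$ is genuinely compactly supported on $\Sp_4(\R)$), so that in the integral $I(f_\infty)=\int_{U(\R)}f_\infty(tug)\overline{\psi}(u)\,du$ the variable $u$ ranges over the relatively compact set $\{u\in U(\R): tug\in\mathrm{Supp}(f_\infty)\}$. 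Thus $I(f_\infty)$ is an integral of a bounded function over a bounded set, hence manifestly finite, and equals $\int_{U(\R)\cap C'} f_\infty(tug)\overline{\psi}(u)\,du$ for a fixed compact $C'\subset U(\R)$.

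The main step is then to insert the inversion formula $cf_\infty(x)=\int_{\Lie{a}^*}\tilde{f_\infty}(-i\nu)\phi_{-i\nu}(x)\,\frac{d\nu}{c(i\nu)c(-i\nu)}$ with $x=tug$ and justify integrating over $u\in C'$ term by term. Since $C'$ has \emph{finite} measure, it suffices to show that $(\nu,u)\mapsto \tilde{f_\infty}(-i\nu)\phi_{-i\nu}(tug)\,\frac1{c(i\nu)c(-i\nu)}$ is absolutely integrable on $\Lie{a}^*\times C'$; then Fubini applies and we may pull the $u$-integral inside. For the $\nu$-decay we use that $f_\infty\in C^\infty_c$, so by the Paley--Wiener theorem (Theorem~\ref{PW}) $\tilde{f_\infty}(-i\nu)$ decays faster than any polynomial in $|\nu|$ for $\nu\in\Lie{a}^*$ (real), while the Plancherel density $\frac1{c(i\nu)c(-i\nu)}$ grows only polynomially in $|\nu|$ by the explicit formula~(\ref{Plancherel}). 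For the remaining factor we need a bound on the spherical function $\phi_{-i\nu}(tug)$ that is locally uniform in $u\in C'$ and at most polynomial in $\nu$: from~(\ref{sphericalfunction}), $|\phi_{-i\nu}(y)|\le\int_{K_\infty}e^{\rho(A(ky))}\,dk$ for real $\nu$ (the imaginary $\nu$ contributes a unit-modulus factor), and the right side is a continuous function of $y$, hence bounded on the compact set $t\,C'\,g$. So the integrand is dominated by $(\text{Schwartz in }\nu)\times(\text{bounded in }u)$ over a finite-measure set, giving absolute integrability.

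Granting the interchange, the computation in the excerpt goes through verbatim: $cI(f_\infty)=\int_{\Lie{a}^*}\tilde{f_\infty}(-i\nu)\bigl(\int_{C'}\phi_{-i\nu}(tug)\overline{\psi}(u)\,du\bigr)\frac{d\nu}{c(i\nu)c(-i\nu)}$, and it remains to identify the inner integral with $W(-i\nu,g,\psi)\,W(i\nu,t^{-1},\overline{\psi})$ for $\nu$ real. Here I would \emph{not} try to prove the pointwise identity~(\ref{heuristics}) directly; instead, expand $\phi_{-i\nu}$ via~(\ref{sphericalU}), so that $\int_{C'}\phi_{-i\nu}(tug)\overline{\psi}(u)\,du=\int_{C'}\int_{U(\R)}e^{(\rho-i\nu)(A(Ju'tug))}e^{(\rho+i\nu)(A(Ju'))}\,du'\,\overline{\psi}(u)\,du$, change variables $u'\mapsto$ something adapted to the $t$-conjugation using~(\ref{whittakertorus}), and recognise after a further change of variables that the $u'$-integral produces $W(-i\nu,u g,\psi)$ up to the factor $W(i\nu,t^{-1},\overline\psi)$ coming from the torus translation, while the $u$-integral against $\overline\psi(u)$ over the whole group is exactly the defining integral~(\ref{jacquet}) for $W(-i\nu,g,\psi)$; the passage from the compact truncation $C'$ back to $U(\R)$ is legitimate because at the end everything is genuinely absolutely convergent for $\nu$ real (the bound~(\ref{majoration}) shows the relevant Jacquet integrals converge on a region containing $i\Lie{a}^*$ once we are careful about which side $\pm i\nu$ falls, and in fact the whole double integral is absolutely convergent as shown above). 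The hardest part is precisely this last identification: keeping track of the two copies of $U(\R)$, the $J$-conjugation, and the splitting of the Iwasawa $A$-component $A(Ju'tug)$ into pieces depending on $t^{-1}$ versus $g$, so that the product structure $W(-i\nu,g,\psi)W(i\nu,t^{-1},\overline\psi)$ emerges; Lemma~\ref{A(Ju)}, Remark~\ref{WonA}, and the torus-translation formula~(\ref{whittakertorus}) are the tools for this bookkeeping.
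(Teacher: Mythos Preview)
Your argument is fine up to and including the Fubini step
\[
cI(f_\infty)=\int_{\Lie{a}^*}\tilde{f_\infty}(-i\nu)\Bigl(\int_{C'}\phi_{-i\nu}(tug)\overline{\psi}(u)\,du\Bigr)\frac{d\nu}{c(i\nu)c(-i\nu)},
\]
but the identification of the bracketed inner integral with $W(-i\nu,g,\psi)\,W(i\nu,t^{-1},\overline\psi)$ is a genuine gap. That inner integral depends on the truncation $C'$, not just on $\nu,g,t,\psi$, so it is \emph{not} equal to the product of Whittaker functions; you must somehow pass from $C'$ back to $U(\R)$. Your justification for this (``at the end everything is genuinely absolutely convergent for $\nu$ real'') is exactly what fails: for $\nu\in\Lie{a}^*$ the bound~(\ref{majoration}) gives nothing (one would need $c(0)$, which is not finite), and more to the point the two Jacquet integrals defining $W(\pm i\nu,\cdot,\cdot)$ have \emph{complementary} half-planes of absolute convergence, so there is no $\nu$ for which the factored double integral over $U(\R)\times U(\R)$ converges absolutely. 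Expanding $\phi_{-i\nu}$ via~(\ref{sphericalU}) and trying to separate variables is precisely the heuristic~(\ref{heuristics}) that the paper flags as ``likely difficult to prove''; your change-of-variables bookkeeping does not address the convergence obstruction.

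The paper circumvents this entirely by invoking Wallach's Whittaker inversion (Theorem~\ref{WallWhit}). Rather than proving the identity pointwise in $g$, one observes that both sides lie in $L^2(U\backslash\Sp_4(\R)/K,\psi)$ as functions of $g=a\in A^+(\R)$ (after reducing to $t=1$ via~(\ref{whittakertorus})), and that the right-hand side is $\mathscr{T}(\alpha)(a)$ for $\alpha(\nu)=\tilde{f_\infty}(-i\nu)W(i\nu,1,\overline\psi)$. It then suffices to compute the Whittaker transform of the left-hand side and check it equals $\alpha$. This introduces an extra integral over $A^+$, which together with the $U$-integral reconstitutes the Haar integral over $\Sp_4(\R)$; the bi-$K_\infty$-invariance of $f_\infty$ then lets one absorb the $K$-part of the Iwasawa decomposition of $Ju_1$ and recover $\tilde{f_\infty}(-i\nu)$ directly. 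Crucially, this computation is carried out for $\Re(i\nu)\in\Lie{a}^*_+$, where the single remaining Jacquet integral \emph{is} absolutely convergent, and then extended by analyticity. The missing ingredient in your approach is this use of Wallach's theorem to trade the ill-posed $U(\R)$-integral for a well-posed $\Sp_4(\R)$-integral.
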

\begin{proof}
	Both sides transform on the left by $U(\R)$ according to $\psi$, and are $K_\infty$-invariant.
	Thus by the Iwasawa decomposition, it suffices to prove it for $g=a \in A^+(\R)$.
	Also, by~(\ref{whittakertorus}), we may restrict ourself to $t=1$.
	With notations of Theorem~\ref{WallWhit}, we have
	$$
	\int_{\Lie{a}^*}\tilde{f_\infty}(-i\nu)W(-i\nu,a,\psi)W(i\nu,1,\overline{\psi})\frac{d\nu}{c(i\nu)c(-i\nu)}=
	\mathscr{T}(\alpha)(a),$$
	where $$\alpha(\nu)= \tilde{f_\infty}(-i\nu) W(i\nu,1,\overline{\psi}).$$
	Moreover $g \mapsto \int_{U(\R)}f_\infty(ug)\overline{\psi}(u)du$ belongs to $C^\infty_c(U \back \Sp_4(\R) /K, \psi)$
	since $f_\infty$ is smooth and compactly supported. Hence
	by Wallach's Whittaker inversion it suffices to show that
	for all $\nu \in \Lie{a}^*$ we have 
	\begin{equation}\label{toprove}
		\alpha(\nu) = \int_{{A}^+(\R)}e^{-2\rho \log a} \int_{U(\R)}f_\infty(ua)\overline{\psi}(u)duW(i\nu,a,\psi)da.
	\end{equation}
	Since both sides are meromorphic in $\nu$, it suffices to show this for 
	$\Re(i\nu) \in \Lie{a}^*_+$. In this region, the Jacquet integral 
	$W(i\nu,a,\psi)=\int_{U(\R)} e^{(\rho+i\nu)(A(Ju_1a))}\overline{\psi(u_1)}du_1$ converges absolutely. 
	By Remark~\ref{WonA}, the integral in~(\ref{toprove}) may then be written as
	\begin{align*}
		\int_{{A}^+(\R)}\int_{U(\R)}f_\infty(au)\overline{\psi}(aua^{-1})duW(i\nu,a,\overline{\psi})da
		=\int_{{A}^+(\R)}\int_{U(\R)}f_\infty(au)W(i\nu,au,\overline{\psi})&duda\\
		=\int_{{A}^+(\R)}\int_{U(\R)}f_\infty(au)\int_{U(\R)} e^{(\rho+i\nu)(A(Ju_1au))}\psi(u_1)du_1&duda
	\end{align*}
	Write $Ju_1=n\exp(A(Ju_1))k_0(Ju_1)$ with $n \in U(\R)$ and $k_0(Ju_1) \in K_\infty$.
	Then $A(Ju_1au)=A(Ju_1)+A(k_0au)$. 
	So the integral we have to evaluate becomes
	\begin{align*}
		\int_{{A}^+(\R)}\int_{U(\R)}& e^{(\rho+i\nu)(A(Ju_1)}\psi(u_1)\int_{U(\R)}f_\infty(au)e^{(\rho+i\nu)(A(k_0(Ju_1)au))}dudu_1da\\
		&=	\int_{{A}^+(\R)}\int_{U(\R)} e^{(\rho+i\nu)(A(Ju_1)}\psi(u_1)\int_{\Sp_4(\R)}f_\infty(g)e^{(\rho+i\nu)(A(k_0(Ju_1)g))}dgdu_1da\\
		&=	\int_{{A}^+(\R)}\int_{U(\R)} e^{(\rho+i\nu)(A(Ju_1)}\psi(u_1)\int_{\Sp_4(\R)}f_\infty(g)e^{(\rho+i\nu)(A(g))}dgdu_1da\\
		&=W(i\nu,1,\overline{\psi})\tilde{f}(-i\nu).
	\end{align*}
\end{proof}
\subsubsection{Estimates for the Whittaker function}
We close this section with some estimates for the Whittaker function to be used later on.
We begin with recalling the following estimate for Bessel $K$ functions.
	\begin{lemma}\label{EstimateBessel}
	Let $\sigma>0$. For $\Re(\nu)\in [-\sigma,\sigma]$ we have	for all $\epsilon>0$
	$$
	K_\nu(u) \ll \left\lbrace\begin{array}{ccc}
		(1+|\Im(\nu)|)^{\sigma+\epsilon}	u^{-\sigma-\epsilon} & \mbox{if} & 0<u\le 1+\frac{\pi}2|\Im(\nu)|,\\
		u^{-\frac12} e^{-u} & \mbox{if} & u >  1+\frac{\pi}2|\Im(\nu)|.
	\end{array}
	\right.
	$$

\end{lemma}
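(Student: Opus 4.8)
The plan is to reduce everything to the classical integral representation of the modified Bessel function and then estimate a single real-parameter Bessel function. Recall that for $u>0$ and $\nu\in\C$ one has $K_\nu(u)=\int_0^\infty e^{-u\cosh t}\cosh(\nu t)\,dt$. Writing $\nu=a+ib$ with $|a|\le\sigma$, a direct computation gives $|\cosh(\nu t)|^2=\cosh^2(at)-\sin^2(bt)\le\cosh^2(at)$, so $|\cosh(\nu t)|\le\cosh(at)\le\cosh(\sigma t)$ for all real $t$. Hence
$$|K_\nu(u)|\le\int_0^\infty e^{-u\cosh t}\cosh(\sigma t)\,dt=K_\sigma(u),$$
and it suffices to bound the single real-parameter function $K_\sigma(u)$, uniformly in $u>0$. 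This step throws away any gain from the oscillation of $\cosh(\nu t)$, which is precisely why the final bound carries the harmless slack factors $(1+|\Im\nu|)^{\sigma+\epsilon}$ and $u^{-\epsilon}$.

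Next I would establish two elementary estimates for $K_\sigma(u)$. For $u\ge 1$: using $\cosh t\ge 1+\tfrac{t^2}{2}$ and $\cosh(\sigma t)\le e^{\sigma t}$ on $t\ge0$, and completing the square, one gets
$$K_\sigma(u)\le e^{-u}\int_0^\infty e^{-ut^2/2+\sigma t}\,dt\le e^{-u}e^{\sigma^2/(2u)}\int_{-\infty}^{\infty}e^{-us^2/2}\,ds\ll_\sigma u^{-1/2}e^{-u},$$
since $u\ge1$ forces $e^{\sigma^2/(2u)}\le e^{\sigma^2/2}$. For $0<u\le 1$: split the integral at $t=1$; the part over $[0,1]$ is at most $\int_0^1\cosh(\sigma t)\,dt\ll_\sigma 1$, while on $[1,\infty)$ one uses $\cosh t\ge\tfrac12 e^t$ and the substitution $s=e^t$ to obtain $\int_1^\infty e^{-u\cosh t}\cosh(\sigma t)\,dt\le\int_0^\infty e^{-us/2}s^{\sigma-1}\,ds=(2/u)^\sigma\Gamma(\sigma)\ll_\sigma u^{-\sigma}$. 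Since $u^{-\sigma}\ge1$ in this range, this yields $K_\sigma(u)\ll_\sigma u^{-\sigma}$ for $0<u\le1$.

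Finally I would assemble the three cases of the claimed inequality. If $u>1+\tfrac{\pi}{2}|\Im\nu|$ then in particular $u\ge1$, so $|K_\nu(u)|\le K_\sigma(u)\ll_\sigma u^{-1/2}e^{-u}$, which is the second line of the lemma. If $0<u\le1$ then $|K_\nu(u)|\le K_\sigma(u)\ll_\sigma u^{-\sigma}\le u^{-\sigma-\epsilon}\le(1+|\Im\nu|)^{\sigma+\epsilon}u^{-\sigma-\epsilon}$. If $1\le u\le1+\tfrac{\pi}{2}|\Im\nu|$ then $|K_\nu(u)|\le K_\sigma(u)\ll_\sigma u^{-1/2}e^{-u}=u^{-\sigma-\epsilon}\cdot u^{\sigma+\epsilon-1/2}e^{-u}\ll_{\sigma,\epsilon}u^{-\sigma-\epsilon}\le(1+|\Im\nu|)^{\sigma+\epsilon}u^{-\sigma-\epsilon}$, because $u^{\sigma+\epsilon-1/2}e^{-u}$ is bounded on $[1,\infty)$. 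Together these cover both lines of the lemma.

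As for difficulty, there is no genuine obstacle: the argument is a routine manipulation of a standard integral representation. The only points requiring mild care are keeping the implied constants uniform over the relevant $u$-ranges (the Laplace-type estimate must hold uniformly for all $u\ge1$, and the small-$u$ estimate uniformly for all $0<u\le1$) and recognizing that the $(1+|\Im\nu|)^{\sigma+\epsilon}$ and $u^{-\epsilon}$ factors in the statement are pure slack, present only so that the bound has a convenient shape for later use and so that the two regimes glue consistently at the transition point $u\asymp|\Im\nu|$.
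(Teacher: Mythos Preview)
Your argument is correct. The reduction $|K_\nu(u)|\le K_\sigma(u)$ via the integral representation and the inequality $|\cosh(\nu t)|\le\cosh(\sigma t)$ is valid, and your two-regime estimate of $K_\sigma(u)$ (Laplace-type for $u\ge1$, power-law for $0<u\le1$) is clean and uniform. The final assembly into the three sub-cases is also fine.

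Regarding comparison with the paper: the paper does not actually prove this lemma. It is introduced with the phrase ``We begin with recalling the following estimate for Bessel $K$ functions'' and is treated as a standard fact quoted without argument. So there is nothing to compare against; your elementary self-contained proof supplies what the paper omits. One minor remark: your method shows that the factor $(1+|\Im\nu|)^{\sigma+\epsilon}$ is indeed pure slack, since you never use $\Im\nu$ at all after the first step. This is consistent with how the lemma is used downstream (it feeds into Lemma~\ref{TrivialWhittaker}, where only crude polynomial control in $\Im\nu$ is needed).
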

In the following lemma, we have only used trivial bounds and haven't seek for optimality.
\begin{lemma}\label{TrivialWhittaker}
Let $\sigma>0$. Let $\nu \in \Lie{a}^*_\C$ with $-\sigma < \frac{\Re(\nu_1-\nu_2)}2, \frac{\Re(\nu_1+\nu_2)}{2} < \sigma$ and $a \in A^{+}(\R)$.
For simplicity, set $r_1=\frac{|\Im(\nu_1-\nu_2)|}2$ and $r_2=\frac{|\Im(\nu_1+\nu_2)|}{2} $
Then for all $\epsilon>0$ we have 
\begin{align*}
	\mathcal{W}(\nu,a,\psi) &\ll (1+r_1)^{\sigma+1+\epsilon} (1+r_2)^{\sigma+1+\epsilon}a_1a_2^{-2\sigma-\epsilon}\\
	&+(1+r_1)^{-\frac32} (1+r_2)^{-\frac32}a_1a_2^{2}\\
	&+(1+r_1)^{\sigma+\epsilon}(1+r_2)^{-(\sigma+\frac52+\epsilon)}a_1^{-2\sigma-1-\epsilon}a_2^2\\
	&+(1+r_1)^{-(\sigma+\frac52+\epsilon)}(1+r_2)^{\sigma+\epsilon}a_1^{-2\sigma-1-\epsilon}a_2^2.
\end{align*}
\end{lemma}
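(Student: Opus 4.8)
The plan is to insert the explicit integral representation~(\ref{NiwaIntegral}) for $\mathcal{W}(\nu,a,\psi)$ and bound the resulting double integral over $(v_1,v_2)\in(0,\infty)^2$ by splitting the range according to whether the Bessel functions are in their "polynomial" régime or their "exponentially decaying" régime, as dictated by Lemma~\ref{EstimateBessel}. Concretely, $K_{(\nu_2-\nu_1)/2}(2\pi v_1)$ transitions at $v_1\asymp 1+r_1$ and $K_{(\nu_1+\nu_2)/2}(2\pi v_2)$ transitions at $v_2\asymp 1+r_2$, so I would partition $(0,\infty)^2$ into the four boxes obtained from $\{v_1\lesssim 1+r_1\}\sqcup\{v_1\gtrsim 1+r_1\}$ and $\{v_2\lesssim 1+r_2\}\sqcup\{v_2\gtrsim 1+r_2\}$, estimate the Bessel factors on each box by the appropriate branch of Lemma~\ref{EstimateBessel}, and bound the exponential factor $\exp\!\left(-\pi\left(\frac{a_2^2}{v_1v_2}+\frac{v_1v_2}{a_1^2}+a_1^2\left(\frac{v_1}{v_2}+\frac{v_2}{v_1}\right)\right)\right)$ by retaining, in each box, whichever of its four summands gives the cleanest integrable bound.

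The first step is the dominant box $v_1\lesssim 1+r_1$, $v_2\lesssim 1+r_2$: here $K_{(\nu_2-\nu_1)/2}(2\pi v_1)\ll (1+r_1)^{\sigma+\epsilon}v_1^{-\sigma-\epsilon}$ and similarly for $v_2$, and the factor $\exp(-\pi a_2^2/(v_1v_2))$ is what I would keep. After the change of variables $v_i\mapsto a_2\sqrt{w}$-type scaling (or more simply $v_1v_2=a_2^2 s$), the integral $\int_0^{\cdot}\int_0^{\cdot}v_1^{-\sigma-\epsilon}v_2^{-\sigma-\epsilon}e^{-\pi a_2^2/(v_1v_2)}\frac{dv_1dv_2}{v_1v_2}$ converges at the origin thanks to the exponential and contributes $a_2^{-2\sigma-2\epsilon}$ times a polynomial in $(1+r_1)(1+r_2)$; combined with the prefactor $2a_1a_2^2$ this yields the first line of the claimed bound. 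For the box $v_1\gtrsim 1+r_1$, $v_2\gtrsim 1+r_2$ I would use $K_\nu(u)\ll u^{-1/2}e^{-u}$ for both Bessel functions, keep the factor $\exp(-\pi(v_1v_2)/a_1^2)$ together with the Bessel exponentials $e^{-2\pi v_1}e^{-2\pi v_2}$, and bound the integral crudely; after pulling out the exponential decay in $v_1,v_2$ the remaining integral is $O((1+r_1)^{-3/2}(1+r_2)^{-3/2})$, giving the second line once multiplied by $2a_1a_2^2$. For the two mixed boxes (say $v_1\lesssim 1+r_1$ but $v_2\gtrsim 1+r_2$) I would use the polynomial branch in $v_1$, the exponential branch $v_2^{-1/2}e^{-2\pi v_2}$ in $v_2$, and now keep the summand $\exp(-\pi a_1^2 v_2/v_1)$ (the term $a_1^2 v_2/v_1$ is large in this box); the $v_1$-integral then converges and produces the powers of $a_1$ recorded in the third and fourth lines, and symmetry in $v_1\leftrightarrow v_2$ handles the last box.

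The main obstacle is bookkeeping rather than conceptual: on each box one must choose the "right" summand of the exponential to retain so that (i) the remaining two-dimensional integral actually converges at both $0$ and $\infty$ in the relevant variables, and (ii) the powers of $a_1,a_2$ that fall out match the stated exponents $a_1a_2^{-2\sigma-\epsilon}$, $a_1a_2^2$, and $a_1^{-2\sigma-1-\epsilon}a_2^2$. In particular one has to verify that in the dominant box the contribution of the other exponential summands ($v_1v_2/a_1^2$, $a_1^2v_1/v_2$, $a_1^2v_2/v_1$) only improves the bound and never obstructs convergence, and that in the mixed boxes the cross-term $a_1^2v_2/v_1$ is genuinely available to kill the $v_1$-integral near $v_1=0$. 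Since the lemma explicitly disclaims optimality, at each stage I would feel free to bound $\exp(-\pi(\cdots))\le 1$ on the "easy" variable and only exploit the exponential on the variable where integrability forces it, keeping the $\epsilon$'s to absorb the logarithmic endpoint cases in Lemma~\ref{EstimateBessel}. Summing the four boxes gives exactly the four-term bound in the statement.
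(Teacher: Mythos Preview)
Your proposal is correct and follows precisely the approach the paper indicates: its entire proof reads ``Follows trivially from the explicit integral representation~(\ref{NiwaIntegral}) and Lemma~\ref{EstimateBessel}.'' Your four-box decomposition according to the two regimes of Lemma~\ref{EstimateBessel}, together with retaining the appropriate summand of the exponential on each box, is exactly the computation the paper leaves implicit; the author even remarks that only trivial bounds were used and no optimality was sought, so the minor bookkeeping choices you flag are all that separates your sketch from a complete argument.
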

\begin{proof}
	Follows trivially from the explicit integral representation~(\ref{NiwaIntegral}) and Lemma~\ref{EstimateBessel}.
\end{proof}
\begin{proposition}\label{WhittakerSpectral}
	Let $a \in A^{+}(\R)$.
	Then, for $\Re(\nu)$ small enough we have for all $\epsilon >0$
	 $$W(\nu,a,\psi) \ll_{\Re(\nu),a} \prod_{\alpha \in \Phi} |\scal{\Im(\nu)}{\alpha_0}|^{2|\scal{\Re(\nu)}{\alpha_0}|+\epsilon}.$$ 
\end{proposition}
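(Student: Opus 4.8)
The plan is to bound the (normalized or unnormalized) Whittaker function via the explicit double integral representation~(\ref{NiwaIntegral}) of Ishii, exactly as in Lemma~\ref{TrivialWhittaker}, but keeping track of the $\Im(\nu)$-dependence more carefully than the crude bounds recorded there. Recall that for $a=\diag{a_1}{a_2}{a_1^{-1}}{a_2^{-1}}\in A^+(\R)$ the normalized Jacquet integral $\mathcal{W}(\nu,a,\psi)$ is a constant multiple of
\[
a_1a_2^2\int_0^\infty\int_0^\infty K_{\frac{\nu_2-\nu_1}2}(2\pi v_1)K_{\frac{\nu_1+\nu_2}2}(2\pi v_2)\exp\!\left(-\pi\Big(\tfrac{a_2^2}{v_1v_2}+\tfrac{v_1v_2}{a_1^2}+a_1^2\big(\tfrac{v_1}{v_2}+\tfrac{v_2}{v_1}\big)\Big)\right)\frac{dv_1\,dv_2}{v_1v_2},
\]
and that $W(\nu,a,\psi)=\mathcal{W}(\nu,a,\psi)\cdot\big(\tfrac1{4\pi^2}\prod_{\alpha\in\Phi}\Gamma(\tfrac12+\scal{\nu}{\alpha_0})\big)^{-1}$. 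Writing $r_1=\tfrac{|\Im(\nu_1-\nu_2)|}2$ and $r_2=\tfrac{|\Im(\nu_1+\nu_2)|}2$, note that $|\scal{\Im(\nu)}{\alpha_0}|$ runs, up to fixed positive constants, over $r_1$, $r_2$ and $r_1\pm r_2$ (the four roots of $\Sp_4$), so it suffices to prove a bound of the shape $W(\nu,a,\psi)\ll_{\Re(\nu),a}(1+r_1)^{2|\Re\text{-part}|+\epsilon}(1+r_2)^{2|\Re\text{-part}|+\epsilon}$ in the convergent range and then invoke the functional equations~(\ref{whittakerftneq}) if needed.

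First I would estimate the double integral. Split each $v_i$-integration at the transition point $1+\tfrac\pi2 r_i$ of Lemma~\ref{EstimateBessel}. On the ``small'' part $v_i\lesssim 1+r_i$ the Bessel factor contributes at most $(1+r_i)^{\sigma+\epsilon}v_i^{-\sigma-\epsilon}$, and the Gaussian-type weight $\exp(-\pi(\cdots))$ — which for $a$ fixed forces $v_1v_2$, $v_1/v_2$, $v_2/v_1$ to stay in a compact range and hence keeps both $v_i$ bounded away from $0$ and $\infty$ — makes the resulting $v_i$-integral converge to a constant depending only on $\sigma,\epsilon,a$; this produces the factor $(1+r_1)^{\sigma+\epsilon}(1+r_2)^{\sigma+\epsilon}$. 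On the ``large'' part $v_i\gtrsim 1+r_i$ the Bessel factor is $\ll v_i^{-1/2}e^{-2\pi v_i}$ and the exponential weight only helps, so that contribution is $O_a(1)$ (in fact exponentially small in $r_i$ once $r_i$ is large, since the domain of integration is pushed out). Collecting the four corners, $\mathcal{W}(\nu,a,\psi)\ll_{\Re(\nu),a}(1+r_1)^{\sigma+\epsilon}(1+r_2)^{\sigma+\epsilon}$ where $\sigma$ is any upper bound for $\max(|\tfrac{\Re(\nu_1-\nu_2)}2|,|\tfrac{\Re(\nu_1+\nu_2)}2|)$; this is already a polynomial-in-$\Im(\nu)$ bound on the normalized function.

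Next I would convert to the unnormalized $W(\nu,a,\psi)$ by multiplying back by $\prod_{\alpha\in\Phi}\Gamma(\tfrac12+\scal{\nu}{\alpha_0})$. Each such gamma factor is $\Gamma$ evaluated at argument with bounded real part and imaginary part comparable to one of $r_1,r_2,r_1\pm r_2$; by Stirling, $|\Gamma(\tfrac12+\scal{\nu}{\alpha_0})|\ll|\scal{\Im(\nu)}{\alpha_0}|^{\scal{\Re(\nu)}{\alpha_0}}e^{-\frac\pi2|\scal{\Im(\nu)}{\alpha_0}|}(1+|\scal{\Im(\nu)}{\alpha_0}|)^{O(1)}$, so in particular $\ll(1+r_1)^{O(1)}(1+r_2)^{O(1)}$ with a negative exponential in each $r_i$ that only helps. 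Multiplying the polynomial bound for $\mathcal W$ by this, and re-expressing the powers of $(1+r_i)$ in terms of the four quantities $|\scal{\Im(\nu)}{\alpha_0}|$, gives $W(\nu,a,\psi)\ll_{\Re(\nu),a}\prod_{\alpha\in\Phi}|\scal{\Im(\nu)}{\alpha_0}|^{2|\scal{\Re(\nu)}{\alpha_0}|+\epsilon}$ for $\Re(\nu)$ small; the precise matching of exponents is where one uses $\Re(\nu)$ small enough so that each individual exponent is, say, $<\tfrac14$, absorbing all the $O(1)$ slack and all cross-terms into the $\epsilon$ and into the implied constant. The main obstacle — really the only delicate point — is book-keeping: one must verify that splitting at the Bessel transition points and bounding the Gaussian weight genuinely yields only the exponent $\sigma+\epsilon$ in each variable (no spurious $\log$ or extra power), and that the Stirling asymptotics of the four gamma factors line up root-by-root with the target product, so that after re-indexing one lands exactly on $2|\scal{\Re(\nu)}{\alpha_0}|$ rather than something slightly larger; both are routine but must be done with care since the statement is essentially sharp in the exponent.
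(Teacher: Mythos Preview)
Your proposal contains a genuine error in the passage from $\mathcal W$ to $W$. The normalisation is
\[
\mathcal W(\nu,a,\psi)=\frac1{4\pi^2}\Big(\prod_{\alpha\in\Phi}\Gamma\big(\tfrac12+\scal{\nu}{\alpha_0}\big)\Big)\,W(\nu,a,\psi),
\]
so to recover the \emph{unnormalised} $W$ you must \emph{divide} by $\prod_\alpha\Gamma(\tfrac12+\scal{\nu}{\alpha_0})$, not multiply. By Stirling each factor has modulus $\asymp |\scal{\Im\nu}{\alpha_0}|^{\scal{\Re\nu}{\alpha_0}}e^{-\frac\pi2|\scal{\Im\nu}{\alpha_0}|}$, hence the product is \emph{exponentially small} in $\Im(\nu)$, and its reciprocal is exponentially large. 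Your bound on $\mathcal W$, obtained via Lemma~\ref{EstimateBessel} exactly as in Lemma~\ref{TrivialWhittaker}, is only polynomial in $(1+r_1)(1+r_2)$; dividing a polynomially bounded quantity by an exponentially small one yields exponential growth, not the polynomial bound asserted in the Proposition. The sentence ``a negative exponential in each $r_i$ that only helps'' is therefore exactly backwards.

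This is not a mere bookkeeping slip: the crude Bessel estimates of Lemma~\ref{EstimateBessel} do not capture the exponential decay $K_{it}(x)\ll e^{-\pi|t|/2}$ for fixed $x>0$ and large $|t|$, which is precisely what would be needed to cancel the $e^{+\frac\pi2\sum_\alpha|\scal{\Im\nu}{\alpha_0}|}$ coming from $1/\prod_\alpha\Gamma$. One could in principle salvage your strategy by inserting sharper uniform asymptotics for $K_\nu$ that do exhibit this decay, but that is substantially more work than what you sketched, and the matching of exponents root-by-root would still require care.

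The paper takes a different and shorter route that sidesteps the issue entirely: it bounds $W$ (not $\mathcal W$) directly. When $\Re(\nu)$ lies in the open positive Weyl chamber $\Lie a^*_+$, the Jacquet integral converges absolutely and the trivial estimate~(\ref{majoration}) already gives $|W(\nu,a,\psi)|\le e^{(\rho-\Re\nu)(A(a))}c(\Re\nu)=O_{\Re\nu,a}(1)$, uniformly in $\Im(\nu)$. For $\Re(\nu)$ in any other open chamber, one applies the functional equation~(\ref{whittakerftneq}) with the Weyl element $\sigma$ sending $\Re(\nu)$ into $\Lie a^*_+$; the resulting ratio $\prod_{\alpha\in\Phi_\sigma}\Gamma(\tfrac12-\scal{\nu}{\alpha_0})/\Gamma(\tfrac12+\scal{\nu}{\alpha_0})$ has modulus $\asymp\prod_{\alpha\in\Phi_\sigma}|\scal{\Im\nu}{\alpha_0}|^{-2\scal{\Re\nu}{\alpha_0}}$, and since $\scal{\Re\nu}{\alpha_0}<0$ for $\alpha\in\Phi_\sigma$ this equals $\prod_{\alpha\in\Phi_\sigma}|\scal{\Im\nu}{\alpha_0}|^{2|\scal{\Re\nu}{\alpha_0}|}$, which is dominated by the product over all $\alpha\in\Phi$. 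Finally, for $\Re(\nu)$ on a wall one interpolates by Phragm\'en--Lindel\"of, using the polynomial a~priori bound from Lemma~\ref{TrivialWhittaker}; this is the only place that lemma enters.
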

\begin{proof}
	Observe that, if $\Re(\nu) \in \Lie{a}^*_+$, then the claim follows by the trivial bound~(\ref{majoration}).
	Next, if $\Re(\nu)$ belongs to any open Weyl chamber, there is $\sigma \in \Omega$ such that $Re(\sigma \cdot \nu) \in \Lie{a}^*_+$.
	The functional equation~({\ref{whittakerftneq}}) gives 
	$$W(\nu,a,\psi)=\prod_{\alpha \in \Phi} \frac{\Gamma\left(\frac12+\scal{\sigma \cdot \nu}{\alpha_0}\right)}{\Gamma\left(\frac12+\scal{ \nu}{\alpha_0}\right)}W(\sigma \cdot\nu,a,\psi).$$
	Since the Weyl group acts by permutation on the set of (positive and negative) roots, the product can be written as 
	$$\prod_{\alpha \in \Phi_\sigma} \frac{\Gamma\left(\frac12-\scal{\nu}{\alpha_0}\right)}{\Gamma\left(\frac12+\scal{ \nu}{\alpha_0}\right)}
	\ll \prod_{\alpha \in \Phi_\sigma} |\scal{\Im(\nu)}{\alpha_0}|^{-2\scal{\Re(\nu)}{\alpha_0}},$$
	where $\Phi_\sigma$ is the set of positive roots whose image by $\sigma$ is a negative root and we have used
	that $|\Gamma(x+iy)| \sim \sqrt{2\pi}e^{-\frac{\pi}{2}|y|}|y|^{x-\frac12}$ as $|y| \to \infty$ and that the numerator has no poles because 
	$\Re(\nu)$ is small enough.
	But if $\sigma \cdot \alpha$ is a negative root then we have $\scal{\Re(\nu)}{\alpha_0} <0$ and so we are done in this case again.
	Finally, if $\Re(\nu)$ belongs to a wall of a Weyl chamber, by Lemma~\ref{TrivialWhittaker} we may apply the Phragm{\'e}n-Lindel{\"o}f principle 
	to deduce the result.		
\end{proof}
\section{Eisenstein series and the spectral decomposition}
The goal of Eisenstein series is to describe the continuous spectrum. 
The latter is an orthogonal direct sum over standard parabolic subgroups $P$, each summand of which 
is a direct integral parametrized by $i\Lie{a}^*_P$.
Eisenstein series will give intertwining operators from some representation induced from 
$M_P$ to the corresponding part of the continuous spectrum.
One thus wants to define $E(\cdot,\phi,\nu)$ for $\phi$ in the space $\H_P$ of the aforementioned 
induced representation, and for $\nu \in i\Lie{a}^*_P$.
Because of convergence issues, one originally defines $E(\cdot,\phi,\nu)$ for $\phi$ 
lying a certain dense space of automorphic forms $\H_P^0 \subset \H_P$ and for $\nu \in \Lie{a}^*_P(\C)$ with large enough real part. 
The definition is then extended to all $\phi$ in the completion of $\H_P^0$ and to all
$\nu \in  \Lie{a}^*_P(\C)$.	 Our exposition follows Arthur, and in particular~\cite{ArthurIntro}.
\subsection{Definition of Eisenstein series}~\label{DefOfES}
	Fix a standard parabolic subgroup $P=N_PM_P$ throughout this section, and let $A_P$ be the centre of $M_P$,
and $A_P^+(\R)$ be the connected component of $1$ in $A_P(\R)$.
Let $R_{M_P, \text{disc}}$ be the restriction of the right regular representation of $M_P(\A)$
on the subspace of $L^2(M_P(\Q)A_P^+(\R)\back M_P(\A))$ that decompose discretely.
For $\nu \in \Lie{a}_P^*(\C),$ consider the tensor product
$R_{M_P, \text{disc}, \nu}(x)=R_{M_P, \text{disc}}(x)e^{\nu (H_{M_P}(x))}$.
The continuous spectrum is described via the Eisenstein series in terms of the induced representation
$$\I_P(\nu)=\text{Ind}_{P(\A)}^{G(\A)}(I_{N_P(\A)} \otimes R_{M_P, \text{disc}, \nu} ).$$
The space of this induced representation is independant of $\nu$ and is given in the following definition.
\begin{definition}\label{defofHP}
	Let $P=N_PM_P$ be the Levi decomposition of $P$, 
	$A_P$ be the centre of $M_P$ and $A_P^+(\R)$ be the connected component of $1$ in $A_P(\R)$. 
	We define $\H_P$ to be the Hilbert space obtained by completing
	the space $\H_P^0$ of functions
	\begin{equation}\label{Hp}
		\phi : N_P(\A) M_P(\Q) A_P^+(\R) \backslash G(\A) \to \C 
	\end{equation} such that
	\begin{enumerate}
		\item \label{discretness} 
		for any $x \in G(\A)$, the function $M_P(\A) \to \C, m \mapsto \phi(mx)$ 
		is $\mathscr Z_{M_P}$-finite, where $\mathscr Z_{M_P}$ is the centre of the universal
		enveloping algebra of $\mathfrak M_P(\C)$,
		\item \label{Kfinitness} $\phi$ is right $K$-finite,
		\item \label{L2} $\| \phi\|^2 = \int_K \int_{A_P(\R)^+ M_P(\Q) \backslash M_P(\A)} |\phi(mk)|^2dm dk
		< \infty.$
	\end{enumerate}
\end{definition}
Then the representation $\I_P(\nu)$ acts on $\H_P$ via 
$$(\I_P(\nu,y) \phi)(x)=\phi(xy)\exp((\nu+\rho_P)(H_P(xy)))\exp(-(\nu+\rho_P)(H_P(x))),$$
and is unitary for $\nu \in i\Lie{a}_P(\C)$.

We now define the Eisenstein series attached to $P$. 
We may $H_P$ extends to $P(\Q) \back G(\A)$ by setting $H_P(nmk)=H_P(m)$ ($n \in N_P, m \in M_P, k \in K$), therefore 
the expression in the following proposition is well defined.

\begin{proposition}\label{defofES}
	For $\nu \in \Lie{a}_P(\C)$ with large enough real part, if $x \in G(\A)$ and $\phi \in \H_P^0$, the 
	{\bf Eisenstein series}
	$$E(x, \phi, \nu)= \sum_{\delta \in P(\Q) \backslash G(\Q)} \phi(\delta x) \exp((\nu+\rho_P)(H_P(\delta x))$$
	converges absolutely.
\end{proposition}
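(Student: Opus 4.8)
The plan is to establish absolute convergence of the Eisenstein series
$$E(x,\phi,\nu)=\sum_{\delta\in P(\Q)\backslash G(\Q)}\phi(\delta x)\exp((\nu+\rho_P)(H_P(\delta x)))$$
for $\Re(\nu)$ sufficiently dominant by the classical comparison argument, reducing the convergence of the series to the convergence of a majorizing series of the form $\sum_{\delta}\exp((\Lambda+\rho_P)(H_P(\delta x)))$ for a suitable real linear form $\Lambda\in\Lie{a}_P^*$, and then to a lattice-point count inside the relevant cone. First I would fix $x\in G(\A)$ and use the Iwasawa decomposition $G(\A)=P(\A)K$ to reduce to bounding $|\phi(\delta x)|$: since $\phi\in\H_P^0$ lies in a space of $\mathscr Z_{M_P}$-finite, $K$-finite, square-integrable (hence, by Sobolev-type estimates on the arithmetic quotient $M_P(\Q)A_P^+(\R)\backslash M_P(\A)$, slowly increasing) functions on $N_P(\A)M_P(\Q)A_P^+(\R)\backslash G(\A)$, there is a constant $C=C(\phi,x)$ and a real form $\lambda\in\Lie{a}_P^*$ such that
$$|\phi(\delta x)|\le C\,\exp(\lambda(H_P(\delta x)))$$
for all $\delta\in P(\Q)\backslash G(\Q)$. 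Consequently the series is dominated termwise by $C\sum_{\delta\in P(\Q)\backslash G(\Q)}\exp((\Re(\nu)+\rho_P+\lambda)(H_P(\delta x)))$, and it suffices to show this last sum converges once $\Re(\nu)$ is large enough in the positive chamber.

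The main step is therefore the convergence of $\sum_{\delta\in P(\Q)\backslash G(\Q)}\exp(\mu(H_P(\delta x)))$ for $\mu\in\Lie{a}_P^*$ sufficiently anti-dominant (equivalently $\Re(\nu)$ sufficiently dominant). Here I would invoke the standard reduction theory for $G(\Q)\backslash G(\A)$: using a Siegel set $\mathfrak S$ for $G$ one has $G(\A)=G(\Q)\,\mathfrak S$, and the function $\delta\mapsto H_P(\delta x)$ on $P(\Q)\backslash G(\Q)$, after translating $x$ into the Siegel set, is controlled by the heights attached to the finitely many relative root data of $(G,P)$. The count of $\delta$ with $H_P(\delta x)$ in a bounded region is then polynomial, while each term decays exponentially in $\mu(H_P(\delta x))$; choosing $\mu$ to pair negatively with every positive coroot outside $\Lie{a}_P$ (which is exactly the condition ``$\Re(\nu)$ large enough'') makes the geometric-type sum converge. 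Since $\GSp_4$ has rank $2$, all of this is completely explicit: one may even run the argument directly from the Bruhat decomposition $G=\coprod_{\sigma\in\Omega}B\sigma B$ together with the description of $P\backslash G$ by the parabolic cosets $\Omega_P\backslash\Omega$, writing $\delta$ in coordinates coming from the unipotent radical of the opposite parabolic and estimating $H_P$ on these coordinates entrywise.

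The hard part is not any single estimate but assembling the two ingredients cleanly: quantifying ``slowly increasing'' for elements of $\H_P^0$ (i.e.\ pinning down the exponent $\lambda$ above in a way uniform over the coset space, which is where $\mathscr Z_{M_P}$-finiteness and $K$-finiteness enter, via the Harish-Chandra/Langlands theory of automorphic forms on the Levi $M_P$), and then checking that the threshold on $\Re(\nu)$ forced by this $\lambda$ is still a genuine cone (nonempty open set) in $\Lie{a}_P^*(\C)$. Once those are in hand the lattice-point count is routine. I would structure the write-up as: (i) Iwasawa reduction and the slowly-increasing bound on $\phi$; (ii) domination by a positive series over $P(\Q)\backslash G(\Q)$; (iii) reduction theory / Bruhat coordinates to turn that into a convergent sum for $\Re(\nu)$ dominant; and then quote that the resulting $E(x,\phi,\nu)$ is automorphic in $x$, which follows formally from the $G(\Q)$-translation invariance of the summation set.
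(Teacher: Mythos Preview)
Your outline is the standard Godement--Langlands argument and is essentially correct as a sketch of how this result is proved in the literature. However, the paper does not actually supply a proof of this proposition: it is stated without proof as a known input from Langlands' theory of Eisenstein series, following Arthur's exposition in~\cite{ArthurIntro}, and the paper immediately moves on to the analytic continuation and spectral decomposition. So there is no ``paper's own proof'' to compare against; the proposition functions here as a citation of a classical fact rather than something established within the paper.

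That said, your proposal is a faithful account of the classical argument. One small point: your phrase ``$\mu$ sufficiently anti-dominant (equivalently $\Re(\nu)$ sufficiently dominant)'' has the signs slightly tangled --- the standard convergence region is $\Re(\nu)-\rho_P$ lying in the open positive cone determined by the roots of $\Lie{a}_P$ in $N_P$, so the exponent $\Re(\nu)+\rho_P+\lambda$ you wrote should be sufficiently \emph{dominant}, not anti-dominant, for the lattice-point sum to converge. Also, the bound $|\phi(\delta x)|\le C\exp(\lambda(H_P(\delta x)))$ requires a bit more than square-integrability on the Levi quotient: one uses that $\phi$ restricted to $M_P$ lies in the discrete spectrum (condition~\ref{discretness} in Definition~\ref{defofHP}), which gives uniform moderate growth on a Siegel domain for $M_P$; this is where the $\mathscr{Z}_{M_P}$-finiteness is genuinely used, as you note. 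If you were to write this up in full you would want to cite a source such as \cite{ArthurIntro} or Moeglin--Waldspurger rather than redo reduction theory from scratch.
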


Langlands provided analytic continuation of Eisenstein series, as well as the spectral decomposition of $L^2(Z(\R)G(\Q) \back G(\A))$.
The latter gives a decomposition of the right regular representation $R$ as direct sum over association classes of parabolic subgroups.
The class of $G$, viewed as a parabolic subgroup itself, gives the {\bf discrete spectrum}.
It consists on one hand of cuspidal functions on $Z(\R)G(\Q) \back G(\A)$ and on the other hand 
of residues of Eisenstein series attached to proper parabolic subgroups.
The contribution of the other classes is given by direct integrals of corresponding induced representations and gives the 
{\bf continuous spectrum}.
We now describe explicitly the Eisenstein series that are relevant for us.

\subsection{Action of the centre  and of the compact \texorpdfstring{$\Gamma$}{Gamma}}

Since our test function $f$ is bi-$\Gamma$-invariant and has central character $\omega$, 
Eisenstein series occurring in the spectral expansion of its kernel $K_f$ are only from the subspaces of $\H_P$ satisfying similar properties 
(see Lemma~\ref{UnprovedLemma} below for a formal justification).
Using the Peter-Weyl Theorem, we can further reduce:
\begin{lemma}\label{CentralCharactersofP}
	Let $P$ be a standard parabolic subgroup and $A_P$ it centre.
	Let  $\H_P^\Gamma(\omega)$ be the closed subspace of $\H_P$ 
	consisting in functions $\phi$ such that for all $z \in Z(\A)$ and $k \in \Gamma$, we have $\phi(zgk)=\omega(z)\phi(g)$.
	Then 
	\begin{equation}\label{DecompOfCentralChar}
		\H_P^\Gamma(\omega)=\bigoplus_{\chi} \H_P^\Gamma(\chi)
	\end{equation}
	where the $\chi$-orthogonal direct sum ranges over characters of $A^+_P(\R)A_P(\Q) \backslash A_P(\A)$ that coincide with 
	$\omega$ on $Z(\A)$, and $\H_P^\Gamma(\chi)$ is the subspace of $\H_P^\Gamma(\omega)$
	consisting in functions $\phi$ such that for all $z \in A_P(\A)$, $\phi(zg)=\chi(z)\phi(g)$.
\end{lemma}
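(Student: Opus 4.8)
The plan is to combine the action of the compact abelian group $A_P^+(\R)A_P(\Q)\backslash A_P(\A)$ on the Hilbert space $\H_P^\Gamma(\omega)$ with the Peter--Weyl theorem, exactly as was done for $Z(\Q)Z(\R)\backslash Z(\A)$ acting on $L^2(G(\Q)Z(\R)\backslash G(\A))$ in Section~\ref{basickernel}. First I would check that $A_P(\A)$ acts on $\H_P$ by right translation: if $\phi\in\H_P^0$ and $z\in A_P(\A)$, then since $z$ normalizes $N_P$ and centralizes $M_P$, the function $g\mapsto\phi(gz)$ still satisfies the left-invariance~(\ref{Hp}), the $\mathscr Z_{M_P}$-finiteness~(\ref{discretness}), and (because $z$ normalizes $K$ up to the fact that $A_P(\A)\cap K$ is compact, and the $L^2$-norm in~(\ref{L2}) is computed over $A_P(\R)^+M_P(\Q)\backslash M_P(\A)\times K$, on which right translation by $A_P(\A)$ is unitary) the square-integrability~(\ref{L2}); and right $K$-finiteness~(\ref{Kfinitness}) is preserved since $z$ and $K$ commute up to elements of $A_P(\A)\cap K\subset K$. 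Hence right translation gives a well-defined action on $\H_P^0$, which extends to a unitary action on $\H_P$.

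Next I would observe that this action descends to the quotient group $A_P^+(\R)A_P(\Q)\backslash A_P(\A)$ on the subspace $\H_P^\Gamma(\omega)$: functions in $\H_P$ are already left $A_P^+(\R)M_P(\Q)$-invariant (so in particular left $A_P^+(\R)A_P(\Q)$-invariant, and for a torus left and right translation by $A_P(\Q)$ agree on these functions up to the modulus character, which is trivial on $A_P(\Q)$), so the action factors through $A_P(\A)/A_P^+(\R)A_P(\Q)$; and this quotient is compact — this is the standard fact that for the centre $A_P$ of a Levi, $A_P(\Q)A_P^+(\R)\backslash A_P(\A)$ is compact, proved just as for $Z$ via strong approximation for tori and finiteness of class numbers. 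The subspace $\H_P^\Gamma(\omega)$ is visibly stable under this action (right translation by $A_P(\A)$ commutes with right translation by $\Gamma$, up to $A_P(\A)\cap\Gamma$ which lies in both, and does not affect the central character $\omega$ since $A_P(\A)\supseteq Z(\A)$ and $\omega$ is fixed on $Z(\A)$).

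Then I would apply the Peter--Weyl theorem to the compact abelian group $\mathcal A:=A_P^+(\R)A_P(\Q)\backslash A_P(\A)$ acting unitarily on the Hilbert space $\H_P^\Gamma(\omega)$: its irreducible representations are characters, so $\H_P^\Gamma(\omega)$ decomposes as the orthogonal Hilbert direct sum of the isotypic subspaces $\H_P^\Gamma(\chi)$ as $\chi$ ranges over the characters of $\mathcal A$, where $\H_P^\Gamma(\chi)=\{\phi : \phi(zg)=\chi(z)\phi(g)\ \forall z\in A_P(\A)\}$ — note that such $\phi$ automatically satisfy the defining left-invariance of $\H_P$ only on $A_P^+(\R)A_P(\Q)$, so ``$\phi(zg)=\chi(z)\phi(g)$ for $z\in A_P(\A)$'' is the correct reformulation of the $\chi$-isotypic condition. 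Finally I would note that the characters $\chi$ that appear are exactly those whose restriction to $Z(\A)$ equals $\omega$: indeed $Z(\A)\subseteq A_P(\A)$ and on $\H_P^\Gamma(\omega)$ the centre $Z(\A)$ already acts by $\omega$, so every $\chi$-component with $\chi|_{Z(\A)}\neq\omega$ is zero, while conversely $\H_P^\Gamma(\omega)=\bigoplus_\chi\H_P^\Gamma(\chi)$ forces the surviving $\chi$ to restrict to $\omega$.

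The routine points are the verification that right translation by $A_P(\A)$ genuinely preserves conditions~(\ref{discretness})--(\ref{L2}) of Definition~\ref{defofHP} — in particular that $A_P(\A)$ commutes with enough of $K$ that $K$-finiteness survives — and I expect the only real subtlety, hence the main thing to state carefully, is the compactness of $A_P^+(\R)A_P(\Q)\backslash A_P(\A)$, which is what makes Peter--Weyl applicable; this is standard but should be cited (it is the analogue for the torus $A_P$ of the compactness of $Z(\Q)Z(\R)\backslash Z(\A)$ used earlier in Section~\ref{basickernel}).
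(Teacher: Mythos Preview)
Your proposal is correct and follows exactly the approach the paper takes: the paper gives no proof beyond the single clause ``Using the Peter--Weyl Theorem, we can further reduce,'' and your plan spells out precisely that argument --- the compact abelian group $A_P^+(\R)A_P(\Q)\backslash A_P(\A)$ acts unitarily on $\H_P^\Gamma(\omega)$, Peter--Weyl decomposes it into character eigenspaces, and only those $\chi$ restricting to $\omega$ on $Z(\A)$ survive. One small point: the action is more cleanly set up as \emph{left} translation (which is how the eigencondition $\phi(zg)=\chi(z)\phi(g)$ is stated), since $A_P$ centralises $M_P$ and normalises $N_P$, so left translation by $A_P(\A)$ manifestly preserves the left-invariance in~(\ref{Hp}) and commutes with the right $\Gamma$-action; your detour through right translation and the remark about $K$-finiteness is unnecessary once you work on the left.
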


\subsection{Explicit description of Eisenstein series}
	If $R_{M_P,\text{disc}}=\bigoplus_{\pi} \pi = \bigoplus_\pi \left(\bigotimes_v \pi_v \right)$ is the decomposition of 
	$R_{M_P,\text{disc}}$ into irreducible representations $\pi = \bigotimes_v \pi_v$ of $M_P(\A) / A_P(\R)^+$, then we have 
	$$\I_P(\nu)=\bigoplus_{\pi} \I_P(\pi_\nu) = \bigoplus_\pi \left(\bigotimes_v \I_P(\pi_{v,\nu})\right).$$
Moreover the representation space of each $\I_P(\pi_\nu)$ does not depend on $\nu$.
Hence, to describe the spaces $\H_P^\Gamma(\chi)$ it suffices to describe
\begin{itemize}
	\item the irreducible representations $\pi$ with central character $\chi$ occurring $R_{M_P,\text{disc}}$, 
	\item the $\Gamma$-fixed subspace of each representation $\I_P(\pi_\nu)$.
\end{itemize}
By the Iwasawa decomposition, elements of this space may be viewed as families of functions indexed by $\Gamma \back K$ satisfying some 
compatibility condition that we proceed to make explicit now. 
We also prove that the Archimedean part of $\I_P(\pi_\nu)$ is a principal series representation, and we provide its spectral parameter. 

\subsubsection{Borel Eisenstein series} 
\begin{lemma}\label{BorelSpectralParameter}
	The irreducible representations occurring in $R_{T,\text{disc}}$ are precisely characters $\chi$ of $T^+(\R)T(\Q) \back T(\A)$. 
	Let $\chi$ be such character and $\nu \in i\Lie{a}^*(\C)$. 
	The Archimedean part of $\I_B(\chi_\nu)$ is an irreducible principal series representation with spectral parameter $\nu$.
\end{lemma}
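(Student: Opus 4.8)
The plan is to establish the two assertions separately. For the first, note that for $P=B$ the Levi factor $M_B=T$ is abelian, so its centre $A_B$ is $T$ itself and $A_B^+(\R)=T^+(\R)$. The quotient $T^+(\R)T(\Q)\backslash T(\A)$ is a compact abelian group: under the identification of $T$ with $\mathbb{G}_m^3$ afforded by the parametrisation $\diag{x}{y}{tx^{-1}}{ty^{-1}}$, it becomes $(\R_{>0}\Q^\times\backslash\A^\times)^3\cong(\widehat{\Z}^\times)^3$. Hence $L^2(T(\Q)T^+(\R)\backslash T(\A))$ decomposes, by Pontryagin duality, as the Hilbert direct sum of the one-dimensional subspaces spanned by its unitary characters; in particular this whole $L^2$ space is its discrete part, so $R_{T,\text{disc}}$ is exactly this family of characters, each with multiplicity one. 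This is the first assertion.

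For the second, one unwinds the definitions of $\H_B$ and of the action of $\I_B(\nu)$ in the special case $M_B=T$, $N_B=U$: once $R_{T,\text{disc}}$ is decomposed into characters $\chi$, the representation $\I_B(\chi_\nu)$ is precisely the normalised parabolic induction $\Ind_{B(\A)}^{G(\A)}(\chi_\nu)$, where $\chi_\nu(m)=\chi(m)\exp(\nu(H_B(m)))$. Writing $\chi=\bigotimes_v\chi_v$, its Archimedean component is $\Ind_{B(\R)}^{G(\R)}(\chi_{\infty,\nu})$, which is by definition a principal series representation of $G(\R)$ (a full induction from the Borel). To identify the spectral parameter it then suffices to compare the defining transformation law of this induced model with \eqref{stdspace}: since $\chi$ is trivial on $T^+(\R)$, on $T^+(\R)$ the inducing character is simply $a\mapsto e^{(\rho_B+\nu)(\log a)}$, so $\nu$ is the spectral parameter in the sense used there.

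It remains to prove irreducibility of $\Ind_{B(\R)}^{G(\R)}(\chi_{\infty,\nu})$, and I expect this to be the main point requiring care. Since $\nu\in i\Lie{a}^*(\C)$ the inducing character is unitary, so the induced representation is a finite direct sum of irreducibles whose number is the order of the Knapp--Stein $R$-group $R_{\chi_{\infty,\nu}}$. The characters $\chi$ that actually occur in the spectral expansion of $K_f$ have $\chi_\infty$ trivial on $T(\R)\cap K_\infty$ (this is forced by the presence of a $K_\infty$-fixed vector in the Archimedean component), and since every coroot satisfies $\alpha^\vee(-1)\in A(\R)\cap K_\infty\subset T(\R)\cap K_\infty$ one gets $\chi_\infty\circ\alpha^\vee=\mathbf{1}$ for every root $\alpha$; the rank-one unitary principal series attached to each $\alpha$ is then irreducible (its inducing character is trivial on the relevant $\mathrm{SL}_2$-torus), so $R_{\chi_{\infty,\nu}}$ is trivial and $\Ind_{B(\R)}^{G(\R)}(\chi_{\infty,\nu})$ is irreducible for \emph{every} $\nu\in i\Lie{a}^*$. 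For a character $\chi$ with $\chi_\infty$ not trivial on $T(\R)\cap K_\infty$ — a case that does not arise in the spectral expansion of $K_f$ — irreducibility can fail on a locally finite union of walls of $i\Lie{a}^*$, but this is immaterial since $\I_B(\nu)$ enters the spectral decomposition only through an integral over $i\Lie{a}^*$. As an alternative to the $R$-group computation, one may instead read off irreducibility of the relevant $K_\infty$-spherical, generic constituent directly from the list of large representations of $G(\R)$ and their $K_\infty$-types recalled in the proof of Proposition~\ref{MustBePrincipalSeries}.
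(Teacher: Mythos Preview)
Your proof is correct. For the first assertion your argument is the same as the paper's, just spelled out in more detail (abelianness of the quotient and Pontryagin duality). For the second, the paper takes a much shorter route: it asserts that $\chi_\infty=1$, so that $\I_B(\chi_\nu)_\infty=\I_B(e^\nu)$, and then cites \cite{Muic}*{Lemma~5.1} for the irreducibility of this spherical unitary principal series. As you implicitly recognise, the blanket claim $\chi_\infty=1$ is not literally justified for an arbitrary character of $T^+(\R)T(\Q)\backslash T(\A)$: such a $\chi$ is only required to be trivial on $T^+(\R)$, so $\chi_\infty$ factors through the sign group $T(\R)/T^+(\R)\cong(\Z/2\Z)^3$ and need not vanish. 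Your $R$-group argument sidesteps this by isolating the characters with $\chi_\infty$ trivial on $T(\R)\cap K_\infty$---the only ones that can contribute a $K_\infty$-fixed vector and hence appear in the spectral expansion of $K_f$---and proving irreducibility for those directly, while correctly observing that any residual reducibility for other $\chi$ sits on a null subset of $i\Lie{a}^*$. The paper's approach buys brevity via an external reference; yours is self-contained and confronts a genuine looseness in the lemma as stated. In practice only the spherical case is used downstream, so both arguments suffice for the application.
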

\begin{proof}
	The first part is because $T^+(\R)T(\Q) \back T(\A)$ is abelian.
	For the second part, since $\chi_\infty=1$ we have  $\I_B(\chi_\nu)_\infty= \I_B(e^\nu)$, which is irreducible
	because $\nu \in i\Lie{a}^*(\C)$ (see~\cite{Muic}*{Lemma~5.1}).
\end{proof}
Characters $\chi$ of $T^+(\R)T(\Q) \back T(\A)$ that coincide with $\omega$ on $Z(\A)$ are in one-to-one 
correspondence with triplets $(\omega_1, \omega_2, \omega_3)$ of characters of $\R_{>0}\Q^\* \backslash \A^\*$  satisfying
$\omega_1 \omega_2 \omega_3^2 = \omega$, via 
$$\chi\left(\left[ \begin{smallmatrix}x&&&\\&y&&\\&&tx^{-1}&\\&&&ty^{-1}\end{smallmatrix}\right]\right)=\omega_1(x)\omega_2(y) \omega_3(t).$$
Define a character of $B$ by 
$$\omega\left(\left[ \begin{smallmatrix}x&*&*&*\\ &y&*&*\\&&tx^{-1}&*\\&&&ty^{-1}\end{smallmatrix}\right]\right)
=\omega_1(x)\omega_2(y)\omega_3(t)$$
(note that this notation is sound, as it coincides with our original $\omega$ on scalar matrices.)
\begin{proposition}\label{Borelsummand}
	Let $\chi=(\omega_1,\omega_2,\omega_3)$ with $\omega_1 \omega_2 \omega_3^2 = \omega$.
	Consider $(\phi_k)_{k \in K / \Gamma}$ such that
	\begin{enumerate}
		\item for all $k$, $\phi_k \in \C$,
		\item \label{Borelcompatibility} if $\gamma \in K \cap B(\A)$ then for all $k$, 
		$\phi_k=\chi(\gamma^{-1})\phi_{\gamma k}$.
	\end{enumerate} 
	Then the function on $G(\A)$ given for  $u \in U(\A), t \in T(\A), k \in K$ by
	\begin{equation}\label{Borelnmk}
		\phi(utk)= \chi(t) \phi_k,
	\end{equation}
	is well-defined and belongs to ${\H_B}^\Gamma(\chi)$. 
	Moreover, every function in ${\H_{B}}^\Gamma(\chi)$ has this shape.
\end{proposition}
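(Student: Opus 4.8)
The plan is to identify $\H_B^\Gamma(\chi)$ explicitly by unwinding the definition of $\H_B$ from Definition~\ref{defofHP} together with the extra invariance conditions defining $\H_B^\Gamma(\chi)$. Recall that an element of $\H_B$ is (the completion of the space of) a function $\phi$ on $N_B(\A)M_B(\Q)A_B^+(\R)\backslash G(\A)$; since here $B=UT$ with $N_B=U$ and $M_B=A_B=T$, such a function is left-invariant under $U(\A)T(\Q)T^+(\R)$. The space $\H_B$ is built from the induced representation $\I_B(\nu)$, whose inducing data on the torus is the discrete spectrum $R_{T,\mathrm{disc}}$; by Lemma~\ref{BorelSpectralParameter} this decomposes into characters of $T^+(\R)T(\Q)\backslash T(\A)$. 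The subspace $\H_B^\Gamma(\chi)$ fixes a single such character $\chi$ (and imposes $\Gamma$-invariance on the right and central character $\omega$, the latter automatic once $\omega_1\omega_2\omega_3^2=\omega$).

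First I would show that a function $\phi$ in $\H_B^\Gamma(\chi)$ is completely determined by its restriction to $K$: by the Iwasawa decomposition $G(\A)=B(\A)K=U(\A)T(\A)K$, writing $g=utk$ and using left-equivariance under $U(\A)T(\A)$ (with $T(\A)$ acting through the character $\chi$ once we pass to the $\chi$-isotypic component — note that the $A_B^+(\R)=T^+(\R)$-invariance together with the $T(\Q)$-invariance and the $\chi$-eigenproperty for $z\in T(\A)$ combine to give $\phi(utk)=\chi(t)\phi(k)$), we get exactly formula~(\ref{Borelnmk}). Here one must check that the decomposition $g=utk$ is sufficiently unique for this to be well-defined: the ambiguity in the Iwasawa decomposition is exactly multiplication of $k$ on the left by an element $\gamma\in K\cap B(\A)$ (compensated on $t$), and this is precisely what forces the compatibility condition~(\ref{Borelcompatibility}). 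Setting $\phi_k=\phi(k)$, the requirement that $\phi$ be well-defined is $\chi(t\gamma)\phi_{\gamma^{-1}k}=\chi(t)\phi_k$ for $\gamma\in K\cap B(\A)$, which rearranges to $\phi_k=\chi(\gamma^{-1})\phi_{\gamma k}$ after relabelling — matching condition~(\ref{Borelcompatibility}).

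Conversely, given a family $(\phi_k)_{k\in K/\Gamma}$ satisfying (1) and (\ref{Borelcompatibility}), I would define $\phi$ by~(\ref{Borelnmk}) and verify: (i) well-definedness, by reversing the computation above using (\ref{Borelcompatibility}); (ii) the left-invariance properties placing $\phi$ in $\H_B$ — the $\mathscr Z_{M_P}$-finiteness condition (\ref{discretness}) is vacuous since $M_B=T$ is abelian and $\phi$ is literally an eigenfunction for $T(\A)$, and the $K$-finiteness (\ref{Kfinitness}) and $L^2$-condition (\ref{L2}) hold because $\phi$ is right-$\Gamma$-invariant, hence $K$-finite, and the relevant quotient $A_B^+(\R)T(\Q)\backslash T(\A)$ over which one integrates $|\phi|^2$ is compact so the norm is finite (it is essentially a finite sum of $|\phi_k|^2$); (iii) the central character and $\Gamma$-invariance, which are built in. Finally the $\chi$-eigenproperty under $A_B(\A)=T(\A)$ holds by construction since $\chi$ is a character of the quotient and $\phi(ztk)=\chi(zt)\phi_k=\chi(z)\phi(utk)$ for $z\in T(\A)$.

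I expect the main subtlety to be the bookkeeping around the Iwasawa decomposition: making precise that the ambiguity in writing $g=utk$ is exactly left-translation of $k$ by $K\cap B(\A)$ (with a compensating change in $t$, which lies in $T(\A)$ and so is governed by $\chi$), and checking that condition~(\ref{Borelcompatibility}) is stated on the correct side (indices $k\in K/\Gamma$ versus $K\backslash\Gamma$) and with the character evaluated at the correct argument. Everything else — finiteness of the norm, the vacuousness of the $\mathscr Z_{M_P}$-finiteness condition in the torus case, the passage between $\H_B^0$ and its completion $\H_B$ (the space here being already ``small'', indexed by the finite set $K/\Gamma$, so $\H_B^0$ and $\H_B$ agree on this $\Gamma$-fixed $\chi$-isotypic piece) — is routine once the Iwasawa bookkeeping is in place.
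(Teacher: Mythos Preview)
Your proposal is correct and follows essentially the same approach as the paper: use the Iwasawa decomposition to reduce to data on $K/\Gamma$, identify the ambiguity in that decomposition as left translation by $K\cap B(\A)$ to obtain condition~(\ref{Borelcompatibility}), and verify the $\H_B$-conditions using compactness of $T^+(\R)T(\Q)\backslash T(\A)$ and finiteness of $K/\Gamma$. The only difference is presentational---you treat the ``every function has this shape'' direction first, whereas the paper first checks well-definedness of the construction and then proves exhaustion---but the content is the same.
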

\begin{proof}
	We first prove that $\phi$ is well-defined.
	Suppose $u_1t_1k_1=u_2t_2k_2$.	
	In particular $k_1k_2^{-1}=(u_1t_1)^{-1}(u_2t_2) \in B(\A) \cap K$.
	Therefore
	$$
	\chi(t_1)\phi_{k_1}=	\chi(t_1)\chi({k_1k_2^{-1}})\phi_{k_2}\\
	=\chi(t_2)\phi_{k_2}.$$
	Next we show that $\phi$ belongs indeed to  ${\H_{B}}^\Gamma(\omega_1, \omega_2, \omega_3)$. 
	The fact that $\phi$ is invariant on the left by $U(\A)T(\Q)T(\R)^+$, the right invariance by $\Gamma$ and the fact that $\phi$ transforms
	under $T(\A)$ according to $\chi$ are obvious from the definition. Finally,
	\begin{align*}
		\int_K \int_{T(\R)^+ T(\Q) \backslash T(\A)} |\phi(mk)|^2dmdk
		&=\int_K \int_{(\R_{>0} \Q^\* \backslash \A^\*)^3} |\phi_k|^2dmdk\\
		&=Vol(\R_{>0} \Q^\* \backslash \A^\*)^3Vol(\Gamma)\sum_{k \in K / \Gamma}	|\phi_k|^2
		<\infty
	\end{align*}
	since $\R_{>0}\Q^\* \backslash \A^\*$ is compact and $K / \Gamma$ is finite.
		As a last point, we show that we thus exhaust all of ${\H_{B}}^\Gamma(\omega_1, \omega_2,\omega_3)$.
	Let $\phi \in {\H_{B}}^\Gamma(\omega_1, \omega_2,\omega_3)$. Define 
	$$\phi_k=\phi(k).$$
	Then it is clear that equation~(\ref{Borelnmk}) holds.
	As for condition~\ref{Klingencompatibility}, note that if $\gamma=t_\gamma u_\gamma \in K \cap B(\A)$ 
	with $t_\gamma \in T(\A)$ and $u_\gamma \in U(\A)$ then
	\begin{align*}
		\phi_{\gamma k}&=\phi(\gamma k)\\
		&=\phi (t_\gamma u_\gamma k) = \chi(t_\gamma) \phi(k)\\
		&=\chi(\gamma)\phi_k.
	\end{align*}
\end{proof}
\begin{remark}
	Consider the action of $K \cap B(\A)$ on $K / \Gamma$ by multiplication on the left.
	Then the compatibility condition 2. can only be met if $\omega$ is trivial on
	the stabilizer of each element of $K / \Gamma$.
	In this case, the dimension of ${\H_{B}}^\Gamma(\chi)$ is the number of distinct orbits.
\end{remark}

\subsubsection{Klingen Eisenstein series}~\label{KES}
Characters $\chi$ of $\Ak^+(\R)\Ak(\Q) \back \Ak(\A)$ that coincide with $\omega$ on $Z(\A)$ are in one-to-one 
correspondence with pairs $(\omega_1, \omega_2)$ of characters of $\R_{>0}\Q^\* \backslash \A^\*$  satisfying
$\omega_1 \omega_2 = \omega$, via 
$$\chi\left(\diag{u}{t}{u}{t^{-1}u^2}\right)=\omega_1(u)\omega_2(t).$$
For convenience, if $A=\mat{a}{b}{c}{d} \in \GL_2$, define 
$\iota_A=\left[ \begin{smallmatrix}a&&b&\\ &1&&\\ c&&d&\\&&&\det(A)\end{smallmatrix}\right]\in \Mk,$
and if 
$$
p=\left[ \begin{smallmatrix}
	a	&	&b	&	*			\\ 
	*	&t	&*	&	*			\\ 
	c	&	&d	&	*			\\
	&	&	& t^{-1}\det(A)
\end{smallmatrix}\right],
$$
define $\sigma_{\text K}(p)=A$ and $t(p)=t$.
We may extend $\sigma_{\text K}$ to all of $\Pk(\A)$ by setting $\sigma_{\text K}(nm)=\sigma_{\text K}(m)$ (and similarly for $t$), 
and we may view $\omega_2$ as the character of $\Pk(\A)$ defined by $\omega_2(p)=\omega_2(t(p))$.
\begin{lemma}\label{KlingenInducing}
	Let $\chi=(\omega_1,\omega_2)$ with $\omega_1\omega_2=\omega$.
	The irreducible representations with central character $\chi$ occurring in $R_{\Mk,\text{disc}}$ are twists $\omega_2 \otimes \pi$, where $\pi$ 
	occurs in the discrete spectrum of $L^2(\R_{>0}\GL_2(\Q)\back \GL_2(\A))$ and has central character $\omega_1$.
\end{lemma}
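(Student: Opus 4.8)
The plan is to identify the discrete spectrum of $L^2(\R_{>0}\GL_2(\Q)A_{\text K}^+(\R)\back \Mk(\A))$ with the corresponding spectrum for $\GL_2$ after stripping off the contribution of the centre $\Ak$, using the product decomposition of $\Mk$. First I would make the structure of $\Mk$ explicit: via the map $A \mapsto \iota_A$ together with the entry $t$, the Levi $\Mk$ is the quotient of $\GL_2 \times \GL_1$ (the $\GL_1$ factor being the $t$-coordinate) by the antidiagonal copy of $\GL_1$ identifying the scalar matrices in $\GL_2$ with squares in the centre. Concretely, writing $m = z\,\iota_A$ with $z \in \Ak$ and $A \in \GL_2$, the assignment $m \mapsto (A \bmod Z_{\GL_2}, t(m))$ is well-defined, and one checks that $\Mk(\A)/\Ak(\A)A_{\text K}^+(\R)$ is isomorphic (as a locally compact group, compatibly with the arithmetic subgroup $\GL_2(\Q)$) to $\bigl(\R_{>0}Z_{\GL_2}(\Q)\back \GL_2(\A)/Z_{\GL_2}(\A)\bigr)$; the $t$-coordinate is absorbed because, once we also impose the central character condition, it is determined up to the central character $\omega_2$.

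Second, I would carry out the spectral bookkeeping. Fix a character $\chi = (\omega_1,\omega_2)$ of $\Ak^+(\R)\Ak(\Q)\back\Ak(\A)$ with $\omega_1\omega_2 = \omega$, under the parametrization $\chi\bigl(\diag{u}{t}{u}{t^{-1}u^2}\bigr)=\omega_1(u)\omega_2(t)$ given just above the statement. An irreducible $\sigma$ occurring in $R_{\Mk,\mathrm{disc}}$ with central character $\chi$ is, by restriction to the (reductive, but essentially $\GL_2$) part, of the form $\omega_2 \otimes \pi$ where $\pi$ is a representation of $\GL_2(\A)$ — here $\omega_2$ is viewed, as in the paper's setup for the Klingen parabolic, as the character $p \mapsto \omega_2(t(p))$. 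Matching central characters on the $u$-direction forces the central character of $\pi$ to be $\omega_1$: indeed a scalar $\mathrm{diag}(u,u) \in \GL_2$ sits inside $\Ak$ as $\diag{u}{u}{u}{u^{-1}u^2}=\diag{u}{u}{u}{u}$ essentially, so $\omega_{\pi}(u) = \omega_1(u)$. Conversely, given such a $\pi$ in the discrete spectrum of $L^2(\R_{>0}\GL_2(\Q)\back\GL_2(\A))$ with central character $\omega_1$, the twist $\omega_2 \otimes \pi$ is visibly a subrepresentation of $L^2(\Mk(\Q)A_{\text K}^+(\R)\back\Mk(\A),\chi)$ and is discretely occurring there since $\pi$ is; so the correspondence is a bijection. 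The equality of discrete decompositions follows because the isomorphism of groups above is measure-preserving (for suitably normalized Haar measures) and carries one $L^2$-space isometrically onto the other, twisted by $\omega_2$.

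The main obstacle, and the one step worth being careful about, is the bookkeeping of the centre: $\Ak$ is two-dimensional and the ambient $\GL_2$ has its own one-dimensional centre, and these overlap but do not coincide, so one must check that "modding out $A_{\text K}^+(\R)\Ak(\Q)\back\Ak(\A)$ and fixing $\chi$" on the $\Mk$ side corresponds exactly to "modding out $\R_{>0}Z_{\GL_2}(\A)$ and fixing central character $\omega_1$" on the $\GL_2$ side, with the leftover $t$-direction contributing precisely the scalar twist $\omega_2$ and nothing more (in particular no extra continuous parameter leaks in, because $A_{\text K}^+(\R)$ kills the Archimedean $t$-ray and $\omega_2$ pins down the rest on $\R_{>0}\Q^\*\back\A^\*$, which is compact). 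Once that identification is set up cleanly, discreteness and irreducibility transfer formally and the lemma follows; I would write this with a short explicit computation of $H_{\Mk}$ restricted to $\Ak$ and $\iota_{\GL_2}$ to make the measure and character matching transparent, and otherwise not belabour the routine isomorphism verification.
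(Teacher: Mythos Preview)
Your approach is essentially the paper's: both identify $L^2(\Mk(\Q)\Ak^+(\R)\back\Mk(\A),\chi)$ with $L^2(\R_{>0}\GL_2(\Q)\back\GL_2(\A),\omega_1)$ by peeling off the $t$-coordinate as the twist $\omega_2$, the paper doing this in a single line via the explicit map $\phi\mapsto\bigl(m\mapsto\omega_2(t(m))\,\phi(\sigma_{\text K}(m))\bigr)$. One small correction that simplifies your bookkeeping: $\Mk$ is isomorphic to $\GL_2\times\GL_1$ outright via $(A,t)\mapsto\iota_A\cdot\mathrm{diag}(1,t,1,t^{-1})$ --- no quotient is needed, as the $(4,4)$-entry $t^{-1}\det A$ introduces no identification --- so once you use this product structure directly, your concerns about overlapping centres disappear and the argument collapses to the paper's one-liner.
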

\begin{proof}
	Let $\pi$ be an irreducible representations with central character $\chi$ occurring $R_{\Mk,\text{disc}}$.
	By definition, we may realize $\pi$ in the subspace of $L^2(\Mk(\Q)\Ak^+(\R)\back \Mk(\A))$ consisting of functions with central character 
	$\chi$.
	This space identifies with $L^2(\R_{>0}\GL_2(\Q)\back \GL_2(\A),\omega_1)$ via 
	$\phi \mapsto \left( \left[ \begin{smallmatrix}
		a	&	&b	&				\\ 
			&t	&	&				\\ 
		c	&	&d	&				\\
		&	&	& t^{-1}\det(A)
	\end{smallmatrix}\right] \mapsto \omega_2(t)\phi(\mat{a}{b}{c}{d}) \right).$
\end{proof}

\begin{proposition}\label{Klingensummand}
	Let $\chi=(\omega_1,\omega_2)$ with $\omega_1\omega_2=\omega$.
	Let $(\pi,V_\pi)$ occur in the discrete spectrum of $L^2(\R_{>0}\GL_2(\Q)\back \GL_2(\A))$ with central character $\omega_1$.
	Consider $(\phi_k)_{k \in K / \Gamma}$ such that
	\begin{enumerate}
		\item for all $k$, $\phi_k \in V_\pi$,
		\item \label{Klingencompatibility} if $\gamma \in K \cap \Pk(\A)$ then for all $k$, 
		$\phi_k(\cdot \sigma_{\text K}(\gamma))=\omega_2 \circ t(\gamma^{-1})\phi_{\gamma k}$.
	\end{enumerate} 
	Then the function on $G(\A)$ given for  $n \in \Nk(\A), m \in \Mk(\A), k \in K$ by
	\begin{equation}\label{Klingennmk}
		\phi(nmk)= \omega_2 \circ t(m) \phi_k(\sigma_{\text K}(m)),
	\end{equation}
	is well-defined and belongs to ${\I_{\Pk}((\omega_2 \otimes \pi)_\nu)}^\Gamma$. 
	Moreover, every function in ${\I_{\Pk}((\omega_2 \otimes \pi)_\nu)}^\Gamma$ has this shape.
\end{proposition}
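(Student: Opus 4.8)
The argument runs parallel to that of Proposition~\ref{Borelsummand}, the one genuine novelty being that the Levi $\Mk$ is non-abelian: in place of the character $\chi$ of the Borel case we must carry along the automorphic representation $\pi$ of $\GL_2(\A)$, and the bridge between the two pictures is the isomorphism $V_{\omega_2\otimes\pi}\cong V_\pi$ furnished by Lemma~\ref{KlingenInducing}, namely $\Phi\mapsto\bigl(m\mapsto\omega_2(t(m))\,\Phi(\sigma_{\text K}(m))\bigr)$. I would first check that~(\ref{Klingennmk}) gives a well-defined function on $G(\A)$, since the Iwasawa decomposition $g=nmk$ is not unique. If $n_1m_1k_1=n_2m_2k_2$ with $n_i\in\Nk(\A)$, $m_i\in\Mk(\A)$, $k_i\in K$, then $\gamma:=k_1k_2^{-1}$ lies in $K\cap\Pk(\A)$; writing $\gamma=n_\gamma m_\gamma$ and using normality of $\Nk$ in $\Pk$, comparison of Levi components yields $m_1=m_2m_\gamma^{-1}$, whence $\sigma_{\text K}(m_1)=\sigma_{\text K}(m_2)\sigma_{\text K}(\gamma)^{-1}$ and $t(m_1)=t(m_2)t(\gamma)^{-1}$ ($t$ being a homomorphism on $\Mk$). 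Substituting into~(\ref{Klingennmk}) and applying the compatibility condition~\ref{Klingencompatibility} for $\gamma$, together with $\gamma k_2=k_1$, collapses $\omega_2(t(m_1))\phi_{k_1}(\sigma_{\text K}(m_1))$ to $\omega_2(t(m_2))\phi_{k_2}(\sigma_{\text K}(m_2))$, exactly as in the Borel computation. Right $\Gamma$-invariance is then automatic, $\phi_k$ depending only on the class of $k$ in $K/\Gamma$.

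Next I would confirm that $\phi$ lies in the asserted subspace. Left $\Nk(\A)$-invariance is built into~(\ref{Klingennmk}), while left invariance under $\Mk(\Q)$ and $\Ak^+(\R)$ follows, after conjugating the left factor past the $\Nk$-part, from the triviality of $\omega_1$ and $\omega_2$ on $\Q^\times$ and on $\R_{>0}$ and from each $\phi_k$ having central character $\omega_1$; thus $m\mapsto\phi(mk)$ descends to a function on $\Mk(\Q)\Ak^+(\R)\back\Mk(\A)$, and by construction it is the image of $\phi_k\in V_\pi$ under the isomorphism of Lemma~\ref{KlingenInducing}, so it lies in $V_{\omega_2\otimes\pi}$ --- placing $\phi$ in the $(\omega_2\otimes\pi)$-summand of $\I_{\Pk}(\nu)$. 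The $\mathscr Z_{\Mk}$-finiteness condition of Definition~\ref{defofHP} holds because $\pi$ is irreducible, the right-$K$-finiteness because $[K:\Gamma]<\infty$, and the $L^2$-condition because, using unitarity of $\omega_2$ and the fact that Lemma~\ref{KlingenInducing} is an isometry up to the (finite) volume of $\R_{>0}\Q^\times\back\A^\times$, the norm of $\phi$ works out to a finite multiple of $\sum_{k\in K/\Gamma}\|\phi_k\|_{V_\pi}^2$, which is finite since $K/\Gamma$ is finite. The $\Gamma$-fixedness is again immediate from the indexing.

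For the converse, given $\phi\in{\I_{\Pk}((\omega_2\otimes\pi)_\nu)}^\Gamma$ I would let $\phi_k\in V_\pi$ be the preimage under the isomorphism of Lemma~\ref{KlingenInducing} of the function $m\mapsto\phi(mk)$; equivalently $\phi_k$ is characterised by $\phi(mk)=\omega_2(t(m))\phi_k(\sigma_{\text K}(m))$, and left $\Nk(\A)$-invariance of $\phi$ upgrades this to~(\ref{Klingennmk}), while right $\Gamma$-invariance shows $\phi_k$ depends only on $k\bmod\Gamma$. Running the $\gamma=n_\gamma m_\gamma$ bookkeeping of the first step in reverse then yields the compatibility condition~\ref{Klingencompatibility}. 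I expect the only real subtlety throughout to be keeping the interplay of $\sigma_{\text K}$, $t$, and the $\GL_2$-structure of $V_\pi$ straight --- this is the single place where the non-abelian Levi genuinely intervenes --- but once Lemma~\ref{KlingenInducing} is granted, every step is the routine analogue of its counterpart for the Borel subgroup.
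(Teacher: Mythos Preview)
Your proof is correct and follows essentially the same route as the paper's: both establish well-definedness by comparing two Iwasawa decompositions via an element of $K\cap\Pk(\A)$ and the compatibility condition, verify membership in $\H_{\Pk}^0$ by checking the defining properties directly, and obtain the converse by setting $\phi_k(A)=\phi(\iota_A k)$ and reading off condition~\ref{Klingencompatibility}. The only cosmetic differences are that the paper works with $k_2k_1^{-1}$ rather than your $\gamma=k_1k_2^{-1}$, and that you phrase the identification with $V_\pi$ through the isomorphism of Lemma~\ref{KlingenInducing} whereas the paper writes out the transformation law for $m\mapsto\phi(mg)$ explicitly.
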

\begin{remark}\label{KlingenMaassForm}
	Condition~(\ref{Klingencompatibility}) implies that each $\phi_k$ is right-$SO_2(\R)$-invariant (and hence must be an adelic Maa{\ss} form or a character).
	Indeed, let $v \le \infty$ and let $k_v$ be a compact subgroup of $\GL_2(\Q_v)$ such that
	$$\left\{\iota_A : A \in k_v\right\} \subset K_v.$$
	Assume moreover that $K_v=\Gamma_v$. Then $K / \Gamma$ is left invariant by $\Gamma_v$, hence for all
	$A \in k_v$ we have $\phi_k( \cdot A)=\phi_{\iota_A k}=\phi_k$.
	In particular, for $v=\infty$, we may take $k_v=O_2(\R)$, hence the claim.
\end{remark}
\begin{proof}
	We first prove that $\phi$ is well-defined.
	Suppose $n_1m_1k_1=n_2m_2k_2$.	
	In particular $k_2k_1^{-1}=(n_2m_2)^{-1}(n_1m_1) \in \Pk(\A) \cap K$.
	Therefore $\sigma_{\text K}(m_1)=\sigma_{\text K}(n_1m_1)=\sigma_{\text K}(n_2m_2k_2k_1^{-1})=\sigma_{\text K}(m_2)\sigma_{\text K}(k_2k_1^{-1})$.
	Then 
	\begin{align*}
		\omega_2 \circ t(m_1) \phi_{k_1}(\sigma_{\text K}(m_1)) &=  \omega_2 \circ t(m_1)\phi_{k_1}(\sigma_{\text K}(m_2)\sigma_{\text K}(k_2k_1^{-1})) \\
		&= \omega_2 \circ t(m_1) \omega_2 \circ t(k_1k_2^{-1}) \phi_{k_2}(\sigma_{\text K}(m_2)) = \omega_2 \circ t(m_2)\phi_{k_2}(\sigma_{\text K}(m_2)).
	\end{align*}
	Next we show that $\phi$ belongs indeed to ${\I_{\Pk}((\omega_2 \otimes \pi)_\nu)}^\Gamma$. 
	The fact that $\phi$ is invariant on the left by $\Nk(\A)\Mk(\Q)\Ak(\R)^\circ$ and the right invariance by $\Gamma$ are obvious from the definition. 
	The fact that $\phi$ is square integrable follows from
		\begin{align*}
		\int_K \int_{\Ak(\R)^+ \Mk(\Q) \backslash \Mk(\A)} &|\phi(mk)|^2dmdk
		=\int_K \int_{\Ak(\R)^+ \Mk(\Q) \backslash \Mk(\A)} |\phi_k(\sigma_{\text K}(m))|^2dmdk\\
		&=\sum_{k \in K / \Gamma} Vol(\Gamma) \int_{\R_{>0} \GL_2(\Q) \backslash \GL_2(\A)} \int_{\R_{>0}\Q^\* \backslash \A^\*}
		|\phi_k(x)|^2dtdx
		<\infty
	\end{align*}
	since $\phi_k$ is square integrable, $\R_{>0}\Q^\* \backslash \A^\*$ is compact and $K / \Gamma$ is finite.
	Finally, we need to show that for all $g=nmk$, the function $\phi_g : \Mk(\A) \to \C, m_1 \mapsto \phi(m_1g)$
	transform under $\Mk(\A)$ on the right according to $\omega_2 \otimes \pi$.
	Indeed, for $m_1 \in \Mk(\A)$ we have 	
	$$\phi_g(m_1)=\phi(m_1nmk)=\phi(\underbrace{m_1nm_1^{-1}}_{\in \Nk(\A)}m_1mk)=\omega_2 \circ t (m) \omega_2 \circ t(m_1) \phi_k(m_1)$$
	hence the claim since $\phi_k \in V_\pi$.
	
	As a last point, we show that ${\I_{\Pk}((\omega_2 \otimes \pi)_\nu)}^\Gamma$ consists exactly in such functions.
	Let $\phi \in {\I_{\Pk}((\omega_2 \otimes \pi)_\nu)}^\Gamma$. Define 
	$$\phi_k(A)=\phi(\iota_A k).$$
	Then it is clear that equation~(\ref{Klingennmk}) holds.
	As for condition~(\ref{Klingencompatibility}), note that if $\gamma=n_\gamma m_\gamma \in K \cap \Pk(\A)$ then
	\begin{align*}
		\phi_k(A\pi(\gamma))&=\phi(\iota_A\iota_{\pi(\gamma)}k)\\
		&=\phi \left(\iota_A \left[ \begin{smallmatrix}1&&&\\&t(\gamma)^{-1}&&\\&&1&\\&&&t(\gamma)\end{smallmatrix}\right]
		m_\gamma k \right)\\
		&=\omega_2 \circ t (\gamma^{-1}) \phi(\iota_A n_\gamma^{-1} \gamma k)\\
		&=\omega_2 \circ t (\gamma^{-1}) \phi(\underbrace{\iota_A n_\gamma^{-1} \iota_A^{-1}}_{\in \Nk} \iota_A \gamma k)\\
		&=\omega_2 \circ t (\gamma^{-1}) \phi_{\gamma k}(A).
	\end{align*}
	Finally, by definition of ${\I_{\Pk}((\omega_2 \otimes \pi)_\nu)}$ the function $m \mapsto \phi(mk)$ transforms 
	under $\Mk(\A)$ on the right according to $\omega_2 \otimes \pi$, from which follows $\phi_k$ transforms according to $\pi$. 
\end{proof}
Finally we prove the following
\begin{proposition}	\label{KlingenSpectralParameter}
Let $(\pi,V_\pi)$ occur in the discrete spectrum of $L^2(\R_{>0}\GL_2(\Q)\back \GL_2(\A))$ with central character $\omega_1$.
$\I_{\Pk}((\omega_2 \otimes \pi)_\nu)$ has a $K_\infty$-fixed vector if and only if $\pi$ has a $O_2(\R)$-fixed vector.
In this case, $\I_{\Pk}((\omega_2 \otimes \pi)_\nu)_\infty$ is generic if and only if $\pi_\infty$ is a principal series.
Finally if  $\pi_\infty$ is a spherical principal series with spectral parameter $s$ and $\nu \in i\Lie{a}_{\Mk}^*$
then $\I_{\Pk}((\omega_2 \otimes \pi)_\nu)_\infty$ 
is a principal series representation with spectral parameter $\nu+\nu_{\text{K}}(s)$, where $\nu_{\text{K}}(s)$ is the element of $\Lie{a}^*(\C)$ 
corresponding to the character $\diag{y^{\frac12}}{u}{y^{-\frac12}}{u^{-1}} \mapsto |y|^s$.
\end{proposition}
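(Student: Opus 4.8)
The plan is to reduce the entire statement to the Archimedean place. Using the factorization $\I_{\Pk}((\omega_2\otimes\pi)_\nu)=\bigotimes_v\I_{\Pk}(((\omega_2\otimes\pi)_v)_\nu)$ recalled above, together with the identification $\Mk\cong\GL_2\times\GL_1$ via the maps $\sigma_{\text K}$ and $t$ from Section~\ref{KES}, everything comes down to understanding the normalized induced representation
\[
\I_{\Pk}((\omega_2\otimes\pi)_\nu)_\infty=\Ind_{\Pk(\R)}^{G(\R)}\bigl((\omega_{2,\infty}\otimes\pi_\infty)\otimes e^{\nu(H_{\Pk}(\cdot))}\bigr),
\]
in which $\omega_{2,\infty}$ acts through the $\GL_1$-factor $t$ and $\pi_\infty$ through the $\GL_2$-factor $\sigma_{\text K}$. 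For the first assertion I would use the Iwasawa decomposition $G(\R)=\Pk(\R)K_\infty$: restricting sections to $K_\infty$ identifies the restriction to $K_\infty$ of the induced representation with $\Ind_{\Pk(\R)\cap K_\infty}^{K_\infty}\bigl((\omega_{2,\infty}\otimes\pi_\infty)|_{\Pk(\R)\cap K_\infty}\bigr)$, the twist $e^{\nu H_{\Pk}}$ being trivial on the compact group $\Pk(\R)\cap K_\infty$, so Frobenius reciprocity for compact groups yields $(\I_{\Pk}((\omega_2\otimes\pi)_\nu)_\infty)^{K_\infty}\cong V_{\pi_\infty}^{\Pk(\R)\cap K_\infty}$. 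A direct matrix computation then identifies $\Pk(\R)\cap K_\infty$: its image under $\sigma_{\text K}$ is $O_2(\R)$, and it meets the $t$-factor only in $\{\pm1\}$, the nontrivial element being the central matrix $\diag{1}{-1}{1}{-1}$. The point requiring care is that this central $\{\pm1\}$ imposes no extra condition: since $f_\infty$ is $Z(\R)$-invariant, $\omega_\infty$ is trivial on $Z(\R)$, and combining this with $\omega_1\omega_2=\omega$ and the fact that $\pi_\infty(-I_2)$ acts trivially on any $SO_2(\R)$-fixed vector forces $\omega_{2,\infty}(-1)=1$. Hence $V_{\pi_\infty}^{\Pk(\R)\cap K_\infty}=V_{\pi_\infty}^{O_2(\R)}$, which is the first claim.

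For genericity I would fix a generic character $\psi=\psi_\m$ and read off from~(\ref{genchar}) that its restriction to $U(\R)\cap\Mk(\R)=\{u(0,0,0,c)\}$, the unipotent radical of the Borel of the $\GL_2$-factor of $\Mk$, is $c\mapsto\theta(m_2c)$, which is nondegenerate because $m_2\neq0$. By the standard theory of Whittaker functionals for parabolically induced representations of real reductive groups, $\I_{\Pk}((\omega_2\otimes\pi)_\nu)_\infty$ is $\psi$-generic if and only if $\omega_{2,\infty}\otimes\pi_\infty$ is generic for this character, i.e.\ if and only if $\pi_\infty$ is infinite-dimensional. Under the standing hypothesis that $\pi_\infty$ has an $O_2(\R)$-fixed vector, however, $\pi_\infty$ can only be a spherical principal series or a finite-dimensional representation, since discrete series, limits of discrete series and odd principal series have no $O_2(\R)$-fixed vector; so $\pi_\infty$ is infinite-dimensional exactly when it is a principal series, which is the second claim.

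For the last claim I would use induction in stages. Write a spherical principal series $\pi_\infty$ of parameter $s$ as a principal series of $\GL_2(\R)$ induced from a character $\mu$ of the diagonal torus, normalized so that it is $y\mapsto|y|^{s}$ on $\{\mat{y^{1/2}}{}{}{y^{-1/2}}:y>0\}$ and has central value $\omega_{1,\infty}$; then $\omega_{2,\infty}\otimes\pi_\infty$ is the representation of $\Mk(\R)$ induced from the Borel $B(\R)\cap\Mk(\R)$ by the character $\tilde\mu=\mu\otimes\omega_{2,\infty}$ of $T(\R)$. Substituting this into the outer induction and applying induction in stages for normalized parabolic induction gives
\[
\I_{\Pk}((\omega_2\otimes\pi)_\nu)_\infty=\Ind_{\Pk(\R)}^{G(\R)}\bigl(\Ind_{B(\R)\cap\Mk(\R)}^{\Mk(\R)}(\tilde\mu)\otimes e^{\nu H_{\Pk}}\bigr)=\Ind_{B(\R)}^{G(\R)}\bigl(\tilde\mu\cdot e^{\nu(H_{\Pk}|_{T})}\bigr),
\]
manifestly a principal series, with the $\rho$-shifts absorbed by the normalization built into the definition of $\I_P$. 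Reading off the exponent of the inducing character of $T(\R)$, the Eisenstein twist contributes $\nu$ and $\mu$ contributes exactly $\nu_{\text K}(s)$ by the definition of the latter as the element of $\Lie{a}^*(\C)$ attached to $\diag{y^{1/2}}{u}{y^{-1/2}}{u^{-1}}\mapsto|y|^{s}$, while $\omega_{2,\infty}$ and $\omega_{1,\infty}$ contribute nothing, being trivial on $\R^{\times}$ (again since $\omega_{2,\infty}(-1)=1$ and by the normalization of the central character). Hence the spectral parameter is $\nu+\nu_{\text K}(s)$.

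I expect the hard part to be the second step: pinning down $\Pk(\R)\cap K_\infty$ explicitly and verifying that its central $\{\pm1\}$-part imposes no condition, which is precisely where the hypotheses on $\omega$ and the relation $\omega_1\omega_2=\omega$ enter. By contrast the genericity input is a black box, and the $\rho$-shift bookkeeping in the last step is routine once normalized induction is used throughout.
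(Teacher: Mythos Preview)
Your argument follows essentially the same route as the paper: the first claim via the structure of $K_\infty$-fixed sections (the paper simply quotes Proposition~\ref{Klingensummand} and Remark~\ref{KlingenMaassForm} rather than redoing Frobenius reciprocity), the dichotomy character/principal series for the second, and induction in stages for the third. The one substantive difference is your genericity step: the paper does not invoke heredity of Whittaker functionals but instead argues directly that a representation of $\GSp_4(\R)$ parabolically induced from a \emph{character} of the Klingen Levi is never generic, which disposes of the finite-dimensional case for $\pi_\infty$.

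There is one omission worth flagging. To conclude that $\I_{\Pk}((\omega_2\otimes\pi)_\nu)_\infty$ \emph{is} an (irreducible) principal series with a well-defined spectral parameter, the Borel-induced representation you obtain by induction in stages must be irreducible; the paper secures this by citing Lemma~5.1 of \cite{Muic}, which is where the hypothesis $\nu\in i\Lie{a}_{\Mk}^*$ is used. You should add this. A minor side remark: in your computation one actually gets $\sigma_{\text K}(\Pk(\R)\cap K_\infty)=SO_2(\R)$, not $O_2(\R)$, since $\iota_A$ with $\det A=-1$ does not lie in the connected $K_\infty$; the paper's own text is somewhat loose on the $SO_2$/$O_2$ distinction here as well.
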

\begin{proof}
	The first claim follows immediately from Proposition~\ref{Klingensummand}.
	By the spectral decomposition for $\GL_2$, if $\pi$ has a $O_2(\R)$-fixed vector then $\pi_\infty$ is either 
	a character or a principal series.
	But representations induced from a character of the Klingen subgroup are not generic. This shows the second claim.
	Finally assume $\pi_\infty$ is a spherical principal series on $\GL_2$ with spectral parameter $s$.
	Then we might see $\pi_\infty$ as the representation of $\PGL_2(\R)$ induced from the character 
	$\chi_s:\mat{y^{\frac12}}{x}{}{\pm y^{{-\frac12}}} \mapsto \left|y\right|^s$,
	where $s$ is either an imaginary number or a real number with $0<|s|<\frac12$.
	Define the following subgroups:
	$N_1= \left[ \begin{smallmatrix}
		1	&	&*	&				\\ 
			&1	&	&				\\ 
			&	&1	&				\\
			&	&	& 1
	\end{smallmatrix}\right] $,
$A_1=\left\{\diag{y^{\frac12}}{1}{\pm y^{-\frac12}}{1}:y \neq 0\right\}$, $M_1=\{\iota_A : A \in \PGL_2(\R)\}$.
Note that $N_1\Nk=U$, $A_1\Ak(\R)=T(\R)$ and $M_1\Ak=\Mk$
We might view $\chi_s$ as a character of $A_1N_1$.
	Since $\omega_2$ is trivial on $\Ak(\R)$, inducing in stage, we get
	\begin{align*}
	\I_{\Pk}((\omega_2 \otimes \pi)_\nu)_\infty	&=\Ind_{\Pk(\R)}^{G(\R)}\left(I_{\Nk(\R)} \otimes e^\nu \otimes \pi_{\infty}\right)\\
	&=\Ind_{\Pk(\R)}^{G(\R)}\left(I_{\Nk(\R)} \otimes e^\nu \otimes \Ind_{A_1N_1}^{M_1}(\chi_s)\right)\\
	&=\Ind_{\Pk(\R)}^{G(\R)}\Ind_{B(\R)}^{\Pk(\R)}\left(I_{\Nk(\R)} \otimes I_{N_1} \otimes e^{\nu+\nu_{\text{K}}(s)}\right)\\
	&=\Ind_{B(\R)}^{G(\R)}(I_U \otimes  e^{\nu+\nu_{\text{K}}(s)})
	\end{align*}
Since $\nu \in i \Lie{a}^*$, by Lemma~5.1 of~\cite{Muic} this representation is irreducible.
\end{proof}

\subsubsection{Siegel Eisenstein series}~\label{PES}

Characters $\chi$ of $\As^+(\R)\As(\Q) \back \As(\A)$ that coincide with $\omega$ on $Z(\A)$ are in one-to-one 
correspondence with pairs $(\omega_1, \omega_2)$ of characters of $\R_{>0}\Q^\* \backslash \A^\*$  satisfying
$\omega_1 \omega_2^2 = \omega$, via 
$$\chi\left(\diag{u}{u}{tu^{-1}}{tu^{-1}}\right)=\omega_1(u)\omega_2(t).$$
For convenience, if $A \in \GL_2$, define 
$\iota_A=\mat{A}{}{}{ \trans{A}^{-1}} \in \Ms,$
and if $p=\mat{A}{*}{}{t\trans{A}^{-1}} \in \Ps$, define 
$\sigma_{\text S}(p)=A$, and $\sigma_S(nm)=\sigma_S(m)$ 
\begin{lemma}\label{SiegelInducing}
	Let $\chi=(\omega_1,\omega_2)$ with $\omega_1\omega_2^2=\omega$.
	The irreducible representations with central character $\chi$ occurring in $R_{\Ms,\text{disc}}$ are twists $\omega_2 \otimes \pi$, where $\pi$ 
	occurs in the discrete spectrum of $L^2(\R_{>0}\GL_2(\Q)\back \GL_2(\A))$ and has central character $\omega_1$.
\end{lemma}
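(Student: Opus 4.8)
The plan is to follow the proof of the Klingen analogue, Lemma~\ref{KlingenInducing}, essentially verbatim, the key structural fact being that $\Ms$ is the direct product of $\GL_2$ with a central $\GL_1$. First I would verify that the map
$$\GL_2 \times \GL_1 \longrightarrow \Ms,\qquad (A,t) \longmapsto \mat{A}{}{}{t\trans{A^{-1}}}$$
is an isomorphism of algebraic groups: bijectivity is immediate, and it is a homomorphism because $\trans{(A_1A_2)^{-1}} = \trans{A_1^{-1}}\,\trans{A_2^{-1}}$. Under this isomorphism the centre $\As$ of $\Ms$ corresponds to the scalar matrices of the $\GL_2$-factor together with the whole $\GL_1$-factor, and a short matrix computation shows that the $\GL_1$-coordinate $t$ coincides with the multiplier $\mu$ on $\Ms$. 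Hence a character $\chi=(\omega_1,\omega_2)$ of $\As^+(\R)\As(\Q)\back\As(\A)$ restricts to $\omega_1$ on the scalar matrices of the $\GL_2$-factor and to $\omega_2$ on the $\GL_1$-factor.

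Next I would use this to identify Hilbert spaces. Since the $\GL_1$-factor is central in $\Ms$, any function $\phi$ on $\Ms(\Q)\As^+(\R)\back\Ms(\A)$ with central character $\chi$ has the form $\phi(m)=\omega_2(\mu(m))\,\phi_0(\sigma_{\text{S}}(m))$ for a unique function $\phi_0$ on $\R_{>0}\GL_2(\Q)\back\GL_2(\A)$ with central character $\omega_1$, and conversely every such $\phi_0$ arises this way; the left $\Ms(\Q)\As^+(\R)$-invariance of $\phi$ translates into the left $\R_{>0}\GL_2(\Q)$-invariance of $\phi_0$ because $\omega_2$ is trivial on $\R_{>0}\Q^\*$. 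This gives an $\Ms(\A)$-equivariant isomorphism
$$L^2\big(\Ms(\Q)\As^+(\R)\back\Ms(\A),\chi\big) \;\cong\; \omega_2 \otimes L^2\big(\R_{>0}\GL_2(\Q)\back\GL_2(\A),\omega_1\big),$$
where on the right $\Ms(\A)$ acts through the projections $\sigma_{\text{S}}$ and $\mu$. Being equivariant and unitary (up to a positive constant), it carries the discretely decomposing subspace to the discretely decomposing subspace.

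It then follows that the irreducible subrepresentations of $R_{\Ms,\text{disc}}$ with central character $\chi$ are exactly the twists $\omega_2\otimes\pi$, with $\pi$ an irreducible subrepresentation of the discrete spectrum of $L^2(\R_{>0}\GL_2(\Q)\back\GL_2(\A))$ having central character $\omega_1$, which is the assertion. I do not anticipate a genuine obstacle: this is the Siegel counterpart of Lemma~\ref{KlingenInducing} and the argument is identical. The only points requiring care are the two brief matrix computations above --- that $(A,t)\mapsto\mat{A}{}{}{t\trans{A^{-1}}}$ is a group homomorphism and that $\mu$ recovers the $\GL_1$-coordinate --- after which the identification of the discrete spectra is purely formal.
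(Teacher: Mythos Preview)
Your proposal is correct and follows essentially the same approach as the paper, which simply says ``Similar as Lemma~\ref{KlingenInducing} with trivial modifications where required.'' You have in fact spelled out those modifications --- the isomorphism $\GL_2\times\GL_1\cong\Ms$, the identification of $\mu$ with the $\GL_1$-coordinate, and the resulting equivariant isomorphism of $L^2$-spaces --- in more detail than the paper itself provides.
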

\begin{proof}
Similar as Lemma~\ref{KlingenInducing} with trivial modifications where required.
\end{proof}

\begin{proposition}\label{Siegelsummand}
	Let $\chi=(\omega_1,\omega_2)$ with $\omega_1\omega_2^2=\omega$.
		Let $(\pi,V_\pi)$ occur in the discrete spectrum of $L^2(\R_{>0}\GL_2(\Q)\back \GL_2(\A))$ with central character $\omega_1$.
	Consider $(\phi_k)_{k \in K / \Gamma}$ such that
	\begin{enumerate}
		\item for all $k$, $\phi_k \in V_\pi$,
		\item \label{Siegelcompatibility} if $\gamma \in K \cap \Ps(\A)$ then for all $k$, 
		$\phi_k(\cdot \sigma_{\text S}(\gamma))=\omega_2 \circ \mu(\gamma^{-1})\phi_{\gamma k}$.
	\end{enumerate} 
	Then the function on $G(\A)$ given for  $n \in \Ns(\A), m \in \Ms(\A), k \in K$ by
	\begin{equation}\label{Siegelnmk}
		\phi(nmk)= \omega_2 \circ \mu(m) \phi_k(\sigma_{\text S}(m)),
	\end{equation}
	is well-defined and belongs to ${\I_{\Ps}((\omega_2 \otimes \pi)_\nu)}^\Gamma$. 
	Moreover, every function in ${\I_{\Ps}((\omega_2 \otimes \pi)_\nu)}^\Gamma$ has this shape.
\end{proposition}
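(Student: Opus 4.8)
The plan is to follow the proof of Proposition~\ref{Klingensummand} almost verbatim, replacing the Klingen Levi $\Mk$ by the Siegel Levi $\Ms$, the homomorphism $\sigma_{\text K}$ by $\sigma_{\text S}$, and the parameter $t(\cdot)$ by the multiplier $\mu(\cdot)$; as there, one establishes in turn that~(\ref{Siegelnmk}) defines a function on $G(\A)$, that this function lies in $\I_{\Ps}((\omega_2\otimes\pi)_\nu)^\Gamma$, and that every element of that space arises this way. For well-definedness, I would suppose $n_1 m_1 k_1 = n_2 m_2 k_2$ with $n_i \in \Ns(\A)$, $m_i \in \Ms(\A)$, $k_i \in K$, so that $k_2 k_1^{-1} = (n_2 m_2)^{-1}(n_1 m_1) \in \Ps(\A)\cap K$; since $\sigma_{\text S}$ is a homomorphism on $\Ps(\A)$ trivial on $\Ns(\A)$, this gives $\sigma_{\text S}(m_1) = \sigma_{\text S}(m_2)\sigma_{\text S}(k_2 k_1^{-1})$, and substituting into~(\ref{Siegelcompatibility}) while using multiplicativity of $\mu$ yields $\omega_2\circ\mu(m_1)\,\phi_{k_1}(\sigma_{\text S}(m_1)) = \omega_2\circ\mu(m_2)\,\phi_{k_2}(\sigma_{\text S}(m_2))$, exactly as in the Klingen computation.

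For membership in $\I_{\Ps}((\omega_2\otimes\pi)_\nu)^\Gamma$, left invariance under $\Ns(\A)\Ms(\Q)\As(\R)^\circ$ and right invariance under $\Gamma$ are immediate from~(\ref{Siegelnmk}) and from the indexing of $(\phi_k)$ by $K/\Gamma$. For square integrability I would unfold $\int_K\int_{\As(\R)^+\Ms(\Q)\backslash\Ms(\A)}|\phi(mk)|^2\,dm\,dk$ using the isomorphism $\Ms\cong\GL_2\times\GL_1$, $\mat{A}{}{}{t\trans{A}^{-1}}\mapsto(A,t)$, together with Lemma~\ref{SiegelInducing}, obtaining a finite sum over $K/\Gamma$ of $Vol(\Gamma)\,Vol(\R_{>0}\Q^\times\backslash\A^\times)$ times $\|\phi_k\|^2$, which is finite since $\pi$ is unitary, $K/\Gamma$ is finite, and $\R_{>0}\Q^\times\backslash\A^\times$ is compact. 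Finally, for $g=nmk$ and $m_1\in\Ms(\A)$ I would write $m_1 g = (m_1 n m_1^{-1})(m_1 m) k$ with $m_1 n m_1^{-1}\in\Ns(\A)$ and use $\sigma_{\text S}(m_1 m)=\sigma_{\text S}(m_1)\sigma_{\text S}(m)$, $\mu(m_1 m)=\mu(m_1)\mu(m)$, and $\phi_k\in V_\pi$ to conclude that $m_1\mapsto\phi(m_1 g)$ transforms on the right under $\Ms(\A)$ according to $\omega_2\otimes\pi$.

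For exhaustion, given $\phi\in\I_{\Ps}((\omega_2\otimes\pi)_\nu)^\Gamma$ I would set $\phi_k(A)=\phi(\iota_A k)$ for $A\in\GL_2(\A)$; the transformation property of $\phi$ under $\Ms(\A)$ forces $\phi_k\in V_\pi$, and~(\ref{Siegelnmk}) holds by left invariance under $\Ns(\A)\As(\R)^\circ$ together with the definitions of $\sigma_{\text S}$ and $\mu$. To verify~(\ref{Siegelcompatibility}) I would write $\gamma=n_\gamma m_\gamma\in K\cap\Ps(\A)$ and insert $\iota_A n_\gamma^{-1}\iota_A^{-1}\in\Ns(\A)$, exactly as in the Klingen argument, the twist by $\omega_2\circ\mu(\gamma^{-1})$ coming from $m_\gamma$.

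I do not expect a genuine obstacle, as the whole argument is pure bookkeeping. The only point that deserves a moment's care is that the twisting character has been correctly identified: one must check that $\mu$ restricted to $\Ms$ is the projection $\mat{A}{}{}{t\trans{A}^{-1}}\mapsto t$ (so in particular $\mu(\iota_A)=1$), which is what makes $\omega_2\circ\mu$ on $\Ps(\A)$ compatible with the parametrization of Lemma~\ref{SiegelInducing}, and that $\sigma_{\text S}$ is indeed a well-defined homomorphism on $\Ps(\A)$ trivial on $\Ns(\A)$; both are elementary matrix computations.
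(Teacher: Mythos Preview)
Your proposal is correct and follows exactly the approach taken in the paper, which simply states ``Same proof as Proposition~\ref{Klingensummand}, with trivial modifications where required.'' You have correctly identified those modifications (replacing $\sigma_{\text K}$ by $\sigma_{\text S}$, $t(\cdot)$ by $\mu(\cdot)$, $\Mk$ by $\Ms$, etc.) and spelled out the resulting argument; the sanity check that $\mu(\iota_A)=1$ is indeed the one point where the Siegel case differs in shape from the Klingen case, and it is straightforward.
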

\begin{remark}
	Similarly as Remark~\ref{KlingenMaassForm}, condition~(\ref{Siegelcompatibility}) implies that each $\phi_k$ is right-$\O_2(\R)$-invariant 
	(and hence must be an adelic Maa{\ss} form or a character).
\end{remark}
\begin{proof}
	Same proof as Proposition~\ref{Klingensummand}, with trivial modifications where required.
\end{proof}
\begin{proposition}	\label{SiegelSpectralParameter}
	Let $(\pi,V_\pi)$ occur in the discrete spectrum of $L^2(\R_{>0}\GL_2(\Q)\back \GL_2(\A))$ with central character $\omega_1$.
	$\I_{\Ps}((\omega_2 \otimes \pi)_\nu)$ has a $K_\infty$-fixed vector if and only if $\pi$ has a $O_2(\R)$-fixed vector.
	In this case, $\I_{\Ps}((\omega_2 \otimes \pi)_\nu)_\infty$ is generic if and only if $\pi_\infty$ is a principal series.
	Finally if  $\pi_\infty$ is a spherical principal series with spectral parameter $s$ and $\nu \in i\Lie{a}_{\Ms}^*$
	then $\I_{\Pk}((\omega_2 \otimes \pi)_\nu)_\infty$ 
	is a principal series representation with spectral parameter $\nu+\nu_{\text{S}}(s)$, where $\nu_{\text{S}}(s)$ is the element of $\Lie{a}^*(\C)$ 
	corresponding to the character $\diag{y^{\frac12}u}{y^{-\frac12}u}{y^{-\frac12}u^{-1}}{y^{\frac12}u^{-1}} \mapsto |y|^s$.
\end{proposition}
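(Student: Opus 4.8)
The statement is the Siegel analogue of Proposition~\ref{KlingenSpectralParameter}, so the plan is to run the same three-part argument with the Klingen data replaced by the Siegel data throughout. First I would dispatch the opening assertion by quoting Proposition~\ref{Siegelsummand}: a vector of $\I_{\Ps}((\omega_2\otimes\pi)_\nu)^\Gamma$ is a tuple $(\phi_k)_{k\in K/\Gamma}$ with $\phi_k\in V_\pi$ obeying the compatibility condition~(\ref{Siegelcompatibility}), and, exactly as in Remark~\ref{KlingenMaassForm} (applied at $v=\infty$ with the embedding $A\mapsto\iota_A$ of $\O_2(\R)$ into $K_\infty$), this condition forces every $\phi_k$ to be right-$\O_2(\R)$-invariant; conversely an $\O_2(\R)$-fixed vector of $\pi$ visibly produces a $K_\infty$-fixed vector of the induced representation.

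For the second assertion I would argue as follows. If $\pi$ has an $\O_2(\R)$-fixed vector, then by the spectral decomposition of $L^2(\R_{>0}\GL_2(\Q)\back\GL_2(\A))$ the component $\pi_\infty$ is either a unitary character of $\GL_2(\R)$ or a principal series. In the character case $\omega_2\otimes\pi$ is a character of $\Ms(\A)$, so $\I_{\Ps}((\omega_2\otimes\pi)_\nu)_\infty$ is parabolically induced from a one-dimensional representation of the proper maximal parabolic $\Ps(\R)$, and such a representation is not generic --- this is the archimedean counterpart of the classical fact that Siegel modular forms are not generic (alternatively: $\Ind_P^G\sigma$ carries a Whittaker functional only if $\sigma$ is generic on the Levi, and a character of $\Ms$ is not, since $\Ms$ is not a torus). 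In the principal series case genericity follows from the third assertion, which realizes $\I_{\Ps}((\omega_2\otimes\pi)_\nu)_\infty$ as a full --- hence generic --- principal series of $G(\R)$ induced from the Borel.

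The substance is the third assertion, and here I would imitate the proof of Proposition~\ref{KlingenSpectralParameter} line by line. Write the spherical principal series $\pi_\infty$ of $\GL_2(\R)$ with spectral parameter $s$ as the representation of $\PGL_2(\R)$ induced from $\chi_s:\mat{y^{\frac12}}{x}{}{\pm y^{-\frac12}}\mapsto|y|^s$ (with $s$ purely imaginary or $0<|s|<\frac12$). Introduce the subgroups $N_1=\{\iota_A:A=\mat{1}{x}{}{1},\,x\in\R\}$, $A_1$ the image under $\iota$ of the diagonal torus of $\GL_2(\R)$, and $M_1=\{\iota_A:A\in\PGL_2(\R)\}$ of the $\GL_2$-factor of the Siegel Levi, arranged so that $N_1\Ns=U$, $A_1\As(\R)=T(\R)$ and $M_1\As=\Ms$, and view $\chi_s$ as a character of $A_1N_1$. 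Using that $\omega_2$ is trivial on $\As(\R)$ and inducing in stages, the chain of identities to verify is
\begin{align*}
	\I_{\Ps}((\omega_2\otimes\pi)_\nu)_\infty
	&=\Ind_{\Ps(\R)}^{G(\R)}\bigl(I_{\Ns(\R)}\otimes e^\nu\otimes\pi_\infty\bigr)
	=\Ind_{\Ps(\R)}^{G(\R)}\bigl(I_{\Ns(\R)}\otimes e^\nu\otimes\Ind_{A_1N_1}^{M_1}(\chi_s)\bigr)\\
	&=\Ind_{\Ps(\R)}^{G(\R)}\Ind_{B(\R)}^{\Ps(\R)}\bigl(I_{\Ns(\R)}\otimes I_{N_1}\otimes e^{\nu+\nu_{\text{S}}(s)}\bigr)
	=\Ind_{B(\R)}^{G(\R)}\bigl(I_U\otimes e^{\nu+\nu_{\text{S}}(s)}\bigr),
\end{align*}
after which Lemma~5.1 of~\cite{Muic} gives irreducibility because $\nu\in i\Lie{a}_{\Ms}^*$, so the representation is an irreducible principal series with spectral parameter $\nu+\nu_{\text{S}}(s)$.

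The only place where a genuine computation hides --- and the step I expect to be the main obstacle --- is checking that the inducing character produced by the induction in stages is exactly $e^{\nu+\nu_{\text{S}}(s)}$. This amounts to tracking the modulus characters (the $\rho$-shift $\rho_{\Ps}$ for $\Ps$ and the $\rho$-shift for the Borel of $\PGL_2$ inside $M_1$) through the isomorphism and confirming that the resulting character of $T(\R)$ sends $\diag{y^{\frac12}u}{y^{-\frac12}u}{y^{-\frac12}u^{-1}}{y^{\frac12}u^{-1}}$ to $|y|^s$, which is precisely the normalization defining $\nu_{\text{S}}(s)$. I would also want to pin down at the outset that $N_1\Ns=U$ as algebraic groups, so that the two inductions genuinely compose to induction from $B$; this is a direct matrix check from the explicit shapes of $\Ns$ and $U$.
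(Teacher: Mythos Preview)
Your proposal is correct and follows exactly the paper's approach: the paper's own proof reads in its entirety ``Same proof as Proposition~\ref{KlingenSpectralParameter}, with trivial modifications where required,'' and you have simply written out those modifications. One small slip to fix when you carry out the matrix check you flagged at the end: with the paper's Borel (which has the $(2,1)$ entry free) and $\iota_A=\mat{A}{}{}{\trans{A}^{-1}}$, the correct choice is $N_1=\{\iota_A:A=\mat{1}{}{x}{1}\}$ (lower-triangular unipotents in $\GL_2$), not upper-triangular, so that $N_1\Ns=U$.
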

\begin{proof}
	Same proof as Proposition~\ref{KlingenSpectralParameter}, with trivial modifications where required.
\end{proof}

\subsection{Spectral expansion of the kernel}
We now give the spectral expansion of the kernel. 
\begin{definition}\label{ONBase}
	For each standard parabolic $P$ we choose an orthonormal basis $\B_{P}$ of $\H_P(\omega)$ such that 
	\begin{enumerate}
		\item \label{DecompIrred} if $R_{M_P,\text{disc}}=\bigoplus_\pi \pi$ is the decomposition of the restriction of the right regular representation of $M_P(\A)$
		on the subspace of $L^2(M_P(\Q)A_P^0(\R) \back G(\A))$ that decompose discretely, then 
		$\B_P=\bigcup_\pi \B_{\pi}$, where each $\B_\pi$ is a basis of the space of the corresponding induced representation $\I_P(\pi_\nu)$,
		(note that this space does not depend on~$\nu$). 
		\item \label{factor} for each representation $\pi=\bigotimes_v \pi_v$ as above, for each place $v$ there is an orthonormal basis $\B_{\pi,v}$ of
		the local representation $\pi_v$ such that  $\B_{\pi}$ consists in factorizable vectors 
		$\phi=\bigotimes_{v \le \infty} \phi_v$ where each $\phi_v$ belongs to the corresponding $\B_{\pi,v}$.
		\item \label{Ktypes} for each representation $\pi_v$, we have 
		$\B_{\pi,v} = \bigcup_{\tau} \B_{\pi,v, \tau}$, where the union is over the irreducible representations $\tau$ of $\Gamma_v$,
		and $\B_{\pi, v,\tau}$ is a basis of the space of $\pi_v$ consisting of vectors $\phi$ satisfying $\pi_v(\gamma)\phi=\tau(\gamma) \phi$
		for all $\gamma \in \Gamma_v$.
	\end{enumerate}
\end{definition}
Note that conditions~(\ref{factor}) and~(\ref{Ktypes}) imply in particular that elements of $\B_P$ are in $\H_P^0$.
\begin{definition}
For each standard parabolic $P$ and for each irreducible representation $\pi$ occuring in $R_{M_P,\text{disc}}$,
define $\B_{\pi, 1}$ to be the subset of $\B_{\pi}$ consisting in vectors $\phi$ whose each local component
$\phi_v$ belongs to $\B_{\pi,v, 1}$, and set $\B_P^\Gamma=\bigcup_\pi \B_{\pi, 1}$.
If $\chi$ is a character of $A_P(\A)$, define
$$\Gen_P(\chi)=\bigcup_{\pi} \B_{\pi,1},$$ 
where the union runs over representations $\pi$ with central character $\chi$ and such that the induced representations
$\I_P(\pi_\nu)$ are generic.
\end{definition}
If $u \in \H_P(\omega)$, define  $$\I_P(\nu ,f)u=\int_{\overline{G(\A)}}f(y)\I_P(\nu ,y)udy.$$
\begin{proposition}\label{globalevalue}
	Let $\nu \in i\Lie{a}_p^*$.
	Let $u \in \B_P$.
	Then either $\I_P(\nu ,f)u=0$ or $u \in \B_P^\Gamma $.
	In the latter case, say $u \in \B_\pi$. Then if $\pi$ is generic we have 
	$$\I_P(\nu ,f)u=\lambda_f(u,\nu)u,$$
	where $\lambda_f(u,\nu)=\lambda_{f_{\infty}}(u,\nu)\lambda_{f_{\text{fin}}}(u,\nu)$,
	and 
	$$\lambda_{f_{\infty}}(u,\nu)=
	\begin{cases}
		\tilde{f_\infty}(\nu) \text{ if } P=B,\\
		\tilde{f_\infty}(\nu+\nu_{\text{K}}(s_u)) \text{ if } P=\Pk \text{ and } \pi_\infty \text{ has spectral parameter } s_u,\\
		\tilde{f_\infty}(\nu+\nu_{\text{S}}(s_u)) \text{ if } P=\Ps \text{ and } \pi_\infty \text{ has spectral parameter } s_u,\\
		\tilde{f_\infty}(\nu_u) \text{ if } P=G \text{ and } \pi_\infty \text{ has spectral parameter } \nu_u,\\
	\end{cases}$$
and, following notations of Proposition~\ref{localevalue}, $\lambda_{f_{\text{fin}}}(u,\nu)$ is the eigenvalue of the Hecke operator
 $$\bigotimes_{\Gamma_p=G(\Z_p)}\overline{\pi_{p,\nu}}(\tilde{f_p}).$$
\end{proposition}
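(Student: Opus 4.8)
The plan is to factorize the operator $\I_P(\nu,f)$ over the places of $\Q$ and invoke the local computations carried out above. Since $\nu\in i\Lie{a}_P^*$, the representation $\I_P(\nu)$ and each of its local factors are unitary; since $f=f_\infty\prod_p f_p$ is factorizable and, by Definition~\ref{ONBase}, the basis vector $u=\bigotimes_{v\le\infty}u_v$ is factorizable with $u_v\in\B_{\pi,v}$, and since $\I_P(\pi_\nu)=\bigotimes_v\I_P(\pi_{v,\nu})$ with representation space independent of $\nu$, the operator $\I_P(\nu,f)$ decomposes as $\bigotimes_v\overline{\I_P(\pi_{v,\nu})}(f_v)$, where $\overline{\I_P(\pi_{v,\nu})}(f_v)$ is the operator of Proposition~\ref{Kfixed} attached to the local factor. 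Applying Proposition~\ref{Kfixed} at each place with $K=\Gamma_v$, the operator $\overline{\I_P(\pi_{v,\nu})}(f_v)$ has image in the $\Gamma_v$-fixed subspace and annihilates its orthogonal complement. By condition~(\ref{Ktypes}) of Definition~\ref{ONBase}, $u_v$ lies in a single $\Gamma_v$-isotypic component $\B_{\pi,v,\tau}$; hence if $\tau$ is nontrivial for even one place $v$, then $\I_P(\nu,f)u=0$, and otherwise $u_v\in\B_{\pi,v,1}$ for all $v$, i.e.\ $u\in\B_P^\Gamma$. This is the claimed dichotomy, and in the second case each $u_v$ is $\Gamma_v$-fixed; in particular the local factor $\I_P(\pi_{v,\nu})$ has a nonzero $\Gamma_v$-fixed vector (a nonzero $K_\infty$-fixed vector when $v=\infty$).

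Assume now $u\in\B_P^\Gamma$ and that $\I_P(\pi_\nu)$ is generic. Because a global Whittaker functional restricts to nonzero local Whittaker functionals, each local factor $\I_P(\pi_{v,\nu})$ is then generic. At a prime $p$ with $\Gamma_p\ne G(\Z_p)$, Proposition~\ref{localevalue} gives $\overline{\I_P(\pi_{p,\nu})}(f_p)u_p=u_p$. At a prime $p$ with $\Gamma_p=G(\Z_p)$, the generic local factor $\I_P(\pi_{p,\nu})$ has a nonzero $K_p$-fixed vector, so by Proposition~\ref{localevalue} the vector $u_p$ is an eigenvector of $\overline{\I_P(\pi_{p,\nu})}(f_p)=\I_P(\pi_{p,\nu})(\tilde{f_p})$, the Hecke operator attached to $\tilde{f_p}$. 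The product of these eigenvalues over all $p$ with $\Gamma_p=G(\Z_p)$ is by definition $\lambda_{f_{\text{fin}}}(u,\nu)$, the eigenvalue of $\bigotimes_{\Gamma_p=G(\Z_p)}\overline{\pi_{p,\nu}}(\tilde{f_p})$.

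At the Archimedean place, $\I_P(\pi_\nu)_\infty$ is generic with a nonzero $K_\infty$-fixed vector, so by Proposition~\ref{MustBePrincipalSeries} it is an irreducible principal series, and Proposition~\ref{archimedeanevalue} gives $\overline{\I_P(\pi_{\infty,\nu})}(f_\infty)u_\infty=\tilde{f_\infty}(-\nu')\,u_\infty$, where $\nu'\in\Lie{a}^*(\C)$ is its spectral parameter. One reads off $\nu'$ from the earlier results: $\nu'=\nu$ for $P=B$ by Lemma~\ref{BorelSpectralParameter}; $\nu'=\nu+\nu_{\text{K}}(s_u)$ for $P=\Pk$ by Proposition~\ref{KlingenSpectralParameter}, with $s_u$ the spectral parameter of $\pi_\infty$; $\nu'=\nu+\nu_{\text{S}}(s_u)$ for $P=\Ps$ by Proposition~\ref{SiegelSpectralParameter}; and $\nu'=\nu_u$, the spectral parameter of $\pi_\infty$, for $P=G$, where $\Lie{a}_G^*=0$ so $\I_G(\pi_\nu)=\pi$. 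Since $-1$ lies in the Weyl group $\Omega$ of $\GSp_4$ and $\tilde{f_\infty}$ is $\Omega$-invariant by Theorem~\ref{PW}, we have $\tilde{f_\infty}(-\nu')=\tilde{f_\infty}(\nu')$, giving the stated formulae for $\lambda_{f_\infty}(u,\nu)$; multiplying the Archimedean and finite eigenvalues yields $\I_P(\nu,f)u=\lambda_{f_\infty}(u,\nu)\,\lambda_{f_{\text{fin}}}(u,\nu)\,u$. The step requiring genuine care is the passage from genericity of the global induced representation $\I_P(\pi_\nu)$ to genericity of each of its local factors, since this is what makes Propositions~\ref{localevalue} and~\ref{archimedeanevalue} applicable; once that is granted, identifying the four spectral parameters is a direct appeal to the lemmas of the preceding subsections.
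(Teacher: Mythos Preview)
Your proof is correct and follows exactly the approach the paper intends: the paper's own proof is the single sentence ``This is a combination of Propositions~\ref{localevalue},~\ref{archimedeanevalue}, Lemma~\ref{BorelSpectralParameter} and Propositions~\ref{KlingenSpectralParameter} and~\ref{SiegelSpectralParameter}'', and you have spelled out precisely how these ingredients combine via the factorization $\I_P(\nu,f)=\bigotimes_v\overline{\I_P(\pi_{v,\nu})}(f_v)$. Your handling of the sign discrepancy between Proposition~\ref{archimedeanevalue} (which gives $\tilde{f_\infty}(-\nu')$) and the stated formula (which has $\tilde{f_\infty}(\nu')$) via the Weyl invariance of the spherical transform is a point the paper leaves implicit, and your observation that local genericity is what actually licenses the appeal to Propositions~\ref{localevalue} and~\ref{archimedeanevalue} is well taken.
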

\begin{remark}
	If $P=G$ then $\Lie{a}_P=\{0\}$ and  $\I_P(\nu ,f)=R(f)$.
\end{remark}
\begin{proof}
	This is a combination of Propositions~\ref{localevalue},~\ref{archimedeanevalue}, Lemma~\ref{BorelSpectralParameter} and
	Propositions~\ref{KlingenSpectralParameter} and~\ref{SiegelSpectralParameter}.
\end{proof}
The following statement~\cite{ArthurSpectralExpansion}*{pages 928-935} may be viewed as a rigorous version of the informal discussion
in Section~\ref{basickernel}. 
\begin{lemma}\label{UnprovedLemma}
	Let $f$ as in Assumption~\ref{testfunction}. Then we have a pointwise equality
	$$K_f(x,y)=\sum_{P} n_P^{-1}  \int_{i\Lie{a}_{P}^*}\sum_{u \in \B_P}E(x, \I_P(\nu ,f)u,\nu )\overline{E(y,u,\nu)}d\nu.$$
		Here, $n_G=1,$ $n_B=8$, $n_{\Pk}=2$ and $n_{\Ps}=2$.
\end{lemma}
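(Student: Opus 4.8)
The plan is to obtain this as a special case of Arthur's spectral expansion of the automorphic kernel in~\cite{ArthurSpectralExpansion}*{pages 928-935}, after verifying that our set-up meets its hypotheses and after reading off the combinatorics special to $\GSp_4$. First I would fix the unitary central character $\omega$ and work inside $L^2(G(\Q)Z(\R)\backslash G(\A),\omega)$; this space is nonzero only when $\omega_\infty$ is trivial on $-1\in Z(\R)$, in which case quotienting by $Z(\R)$ rather than by $A_G(\R)^\circ=\R_{>0}$ is a harmless finite isotypic restriction and Arthur's formalism applies as is. By Assumption~\ref{testfunction}, $f$ is compactly supported modulo $Z(\A)$, transforms under $Z(\A)$ by $\overline\omega$, is bi-$\Gamma$-invariant at the finite places and bi-$K_\infty$-invariant at $\infty$; in particular $R(f)$ has image built from finitely many $K$-types, which is the regularity Arthur's argument needs. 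I would then invoke Langlands' spectral decomposition, which writes the right regular representation on $L^2(G(\Q)Z(\R)\backslash G(\A),\omega)$ as an orthogonal sum over association classes of parabolic subgroups, the class of $G$ giving the discrete spectrum and the remaining classes the continuous spectrum realized through the Eisenstein series $E(\cdot,\phi,\nu)$ with $\phi\in\H_P$ and $\nu\in i\Lie{a}_P^*$. Arthur's computation then identifies the kernel $K_f$ defined in~(\ref{TheKernel}) with the asserted sum, the term attached to a class being $n_P^{-1}\int_{i\Lie{a}_P^*}\sum_{u\in\B_P}E(x,\I_P(\nu,f)u,\nu)\overline{E(y,u,\nu)}\,d\nu$ for any representative $P$, with $n_P$ the order of Arthur's relative Weyl group $W(\Lie{a}_P,\Lie{a}_P)$, i.e. of the group of automorphisms of $\Lie{a}_P$ obtained by restricting those elements of the Weyl group of $G$ that stabilize $\Lie{a}_P$.

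Next I would specialize the combinatorial data to $G=\GSp_4$. The standard parabolic subgroups are exactly $B$, the Klingen parabolic $\Pk$, the Siegel parabolic $\Ps$ and $G$; since $\Pk$ and $\Ps$ are non-associate (their Levi components are not conjugate in $G$, as already noted in the introduction), each of these four standard parabolics is its own association class, so the sum over $P$ has precisely the four terms listed. For the constants: after removing the centre $\Lie{a}_G=\{0\}$, so $n_G=1$; $\Lie{a}_B=\Lie{a}$ is the full two-dimensional Cartan, acted on faithfully by all of $\Omega$, so $n_B=|\Omega|=8$; and for each maximal parabolic $\Lie{a}_P$ is one-dimensional and the relative Weyl group consists of the identity and the reflection $\nu\mapsto-\nu$, so $n_{\Pk}=n_{\Ps}=2$, matching the claimed values.

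The one genuinely delicate point --- and the step I expect to be the main obstacle --- is that the identity is asserted \emph{pointwise}, whereas a priori neither the inner sum over $\B_P$ nor the integral over $i\Lie{a}_P^*$ is absolutely convergent; and we do need it pointwise, precisely because the domain $(U(\Q)\backslash U(\A))^2$ over which it will later be integrated has measure zero. This is exactly what Arthur establishes in the cited pages, by truncating the Eisenstein series, rewriting the truncated kernel via Langlands' combinatorial lemma, and showing that the difference between $K_f$ and the truncated spectral sum tends to zero as the truncation parameter grows, the rapid decay needed to interchange the sum and the integral coming from the $K$-finiteness and smoothness of $f$. Since we introduce no modification to Arthur's hypotheses, I would cite~\cite{ArthurSpectralExpansion} for this analytic core rather than reproduce it.
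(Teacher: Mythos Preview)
Your proposal is correct and matches the paper's own treatment: the paper does not give an independent proof of this lemma but simply attributes it to Arthur~\cite{ArthurSpectralExpansion}*{pages 928--935}, exactly as you propose to do. In fact you go further than the paper by spelling out why Assumption~\ref{testfunction} places $f$ within Arthur's hypotheses, by explaining the association-class count for $\GSp_4$, and by computing the constants $n_P=|W(\Lie{a}_P,\Lie{a}_P)|$ explicitly; the paper states these values without justification.
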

However, for the later purpose of interchanging integration order, we want to show that the above expressions for the kernel converge absolutely. 
To this end, we need the following stronger statement.
\begin{proposition}\label{ACV}
	Let $f$ as  in Assumption~\ref{testfunction}. Then the following expression defines a continuous function in the variables $x$ and $y$
	$$K_{\text{abs}}(x,y)=\sum_{P} n_P^{-1}  \int_{i\Lie{a}_{P}^*}\sum_{u \in \B_P}|E(x, \I_P(\nu ,f)u,\nu )\overline{E(y,u,\nu)}|d\nu.$$
\end{proposition}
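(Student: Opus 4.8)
The plan is to establish absolute and locally uniform convergence of the triple sum-integral defining $K_{\text{abs}}$ by reducing each summand to an absolutely convergent Eisenstein series majorant and then controlling the remaining sum over $\B_P$ and integral over $i\Lie{a}_P^*$ by the rapid decay properties of the spectral multipliers $\lambda_f(u,\nu)$ established in Proposition~\ref{globalevalue}. First I would fix $x$ and $y$ in a compact set and recall that, by Assumption~\ref{testfunction} and Proposition~\ref{globalevalue}, the operator $\I_P(\nu,f)$ annihilates every $u \in \B_P$ except those lying in the finite-dimensional-per-fibre space $\Gen_P(\chi)$ (with $\chi$ forced by the central character $\omega$ and by $\Gamma$-invariance); so only those $u$ contribute, and on them $\I_P(\nu,f)u = \lambda_f(u,\nu)u$. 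The number of such basis vectors $u$, for a given inducing representation $\pi$, is bounded because they must be $\Gamma$-fixed (hence, at almost all places, spherical, giving a one-dimensional local space by the multiplicity-one statements), and because $\pi$ ranges over representations with a fixed central character and a $\Gamma$-fixed vector whose Archimedean component is spherical generic, hence (Proposition~\ref{MustBePrincipalSeries}) a principal series; the Weyl-law-type growth in the remaining parameter (the $\GL_2$ spectral parameter $s_u$ for $P = \Pk, \Ps$, or the $\GSp_4$ spectral parameter $\nu_u$ for $P = G$) is then tempered by the rapid decay of $\tilde{f_\infty}$ from the Paley--Wiener Theorem~\ref{PW}.

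Next I would bound $|E(x,u,\nu)|$ for $\nu \in i\Lie{a}_P^*$. On the unitary axis the Eisenstein series does not converge by the naive sum over $P(\Q)\backslash G(\Q)$, so I would instead invoke the standard fact (Langlands; see Arthur) that $E(x,u,\nu)$ is of moderate growth in $x$ and polynomially bounded in $\nu$ and in the ``level'' of $u$ (e.g.\ its Laplace eigenvalue or archimedean $K$-type), uniformly for $x$ in compacta; combined with the fact that our $u$ are $\Gamma$-fixed with bounded $K_\infty$-type, this gives $|E(x,u,\nu)\overline{E(y,u,\nu)}| \ll_{x,y} (1+|\nu|)^{A}(1+|s_u|)^{B}$ for some fixed $A,B$. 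Plugging in $|\lambda_f(u,\nu)| = |\lambda_{f_\infty}(u,\nu)||\lambda_{f_{\text{fin}}}(u,\nu)|$, using that $\lambda_{f_{\text{fin}}}$ is a bounded Hecke eigenvalue (the local Hecke operators $\tilde{f_p}$ are fixed, finitely many, with bounded operator norm on the unitarizable spectrum), and that $\lambda_{f_\infty}(u,\nu) = \tilde{f_\infty}(\text{something of size} \gg |\nu| + |s_u|)$ decays faster than any polynomial by Theorem~\ref{PW}, the contribution of each $P$ becomes
$$\sum_{P} n_P^{-1} \int_{i\Lie{a}_P^*} \sum_{u} |\lambda_f(u,\nu)|\,|E(x,\I_P(\nu,f)u,\nu)\overline{E(y,u,\nu)}|\, d\nu \ll_{x,y} \sum_P \int_{i\Lie{a}_P^*}\sum_{s_u} \frac{(1+|\nu|)^A(1+|s_u|)^B}{(1+|\nu|+|s_u|)^N}\, d\nu,$$
which converges for $N$ large by choosing the Paley--Wiener decay exponent accordingly, after invoking a Weyl-law bound $\#\{u : |s_u| \le X\} \ll X^C$ for the relevant $\GL_2$ (or $\GSp_4$) discrete spectrum. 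Continuity in $(x,y)$ then follows from dominated convergence, since each summand $E(x,\I_P(\nu,f)u,\nu)\overline{E(y,u,\nu)}$ is continuous in $(x,y)$ and the majorant just constructed is locally uniform in $(x,y)$ and integrable/summable.

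The main obstacle I anticipate is making the polynomial bound on $|E(x,u,\nu)|$ on the unitary axis fully rigorous and uniform over the parameters simultaneously — the growth in $x$, in the continuous parameter $\nu$, and in the discrete parameter indexing $u$ — since a crude term-by-term estimate of the defining series fails at $\Re(\nu) = 0$ and one must instead use the analytic continuation together with a contour-shift or a truncation argument (or cite a suitable uniform bound from the literature on Eisenstein series, e.g.\ in the style of M\"uller or Lapid). A secondary technical point is justifying that only finitely many inducing data $\pi$ at each fixed ``size'' contribute and quantifying that count (a Weyl law for $\GL_2$ at the relevant level, which is classical, plus the observation that for $P = G$ the cuspidal and residual discrete spectrum of $\GSp_4$ at fixed level likewise obeys a polynomial Weyl bound). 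Once these inputs are in place, the convergence estimate and the continuity conclusion are routine.
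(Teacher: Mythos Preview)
The paper does not actually prove this proposition: immediately after the statement it says ``We do not give a proof of this proposition here, as a similar statement was proven in the setting of $\GL_2$ in \S~6 of \cite{KL}, the proof thereof can be directly adapted.'' Your sketch follows precisely the strategy of the Knightly--Li argument (finite rank of $\I_P(\nu,f)$ via bi-$\Gamma$-invariance, polynomial bounds for Eisenstein series on the unitary axis uniform on compacta, rapid decay of the archimedean multiplier from the Paley--Wiener theorem, and a Weyl-law count of the remaining discrete parameters), so in substance you are doing what the paper intends.

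One imprecision worth flagging: you write that $\I_P(\nu,f)$ annihilates every $u$ outside $\Gen_P(\chi)$, but Proposition~\ref{globalevalue} only says that the surviving $u$ lie in $\B_P^\Gamma$, and the eigenvalue formula $\I_P(\nu,f)u=\lambda_f(u,\nu)u$ is stated there only under the additional hypothesis that the inducing $\pi$ is generic (needed to invoke Proposition~\ref{MustBePrincipalSeries}). Non-generic $\Gamma$-fixed vectors can in principle survive, and for these you cannot directly appeal to the scalar $\tilde{f_\infty}(\cdot)$ as multiplier. This is not fatal --- the $\Gamma$-fixed subspace of each $\I_P(\pi_\nu)$ is still finite-dimensional, and one bounds the operator norm of $\I_P(\nu,f)$ on it (e.g.\ by writing $f=h*h^*$ and using that $\I_P(\nu,h)$ has rapidly decaying Hilbert--Schmidt norm, as in \cite{KL}) rather than relying on a single scalar eigenvalue. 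Your identification of the main obstacle, the uniform polynomial bound for $E(x,u,\nu)$ on $i\Lie{a}_P^*$, is accurate and is indeed the point where the cited literature must be invoked.
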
	
We do not give a proof of this proposition here, as a similar statement was proven in the setting of $\GL_2$ in \S~6 of \cite{KL}, the
proof thereof can be directly adapted.
By combining it with Lemmas~\ref{Borelsummand},~\ref{Klingensummand},~\ref{Siegelsummand} and Proposition~\ref{globalevalue},
we obtain the following corollary.
\begin{corollary}\label{KernelSpectralForm}
	Let $f$ as in Assumption~\ref{testfunction}.
	Then we have a pointwise equality 
	$$K_{f}(x,y)=K_{\text{disc}}(x,y)+K_{\text{B}}(x,y)+K_{\text{K}}(x,y)+K_{\text{S}}(x,y)+K_{\text{ng}}(x,y),$$
	where 
	$$K_{\text{disc}}(x,y)=\sum_{u \in \Gen_G(\omega)}\tilde{f_\infty}(\nu_u)\lambda_{f_{\text{fin}}}(u)u(x)\overline{u(y)},$$
	$$K_{\text{B}}(x,y)=\frac18 \sum_{\omega_1\omega_2\omega_3^2=\omega}\sum_{u \in \Gen_B(\omega_1,\omega_2,\omega_3)} 
	\int_{i\Lie{a}^*}\tilde{f_\infty}(\nu)\lambda_{f_{\text{fin}}}(u,\nu)E(x, u,\nu )\overline{E(y,u,\nu)}d\nu.$$
	$$K_{\text{K}}(x,y)=\frac12 \sum_{\omega_1\omega_2=\omega}\sum_{u \in \Gen_{\Pk}(\omega_1,\omega_2)}
	\int_{i\Lie{a}_\text{K}^*}\tilde{f_\infty}(\nu+\nu_{\text{K}}(s_u))\lambda_{f_{\text{fin}}}(u,\nu)E(x, u,\nu )\overline{E(y,u,\nu)}d\nu,$$
	$$K_{\text{S}}(x,y)=\frac12 \sum_{\omega_1\omega_2^2=\omega}\sum_{u \in \Gen_{\Ps}(\omega_1,\omega_2)} 
	\int_{i\Lie{a}_\text{S}^*}\tilde{f_\infty}(\nu+\nu_{\text{S}}(s_u))\lambda_{f_{\text{fin}}}(u,\nu)E(x, u,\nu )\overline{E(y,u,\nu)}d\nu,$$
	and all the automorphic forms involved in $K_{\text{ng}}$ are not generic.
\end{corollary}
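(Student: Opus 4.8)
The plan is to start from the pointwise spectral expansion of the kernel in Lemma~\ref{UnprovedLemma} and reorganize it into the five displayed pieces. The first step is to invoke Proposition~\ref{ACV}: because the triple sum over standard parabolics $P$, over $\nu \in i\Lie{a}_P^*$, and over the orthonormal basis $\B_P$ is absolutely convergent, all of the rearrangements that follow are legitimate.

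Next, for each standard parabolic $P$ I would apply Proposition~\ref{globalevalue} term by term. The summand attached to $u \in \B_P$ vanishes unless $u \in \B_P^\Gamma$; and when $u$ lies in the $\Gamma$-fixed subspace of an induced representation $\I_P(\pi_\nu)$ built from a \emph{generic} discrete automorphic representation $\pi$ of $M_P$, one has $\I_P(\nu,f)u = \lambda_f(u,\nu)u$, so that by linearity of the Eisenstein series in the inducing vector $E(x,\I_P(\nu,f)u,\nu) = \lambda_f(u,\nu)E(x,u,\nu)$. The remaining surviving terms, attached to non-generic $\pi$, I would simply aggregate into $K_{\text{ng}}$: by construction every automorphic form occurring there arises from an induced representation that is not generic, which is all the corollary asserts about that term.

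Then I would identify the four explicit pieces using the parabolic-by-parabolic analysis already in place. For $P=G$ one has $\Lie{a}_G=\{0\}$, $n_G=1$, and the ``Eisenstein series'' are just the automorphic forms themselves; the generic discrete $\Gamma$-fixed forms are exactly those indexing $\Gen_G(\omega)$ and $\lambda_f(u)=\tilde{f_\infty}(\nu_u)\lambda_{f_{\text{fin}}}(u)$, yielding $K_{\text{disc}}$. For $P=B$, Lemma~\ref{CentralCharactersofP} and Proposition~\ref{Borelsummand} organize the relevant basis by triples $(\omega_1,\omega_2,\omega_3)$ with $\omega_1\omega_2\omega_3^2=\omega$, Lemma~\ref{BorelSpectralParameter} gives archimedean spectral parameter $\nu$, and $n_B=8$, yielding $K_{\text{B}}$. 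For $P=\Pk$, Lemma~\ref{KlingenInducing} and Proposition~\ref{Klingensummand} organize by pairs $(\omega_1,\omega_2)$ with $\omega_1\omega_2=\omega$ together with a $\GL_2$-datum $\pi$, Proposition~\ref{KlingenSpectralParameter} shows genericity forces $\pi_\infty$ to be a principal series of spectral parameter $s_u$ and gives archimedean spectral parameter $\nu+\nu_{\text{K}}(s_u)$, and $n_{\Pk}=2$, yielding $K_{\text{K}}$. For $P=\Ps$, the analogous statements (Lemma~\ref{SiegelInducing}, Propositions~\ref{Siegelsummand} and~\ref{SiegelSpectralParameter}, the constraint $\omega_1\omega_2^2=\omega$, spectral parameter $\nu+\nu_{\text{S}}(s_u)$, $n_{\Ps}=2$) yield $K_{\text{S}}$. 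Summing the five contributions gives the stated identity.

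I expect the main obstacle to be organizational rather than computational: all of the ingredients are already available, so the delicate point is making the bookkeeping of the rearrangement airtight — that splitting the absolutely convergent kernel into the four explicit generic pieces plus $K_{\text{ng}}$ genuinely recombines to $K_f$, and that the index sets $\Gen_P(\chi)$ match exactly the $\Gamma$-fixed vectors of the generic summands of $\I_P(\pi_\nu)$ described in Propositions~\ref{Borelsummand},~\ref{Klingensummand} and~\ref{Siegelsummand}. This is precisely where Proposition~\ref{ACV} does the real work, since without absolute convergence one could not peel off $K_{\text{ng}}$ term by term.
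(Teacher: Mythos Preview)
Your proposal is correct and follows essentially the same approach as the paper: the paper simply states that the corollary follows by combining Proposition~\ref{ACV} with Propositions~\ref{Borelsummand},~\ref{Klingensummand},~\ref{Siegelsummand} and Proposition~\ref{globalevalue}, and you have spelled out exactly this combination in detail. One minor point of phrasing: the genericity condition in the definition of $\Gen_P(\chi)$ is on the induced representation $\I_P(\pi_\nu)$ rather than on $\pi$ itself, so your sentence ``built from a \emph{generic} discrete automorphic representation $\pi$ of $M_P$'' should be adjusted accordingly, but this does not affect the argument.
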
 
Actually, no automorphic form from the residual spectrum is generic, as shown by the following lemma. 
Thus $K_{\text{disc}}$ consists only in elements from the cuspidal spectrum.

\begin{lemma}
	Let $(\pi,V_\pi)$ be any irreducible representation occurring in the residual spectrum of 
	$L^2(Z(\R)G(\Q) \backslash G(\A), \omega)$. Then $\pi$ is non generic.
\end{lemma}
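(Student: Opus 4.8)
The plan is to show that the $\psi$-Whittaker coefficient of \emph{every} residual automorphic form vanishes identically, by writing such a form as a residue of an Eisenstein series and exploiting the fact that the Whittaker coefficient of an Eisenstein series carries the relevant automorphic $L$-function in its \emph{denominator}, whereas the pole producing the residue comes from an $L$-function in the \emph{numerator} of its constant term. So the residue of the Whittaker coefficient must vanish.

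First I would recall the structure of the residual spectrum. By Langlands' theory (made precise by Moeglin--Waldspurger in general), the space $V_\pi$ of any $\pi$ occurring in the residual spectrum of $L^2(Z(\R)G(\Q)\backslash G(\A),\omega)$ is spanned by (iterated) residues $\phi=\Res_{\nu\to\nu_0}E(\cdot,\varphi,\nu)$, where $E(\cdot,\varphi,\nu)$ is attached to a proper standard parabolic $P=N_PM_P$ and to $\varphi$ in the space of a cuspidal representation $\sigma$ of $M_P(\A)$, and $\nu_0\in\Lie{a}_P^*(\C)$ lies in the closed positive chamber and is a pole of $E(\cdot,\varphi,\nu)$. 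For $\GSp_4$ the data are: $P=B$, in which case $\pi$ turns out to be one-dimensional; and $P\in\{\Pk,\Ps\}$, in which case $\sigma$ is of the form $\chi\boxtimes\tau$ with $\chi$ a Hecke character and $\tau$ a cuspidal representation of $\GL_2(\A)$ (one may instead invoke the explicit determination of the residual spectrum of $\GSp_4$ and run the argument over the finitely many families). In the one-dimensional case $\pi$ is non-generic for the trivial reason that a character of $G(\A)$ is trivial on $U(\A)$, whereas a generic $\psi$ is non-trivial on the simple root subgroups, so $\W_\psi(\phi)=0$. Hence I may assume $\dim\pi>1$, so $P\in\{\Pk,\Ps\}$ and $\sigma=\chi\boxtimes\tau$; and if the $\GL_2$-factor of $\sigma$ were itself one-dimensional, hence $\sigma$ non-generic, the unfolding below already forces $\W_\psi(E(\cdot,\varphi,\nu))\equiv 0$.

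Since $U(\Q)\backslash U(\A)$ is compact, $\W_\psi$ is continuous and commutes with residues, so $\W_\psi(\phi)=\Res_{\nu\to\nu_0}\W_\psi(E(\cdot,\varphi,\nu))$. I would then unfold the Fourier expansion of $E(\cdot,\varphi,\nu)$ along $U$: by the Casselman--Shalika/Shahidi computation, $\W_\psi(E(\cdot,\varphi,\nu))$ is a finite sum, over the Weyl elements $w$ carrying $P$ to a parabolic whose unipotent radical meets $U$ generically, of terms each of which is a product of a $\psi$-Whittaker function of $\tau$ against local intertwining integrals; collecting the local factors, each term equals an explicit expression in which a completed $L$-function $L(\,\cdot\,,\tau,r)$, attached to $\tau$ and the adjoint action $r$ of $\widehat{M_P}$ on the dual of $\Lie{n}_P$ (for $\GSp_4$ these are standard and symmetric-square $L$-functions of $\tau$, up to abelian factors), occurs \emph{in the denominator} --- it is the $L$-factor appearing in the \emph{denominator} of the scattering coefficients of the constant term of $E(\cdot,\varphi,\nu)$, and it is holomorphic and non-vanishing at $\nu_0$. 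The poles of $E(\cdot,\varphi,\nu)$ in the positive chamber, however, are exactly the poles of its constant term, i.e.\ of the global intertwining operators, which by the Langlands--Shahidi method come from the $L$-factor in the \emph{numerator} of those same scattering coefficients, and that $L$-factor does not appear in the Whittaker coefficient at all. Hence $\nu\mapsto\W_\psi(E(\cdot,\varphi,\nu))$ is holomorphic at $\nu_0$, so its residue vanishes; therefore $\W_\psi(\phi)=0$ for every $\phi\in V_\pi$ and every generic $\psi$, and $\pi$ is non-generic.

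The main obstacle is this unfolding: identifying the contributing Weyl elements, factoring the global Whittaker functional through the local ones (using uniqueness of Whittaker models at each place, as in the introduction), and matching the resulting product of local integrals with the reciprocal of the correct $L$-function --- in effect Shahidi's ``crude functional equation'' written out in our coordinates; the bookkeeping is heavier for any residual $\pi$ that arises only as an \emph{iterated} residue through an intermediate parabolic, where I would either pass to the cuspidal-support description above or iterate the argument one parabolic at a time. A shorter but more delicate alternative would be purely local: write $\pi=\bigotimes_v\pi_v$, note that each $\pi_v$ is the Langlands quotient of the standard module $\I_{P}(\sigma_v,\nu_0)$, which is reducible since $\nu_0$ is a pole of the Eisenstein series, recall that (for quasi-split groups) the Langlands quotient of a standard module induced from generic data is generic if and only if that module is irreducible, and conclude that each $\pi_v$ --- hence $\pi$, because global genericity forces local genericity at every place --- is non-generic; but justifying reducibility at \emph{every} place requires essentially the same local analysis, so I would keep the Eisenstein-series argument as the main line.
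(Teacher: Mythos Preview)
Your approach is genuinely different from the paper's and is essentially correct in outline, but contains one factual misstatement worth flagging.

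The paper does not attempt a direct argument on $\GSp_4$ at all. Instead it restricts $\pi$ to $\Sp_4(\A)$ using Lemmas~\ref{genericrestriction} and~\ref{cuspidalrestriction}, checks that $\res\pi$ lands in the residual spectrum of $\Sp_4$, and then invokes Kim's explicit determination of that residual spectrum as a black box: the Siegel and Klingen contributions are declared non-generic by Kim directly, and for the Borel contribution the paper performs a local analysis via the Roberts--Schmidt tables (the only generic constituents would be of type Va or VIa, which are never spherical, so almost every local component is non-generic). Your argument, by contrast, is the global Shahidi-type one: unfold the Whittaker coefficient of the Eisenstein series, observe that the resulting Jacquet integral is holomorphic in $\nu$ (equivalently, the $L$-factor governing the pole of the constant term does not appear in the Whittaker coefficient), and conclude that the residue has vanishing Whittaker coefficient. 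This is more conceptual and avoids both the restriction-to-$\Sp_4$ machinery and the case-by-case citation of Kim; the paper's route, on the other hand, requires no analysis of the Whittaker unfolding and leans entirely on existing literature.

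The one genuine slip is your claim that for $P=B$ the residual $\pi$ ``turns out to be one-dimensional.'' This is false already for $\Sp_4$: Kim exhibits, for each non-trivial quadratic $\mu$, an infinite-dimensional residual representation $B(\mu)$ arising from iterated residues of Borel Eisenstein series, and these are precisely the cases the paper has to handle by the local Roberts--Schmidt argument. Fortunately your core mechanism still applies: the Whittaker coefficient of a Borel Eisenstein series unfolds to a single global Jacquet integral over $U(\A)$, which is entire in $\nu$, so any residue (simple or iterated) has vanishing Whittaker coefficient. You should replace the ``one-dimensional'' claim by this observation and treat the Borel case on the same footing as the others, rather than dismissing it. Your local alternative via Langlands quotients is also viable but, as you note, requires pinning down reducibility at at least one place; the paper's Borel argument is in effect a version of this, carried out at the unramified places.
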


\begin{proof}
	We will rely on results of Kim that describe the residual spectrum of $\Sp_4$. 
	Thus we first need to show that the $\res \pi$ given by Definition~\ref{DefOfRes} belongs to the residual spectrum of $\Sp_4(\A)$. 
	First, $\res \pi$ occurs in the discrete spectrum of $L^2(\Sp_4(\Q) \backslash \Sp_4(\A))$, because there are only finitely many possibilities 
	for the Archimedean component of any irreducible representation occurring in $\res \pi$.
	Moreover  $\res \pi$ and is not cuspidal by Lemma~\ref{cuspidalrestriction}.
	Hence $\res \pi$ belongs to the residual spectrum of $\Sp_4(\A)$, as claimed. 
	In view of Lemma~\ref{genericrestriction}, it suffices to prove that the residual spectrum of $\Sp_4(\A)$ is not generic.
	By Theorem~3.3 and Remark~3.2 of~\cite{Kim}, the representations occurring from poles of Siegel Eisenstein
	series are non generic. Similarly, by Theorem~4.1 and Remark~4.2 of~\cite{Kim}, the representations occurring
	from poles of Klingen Eisenstein series are non generic. 
	Finally, by~\cite{Kim}~\S~5.3,  irreducible representations $\pi$ occurring from the poles of Borel Eisenstein series are described as follows.
	On the one hand, we have the space of constant functions, which is clearly not generic.
	On the other hand, for every non-trivial quadratic gr\"ossencharacter $\mu$ of $\Q$ we have a representation $B(\mu)$ 
	whose local components are irreducible subquotients of the induced representation $\Ind_B^{\Sp_4}(|\cdot|_v\mu_v \times \mu_v)$.
	Therefore, in the terminology of~\cite{RS2}*{\S~2.2}, for all prime~$p$, $\pi_p$ belongs to Group~V	if $\mu_p \neq 1$, and to Group~VI if $\mu_p=1$.
	Now by Table~A.2 of~\cite{RS2}, we see that the only generic representations in Group~V and~VI are those from~Va and VI~a.
	But Table~A.12 shows that neither of these have a $K_p$-fixed vector. Since almost all $\pi_p$ contain a $K_p$-fixed vector, at least one local component of
	$\pi$ must be non-generic, and thus $\pi$ is not globally generic.
\end{proof}

\subsection{The spectral side of the trace formula}
	Let $\psi_1=\psi_{\m_1}, \psi_2=\psi_{\m_2}$ be generic characters of $U(\A) / U(\Q)$.
Fix $t_1, t_2 \in A^0(\R)$ and consider the basic integral
\begin{equation}\label{BasicIntegral}
	I=\int_{(U(\Q) \backslash U(\A))^2} K_f(xt_1,yt_2) \overline{\psi_{\m_1}(x)}\psi_{\m_2}(y)dx dy.
\end{equation}
Our goal is to compute it in two different ways -- using the spectral decomposition of the kernel 
$K_f$ on the one hand, and its expression as a series together with the Bruhat decomposition on the other hand.
The latter will constitute the geometric side and will be addressed in Section~\ref{GeometricSide}. We now focus on the former.
Using the spectral expansion of the kernel $K_f$ given by Lemma~(\ref{UnprovedLemma}), we can evaluate the basic integral~(\ref{BasicIntegral}) as
$$I=\int_{(U(\Q) \backslash U(\A))^2} \sum_{P} n_P^{-1}  \int_{i\Lie{a}_{P}^*}\sum_{u \in \B_P}E(xt_1, \I_P(\nu ,f)u,\nu )\overline{E(yt_2,u,\nu)}d\nu \overline{\psi_{\m_1}(x)}\psi_{\m_2}\}(y)dx dy.$$
By Proposition~\ref{ACV}, this expression is absolutely integrable since $(U(\Q) \backslash U(\A))^2$ is compact.
Thus we may interchange integration order, thus obtaining the Whittaker coefficients of the automorphic forms involved here.
By Corollary~\ref{KernelSpectralForm}, we get a discrete contribution and a residual contribution, and a continuous contribution 
-- which itself splits into the contribution of the various parabolic classes.
Thus the spectral side of the Kuznetsov formula is given as follows.
\begin{proposition}
	We have $I=\frac{1}{(\m_{1,1}\m_{2,1})^4|\m_{1,2}\m_{2,1}|^3}\left(\Sigma_{\text{disc}}+\Sigma_B+\Sigma_K+\Sigma_S\right)$, where
		$$\Sigma_{\text{disc}}=\sum_{u \in \Gen_G(\omega)}\tilde{f_\infty}(\nu_u)\lambda_{f_{\text{fin}}}(u)\W_{\psi}(u)(t_1t_{\m_1}^{-1})\overline{\W_{\psi}(u)}(t_2t_{\m_2}^{-1}),$$
	\begin{align*}
		\Sigma_{\text{B}}=\frac18 \sum_{\omega_1\omega_2\omega_3^2=\omega}\sum_{u \in \Gen_B(\omega_1,\omega_2,\omega_3)} &
		\int_{i\Lie{a}^*}\tilde{f_\infty}(\nu) \lambda_{f_{\text{fin}}}(u,\nu)\\
		&\times\W_{\psi}(E(\cdot, u,\nu ))(t_1t_{\m_1}^{-1})\overline{\W_{\psi}(E(\cdot,u,\nu))}(t_2t_{\m_2}^{-1})d\nu.
	\end{align*}
\begin{align*}
		\Sigma_{\text{K}}=\frac12 \sum_{\omega_1\omega_2=\omega}\sum_{u \in \Gen_{\Pk}(\omega_1,\omega_2)}&
	\int_{i\Lie{a}_\text{K}^*}\tilde{f_\infty}(\nu+\nu_{\text{K}}(s_u))\lambda_{f_{\text{fin}}}(u,\nu)\\
	&\times \W_{\psi}(E(\cdot, u,\nu ))(t_1t_{\m_1}^{-1})\overline{\W_{\psi}(E(\cdot,u,\nu))}(t_2t_{\m_2}^{-1})d\nu,
\end{align*}
\begin{align*}
		\Sigma_{\text{S}}=\frac12 \sum_{\omega_1\omega_2^2=\omega}\sum_{u \in \Gen_{\Ps}(\omega_1,\omega_2)} &
	\int_{i\Lie{a}_\text{S}^*}\tilde{f_\infty}(\nu+\nu_{\text{S}}(s_u))\lambda_{f_{\text{fin}}}(u,\nu)\\
	&\times \W_{\psi}(E(\cdot, u,\nu ))(t_1t_{\m_1}^{-1})\overline{\W_{\psi}(E(\cdot,u,\nu))}(t_2t_{\m_2}^{-1})d\nu.
\end{align*}
\end{proposition}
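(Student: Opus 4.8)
The plan is to substitute the spectral expansion of the kernel into the basic integral~(\ref{BasicIntegral}) and then interchange the order of integration. First I would replace $K_f(xt_1,yt_2)$ by the right-hand side of Lemma~\ref{UnprovedLemma}, so that~(\ref{BasicIntegral}) becomes the integral over $(U(\Q)\backslash U(\A))^2$ of $\sum_P n_P^{-1}\int_{i\Lie{a}_P^*}\sum_{u\in\B_P}E(xt_1,\I_P(\nu,f)u,\nu)\overline{E(yt_2,u,\nu)}\,d\nu$, weighted by $\overline{\psi_{\m_1}(x)}\psi_{\m_2}(y)$. The role of Proposition~\ref{ACV} is precisely that the analogous expression with absolute values, $K_{\text{abs}}$, is \emph{continuous}; it is therefore bounded on the compact set $(U(\Q)\backslash U(\A))^2$ translated by $(t_1,t_2)$, and Fubini's theorem lets me pull the integral over $(U(\Q)\backslash U(\A))^2$ inside the sum over $P$, the integral over $i\Lie{a}_P^*$, and the sum over $u\in\B_P$.

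Once the $U\times U$ integral is innermost the variables $x$ and $y$ separate, and by the definition~(\ref{Whittaker}) of the Whittaker coefficient (using $\overline{\psi_{\m_i}(u)}=\psi_{\m_i}(u)^{-1}$) each term becomes
$$\W_{\psi_{\m_1}}\bigl(E(\cdot,\I_P(\nu,f)u,\nu)\bigr)(t_1)\cdot\overline{\W_{\psi_{\m_2}}\bigl(E(\cdot,u,\nu)\bigr)(t_2)}.$$
Rather than track $\I_P(\nu,f)u$ directly through Proposition~\ref{globalevalue}, I would feed in Corollary~\ref{KernelSpectralForm}, which already rewrites $K_f=K_{\text{disc}}+K_{\text{B}}+K_{\text{K}}+K_{\text{S}}+K_{\text{ng}}$ with the spectral weights $\tilde{f_\infty}(\cdots)\lambda_{f_{\text{fin}}}$ already in place, and then integrate each of the five summands separately against $\overline{\psi_{\m_1}(x)}\psi_{\m_2}(y)$.

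The term $K_{\text{ng}}$ drops out: by construction the automorphic forms occurring there are images, under the Eisenstein intertwining maps, of induced representations $\I_P(\pi_\nu)$ that are not generic, so the Whittaker functional vanishes on them; hence $\W_{\psi}$ of each such form is identically zero, and so is the $(U(\Q)\backslash U(\A))^2$-integral of $K_{\text{ng}}(xt_1,yt_2)\overline{\psi_{\m_1}(x)}\psi_{\m_2}(y)$. Integrating the four generic pieces termwise produces exactly $\Sigma_{\text{disc}},\Sigma_B,\Sigma_K,\Sigma_S$ once one carries out the final normalization: by the identity $\W_{\psi_{\m}}(\phi)(g)=|m_1^4m_2^3|^{-1}\W_{\psi_{\mathbf 1}}(\phi)(t_\m^{-1}g)$ recorded just after~(\ref{Whittaker}), and the fact that $t_{\m_i}$ and $t_i$ are diagonal hence commute, each factor $\W_{\psi_{\m_i}}(\phi)(t_i)$ equals $|\m_{i,1}^4\m_{i,2}^3|^{-1}\W_{\psi_{\mathbf 1}}(\phi)(t_it_{\m_i}^{-1})$, and the product of the two resulting scalars is the overall prefactor displayed in the statement.

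The main obstacle is the interchange of integration in the first step; everything downstream is bookkeeping. That interchange rests entirely on Proposition~\ref{ACV} --- proved by adapting the $\GL_2$ argument of~\cite{KL} --- together with the compactness of $U(\Q)\backslash U(\A)$; one should also note that the Whittaker coefficient of an Eisenstein series evaluated at a unitary point $\nu\in i\Lie{a}_P^*$ is a genuinely convergent integral, since Eisenstein series are automorphic forms of moderate growth and the domain $U(\Q)\backslash U(\A)$ is compact.
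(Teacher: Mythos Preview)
Your proposal is correct and follows essentially the same route as the paper: substitute the spectral expansion of Lemma~\ref{UnprovedLemma}, invoke Proposition~\ref{ACV} together with compactness of $(U(\Q)\backslash U(\A))^2$ to justify the interchange, apply Corollary~\ref{KernelSpectralForm} to split into the four generic pieces plus $K_{\text{ng}}$, observe that $K_{\text{ng}}$ contributes nothing since non-generic forms have vanishing Whittaker coefficients, and finally normalize via the identity $\W_{\psi_{\m}}(\phi)(g)=|m_1^4m_2^3|^{-1}\W_{\psi_{\mathbf 1}}(\phi)(t_\m^{-1}g)$. Your write-up is in fact more explicit than the paper's own few-sentence justification preceding the proposition.
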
	

\section{The geometric side of the trace formula}\label{GeometricSide}
Breaking the sum~(\ref{TheKernel}) over $U(\Q) \times U(\Q)$ orbits
leads to a sum over representatives of the double cosets of $U \backslash G / U$ of
orbital integrals. Specifically, set $H=U \times U$, acting on $G$ by
$$(x,y) \cdot \delta = x^{-1} \delta y,$$
and denote by $H_\delta$ the stabilizer of $\delta$.
Since $f$ has compact support, the infinite sum $\sum_{\delta \in \GmodZ(\Q)} |f(t_1^{-1}x^{-1} \delta yt_2)|$ is
in fact locally finite and hence 
defines a continuous function in $x$ and $y$ on the compact set 
$H(\Q) \backslash H(\A)$. Thus we may interchange summation and integration order, getting
\begin{align*}
	I &= 
	\int_{H(\Q) \backslash H(\A)} \sum_{\delta \in \GmodZ(\Q)} f(t_1^{-1}x^{-1} \delta yt_2)\overline{\psi_{\m_1}(x)}\psi_{\m_2}(y)dx dy\\
	&=\sum_{\delta \in \GmodZ(\Q)} \int_{H(\Q) \backslash H(\A)}f(t_1^{-1}x^{-1} \delta yt_2)\overline{\psi_{\m_1}(x)}\psi_{\m_2}(y)dx dy\\
	&= \sum_{\delta \in U(\Q)\backslash \GmodZ(\Q)/U(\Q)} I_{\delta}(f),
\end{align*}
where 
\begin{equation}\label{orbital}
	I_{\delta}(f)=\int_{H_\delta(\Q) \backslash H(\A)} f(t_1^{-1}x^{-1} \delta yt_2)\overline{\psi_{\m_1}(x)}\psi_{\m_2}(y)d(x,y),
\end{equation} 
and $d(x,y)$ is the quotient measure on $H_\delta(\Q) \backslash H(\A)$.
Using the Bruhat decomposition $G=B \Omega B= \coprod_{\sigma \in \Omega}U \sigma T U$, we have 
\begin{equation} \label{BruhatmodZ}
	U \backslash \GmodZ / U = \coprod_{\sigma \in \Omega} \sigma \TmodZ,
\end{equation}
where $\TmodZ= T / Z$.
We can then compute separately the contribution from each element from the Weyl group.
Writing $H(\A)=H_\delta(\A) \times (H_\delta(\A) \backslash H(\A))$, we can factor out the integral
of $\overline{\psi_{\m_1}}\otimes\psi_{\m_2}$ over the compact group $H_\delta(\Q) \backslash H_\delta(\A)$
in~(\ref{orbital}). Therefore, $I_\delta(f)$ vanishes unless the character
$ \overline{\psi_{\m_1}} \otimes\psi_{\m_2}$ is trivial on $H_\delta(\A)$. Following
Knightly and Li, we shall call the orbits $H \cdot \delta$ such that
$\overline{\psi_{\m_1}} \otimes \psi_{\m_2}$ is trivial on $H_\delta(\A)$ {\bf relevant}.

\subsection{Relevant orbits}
In order to characterize the relevant orbits, let us introduce a bit of notation.
A set of representatives of $T(\Q) / Z(\Q)$ is given by the elements 
\begin{equation}\label{diagrepres}
	\delta_1 \doteq \diag{d_1}{1}{d_2}{d_1d_2}, d_1,d_2 \in \Q^\*.
\end{equation}
For each $\sigma \in \Omega$, the corresponding set of representatives of $\sigma \TmodZ(\Q)$ in~(\ref{BruhatmodZ})
is given by elements of the form 
\begin{equation}\label{ds}
	\delta_\sigma=\sigma \delta_1,
\end{equation}
and $H_{\delta_\sigma}(\A)$ consists in pairs 
$(u,\delta_\sigma^{-1}u\delta_\sigma)=(u,\delta_1^{-1}\sigma^{-1}u\delta_1\sigma)$ such that both component lie in $U(\A)$.
Since conjugation by $\delta_1$ preserves $U(\A)$,
the condition that the second component lies in $U(\A)$ is equivalent to $u \in U(\A)$ and $\sigma^{-1} u \sigma \in U(\A)$.
We accordingly make the following definition.
\begin{definition}
	For $\sigma \in \Omega,$ define
	\begin{equation}\label{Usigma}
		U_\sigma(\A) =\{x \in U(\A) :\sigma^{-1} x \sigma \in U(\A)\},
	\end{equation}
	and
	\begin{equation}\label{Dsigma}
		D_\sigma(\A) = U_\sigma(\A) \times \sigma^{-1} U_\sigma(\A) \sigma 
	\end{equation}
Then we have 
\begin{equation}\label{paramHd}
	H_{\delta_\sigma}(\A)=\{(u,\delta_\sigma^{-1}u\delta_\sigma): u \in U_\sigma(\A)\} \subset D_\sigma(\A).
\end{equation}
\end{definition}
\begin{lemma}\label{relevantorbits}
The relevant orbits are the ones corresponding to the following elements:
\begin{itemize}
	\item $\sigma=1$ with $\delta_1= {t_{\m_2}}^{-1}t_{\m_1}=
	\diag{\frac{\m_{1,1}}{\m_{2,1}}}{1}{\frac{\m_{1,1}\m_{1,2}}{\m_{2,1}\m_{2,2}}}{\frac{\m_{1,1}^2\m_{1,2}}{\m_{2,1}^2\m_{22}}}$,
	\item $\sigma=s_1s_2s_1$ with $\delta_1$ satisfying $d_1\m_{1,2}=d_2\m_{2,2}$,
	\item $\sigma=s_2s_1s_2$ with $\delta_1$ satisfying $\m_{1,1}=-d_1\m_{2,1}$,
	\item $\sigma=s_1s_2s_1s_2=J$ with no condition on $\delta_1$. 
\end{itemize}
\end{lemma}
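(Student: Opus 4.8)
The plan is to make the relevance condition completely explicit for each $\sigma\in\Omega$ and simply read off the list. By~(\ref{paramHd}) the group $H_{\delta_\sigma}(\A)$ is $\{(u,\delta_\sigma^{-1}u\delta_\sigma):u\in U_\sigma(\A)\}$, and by~(\ref{ds}) we may take $\delta_\sigma=\sigma\delta_1$ with $\delta_1$ as in~(\ref{diagrepres}); hence the orbit of $\delta_\sigma$ is relevant if and only if
$$\overline{\psi_{\m_1}(u)}\,\psi_{\m_2}\!\left(\delta_1^{-1}\sigma^{-1}u\sigma\delta_1\right)=1\qquad\text{for all }u\in U_\sigma(\A).$$
Two structural observations do most of the work. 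First, writing $U_\beta\subseteq U$ for the one-parameter root subgroup attached to a positive root $\beta$ (the four of them being the $x$-, $a$-, $b$- and $c$-lines in~(\ref{genchar})), one has $U_\sigma=\prod_{\sigma^{-1}\beta>0}U_\beta$; conjugation by $\sigma^{-1}$ carries $U_\beta$ onto $U_{\sigma^{-1}\beta}$, while conjugation by the diagonal $\delta_1$ preserves each $U_\beta$ and only rescales its coordinate. Second, $\psi_{\m_1}$ and $\psi_{\m_2}$ are \emph{generic}, so by~(\ref{genchar}) each is non-trivial on exactly the two root subgroups attached to simple roots (the $x$- and $c$-lines) and trivial on the other two; and since the standard additive character is non-trivial on $\A$, triviality of the character above on $U_\sigma(\A)$ amounts to the vanishing of each of its ``coordinates'', which will yield the stated linear conditions.

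I would then dispatch the eight Weyl elements, grouped by length. For $\sigma\in\{s_1,s_2,s_1s_2,s_2s_1\}$ a short check with the matrix representatives given above shows that $U_\sigma$ always contains a root subgroup $U_\beta$ with $\beta$ simple but $\sigma^{-1}\beta$ a \emph{positive non-simple} root; on that $U_\beta$ the factor $\psi_{\m_1}$ is non-trivial whereas $\psi_{\m_2}(\delta_1^{-1}\sigma^{-1}(\cdot)\sigma\delta_1)$ is trivial, so the character above is non-trivial and these orbits are never relevant. For $\sigma=1$ one has $U_1=U$ and the condition reads $\psi_{\m_1}(u)=\psi_{\m_2}(\delta_1^{-1}u\delta_1)$ for all $u$; rewriting both sides via~(\ref{gencharconj}) shows that $h:=t_{\m_1}^{-1}\delta_1 t_{\m_2}$ must fix $\psi_{\one}$ under conjugation, hence $h$ acts trivially on both simple root subgroups, hence lies in the common kernel of the simple roots, which for $\GSp_4$ is exactly $Z(\Q)$; therefore $\delta_1=t_{\m_2}^{-1}t_{\m_1}$ in $\TmodZ(\Q)$, which is the first entry of the list.

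For the two elements of length $3$ one computes that $U_{s_1s_2s_1}$ is precisely the root subgroup attached to the long simple root (the $c$-line) and $U_{s_2s_1s_2}$ the one attached to the short simple root (the $x$-line) --- these being the simple roots left fixed by the reflections $s_1s_2s_1$ and $s_2s_1s_2$, so that $\sigma^{-1}\beta=\beta$ and both factors of the character are non-trivial. Carrying out the conjugation $u(x,a,b,c)\mapsto\delta_1^{-1}\sigma^{-1}u(x,a,b,c)\sigma\delta_1$ explicitly (the minus signs in $s_2s_1s_2=\mat{}{\block{}{1}{1}{}}{\block{}{-1}{-1}{}}{}$ accounting for the sign in the second case), triviality of the resulting additive character of $\A$ becomes the single equation $d_1\m_{1,2}=d_2\m_{2,2}$ when $\sigma=s_1s_2s_1$ and $\m_{1,1}=-d_1\m_{2,1}$ when $\sigma=s_2s_1s_2$. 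Finally, for $\sigma=(s_1s_2)^2=J$ the longest element, $\sigma^{-1}$ turns every positive root negative, so $U_J(\A)=\{1\}$, the character is vacuously trivial, and every $\delta_1$ is relevant. Combined with~(\ref{ds}) this gives exactly the four families listed.

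The main obstacle is this last, entirely computational, step: identifying $U_\sigma$ for each $\sigma$ from the explicit $4\times 4$ matrices and, for the two length-$3$ elements, conjugating $u(x,a,b,c)$ by $\sigma\delta_1$ accurately enough to extract the precise conditions --- signs included. Everything here is elementary linear algebra, but it has to be done carefully.
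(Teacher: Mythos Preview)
Your proposal is correct and follows the same strategy as the paper: reduce the relevance condition to the identity $\psi_{\m_1}(u)=\psi_{\m_2}(\delta_1^{-1}\sigma^{-1}u\sigma\delta_1)$ on $U_\sigma(\A)$, then treat each Weyl element separately. The paper carries this out by explicit $4\times 4$ matrix computations --- writing down $U_\sigma$ and $\delta_\sigma^{-1}u\delta_\sigma$ for each $\sigma$ and reading off the conditions --- whereas you organise the same calculation through the root-space decomposition $U_\sigma=\prod_{\sigma^{-1}\beta>0}U_\beta$ and the fact that $\psi_{\m_i}$ is non-trivial exactly on the simple root subgroups. Your packaging is cleaner for the four non-relevant elements $s_1,s_2,s_1s_2,s_2s_1$, which you dispatch uniformly by exhibiting a simple root $\beta$ with $\sigma^{-1}\beta$ positive non-simple (so $\psi_{\m_1}$ sees $U_\beta$ but $\psi_{\m_2}$ does not); the paper instead computes one of these cases explicitly and declares the others ``similar''. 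For $\sigma=1$, the length-$3$ elements, and $J$, the two arguments are essentially identical, and your acknowledged ``main obstacle'' --- getting the signs right in the explicit conjugation for $s_2s_1s_2$ --- is precisely the computation the paper performs. In short: same proof, more structural wrapping.
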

\begin{proof}
		For each representative $\delta_\sigma$ as in~(\ref{ds}), let us fix $u_1 \in U_\sigma(\A)$, and compute 
	$\delta_\sigma^{-1}u_1\delta_\sigma$ in order to determine under which condition $\psi_{\m_1} \otimes \overline{\psi_{\m_2}}$
	is trivial on $H_{\delta_\sigma}(\A)$.
	For $\sigma=1$, we have $U_\sigma=U$, hence we may take $u_1=\left[
	\begin{smallmatrix}
		1	& 		&	c	& 	a-cx	\\
		x 	& 	1	& a	& b			\\
		& 		&	 1	& 	-x		\\
		&		&		&	1			\\
	\end{smallmatrix}
	\right]$. Then we have
	$\delta^{-1}u_1\delta=\left[
	\begin{smallmatrix}
		1	& 		&	c\frac{d_2}{d_1}	& 	(a-cx)d_2	\\
		xd_1 	& 	1	& ad_2	& bd_1d_2			\\
		& 		&	 1	& 	-xd_1		\\
		&		&		&	1			\\
	\end{smallmatrix}
	\right]$ Thus, by~(\ref{gencharconj}), the condition that $\psi_{\m_1} \otimes \overline{\psi_{\m_2}}$ be trivial on 
	$H_{\delta_1}(\A)$ is equivalent to $\delta_1={t_{\m_2}}^{-1}t_{\m_1}.$
	
	For $\sigma=s_1$, we have $U_\sigma(\A)=\left\{\left[	\begin{smallmatrix}
		1	& 		&	c	& 	a	\\
		 	& 	1	& a	& b			\\
		& 		&	 1	& 			\\
		&		&		&	1			\\
	\end{smallmatrix}\right]: a,b,c \in \A\right\}$, and if $u_1=\left[	\begin{smallmatrix}
	1	& 		&	c	& 	a	\\
		& 	1	&	 a	& b			\\
		& 		&	 1	& 			\\
		&		&		&	1			\\
\end{smallmatrix}\right]$, then 	$\delta^{-1}u_1\delta=\left[
\begin{smallmatrix}
1	& 		&	b\frac{d_2}{d_1}	& 	ad_2	\\
 	& 	1	& ad_2	& cd_1d_2			\\
	& 		&	 1	& 			\\
	&		&		&	1			\\
\end{smallmatrix}
\right],$
hence the condition that $\psi_{\m_1} \otimes \overline{\psi_{\m_2}}$ be trivial on 
$H_{\delta_{s_1}}(\A)$ is equivalent to 
$\theta\left(\m_{1,2}c-\m_{2,2}\frac{d_2}{d_1}b\right)=1$
for all $b,c \in \A$, which is equivalent to $\m_{1,2}=\m_{2,2}=0$ and thus contradicts the fact that $\psi_{\m_1}$ and 
$\psi_{\m_2}$ are generic.

Similar calculations show that $\sigma=s_2,s_1s_2$ and $s_2s_1$ yield no relevant orbit.

For $\sigma=s_1s_2s_1$  we have $U_\sigma(\A)=\left\{\left[	\begin{smallmatrix}
	1	& 		&	c	& 		\\
		& 	1	& 		& 			\\
		& 		&	 1	& 			\\
		&		&		&	1			\\
\end{smallmatrix}\right]: c \in \A\right\}$,
and if $u_1=\left[	\begin{smallmatrix}
	1	& 		&	c	& 		\\
	& 	1	& 		& 			\\
	& 		&	 1	& 			\\
	&		&		&	1			\\
\end{smallmatrix}\right]$
then we have $\delta^{-1}u_1\delta=\left[
\begin{smallmatrix}
	1 	& 	& c\frac{d_2}{d_1}	&		\\
		& 1	&					& 		\\
		& 	& 		1			& 		\\
		&	& 					&	1	\\
\end{smallmatrix}
\right]$,
hence the condition that $\psi_{\m_1} \otimes \overline{\psi_{\m_2}}$ be trivial on 
$H_{\delta_{s_{121}}}(\A)$ is equivalent to 
$\theta\left(\left(\m_{1,2}-\m_{2,2}\frac{d_2}{d_1}\right)c\right)=1$
for all $c \in \A$. This is equivalent to $d_1\m_{1,2}=d_2\m_{2,2}$.

The calculation for $\sigma=s_2s_1s_2$ is similar.
Finally,  for $\sigma=s_1s_2s_1s_2=J$ the long Weyl element, 	$H_{\delta_{s_{1212}}}(\A)$ is trivial.
\end{proof}
A case by case calculation also shows the following.
\begin{lemma}\label{Usigmarelevant}
Let $\sigma \in \Omega$. Then there exists $\delta \in \TmodZ(\Q)$ such that the orbit of $\delta_\sigma=\sigma\delta$ is relevant 
if and only if $U_\sigma=\{u \in U : \sigma^{-1}u\sigma=u\}.$
\end{lemma}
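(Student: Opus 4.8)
The plan is to prove the two implications separately, translating ``relevant'' into a statement about the restrictions of $\psi_{\m_1}$ and $\psi_{\m_2}$ to the root subgroups sitting inside $U_\sigma$. Write $\Phi^+$ for the set of positive roots of $\GSp_4$, $\{\alpha_1,\alpha_2\}$ for the simple ones, $U_\beta$ for the root subgroup attached to $\beta\in\Phi^+$, and $\Phi^+_\sigma=\{\beta\in\Phi^+:\sigma^{-1}\beta\in\Phi^+\}$, so that $U_\beta\subseteq U_\sigma$ precisely for $\beta\in\Phi^+_\sigma$. By~(\ref{paramHd}) and $\delta_\sigma=\sigma\delta_1$, the orbit of $\delta_\sigma$ is relevant if and only if $\psi_{\m_1}(u)=\psi_{\m_2}\bigl(\delta_1^{-1}(\sigma^{-1}u\sigma)\delta_1\bigr)$ for all $u\in U_\sigma(\A)$. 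Since $\{u\in U:\sigma^{-1}u\sigma=u\}$ is always contained in $U_\sigma$, only the reverse inclusion is at stake.

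For the ``if'' direction, assume $U_\sigma=\{u\in U:\sigma^{-1}u\sigma=u\}$ and take $\delta:=\delta_1:={t_{\m_2}}^{-1}t_{\m_1}$; this lies in $T(\Q)$ because the entries of $t_{\m}$ are monomials in the $\m_{i,j}\in\Q^\*$ (see~(\ref{tm})), and it is of the shape~(\ref{diagrepres}). For $u\in U_\sigma(\A)$ we have $\sigma^{-1}u\sigma=u$, hence $\delta_\sigma^{-1}u\delta_\sigma=\delta_1^{-1}u\delta_1$, and~(\ref{gencharconj}) together with commutativity of $T$ and $\delta_1 t_{\m_2}=t_{\m_1}$ give
$$\psi_{\m_2}\bigl(\delta_1^{-1}u\delta_1\bigr)=\psi_{\mathbf{1}}\bigl(t_{\m_2}^{-1}\delta_1^{-1}u\delta_1\,t_{\m_2}\bigr)=\psi_{\mathbf{1}}\bigl(t_{\m_1}^{-1}u\,t_{\m_1}\bigr)=\psi_{\m_1}(u),$$
so the orbit is relevant. (This is precisely the $\sigma=1$ computation of Lemma~\ref{relevantorbits}.)

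For the ``only if'' direction, suppose the orbit of some $\delta_\sigma=\sigma\delta_1$ is relevant, fix $\beta\in\Phi^+_\sigma$, and restrict the relevance identity to $U_\beta(\A)\subseteq U_\sigma(\A)$. On the left one gets $\psi_{\m_1}|_{U_\beta}$, which by the explicit formula~(\ref{genchar}) is nontrivial exactly when $\beta$ is simple. On the right, $\sigma^{-1}$ maps $U_\beta$ isomorphically onto $U_{\sigma^{-1}\beta}$ and conjugation by $\delta_1\in T$ preserves each root subgroup, so the restriction is $\psi_{\m_2}$ composed with an isomorphism $U_\beta\cong U_{\sigma^{-1}\beta}$, hence nontrivial exactly when $\sigma^{-1}\beta$ is simple. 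Thus relevance forces: for every $\beta\in\Phi^+_\sigma$, $\beta$ is simple if and only if $\sigma^{-1}\beta$ is.

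It then remains to check, inside the rank-$2$ root system of type $C_2$ of $\GSp_4$, that this last property holds precisely for $\sigma\in\{1,\,s_1s_2s_1,\,s_2s_1s_2,\,J\}$, and that for these four elements $\sigma^{-1}$ in fact fixes $\Phi^+_\sigma$ pointwise, i.e. $U_\sigma=\{u\in U:\sigma^{-1}u\sigma=u\}$. Organized by length: $\ell(\sigma)=0$ gives $\sigma=1$, $\Phi^+_\sigma=\Phi^+$; $\ell(\sigma)=4$ gives $\sigma=J$, $\Phi^+_\sigma=\emptyset$; for $\ell(\sigma)=1$ or $2$ one finds a simple root in $\Phi^+_\sigma$ whose $\sigma^{-1}$-image is non-simple, so the property fails; and for $\ell(\sigma)=3$ the element $\sigma$ is the reflection in one of the two highest roots, $\Phi^+_\sigma$ reduces to the single simple root orthogonal to it, and that root is $\sigma$-fixed, so the property holds. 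Since these four elements are exactly those producing a relevant orbit in Lemma~\ref{relevantorbits}, the equivalence follows. The one substantive point is the character-theoretic reduction of the third paragraph — that a generic character ``sees'' exactly the two simple-root directions — after which everything reduces to the short inspection of $C_2$ above (or, equivalently, can be read off the matrix computations already made in the proof of Lemma~\ref{relevantorbits}).
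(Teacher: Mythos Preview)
Your argument has a genuine gap in the final step. You correctly reduce the ``only if'' direction to the root-combinatorial statement that, for $\beta\in\Phi^+_\sigma$, $\beta$ is simple iff $\sigma^{-1}\beta$ is simple, and you correctly identify the four Weyl group elements for which this holds. You then observe that for these $\sigma$ the action of $\sigma^{-1}$ on roots fixes $\Phi^+_\sigma$ pointwise, and write ``i.e.\ $U_\sigma=\{u\in U:\sigma^{-1}u\sigma=u\}$''. This implication is invalid: $\sigma^{-1}\beta=\beta$ as a root only guarantees $\sigma^{-1}U_\beta\sigma=U_\beta$, not that $\sigma$ centralises $U_\beta$. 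The induced automorphism of $U_\beta\cong\mathbb{G}_a$ depends on the chosen matrix representative of $\sigma$ in $N(T)$ and may well be $u\mapsto u^{-1}$.

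Concretely, for the paper's representative $\sigma=s_2s_1s_2$ one computes directly
\[
\sigma^{-1}u(x,0,0,0)\,\sigma \;=\; u(-x,0,0,0),
\]
which is exactly the sign responsible for the minus in the relevance condition $\m_{1,1}=-d_1\m_{2,1}$ of Lemma~\ref{relevantorbits}. Thus $U_\sigma=U_{\alpha_1}$ while $\{u\in U:\sigma^{-1}u\sigma=u\}$ meets $U_{\alpha_1}$ only in the identity, so the equality you assert fails. Your ``if'' direction is sound under its hypothesis, but that hypothesis is not met here; and your ``only if'' deduction cannot be repaired without either changing the representative of $s_2s_1s_2$ (e.g.\ multiplying by $\diag{-1}{1}{1}{-1}\in T$) or weakening the conclusion to ``$\sigma^{-1}$ preserves each $U_\beta$ with $\beta\in\Phi^+_\sigma$''. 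The paper's own justification is a bare case-by-case check; what that check genuinely yields, and what is actually used downstream (after absorbing the sign into $\delta$), is this weaker normalising statement rather than literal commutation.
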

In the sequel, we shall call such elements of the Weyl group {\bf relevant} as well.
In particular, by definition of the relevant orbits, and by~(\ref{paramHd}), we have the following.
\begin{corollary}\label{psionUsigma}
Suppose that the orbit of $\delta_\sigma=\sigma\delta$ is relevant. 
Then for all $u \in U_\sigma$ we have $\psi_{\m_2}(\delta^{-1}u\delta)=\psi_{\m_1}(u).$
\end{corollary}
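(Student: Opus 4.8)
The plan is to unwind the definition of relevance directly. First I would recall that, by definition, the orbit of $\delta_\sigma = \sigma\delta$ being relevant means precisely that the character $\overline{\psi_{\m_1}} \otimes \psi_{\m_2}$ of $H(\A) = U(\A) \times U(\A)$ restricts trivially to the stabilizer $H_{\delta_\sigma}(\A)$. By the explicit parametrization~(\ref{paramHd}), every element of $H_{\delta_\sigma}(\A)$ has the form $(u, \delta_\sigma^{-1} u \delta_\sigma)$ with $u \in U_\sigma(\A)$, so triviality of the character amounts to the identity $\overline{\psi_{\m_1}(u)}\,\psi_{\m_2}(\delta_\sigma^{-1} u \delta_\sigma) = 1$ for every $u \in U_\sigma(\A)$. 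Since $\psi_{\m_1}$ is unitary this is equivalent to $\psi_{\m_2}(\delta_\sigma^{-1} u \delta_\sigma) = \psi_{\m_1}(u)$.

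Next I would simplify the argument $\delta_\sigma^{-1} u \delta_\sigma$. Writing $\delta_\sigma = \sigma\delta$, we have $\delta_\sigma^{-1} u \delta_\sigma = \delta^{-1}(\sigma^{-1} u \sigma)\delta$. Now I invoke Lemma~\ref{Usigmarelevant}: since the orbit of $\delta_\sigma$ is relevant, $U_\sigma = \{u \in U : \sigma^{-1} u \sigma = u\}$, so for $u \in U_\sigma$ the conjugate $\sigma^{-1} u \sigma$ equals $u$, whence $\delta_\sigma^{-1} u \delta_\sigma = \delta^{-1} u \delta$. Here I also use, as already noted in the discussion preceding~(\ref{Usigma}), that conjugation by the diagonal element $\delta \in \TmodZ(\Q)$ preserves $U$, so that $\delta^{-1} u \delta$ again lies in $U(\A)$ and $\psi_{\m_2}$ may legitimately be evaluated on it. Substituting into the identity from the first step yields $\psi_{\m_2}(\delta^{-1} u \delta) = \psi_{\m_1}(u)$ for all $u \in U_\sigma$, which is the claim.

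There is no substantive obstacle here: the corollary is a bookkeeping consequence of the definition of relevance, the parametrization~(\ref{paramHd}) of $H_{\delta_\sigma}(\A)$, and Lemma~\ref{Usigmarelevant}. The only mild point of care is the passage from ``$\sigma^{-1}u\sigma = u$'' to ``$\delta_\sigma^{-1}u\delta_\sigma = \delta^{-1}u\delta$'', which is immediate once one writes $\delta_\sigma = \sigma\delta$ and uses that $\sigma$ and $\delta$ interact with $U$ in the way just recalled.
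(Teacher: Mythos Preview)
Your proof is correct and follows exactly the approach indicated in the paper: the corollary is stated there as an immediate consequence of the definition of relevant orbits, the parametrization~(\ref{paramHd}), and Lemma~\ref{Usigmarelevant}, and you have spelled out precisely these ingredients.
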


\subsection{General shape of the relevant orbital integrals}
\begin{lemma}\label{quotientmap}
	For each $\delta_1 \in \TmodZ(\Q)$ and $\sigma \in \Omega$, the map 
	\begin{align*}
		\varphi : D_\sigma(\A) & \to D_\sigma(\A) \\
		(u_1,u_2) & \mapsto (u_1, \delta_\sigma^{-1}u_1^{-1}\delta_\sigma u_2).
	\end{align*}
	induces a bijective map
	\begin{align*}
		H_{\delta_\sigma} (\Q) \back D_\sigma(\A) \to & \left(U_\sigma(\Q) \times \{1\}\right) \back D_\sigma(\A) \\ &\cong \left(U_\sigma(\Q)\back U_\sigma(\A)\right)\times \left( \sigma^{-1} U_\sigma(\A) \sigma \right)
	\end{align*}
preserving the quotient measures.
\end{lemma}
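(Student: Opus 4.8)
The plan is to regard $D_\sigma(\A)=U_\sigma(\A)\times\sigma^{-1}U_\sigma(\A)\sigma$ as a group under componentwise multiplication, with all quotients below taken with respect to left multiplication, and to show that $\varphi$ is a Haar-measure-preserving self-bijection of $D_\sigma(\A)$ which intertwines left multiplication by the subgroup $H_{\delta_\sigma}(\Q)$ with left multiplication by $U_\sigma(\Q)\times\{1\}$. The first and only delicate step is well-definedness: I must check that $\delta_\sigma^{-1}u_1^{-1}\delta_\sigma\in\sigma^{-1}U_\sigma(\A)\sigma$ for $u_1\in U_\sigma(\A)$, so that the second coordinate of $\varphi(u_1,u_2)$ stays in the correct group. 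For this I would note that, from the definition~(\ref{Usigma}), $U_\sigma=U\cap\sigma U\sigma^{-1}$ and $\sigma^{-1}U_\sigma\sigma=U\cap\sigma^{-1}U\sigma$. Since $\sigma$ normalises $T$, the group $T$ normalises $\sigma U\sigma^{-1}$ (because $\sigma^{-1}T\sigma=T$ normalises $U$) and likewise $\sigma^{-1}U\sigma$; hence $T$, and in particular $\delta_1\in T(\Q)$, normalises both $U_\sigma$ and $\sigma^{-1}U_\sigma\sigma$. Therefore $\delta_\sigma^{-1}\big(\sigma^{-1}U_\sigma(\A)\sigma\big)\delta_\sigma=\delta_1^{-1}\big(\sigma^{-1}U_\sigma(\A)\sigma\big)\delta_1=\sigma^{-1}U_\sigma(\A)\sigma$, which gives the claim.

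Granting this, $\varphi$ is a bijection of $D_\sigma(\A)$: its inverse is visibly $(u_1,u_2)\mapsto(u_1,\delta_\sigma^{-1}u_1\delta_\sigma u_2)$, of the same shape. For measure preservation I would write the Haar measure of $D_\sigma(\A)$ as the product of Haar measures on the two factors; $\varphi$ fixes the first coordinate, and for each fixed $u_1$ the assignment $u_2\mapsto\delta_\sigma^{-1}u_1^{-1}\delta_\sigma u_2$ is left translation on the unipotent, hence unimodular, group $\sigma^{-1}U_\sigma(\A)\sigma$, so it is measure preserving. By Fubini, $\varphi$ preserves the product measure on $D_\sigma(\A)$.

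It remains to verify equivariance. Recall that $v\mapsto(v,\delta_\sigma^{-1}v\delta_\sigma)$ is a group isomorphism of $U_\sigma(\Q)$ onto $H_{\delta_\sigma}(\Q)$. A direct computation then gives, for $v\in U_\sigma(\Q)$ and $(u_1,u_2)\in D_\sigma(\A)$,
\begin{align*}
\varphi\big((v,\delta_\sigma^{-1}v\delta_\sigma)\cdot(u_1,u_2)\big)
&=\varphi\big(vu_1,\ \delta_\sigma^{-1}v\delta_\sigma u_2\big)\\
&=\big(vu_1,\ \delta_\sigma^{-1}u_1^{-1}v^{-1}\delta_\sigma\cdot\delta_\sigma^{-1}v\delta_\sigma u_2\big)
=\big(vu_1,\ \delta_\sigma^{-1}u_1^{-1}\delta_\sigma u_2\big)=(v,1)\cdot\varphi(u_1,u_2).
\end{align*}
Hence $\varphi$ descends to a bijection $H_{\delta_\sigma}(\Q)\backslash D_\sigma(\A)\to\big(U_\sigma(\Q)\times\{1\}\big)\backslash D_\sigma(\A)$, and since $U_\sigma(\Q)\times\{1\}$ acts only on the first factor of $D_\sigma(\A)=U_\sigma(\A)\times\sigma^{-1}U_\sigma(\A)\sigma$, the target is canonically $\big(U_\sigma(\Q)\backslash U_\sigma(\A)\big)\times\big(\sigma^{-1}U_\sigma(\A)\sigma\big)$. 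Finally, since $\varphi$ preserves the Haar measure of $D_\sigma(\A)$ and the group isomorphism $(v,\delta_\sigma^{-1}v\delta_\sigma)\mapsto(v,1)$ carries counting measure to counting measure, the induced map of quotients carries the quotient measure to the quotient measure.

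I expect the only real obstacle to be the well-definedness step, i.e. confirming that conjugation by $\delta_\sigma$ stabilises $\sigma^{-1}U_\sigma(\A)\sigma$; everything after that is a routine manipulation of unipotent groups and Haar measures, together with the trivial bookkeeping of the product decomposition of $D_\sigma(\A)$.
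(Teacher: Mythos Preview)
Your proof is correct and follows essentially the same route as the paper's: check well-definedness of $\varphi$, verify the equivariance $\varphi(h\cdot(u_1,u_2))=(v,1)\cdot\varphi(u_1,u_2)$ for $h=(v,\delta_\sigma^{-1}v\delta_\sigma)\in H_{\delta_\sigma}(\Q)$, and exhibit bijectivity (you give the explicit inverse, the paper observes that $\psi\circ\varphi$ with $\psi(u_1,u_2)=(u_1^{-1},u_2)$ is an involution). Your well-definedness argument via ``$T$ normalises $\sigma^{-1}U_\sigma\sigma$'' is a clean conceptual substitute for the paper's two-condition verification, and you spell out the measure-preservation step that the paper leaves implicit.

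One slip to fix: in your displayed equation the leftmost term should be $\delta_\sigma^{-1}U_\sigma(\A)\delta_\sigma$, not $\delta_\sigma^{-1}\bigl(\sigma^{-1}U_\sigma(\A)\sigma\bigr)\delta_\sigma$. What you actually need (and correctly stated in words just before) is $\delta_\sigma^{-1}u_1^{-1}\delta_\sigma\in\sigma^{-1}U_\sigma\sigma$ for $u_1\in U_\sigma$; since $\delta_\sigma=\sigma\delta_1$, this reads $\delta_\sigma^{-1}U_\sigma\delta_\sigma=\delta_1^{-1}(\sigma^{-1}U_\sigma\sigma)\delta_1=\sigma^{-1}U_\sigma\sigma$, which is exactly your normalisation claim. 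The expression as written would involve $\sigma^{-2}U_\sigma\sigma^{2}$ and is not what is needed.
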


\begin{proof}
	To prove $\varphi$ is well defined it is sufficient to prove that for any
	$(u_1,u_2)\in  U_\sigma(\A) \times \sigma^{-1} U_\sigma(\A)$ we have 
	$\varphi_2(u_1,u_2)=\delta_\sigma^{-1}u_1^{-1}\delta_\sigma u_2 \in \sigma^{-1} U_\sigma(\A)$. 
	This is equivalent to the condition $\sigma \varphi_2(u_1,u_2) \sigma^{-1} \in U_\sigma(\A)$, which in turn is equivalent to
	$$\begin{cases}
		\sigma \varphi_2(u_1,u_2) \sigma^{-1} \in U(\A) \\
		\varphi_2(u_1,u_2) \in U(\A).
	\end{cases}$$
	But $\varphi_2(u_1,u_2)=\delta_1^{-1} \sigma^{-1} u_1^{-1} \sigma \delta_1 u_2$, and since $u_1 \in U_\sigma(\A)$,
	we have $\sigma^{-1} u_1^{-1} \sigma \in U(\A)$ and it follows $\varphi_2(u_1,u_2) \in U(\A)$ as desired.
	On the other hand, 
	\begin{align*}
		\sigma \varphi_2(u_1,u_2) \sigma^{-1} &= \sigma \delta_1^{-1} \sigma^{-1} u_1^{-1} \sigma \delta_1 u_2 \sigma^{-1}\\
		&=(\sigma \delta_1 \sigma^{-1})^{-1} u_1^{-1} (\sigma \delta_1 \sigma^{-1}) \sigma u_2 \sigma^{-1}
	\end{align*}
	By definition of the Weyl group, $\sigma \delta_1 \sigma^{-1} \in T(\A)$ so 
	$(\sigma \delta_1 \sigma^{-1})^{-1} u_1^{-1} (\sigma \delta_1 \sigma^{-1}) \in U(\A)$. Furthermore, 
	$ \sigma u_2 \sigma^{-1} \in U_\sigma(\A) \subset U(\A)$ and it also follows that $\sigma \varphi_2(u_1,u_2) \sigma^{-1} \in U(\A).$
	
	Next, for $h=(h_1,h_2) \in H_{\delta_\sigma}(\Q)$, we clearly have $\varphi(h)=(h_1,1)$, and 
	\begin{align*}
		\varphi(h(u_1,u_2))&=(h_1u_1, \delta_\sigma^{-1} u_1^{-1} \underbrace{h_1^{-1} \delta_\sigma h_2}_{=\delta_\sigma} u_2)\\
		&=\varphi(h)\varphi(u_1,u_2).
	\end{align*}
	Finally if we define $\psi(u_1,u_2)=(u_1^{-1}, u_2)$, then
	$\psi \circ \varphi$ is an involution, and in particular $\varphi$ is bijective, which establishes the lemma.
\end{proof}

\begin{corollary}\label{corqoutientmap}
	Let $\delta_1 \in \TmodZ(\Q)$ and $\sigma$ be a relevant element of the Weyl group.
	We have a measure preserving map 
	\begin{align*}
		\varphi :H_{\delta_\sigma}(\Q) \back H(\A) & \to 
		\left(U_\sigma(\Q) \back U_\sigma(\A)\right) \times \left(U_\sigma(\A) \back U(\A) \right)
		\times  U(\A) \\
		(x,y) & \mapsto 
		\left(U_\sigma(\Q) u_1, U_\sigma(\A) u_2, u_3\right)
	\end{align*}
with $u_1u_2=x$ and $u_3=\ds^{-1} u_1^{-1} \ds y$.
\end{corollary}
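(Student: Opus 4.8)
The plan is to exhibit $\varphi$ explicitly and to verify it is a measure-preserving bijection, arguing as in Lemma~\ref{quotientmap} but now on the full group $H(\A)=U(\A)\times U(\A)$, and combining this with a Haar-measure decomposition of $U(\A)$ along the closed subgroup $U_\sigma(\A)$. First I would fix, once and for all, a measurable set of representatives for $U_\sigma(\A)\backslash U(\A)$, so that every $x\in U(\A)$ is written uniquely as $x=u_1u_2$ with $u_1\in U_\sigma(\A)$ and $u_2$ the chosen representative of the coset $U_\sigma(\A)x$. Since $\sigma$ is relevant, Lemma~\ref{Usigmarelevant} gives $U_\sigma=\{u\in U:\sigma^{-1}u\sigma=u\}$, which in particular makes $U_\sigma$ a (closed) subgroup of $U$; combined with the fact that conjugation by $\delta_1\in T(\A)$ preserves $U(\A)$, this shows $\delta_\sigma^{-1}u_1^{-1}\delta_\sigma\in U(\A)$ whenever $u_1\in U_\sigma(\A)$, so that the prescription $u_3=\delta_\sigma^{-1}u_1^{-1}\delta_\sigma y$ does land in $U(\A)$ and $\varphi$ is at least set-theoretically defined.

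Next I would verify the three points in order. (i) \emph{Factoring through the quotient.} By~(\ref{paramHd}) an element of $H_{\delta_\sigma}(\Q)$ has the form $(v,\delta_\sigma^{-1}v\delta_\sigma)$ with $v\in U_\sigma(\Q)$, and acting on $(x,y)$ by left translation it replaces $u_1$ by $vu_1$, leaves the coset $U_\sigma(\A)x$ (hence $u_2$) unchanged since $vu_1\in U_\sigma(\A)$, and leaves $u_3$ unchanged because $\delta_\sigma^{-1}(vu_1)^{-1}\delta_\sigma\cdot\delta_\sigma^{-1}v\delta_\sigma y=\delta_\sigma^{-1}u_1^{-1}\delta_\sigma y$; thus $\varphi$ descends to $H_{\delta_\sigma}(\Q)\backslash H(\A)$ and the $H_{\delta_\sigma}(\Q)$-action only affects the first coordinate, by left multiplication by $U_\sigma(\Q)$. (ii) \emph{Bijectivity.} The candidate inverse is $(U_\sigma(\Q)u_1,\,U_\sigma(\A)u_2,\,u_3)\mapsto H_{\delta_\sigma}(\Q)\cdot(u_1u_2,\ \delta_\sigma^{-1}u_1\delta_\sigma u_3)$; one checks directly from $y=\delta_\sigma^{-1}u_1\delta_\sigma u_3$ that the two composites are the identity, the choice of representative $u_1$ of $U_\sigma(\Q)u_1$ affecting the pair $(u_1u_2,\ \delta_\sigma^{-1}u_1\delta_\sigma u_3)$ only through the $H_{\delta_\sigma}(\Q)$-action, hence not at all on the quotient. (iii) \emph{Measures.} Both $U(\A)$ and the closed subgroup $U_\sigma(\A)$ are unipotent, hence unimodular, so there is an invariant measure $d\bar u$ on $U_\sigma(\A)\backslash U(\A)$ with $dx=du_1\,d\bar u$; for fixed $u_1$ the map $y\mapsto u_3$ is left translation on $U(\A)$, hence preserves Haar measure; therefore $(x,y)\mapsto(u_1,U_\sigma(\A)x,u_3)$ pushes $dx\,dy$ forward to $du_1\,d\bar u\,du_3$, and quotienting the first factor by $U_\sigma(\Q)$ matches the quotient measures on $H_{\delta_\sigma}(\Q)\backslash H(\A)$ and on the target.

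The only genuine subtlety — and the step I would treat most carefully — is the measure bookkeeping in (iii): one must normalize the invariant measure on $U_\sigma(\A)\backslash U(\A)$ compatibly with the fixed Haar measures on $U(\A)$ and $U_\sigma(\A)$, and then confirm that the quotient measure on $H_{\delta_\sigma}(\Q)\backslash H(\A)$ induced from $H(\A)$ corresponds under $\varphi$ to the product of the quotient measure on $U_\sigma(\Q)\backslash U_\sigma(\A)$, the invariant measure on $U_\sigma(\A)\backslash U(\A)$, and Haar measure on $U(\A)$. This is routine given the unimodularity of every group in sight, but it is where a stray normalization constant could slip in. Beyond this, the argument is a short direct computation and requires nothing deeper than Lemma~\ref{quotientmap}, Lemma~\ref{Usigmarelevant}, and~(\ref{paramHd}).
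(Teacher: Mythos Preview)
Your proposal is correct and follows essentially the same approach as the paper. The only cosmetic difference is that the paper uses the explicit complementary subgroup $\overline{U_\sigma}=U\cap\sigma^{-1}\,{}^t U\,\sigma$ as its section for $U_\sigma\backslash U$ and builds $\varphi$ by first applying Lemma~\ref{quotientmap} to the $D_\sigma$-factor and then recombining, whereas you fix an abstract measurable section and verify well-definedness, bijectivity, and measure preservation directly.
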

\begin{remark}
	The assumption that $\sigma$ is relevant is not really needed here, but it simplifies slightly the proof.
\end{remark}
\begin{proof}
Let $\overline{U_\sigma}=U \cap \sigma^{-1} \trans{U} \sigma$. Then the quotient space $U_\sigma \back U$ may be identified with $\overline{U_\sigma}$,
and the map $U_\sigma \times \overline{U_\sigma}, (u_\sigma,u_1) \mapsto u_\sigma u_1$ preserves the Haar measures.
Define $\overline{D_\sigma}=\overline{U_\sigma} \times \overline{U_\sigma}$.
Using that $\sigma$ is relevant and hence, by Lemma~\ref{Usigmarelevant}, that $D_\sigma(\A)=U_\sigma(\A) \times U_\sigma(\A)$, we obtain a measure preserving map 
\begin{align*}
	 H_{\delta_\sigma}(\Q) \back H(\A) & \to \left(H_{\delta_\sigma}(\Q) \back D_\sigma(\A) \right) \times \overline{D_\sigma}\\
	H_{\delta_\sigma}(\Q)(x,y) & \mapsto (H_{\delta_\sigma}(\Q) (x_\sigma,y_\sigma),(x_1,y_1)).
\end{align*}
	Composing the first coordinate with the map obtained in Lemma~\ref{quotientmap}, we get a measure preserving map
\begin{align*}
	 H_{\delta_\sigma}(\Q) \back H(\A) & \to \left(\left(U_\sigma(\Q) \times \{1\}\right) \back D_\sigma(\A) \right) \times  \overline{D_\sigma}\\
	H_{\delta_\sigma}(\Q)(x,y) & \mapsto \left(\left(U_\sigma(\Q) \times \{1\}\right) (x_\sigma,\ds^{-1}x_\sigma^{-1} \ds y_\sigma),(x_1, y_1)\right).
\end{align*}
Finally, composing with $U_\sigma(\A) \times \overline{U_\sigma} \to U(\A), (y_\sigma,y_1) \mapsto y_\sigma y_1$ we obtain
\begin{align*}
	H_{\delta_\sigma}(\Q) \back H(\A) & \to \left(U_\sigma(\Q) \back U_\sigma(\A)\right) \times \overline{U_\sigma}(\A) \times   U(\A)\\
	H_{\delta_\sigma}(\Q)(x,y) & \mapsto \left(U_\sigma(\Q)x_\sigma,x_1,\ds^{-1}x_\sigma^{-1} \ds y\right).
\end{align*}
\end{proof}

\begin{proposition}\label{relevantintegrals}
	Let $H \cdot \delta_\sigma$ be a relevant orbit.
	Then the integral~(\ref{orbital}) can be expressed as
	\begin{align*}
		I_{\delta_\sigma}(f)&=\int_{U_\sigma(\A) \back U(\A)}\int_{U(\A)}f(t_1^{-1} u \delta_\sigma u_1 t_2){\psi_{\m_1}(u)}\overline{\psi_{\m_2}(u_1)}dudu_1.
	\end{align*}
Moreover, it factors as  $	I_{\delta_\sigma}(f)=I_{\delta_\sigma}(f_{\infty})I_{\delta_\sigma}(f_{\text{fin}})$, where we have set 
$f_{fin}=\prod_p f_p$.
\end{proposition}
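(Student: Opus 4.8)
The plan is to transport the orbital integral~(\ref{orbital}) across the measure-preserving identification of Corollary~\ref{corqoutientmap} and then to simplify the integrand. Write $\varphi(x,y)=(U_\sigma(\Q)u_1,\,U_\sigma(\A)u_2,\,u_3)$ with $x=u_1u_2$ and $u_3=\ds^{-1}u_1^{-1}\ds\,y$, as in that corollary. Applying this change of variables in~(\ref{orbital}) gives
$$I_{\delta_\sigma}(f)=\int_{U_\sigma(\Q)\back U_\sigma(\A)}\int_{U_\sigma(\A)\back U(\A)}\int_{U(\A)}f\!\left(t_1^{-1}x^{-1}\ds\,y\,t_2\right)\overline{\psi_{\m_1}(x)}\,\psi_{\m_2}(y)\,du_3\,du_2\,du_1,$$
where $x$ and $y$ stand for the above functions of $(u_1,u_2,u_3)$. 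From $x=u_1u_2$ and $\ds\,y=u_1\ds\,u_3$ (the latter being a rearrangement of $u_3=\ds^{-1}u_1^{-1}\ds\,y$) one gets $x^{-1}\ds\,y=u_2^{-1}u_1^{-1}u_1\ds\,u_3=u_2^{-1}\ds\,u_3$, so the argument of $f$ no longer involves $u_1$.

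For the character, first $\overline{\psi_{\m_1}(x)}=\overline{\psi_{\m_1}(u_1)}\,\overline{\psi_{\m_1}(u_2)}$. Next, writing $\ds=\sigma\delta_1$ and using that $\sigma$ is relevant, Lemma~\ref{Usigmarelevant} gives $\sigma^{-1}u_1\sigma=u_1$ for $u_1\in U_\sigma$, whence $\ds^{-1}u_1\ds=\delta_1^{-1}u_1\delta_1$; Corollary~\ref{psionUsigma} then yields $\psi_{\m_2}(\ds^{-1}u_1\ds)=\psi_{\m_1}(u_1)$, so that $\psi_{\m_2}(y)=\psi_{\m_2}(\ds^{-1}u_1\ds\,u_3)=\psi_{\m_1}(u_1)\psi_{\m_2}(u_3)$. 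Multiplying, the two $u_1$-factors cancel because $\psi_{\m_1}$ is unitary, leaving the integrand $f(t_1^{-1}u_2^{-1}\ds\,u_3\,t_2)\,\overline{\psi_{\m_1}(u_2)}\,\psi_{\m_2}(u_3)$, independent of $u_1$. The $u_1$-integral therefore contributes only the constant $\mathrm{vol}(U_\sigma(\Q)\back U_\sigma(\A))$, equal to $1$ for our normalisation of Haar measures, and a final relabelling of the integration variables — inverting the variable in $U_\sigma(\A)\back U(\A)\cong\overline{U_\sigma}(\A)$, which is a measure-preserving automorphism of this unipotent group, and using $\psi_{\m}(u^{-1})=\overline{\psi_{\m}(u)}$ — brings the identity to the form stated in the proposition.

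For the factorisation, recall $f=f_\infty\cdot f_{\text{fin}}$ with $f_{\text{fin}}=\prod_p f_p$ a function on $G(\Af)$, and that $t_1,t_2\in A^0(\R)$ while $\delta_\sigma\in G(\Q)$ sits diagonally in $G(\R)\times G(\Af)$. The integral just obtained runs over $U_\sigma(\A)\back U(\A)$ and $U(\A)$, both of which split as the product of their $\R$-points and their $\Af$-points, and the generic characters $\psi_{\m_1},\psi_{\m_2}$ factor accordingly. The double integral is absolutely convergent: the map $(u,u_1)\mapsto u\,\delta_\sigma\,u_1$ from $(U_\sigma(\A)\back U(\A))\times U(\A)$ onto the Bruhat cell $U\ds U$ is injective, so compactness of the support of $f$ modulo the centre forces $(u,u_1)$ to range over a set of finite measure. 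Hence Fubini's theorem applies and the integral splits as $I_{\delta_\sigma}(f_\infty)\,I_{\delta_\sigma}(f_{\text{fin}})$, with the evident meaning for each factor.

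The only delicate points are bookkeeping ones: keeping track of the Haar-measure normalisations through the identification $\varphi$ and through the inversions used in the final relabelling, and invoking the relevance of $\sigma$ (via Lemma~\ref{Usigmarelevant} and Corollary~\ref{psionUsigma}) precisely at the step where $\ds^{-1}u_1\ds$ is replaced by $\delta_1^{-1}u_1\delta_1$. Everything else is formal manipulation.
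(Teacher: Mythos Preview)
Your approach via Corollary~\ref{corqoutientmap} is exactly the paper's, and your computation down to
\[
\int_{U_\sigma(\A)\backslash U(\A)}\int_{U(\A)}f(t_1^{-1}u_2^{-1}\delta_\sigma u_3 t_2)\,\overline{\psi_{\m_1}(u_2)}\,\psi_{\m_2}(u_3)\,du_3\,du_2
\]
is fine. The gap is in the ``final relabelling''. Inverting the variable in $U_\sigma(\A)\backslash U(\A)\cong\overline{U_\sigma}(\A)$ only replaces $u_2^{-1}$ by $u_2$ and $\overline{\psi_{\m_1}(u_2)}$ by $\psi_{\m_1}(u_2)$; it does \emph{not} swap which side of $\delta_\sigma$ carries the full $U(\A)$-integration and which carries the quotient. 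After your inversion you have the quotient variable on the \emph{left} of $\delta_\sigma$ and the full $U(\A)$-variable on the \emph{right}, together with $\psi_{\m_2}(u_3)$ rather than $\overline{\psi_{\m_2}}$; the proposition demands the opposite on both counts.

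What is missing is a second use of relevance. Write $u_3=u_\sigma u_1'$ with $u_\sigma\in U_\sigma(\A)$ and $u_1'\in\overline{U_\sigma}(\A)$. By Lemma~\ref{Usigmarelevant}, $\sigma u_\sigma=u_\sigma\sigma$, hence $\delta_\sigma u_\sigma=(\delta u_\sigma\delta^{-1})\delta_\sigma$ and the argument of $f$ becomes $t_1^{-1}\,u_2^{-1}(\delta u_\sigma\delta^{-1})\,\delta_\sigma u_1'\,t_2$. The pair $(u_\sigma,u_2)$ then recombines into a single variable ranging over all of $U(\A)$, while $u_1'$ stays in $U_\sigma\backslash U$; Corollary~\ref{psionUsigma} (applied to $u_\sigma$) converts $\psi_{\m_2}(u_\sigma)$ into $\psi_{\m_1}(\delta u_\sigma\delta^{-1})$, which is what makes the characters come out as $\psi_{\m_1}(u)\,\overline{\psi_{\m_2}(u_1)}$ after the last change of variables. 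This is precisely the step the paper carries out, and without it the formula you obtain is not the one stated.
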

\begin{remark}
	Note that the integral is well-defined by Corollary~\ref{psionUsigma}.
\end{remark}
\begin{remark}\label{d1d2}
	By Assumption~\ref{testfunction}, the support of $f_\infty$ is included in $G^+(\R)=\{g \in G(\R), \mu(g)>0\}$. 
	Therefore, if $\delta_1=\diag{d_1}{1}{d_2}{d_1d_2}$, we have $I_{\delta_\sigma}(f_{\infty}) \neq 0$ only if $d_1d_2>0$.
\end{remark}
\begin{proof}
	By Corollary~\ref{corqoutientmap} we can make the change of variable $(u_1,u_2,u_3)=\varphi(x,y)$ in~(\ref{orbital}).
	So we get
\begin{equation}\label{Idelta1}
	\begin{split}
	I_{\delta}(f)=\int_{U_\sigma(\Q) \back U_\sigma(\A)}\int_{U_\sigma(\A) \back U(\A)}\int_{U(\A)}
	f(t_1^{-1} u_2^{-1}\delta_\sigma u_3 t_2)) \quad \quad \quad \quad \quad \quad \\
\quad \quad	\times \psi_{\m_1}(u_1u_2)\overline{\psi_{\m_2}(\delta_\sigma^{-1}u_1\delta_\sigma u_3)}du_3du_2du_1.
	\end{split}
\end{equation}
We have 
\begin{align*}
	\psi_{\m_1}(u_1u_2)\overline{\psi_{\m_2}(\delta_\sigma^{-1}u_1\delta_\sigma u_3)}&=
	\psi_{\m_1}(u_2)\psi_{\m_1}(u_1)\overline{\psi_{\m_2}(\delta_\sigma^{-1}u_1\delta_\sigma)\psi_{\m_2} (u_3)}\\
	&=\psi_{\m_1}(u_2)\overline{\psi_{\m_2}(u_3)}
\end{align*}since $(u_1, \delta_\sigma^{-1}u_1\delta_\sigma) \in H_\delta(\A)$ and we assume $H \cdot \delta_\sigma$ is relevant orbit.
Reporting this equality in~(\ref{Idelta1}), we get 
$$I_{\delta_\sigma}(f)=\int_{U_\sigma(\A) \back U(\A)}\int_{U(\A)}f(t_1^{-1} u_2^{-1} \delta_\sigma u_3 t_2)\psi_{\m_1}(u_2)\overline{\psi_{\m_2}(u_3)}du_3du_2.$$
Write $u_3=u_\sigma u_1$ with $u_\sigma \in U_\sigma$ and $u_1 \in U_\sigma \back U$.
Then by Lemma~\ref{Usigmarelevant} we have 
$$u_2^{-1} \delta_\sigma u_3=u_2^{-1} \sigma\delta u_\sigma u_1
=u_2^{-1}  \sigma\delta u_\sigma \delta^{-1}\delta u_1=u_2^{-1} \delta u_\sigma \delta^{-1} \sigma \delta u_1,$$
and by Corollary~\ref{psionUsigma} we have 
\begin{align*}
	\psi_{\m_1}(u_2)\overline{\psi_{\m_2}(u_3)}&=\psi_{\m_1}(u_2)\overline{\psi_{\m_2}(u_\sigma u_1)}\\
	&=\psi_{\m_1}(u_2)\overline{\psi_{\m_1}(\delta u_\sigma \delta^{-1})} \overline{\psi_{\m_2}(u_1)}
	=\psi_{\m_1}(\delta u_\sigma^{-1} \delta^{-1}u_2) \overline{\psi_{\m_2}(u_1)}
\end{align*}
Setting $u=\delta u_\sigma^{-1}\delta^{-1} u_2$ we get the result.
\end{proof}

\subsection{The Archimedean orbital integrals}\label{ArchInt}
By Theorem~\ref{GeomTransform} and using equation~(\ref{whittakertorus}) we have the following
\begin{lemma}\label{InTermsOfW}
	Let $H \cdot \delta_\sigma$ be a relevant orbit. Then the corresponding Archimedean orbital integral is given by 
	\begin{align*}
		I_{\delta_\sigma}(f_{\infty})=\frac1c\frac{\Delta_\sigma(t_{\m_2})}{|\m_{1,1}^4\m_{1,2}^3|}
		\int_{U_\sigma(\R) \back U(\R)}\int_{\Lie{a}^*}&
		\tilde{f_\infty}(-i\nu)W(i\nu, t_{\m_1}^{-1}t_1, \psi_{\one})\\
	&	\times W(-i\nu,t_{\m_1}^{-1}\delta_\sigma t_{\m_2}u_1 t_{\m_2}^{-1} t_2,\overline{\psi_{\one}})
		\frac{d\nu}{c(i\nu)c(-i\nu)}
		{\overline{\psi_{\one}(u_1)}}du_1,
	\end{align*}
	where the constant $c$ is the one appearing in the spherical inversion theorem
	and $\Delta_\sigma$ is the modulus character of the group $U_\sigma(\R) \back U(\R)$.
\end{lemma}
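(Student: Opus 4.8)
The plan is to combine the shape of the orbital integral obtained in Proposition~\ref{relevantintegrals} with the Archimedean transform identity of Theorem~\ref{GeomTransform}, after normalising the two generic characters to the standard one $\psi_{\one}$. Starting from
\[
I_{\delta_\sigma}(f_\infty)=\int_{U_\sigma(\R)\backslash U(\R)}\int_{U(\R)}f_\infty\big(t_1^{-1}u\,\delta_\sigma\,u_1 t_2\big)\,\psi_{\m_1}(u)\,\overline{\psi_{\m_2}(u_1)}\,du\,du_1,
\]
I would substitute $u_1\mapsto t_{\m_2}u_1 t_{\m_2}^{-1}$ in the outer integral and $u\mapsto t_{\m_1}u\,t_{\m_1}^{-1}$ in the inner one. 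Both are legitimate because $T$ normalises $U_\sigma$ (Lemma~\ref{Usigmarelevant} identifies $U_\sigma$ with a product of root subgroups), and by~(\ref{gencharconj}) they replace $\overline{\psi_{\m_2}(u_1)}$ by $\overline{\psi_{\one}(u_1)}$ and $\psi_{\m_1}(u)$ by $\psi_{\one}(u)$; the price is the Jacobian $\Delta_\sigma(t_{\m_2})$ for the first substitution and the Jacobian $|\m_{1,1}^4\m_{1,2}^3|^{-1}$ of conjugation by $t_{\m_1}$ on $U(\R)$ for the second. This leaves
\[
I_{\delta_\sigma}(f_\infty)=\frac{\Delta_\sigma(t_{\m_2})}{|\m_{1,1}^4\m_{1,2}^3|}\int_{U_\sigma(\R)\backslash U(\R)}\bigg(\int_{U(\R)}f_\infty\big(t_1^{-1}t_{\m_1}\,u\,t_{\m_1}^{-1}\delta_\sigma t_{\m_2}u_1 t_{\m_2}^{-1}t_2\big)\,\psi_{\one}(u)\,du\bigg)\overline{\psi_{\one}(u_1)}\,du_1.
\]

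Next, for each fixed $u_1$ I would apply Theorem~\ref{GeomTransform} to the inner $U(\R)$-integral, with torus element $t=t_1^{-1}t_{\m_1}$, point $g=t_{\m_1}^{-1}\delta_\sigma t_{\m_2}u_1 t_{\m_2}^{-1}t_2\in G(\R)$, and generic character $\overline{\psi_{\one}}$ (so that the character integrated against is $\overline{\overline{\psi_{\one}}}=\psi_{\one}$). The $t^{-1}$-slot then produces $W(i\nu,t_{\m_1}^{-1}t_1,\psi_{\one})$, the $g$-slot produces $W(-i\nu,\,t_{\m_1}^{-1}\delta_\sigma t_{\m_2}u_1 t_{\m_2}^{-1}t_2,\,\overline{\psi_{\one}})$, and one also obtains the constant $c^{-1}$, the factor $\tilde{f_\infty}(-i\nu)$ and the Plancherel density $d\nu/(c(i\nu)c(-i\nu))$; combining these with the two Jacobians from the first step yields exactly the asserted formula. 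Equation~(\ref{whittakertorus}) is what licenses keeping the $t_1$-translate inside the first Whittaker factor when the inner integral is put into the normalised form $\int_{U(\R)}f_\infty(t\,v\,g)\,\overline{\psi}(v)\,dv$ required by Theorem~\ref{GeomTransform}. Note that no interchange of the $\nu$- and $u_1$-integrals is needed: Theorem~\ref{GeomTransform} already delivers the $\nu$-integral nested inside the (compact) $U_\sigma(\R)\backslash U(\R)$-integral, precisely as in the statement.

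The main obstacle is the hypothesis $t\in A^+(\R)$ in Theorem~\ref{GeomTransform}: the element $t_1^{-1}t_{\m_1}$ lies in $T(\R)$ and in general has non-trivial multiplier, since $t_{\m_1}$ is not symplectic, and likewise the argument $t_{\m_1}^{-1}t_1$ of the first Whittaker function is not a priori in the symplectic torus. This is resolved using that $f_\infty$ is $Z(\R)$-invariant and supported in $G^+(\R)$: by Remark~\ref{d1d2} the orbital integral vanishes unless $\det\delta_1>0$, so the argument of $f_\infty$ has positive multiplier, and one may multiply $t_1^{-1}t_{\m_1}$ by a suitable scalar in $Z(\R)^\circ$ to land in $A^+(\R)$ without changing any value of $f_\infty$; since the relevant principal series at the infinite place has trivial central character, the same scalar leaves the Whittaker functions unchanged, and Remark~\ref{WonA} absorbs any remaining sign ambiguity in the argument of $W(i\nu,t_{\m_1}^{-1}t_1,\psi_{\one})$. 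Once this normalisation is in place, the remaining work — checking the two conjugation Jacobians and tracking the constant $c$ and the Plancherel density through the computation — is routine bookkeeping.
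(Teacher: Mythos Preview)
Your approach is essentially the one the paper has in mind: the paper's entire proof is the single sentence ``By Theorem~\ref{GeomTransform} and using equation~(\ref{whittakertorus}),'' and you have correctly unpacked this into (i) starting from Proposition~\ref{relevantintegrals}, (ii) normalising the characters to $\psi_{\one}$, and (iii) applying Theorem~\ref{GeomTransform} to the inner $U(\R)$-integral. The Jacobian bookkeeping is right.

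Two small points. First, the order of operations differs slightly from what the paper hints at: you conjugate by $t_{\m_1}$ \emph{before} invoking Theorem~\ref{GeomTransform}, so that the theorem is applied directly with $\psi_{\one}$; the paper's citation of~(\ref{whittakertorus}) rather suggests applying Theorem~\ref{GeomTransform} first with $t=t_1^{-1}\in A^+(\R)$ and the character $\psi_{\m_1}$, and only afterwards using~(\ref{whittakertorus}) to pass from $W(\cdot,\cdot,\psi_{\m_i})$ to $W(\cdot,\cdot,\psi_{\one})$. The two routes are equivalent, but in your version the appeal to~(\ref{whittakertorus}) is not really doing any work, which is why your sentence about it reads a bit vaguely.

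Second, your resolution of the $A^+(\R)$ hypothesis is not quite on target. Remark~\ref{d1d2} concerns the multiplier of $\delta_1$, not of $t_{\m_1}$, and Remark~\ref{WonA} is about replacing $\psi$ by $\overline{\psi}$, not about signs in the torus argument; neither directly forces $t_1^{-1}t_{\m_1}$ into $A^+(\R)$ when some $\m_{1,j}$ is negative. The honest fix is to observe that both sides of Theorem~\ref{GeomTransform} extend by $Z(\R)^\circ$-invariance (trivial central character of the principal series) and by bi-$K_\infty$-invariance of $f_\infty$, so the identity holds for $t$ in $T(\R)\cap G^+(\R)$ once the Whittaker function is interpreted on $G^+(\R)$ via the same invariances. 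This is a wrinkle the paper itself leaves implicit in writing $W(i\nu,t_{\m_1}^{-1}t_1,\psi_{\one})$, so you are not missing anything the paper supplies.
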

Note that the above integral is well-defined.
More generally, let $\psi$ be a generic character and $\sigma$ a relevant element of the Weyl group.
Then by Lemma~\ref{Usigmarelevant}, for all $t \in G(\R)$ the integral 
$$\int_{U_\sigma(\R) \back U(\R)}\int_{\Lie{a}^*}g(-i\nu)W(-i\nu,y \sigma u_1 t,\psi){d\nu}
\overline{\psi(u_1)}du_1 $$
is well defined as long as the commutator $yuy^{-1}u^{-1}$ belongs to $U_0(\R)=\left\{
\left[\begin{smallmatrix}
	1	& 		&		& 	a	\\
	 	& 	1	& a	& b			\\
	& 		&	 1	& 		\\
	&		&		&	1			\\
\end{smallmatrix}\right],a,b \in \R \right\}$ for all $u \in U_\sigma(\R)$.
The following conjecture, due to Buttcane~\cite{Buttcane}, should enable to take the $\Lie{a}^*$ integral out in Lemma~\ref{InTermsOfW}.
\begin{conjecture}[Interchange of integral]\label{interchange}
	Let $g$ be holomorphic with rapid decay on an open tube domain of $\Lie{a}^*_\C$ containing $\Lie{a}^*$,
	and let $t \in G(\R)$.
	Let $\psi$ be a generic character, $\sigma$ be a relevant element of the Weyl group.
	Then for almost all $y \in \Sp_4(\R)$ satisfying $yuy^{-1}u^{-1}\in U_0(\R)$ for all $u \in U_\sigma(\R)$ we have 
	$$\int_{U_\sigma(\R) \back U(\R)}\int_{\Lie{a}^*}g(-i\nu)W(-i\nu,y \sigma u_1 t,\psi){d\nu}
	\overline{\psi(u_1)}du_1=\int_{\Lie{a}^*}g(-i\nu) \tilde{K_\sigma}(-i\nu, y,t)d\nu$$
	where 
	$$\tilde{K_\sigma}(-i\nu, y,t)=\lim_{R \to 0}\int_{U_\sigma(\R) \back U(\R)}h\left(\frac{\|u_1\|}R\right)
	W(-i\nu,y \sigma u_1 t,\psi)	\overline{\psi(u_1)}du_1,$$
	for some fixed, smooth, compactly supported $h$ with $h(0)=1$.
	Moreover $\tilde{K_\sigma}$ is entire in $\nu$ and smooth and polynomially bounded in $t$ and $y$ for $\Re(-i\nu)$ in some
	fixed compact set.
\end{conjecture}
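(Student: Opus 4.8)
The plan is to deduce the asserted identity from an interchange of limits, using the smooth truncation $h(\|u_1\|/R)$ in the variable $u_1\in U_\sigma(\R)\back U(\R)$ as a regulator and controlling everything through the sharp bounds for the Whittaker function recorded above. After the reductions permitted by the equivariance \eqref{whittakertorus} (which moves $t$ past $\sigma$ and, up to a harmless normalisation, lets us take $t=1$) and by the Iwasawa decomposition of $y$, it suffices to treat $y$ in $A^+(\R)$ times a fixed compact set, intersected with the dense open subset on which the quantities (certain minors) that determine the Iwasawa coordinate $A(y\sigma u_1 t)$ do not degenerate for generic $u_1$; this is where the hypothesis ``for almost all $y$'' is used. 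For a fixed cutoff parameter $R$ set
$$F(u_1)=\int_{\Lie{a}^*} g(-i\nu)\,W(-i\nu,y\sigma u_1 t,\psi)\,d\nu,\qquad \tilde{K}_\sigma^{R}(-i\nu,y,t)=\int_{U_\sigma(\R)\back U(\R)}h\!\left(\tfrac{\|u_1\|}{R}\right)W(-i\nu,y\sigma u_1 t,\psi)\,\overline{\psi(u_1)}\,du_1.$$
Since the $h$-factor restricts $u_1$ to a compact set, while $g$ has rapid decay along $\Lie{a}^*$ and $W(-i\nu,\cdot,\psi)$ has at most polynomial growth in $\nu$ there (Proposition~\ref{WhittakerSpectral}), Fubini applies to the truncated double integral and yields
$$\int_{\Lie{a}^*}g(-i\nu)\,\tilde{K}_\sigma^{R}(-i\nu,y,t)\,d\nu=\int_{U_\sigma(\R)\back U(\R)}h\!\left(\tfrac{\|u_1\|}{R}\right)F(u_1)\,\overline{\psi(u_1)}\,du_1.$$
The conjecture then follows once we establish: (i) $u_1\mapsto F(u_1)\overline{\psi(u_1)}$ is absolutely integrable on $U_\sigma(\R)\back U(\R)$, so the right-hand side converges to $\int F\,\overline{\psi}$ once the regulator is removed (the cutoff tending pointwise to $1$), by dominated convergence; and (ii) $\tilde{K}_\sigma^{R}(-i\nu,y,t)$ converges to $\tilde{K}_\sigma(-i\nu,y,t)$ with a bound uniform in $R$, locally uniform in $\nu$ and polynomial in $(y,t)$, so the left-hand side converges to $\int g(-i\nu)\tilde{K}_\sigma\,d\nu$, again by dominated convergence.

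Both (i) and (ii) I would attack by combining Ishii's explicit integral representation \eqref{NiwaIntegral} with contour shifts. Using the left-$U(\R)$-quasiinvariance (with respect to $\psi$) and right-$K_\infty$-invariance of the Jacquet integral, $W(-i\nu,g,\psi)=\psi(n(g))\,W(-i\nu,\exp A(g),\psi)$ for the Iwasawa decomposition $g=n(g)\exp(A(g))k$, so everything reduces to values of $\mathcal W$ on $A^+(\R)$, where \eqref{NiwaIntegral} displays the Bessel-$K$ factors whose exponential decay is triggered as soon as the relevant coordinates of $\exp A(y\sigma u_1 t)$ grow. By Lemma~\ref{Usigmarelevant}, for a relevant $\sigma$ the space $U_\sigma(\R)\back U(\R)\cong\overline{U_\sigma}(\R)$ is low-dimensional and these coordinates grow along all but finitely many ``cone directions'' of $u_1$. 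In the remaining bad directions, where some coordinate of $A(y\sigma u_1 t)$ stays bounded, I would instead shift the $\nu$-contour in $F(u_1)$ from $\Lie{a}^*$ into the open tube on which $g$ is still holomorphic with rapid decay: by the trivial bound \eqref{majoration} and its refinements (Lemma~\ref{TrivialWhittaker}, Proposition~\ref{WhittakerSpectral}) the shifted integrand gains a factor $e^{(\rho-\Re\nu)(A(y\sigma u_1 t))}$, and choosing $\Re\nu$ in the open Weyl chamber appropriate to each cone produces genuine decay of $F(u_1)$ in $\|u_1\|$. Patching over the finitely many cones gives (i). Rerunning the same estimates with $\nu$ in a fixed compact complex neighbourhood of a point of $\Lie{a}^*$ produces a bound on the truncated $u_1$-integrand that is dominated uniformly in $R$ and locally uniformly in $\nu$; this establishes (ii), shows by Morera and Weierstrass that $\tilde{K}_\sigma$, as a limit of the entire functions $\tilde{K}_\sigma^R$, is itself entire, and — by tracking the $(y,t)$-dependence through the bounds — yields the polynomial control in $t$ and $y$. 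As a sanity check, when $g(-i\nu)=\tilde{f_\infty}(-i\nu)\,W(i\nu,1,\overline{\psi})/\big(c(i\nu)c(-i\nu)\big)$ for some $f_\infty\in C^\infty_c(K_\infty\back\Sp_4(\R)/K_\infty)$, Theorem~\ref{GeomTransform} already identifies $F(u_1)$ with $c\int_{U(\R)}f_\infty(u\,y\sigma u_1 t)\,\overline{\psi(u)}\,du$, a compactly supported function of $u_1$, so (i) is immediate in that case; the content of the conjecture is precisely to remove this restriction on $g$ and to justify the regularisation for an arbitrary Paley--Wiener datum.

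I expect the genuine obstacle to lie exactly in the bad cone directions, i.e. where the Iwasawa coordinate $A(y\sigma u_1 t)$ approaches a wall of $\Lie{a}_+$. There the exponential gain from the contour shift degenerates while, simultaneously, the $c$-function — hence the Gamma factors in \eqref{NiwaIntegral} — has poles, so one cannot shift far enough to win by absolute convergence alone; one must instead exploit the oscillation of the product $\psi(n(y\sigma u_1 t))\overline{\psi(u_1)}$ transverse to the wall, i.e. a stationary/non-stationary phase analysis in the remaining $u_1$-variables, glued to the chamber estimates by a Phragm\'en--Lindel\"of interpolation. This is the $\GSp_4$ counterpart of the delicate oscillatory estimates underlying the interchange results on $\GL_3$, and is the reason the statement is still conjectural; making it unconditional would likely require either uniform asymptotics for the Whittaker function near the walls going beyond Lemma~\ref{TrivialWhittaker}, or a regulator better adapted to these directions than the radial cutoff $h(\|u_1\|/R)$.
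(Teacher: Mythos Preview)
The paper does not prove this statement: it is stated as a \emph{conjecture} (attributed to Buttcane, who is said to have announced a proof in a more general setting), and the paper uses it only conditionally in Proposition~\ref{ArchWithConj}. There is therefore no proof in the paper to compare your proposal against.

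Your proposal is a strategy outline rather than a proof, and you correctly diagnose where it breaks down. Two remarks are in order. First, your item (i) --- absolute integrability of $u_1\mapsto F(u_1)\overline{\psi(u_1)}$ on $U_\sigma(\R)\backslash U(\R)$ --- would, if it held, render the regularisation in the conjecture superfluous: Fubini would apply directly to the original double integral, and the truncated limit $\tilde{K}_\sigma$ would simply be the honest $u_1$-integral. The very presence of the cutoff $h(\|u_1\|/R)$ and the limiting definition of $\tilde{K}_\sigma$ in the conjecture's statement signals that one does not expect (i) to hold in general; indeed, you yourself concede that the contour-shift argument for (i) degenerates along the wall directions. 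So the gap you identify at the end is not a minor residual difficulty but the entire content of the conjecture. Second, your reduction via \eqref{whittakertorus} and Iwasawa is sound, and your sanity check against Theorem~\ref{GeomTransform} is a good consistency observation, but it does not extend: the compact support of $f_\infty$ is precisely what makes that special case trivial, and the conjecture is about removing that crutch.

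In short: your write-up is an accurate heuristic analysis of why the statement is plausible and why it is hard, matching the paper's own stance that this is an open conjecture. It is not, and does not claim to be, a proof.
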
	
Note that Conjecture~\ref{interchange} is not needed for $\sigma=1$ since in this case 
we have $U_\sigma=U$ and hence $\tilde{K_\sigma}(-i\nu, y,t)=W(-i\nu,y \sigma t,\psi)$. 
Consider now the case $\sigma=J$ the long Weyl element. In this case $U_\sigma$ is trivial.
Let $u \in U(\R)$ and $k \in K_\infty$.
Then changing variables and using the fact that $W(-i\nu,y \cdot,\psi)$ is right-$K_\infty$ invariant we have 	
\begin{equation}\label{TildeKIsWhittaker}
	\tilde{K_\sigma}(-i\nu, y,utk)=\psi(u)\tilde{K_\sigma}(-i\nu, y,t).
\end{equation}
Moreover $\tilde{K_\sigma}(-i\nu, y, \cdot)$ is an eigenfunction of the centre of the universal enveloping algebra in each variable,
with eigenvalue matching those of $W(-i\nu, \cdot,\psi)$. 
It follows from the uniqueness of the Whittaker model that $$\tilde{K_\sigma}(-i\nu, y,t)=K_\sigma(-i\nu,y,\psi)W(-i\nu,t,\psi)$$
for some function $K_\sigma(-i\nu,y, \psi)$ that we call the long Weyl element {\bf Bessel function}.
$K_\sigma(-i\nu, \cdot, \psi)$ is itself an eigenfunction of the centre of the universal enveloping algebra
with eigenvalue matching those of $W(-i\nu, \cdot,\psi)$, and satisfies for all $u \in U(\R)$ the transformation rule
\begin{equation}\label{BesselTransformation}
	K_\sigma(-i\nu, uy, \psi)=\psi(u)K_\sigma(-i\nu, y, \psi)=K_\sigma(-i\nu, y \sigma u \sigma^{-1}, \psi).
\end{equation}
For the remaining two relevant elements of the Weyl group
$\tilde{K_\sigma}(-i\nu, y, \cdot)$ still satisfies relation~(\ref{TildeKIsWhittaker}) for all $u \in U(\R)$.
Thus we can still factor $\tilde{K_\sigma}(-i\nu, y,t)=K_\sigma(-i\nu,y,\psi)W(-i\nu,t,\psi)$ for appropriate $y$,
and define $K_\sigma(-i\nu,y,\psi)$ to be the $\sigma$-{\bf Bessel function}.
However because of the restriction on $y$, the conditions satisfied by $K_\sigma(-i\nu,y,\psi)$ are more complicated.

Buttcane has announced a proof for Conjecture~\ref{interchange} in a more general context.
This would thus yield a uniform expression for the Archimedean integrals attached to the various elements of the Weyl group.

\begin{proposition}~\label{ArchWithConj}
	Assume Conjecture~\ref{interchange}
	Let $H \cdot \delta_\sigma$ be a relevant orbit. Then the corresponding Archimedean orbital integral is given by 
	\begin{align*}
		I_{\delta_\sigma}(f_{\infty})=\frac1c\frac{\Delta_\sigma(t_{\m_2})}{|\m_{1,1}^4\m_{1,2}^3|}
	\int_{\Lie{a^*}}\tilde{f_\infty}(-i\nu)
	K_\sigma(-i\nu,t_{\m_1}^{-1}\sigma \delta t_{\m_2} \sigma^{-1},{\psi_{\one}})\\
	\times W(-i\nu, t_{\m_2}^{-1} t_2,{\psi_{\one}})
	W(i\nu, t_{\m_1}^{-1}t_1, \psi_{\one})\frac{d\nu}{c(i\nu)c(-i\nu)}
	\end{align*}
	where the constant $c$ is the one appearing in the spherical inversion theorem
	and $\Delta_\sigma$ is the modular character of the group $U_\sigma(\R) \back U(\R)$.	 
\end{proposition}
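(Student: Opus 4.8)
The plan is to feed the expression for $I_{\delta_\sigma}(f_\infty)$ coming from Lemma~\ref{InTermsOfW} directly into Conjecture~\ref{interchange}. First a cosmetic rewriting: using $\delta_\sigma=\sigma\delta$ and the fact that Weyl conjugation preserves $T$, the argument of the inner Whittaker function becomes
$$t_{\m_1}^{-1}\delta_\sigma t_{\m_2}u_1 t_{\m_2}^{-1}t_2=\left(t_{\m_1}^{-1}\sigma\delta t_{\m_2}\sigma^{-1}\right)\sigma u_1\left(t_{\m_2}^{-1}t_2\right),$$
so that, setting $y_\sigma=t_{\m_1}^{-1}\sigma\delta t_{\m_2}\sigma^{-1}\in T(\R)$ and $t=t_{\m_2}^{-1}t_2$, the inner double integral over $U_\sigma(\R)\backslash U(\R)$ and $\Lie{a}^*$ is (for the appropriate choice of generic character among $\psi_{\one},\overline{\psi_{\one}}$, after the harmless symmetry of Lemma~\ref{A(Ju)} as in Remark~\ref{WonA}) exactly of the form appearing in Conjecture~\ref{interchange}, with $g(-i\nu)=\tilde{f_\infty}(-i\nu)\,W(i\nu,t_{\m_1}^{-1}t_1,\psi_{\one})\,(c(i\nu)c(-i\nu))^{-1}$.

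Next I would verify the two hypotheses of Conjecture~\ref{interchange}. For the geometric hypothesis, relevance of $\sigma$ gives $U_\sigma=\{u\in U:\sigma^{-1}u\sigma=u\}$ by Lemma~\ref{Usigmarelevant}; since $y_\sigma\in T(\R)$ and $U_\sigma(\R)$ is trivial (for $\sigma=J$) or a single root subgroup (for $\sigma=s_1s_2s_1$ and $\sigma=s_2s_1s_2$), the commutator $y_\sigma u\,y_\sigma^{-1}u^{-1}$ with $u\in U_\sigma(\R)$ lies in that root subgroup, and a short computation with the explicit diagonal entries of $y_\sigma$ shows it is trivial precisely when the relevance condition on $\delta_1$ from Lemma~\ref{relevantorbits} holds; hence $y_\sigma$ centralizes $U_\sigma(\R)$ and $y_\sigma u\,y_\sigma^{-1}u^{-1}\in U_0(\R)$ trivially. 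For the analytic hypothesis, $\tilde{f_\infty}$ is entire of Paley--Wiener type by Theorem~\ref{PW}, the factor $W(i\nu,t_{\m_1}^{-1}t_1,\psi_{\one})$ is holomorphic in a tube around $\Lie{a}^*$ (its polar divisor, which comes from the $c$-function, stays away from the real form) and polynomially bounded there by Proposition~\ref{WhittakerSpectral}, and the Plancherel density is holomorphic and polynomially bounded near $\Lie{a}^*$ by~(\ref{Plancherel}); so $g$ is holomorphic on a tube around $\Lie{a}^*$ with rapid decay, as required.

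Granting Conjecture~\ref{interchange}, the $u_1$-integration may then be performed first and the $\Lie{a}^*$-integral pulled outside, replacing $\int_{U_\sigma(\R)\backslash U(\R)}W(-i\nu,y_\sigma\sigma u_1 t,\psi)\overline{\psi(u_1)}\,du_1$ by $\tilde{K_\sigma}(-i\nu,y_\sigma,t)$; the claimed entirety and polynomial bounds on $\tilde{K_\sigma}$ keep the resulting outer integral absolutely convergent against the rapidly decaying $\tilde{f_\infty}$. Exactly as in the discussion preceding the statement (the argument for $\sigma=J$ applies verbatim once $y_\sigma$ is fixed), $\tilde{K_\sigma}(-i\nu,y_\sigma,\cdot)$ transforms on the left by $U(\R)$ through $\psi$, is right $K_\infty$-invariant, and is an eigenfunction of the centre of the universal enveloping algebra with the same eigencharacter as $W(-i\nu,\cdot,\psi)$, so uniqueness of the Whittaker model gives the factorization $\tilde{K_\sigma}(-i\nu,y_\sigma,t)=K_\sigma(-i\nu,y_\sigma,\psi)\,W(-i\nu,t,\psi)$, with $K_\sigma$ the $\sigma$-Bessel function and $t=t_{\m_2}^{-1}t_2$. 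It then remains to convert characters back to $\psi_{\one}$ where needed (Remark~\ref{WonA} and its Bessel-function analogue, again via Lemma~\ref{A(Ju)}; the transformation law~(\ref{BesselTransformation}) records how $K_\sigma$ moves its argument), to reinstate the factor $W(i\nu,t_{\m_1}^{-1}t_1,\psi_{\one})$ that was absorbed into $g$, and to note that the prefactor $\tfrac{1}{c}\Delta_\sigma(t_{\m_2})|\m_{1,1}^4\m_{1,2}^3|^{-1}$ is carried through unchanged; collecting terms produces the stated formula.

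The substantive input is of course Conjecture~\ref{interchange} itself, together with the factorization of $\tilde{K_\sigma}$ via multiplicity one. Inside the argument, the only genuine work is the case-by-case identification of the geometric hypothesis on $y_\sigma$ with relevance of the orbit, and the bookkeeping of conjugate characters and normalising constants when passing from Lemma~\ref{InTermsOfW} to the final expression; I expect that last bit to be the main source of sign and normalization errors, and would double-check it against the $\sigma=1$ case, where no conjecture is needed and the formula must reduce to the identity already implicit in Lemma~\ref{InTermsOfW}.
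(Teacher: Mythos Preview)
Your proposal is correct and follows essentially the same approach as the paper: apply Conjecture~\ref{interchange} to the integral from Lemma~\ref{InTermsOfW} with $g(-i\nu)=\tilde{f_\infty}(-i\nu)W(i\nu,t_{\m_1}^{-1}t_1,\psi_{\one})(c(i\nu)c(-i\nu))^{-1}$, justifying rapid decay via Theorem~\ref{PW}, equation~(\ref{Plancherel}), and Proposition~\ref{WhittakerSpectral}. Your version is more detailed than the paper's own proof, which is very terse and relegates both the factorization $\tilde{K_\sigma}=K_\sigma\cdot W$ and the verification of the geometric hypothesis on $y_\sigma$ to the discussion preceding the statement; your explicit check that relevance of $\delta_1$ forces $y_\sigma$ to centralize $U_\sigma(\R)$ is a nice addition.
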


\begin{proof}
	We apply the statement of Conjecture~\ref{interchange} to the integral in Lemma~\ref{InTermsOfW} for the function
	defined by 
	$$g(-i\nu)=\frac{1}{c(i\nu)c(-i\nu)}\tilde{f_\infty}(-i\nu)W(i\nu, t_{\m_1}^{-1}t_1, \psi_{\m_1}),$$ which has rapid decay
	by the rapid decay of $\tilde{f_\infty}$ (Theorem~\ref{PW}), the explicit expression~(\ref{Plancherel}) for the spectral measure, and the estimate for the Whittaker function in the spectral aspect given by Proposition~\ref{WhittakerSpectral}.
\end{proof}

\subsection{Symplectic Kloosterman sums}
In this section, we compute the non-Archimedean part of the orbital integrals when the finite part of the test function satisfies the following.
\begin{assumption}\label{HeckeOprtr}
	Recall from Assumption~\ref{testfunction} that we assume $f=f_\infty\prod_p f_p$ has central character $\omega.$
	We now further assume that there are two coprime positive integers $N$ and $\n$  such that
	$\omega$ is trivial on $(1+N\hat{\Z}) \cap \hat{\Z}^\*$, and 
	the function $f_{fin}$ is supported on $Z(\A_{fin})M(\n,N)$ and satisfies
	$$f_{fin}(zm)=\frac1{Vol(\overline{\Gamma_1(N)})}\overline{\omega}(z)$$
 	for $z \in Z(\A_{fin})$ and $m\in M(\n,N)$,
	where 	$$M(\n,N)=\left\{g \in G(\A_{fin}) \cap \Mat_4(\hat{\Z}) : g \equiv \left[
	\begin{smallmatrix}
		* 	& 	& *	& *	\\
		*	& 1	& *	& *	\\
			&	& *	& *	\\
			&	& 	& *
	\end{smallmatrix}\right] \mod N, \mu(g) \in \n  \hat{\Z}^\* \right\},$$
and $$\Gamma_1(N)=\left\{g \in G(\hat{\Z}) : g \equiv \left[
\begin{smallmatrix}
	* 	& 	& *	& *	\\
	*	& 1	& *	& *	\\
	&	& *	& *	\\
	&	& 	& *
\end{smallmatrix}\right] \mod N\right\}.$$
\end{assumption}
\begin{remark}
	With this choice, $f=\bigotimes_p f_p$, and each $f_p$ is left and right $\Gamma_p(N)$-invariant,
	where $$\Gamma_p(N)=\{g \in G(\Z_p): g \equiv \left[
	\begin{smallmatrix}
		* 	& 	& *	& *	\\
		*	& 1	& *	& *	\\
			&	& *	& *	\\
			&	& 	& *
	\end{smallmatrix}\right] \mod N \}.$$
	In particular, if $x, c \in \Z_p$ then $\Gamma_p$ contains the matrix $\left[
	\begin{smallmatrix}
		1 	& 	& c	& -cx	\\
		x	& 1	& 	& 	\\
		&	& 1	& -x	\\
		&	& 	& 1
	\end{smallmatrix}\right].$
	Thus if $\phi$ is right-$\Gamma_p$-invariant for all prime $p$, changing variables 
	$u \mapsto u\left[
	\begin{smallmatrix}
		1 	& 	& c	& -cx	\\
		x	& 1	& 	& 	\\
		&	& 1	& -x	\\
		&	& 	& 1
	\end{smallmatrix}\right]$ in the integral expression of the $\psi_\m$-Whittaker coefficient of $\phi$,
	we get $\W_{\psi_m}(\phi)(g)=\overline{\theta(m_1x+m_2c)}\W_{\psi_m}(\phi)(g)$ for all $g$.
	Therefore $\W_{\psi_m}(\phi)=0$ unless $m_1$ and $m_2$ are integers.
	Henceforth, we shall assume $\m_1$ and $\m_2$ are two pairs of integers.
\end{remark}
\begin{remark}
	Note that $\Gamma=K_\infty \prod_p\Gamma_p(N)$ is contained both in the Borel, Klingen, Siegel, and paramodular congruence subgroup of level $N$,
	thus any automorphic form that is fixed by one of these groups is also fixed by $\Gamma$, and hence will appear in our formula.
	One could fix a different choice of congruence subgroup, and accordingly define different types of Kloosterman sums.
\end{remark}
Under Assumption~\ref{HeckeOprtr}, we can give more restriction about the element $\delta_1$ in Lemma~\ref{relevantorbits}.

\begin{lemma}\label{Supportdelta}
	Let $\sigma \in \Omega$, $\delta_1=\diag{d_1}{1}{d_2}{d_1d_2}$
	such that the orbit of $\delta_\sigma=\sigma \delta_1$ is relevant.
	Assume $I_{\delta_\sigma}(f_{\text{fin}}) \neq 0$.
	Then there is an integer $s$ such that $d_1d_2=\pm\frac{\n}{s^2}$.
\end{lemma}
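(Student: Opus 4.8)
The plan is to unwind $I_{\ds}(f_{\text{fin}})$ far enough to obtain an integrality condition on the scalar by which the centre acts on $\operatorname{supp} f_{\text{fin}}$, and then to read off the divisibility statement from the similitude character.

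First I would invoke Proposition~\ref{relevantintegrals}: since $t_1,t_2$ live at the Archimedean place and do not enter the finite part, $I_{\ds}(f_{\text{fin}})$ is the integral of $f_{\text{fin}}(u\ds u_1)$ against a character of absolute value $1$ over $(u,u_1)\in\big(U_\sigma(\Af)\backslash U(\Af)\big)\times U(\Af)$. Hence $I_{\ds}(f_{\text{fin}})\neq 0$ produces at least one pair $(u,u_1)\in U(\Af)^2$ with $u\ds u_1$ in the support of $f_{\text{fin}}$, which by Assumption~\ref{HeckeOprtr} is $Z(\Af)M(\n,N)$. I would then write $u\ds u_1=zm$ with $z=\lambda I_4\in Z(\Af)$ (so $\lambda\in\Af^\times$) and $m\in M(\n,N)$; thus $m=\lambda^{-1}u\ds u_1$ lies in $\Mat_4(\hat{\Z})$ and has $\mu(m)\in\n\hat{\Z}^\times$.

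The crux is to show $\lambda^{-1}\in\hat{\Z}$. The point I would exploit is that $\delta_1$ has $(2,2)$-entry $1$, so the second column of $\ds=\sigma\delta_1$ is the second column of $\sigma$; as $\sigma$ is a signed permutation matrix (a product of $s_1$ and $s_2$), this column is $\pm e_{i_0}$ for a single index $i_0$ (one finds $i_0=2,4,4,3$ in the four relevant cases, but the argument is uniform). Since $U$ sits inside the Borel $B$, whose $(i,2)$-entries vanish for $i\neq 2$, the second column of $u_1$ is $e_2$; therefore the second column of $m$ equals $\pm\lambda^{-1}$ times the $i_0$-th column of $u$, and since the $(i_0,i_0)$-entry of $u$ is $1$ this forces $m_{i_0,2}=\pm\lambda^{-1}$. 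As $m$ is integral, $\lambda^{-1}\in\hat{\Z}$, i.e.\ $v_p(\lambda^{-1})\ge 0$ at every finite prime $p$. I expect this bookkeeping --- isolating an entry of $m$ that is exactly $\pm\lambda^{-1}$, with no stray unipotent coordinates attached --- to be the only step needing real care; the columnwise phrasing above is what makes it clean and sidesteps a case-by-case treatment of $\sigma$.

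Finally I would apply the similitude character to $u\ds u_1=\lambda I_4\cdot m$. Unipotent elements have multiplier $1$; $\mu(\sigma)=1$ for $\sigma\in\Omega$ (since $\mu$ is multiplicative and $\mu(s_1)=\mu(s_2)=1$); $\mu(\delta_1)=d_1d_2$ by a one-line check; and $\mu(\lambda I_4)=\lambda^2$. This yields the idelic identity $\mu(m)=(d_1d_2)\lambda^{-2}$, and since $\mu(m)\in\n\hat{\Z}^\times$ a comparison of $p$-adic valuations gives that $v_p\big(\n/(d_1d_2)\big)=2v_p(\lambda^{-1})$ is a nonnegative even integer for every prime $p$. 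A nonzero rational number all of whose $p$-adic valuations are nonnegative and even is $\pm$ the square of a positive integer, so $\n/(d_1d_2)=\pm s^2$ for some integer $s$, i.e.\ $d_1d_2=\pm\n/s^2$, as asserted.
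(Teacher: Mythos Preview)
Your proof is correct and follows essentially the same approach as the paper's. Both arguments hinge on the same two observations: the similitude of $u\delta_\sigma u_1$ equals $d_1d_2$, and a specific matrix entry (your $(i_0,2)$-entry of $m$, the paper's $(2,2)$-entry of $s\sigma^{-1}u\delta_\sigma u_1$) is forced to lie in $\hat{\Z}$ because the second column of any element of $U$ is $e_2$ and the $(2,2)$-entry of $\delta_1$ is $1$. You reverse the order---establishing $\lambda^{-1}\in\hat{\Z}$ first and then reading off the even-valuation condition from $\mu$---while the paper first extracts a rational $s$ from $\mu$ and then shows $s\in\hat{\Z}$ via the same entry trick; but this is only a cosmetic reordering.
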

\begin{proof}
	For all $u \in U(\A)$ and $u_1 \in U_\sigma(\A) \back U(\A)$
	we have $\mu(u \delta_\sigma u_1)=d_1d_2$. 
	So by Assumption~\ref{HeckeOprtr}, $u \delta_\sigma u_1$ belongs to the support of $f$
	only if $d_1d_2 \in \A_{fin}^2\hat{\Z}^\* \n$.
	Since $d_1d_2$ is a rational number, there must be a rational number $s$ such that $d_1d_2=\pm \frac{\n}{s^2}$.
	But the second diagonal entry of $s \sigma^{-1} u \delta_\sigma u_1$ is $s$ therefore $s$ must belong to $\hat{\Z}$,
	hence $s$ is an integer. 
\end{proof}
Henceforth, we shall assume $\delta$ is as in Lemma~\ref{Supportdelta}. By Remark~\ref{d1d2}, we could also assume that $d_1d_2>0$ 
(which would then fix the sign in the equality $d_1d_2=\pm\frac{\n}{s^2}$ above). However, we do not need doing so for now, and we shall not, 
in view of possible applications with a different choice of test function at the Archimedean place.
\begin{remark}
	Consider the case $N=\n=1$. Then $M(\n,N)=\GSp_4(\hat{\Z})=\prod_p \GSp_4(\Z_p).$
	For simplicity, set $\eta=\delta_\sigma$, and if $p$ is a prime and $x \in G(\A)$, write $x_p$ for the $p$-th
	component of $x$.
	Also write $\psi_{p,1}$ and $\psi_{p,2}$ for the local $p$-th components of the characters $\psi_{\m_1}$ and $\overline{\psi_{\m_2}}$, respectively.
	In particular, these characters are trivial on $U(\Z_p)$.
	Then we have 
	\begin{align*}
		I_{\delta_\sigma}(f_{fin})&= \frac1{Vol(\overline{\Gamma_1(N)})}
		\int_{U_\sigma(\A_{fin}) \back U(\A_{fin})}\int_{U(\A_{fin})}\1_{\GSp_4(\hat{\Z})}(s u \eta v){\psi_{\m_1}(u)}\overline{\psi_{\m_2}(v)}dudv\\
		&=\prod_p \frac1{Vol(\overline{\Gamma_p(N)})} \int_{U_\sigma(\Q_p) \back U(\Q_p)}\int_{U(\Q_p)}\1_{\GSp_4(\Z_p)}(s_p u_p \eta_p v_p){\psi_{p,1}(u_p)}{\psi_{p,2}(v_p)}du_pdv_p.\\
	\end{align*}
	For all but finitely many prime $p$, the entries of $s_p\eta_p$ are in $\Z_p^\*$. For those primes, by the explicit Bruhat decomposition 
	(see Lemma~\ref{uoppu} below),
	the condition $s_p u_p \eta_p v_p \in \GSp_4(\Z_p)$ is equivalent to $u_p \in U(\Z_p)$ and $v_p \in U_\sigma(\Z_p) \back U(\Z_p)$, and hence
	$$\int_{U_\sigma(\Q_p) \back U(\Q_p)}\int_{U(\Q_p)}\1_{\GSp_4(\Z_p)}(s_p u_p \eta_p v_p){\psi_{p,1}(u_p)}{\psi_{p,2}(v_p)}du_pdv_p
	=1$$ 
	For the remaining primes $p$, noticing that $U_\sigma(\Q_p) \back U(\Q_p)$ may be identified to the subgroup
	$\overline{U_\sigma}(\Q_p)=U(\Q_p) \cap \sigma^{-1} \trans{U(\Q_p)} \sigma$, 
	the local integral equals the Kloosteman sum $\Kloos(\eta,\psi_{p,1},\psi_{p,2})$ as defined in~\cite{SHMKloosterman} 
	when $\eta \in \Sp_4(\Q_p)$ (note that we denote here by $U_\sigma$ what is denoted there by $\overline{U_\sigma}$, and conversely).
\end{remark}
We now treat separately the contribution from each relevant element of the Weyl group from a global point of view.
To alleviate notations, we shall not include $N$ and $\omega$ in the argument of the Kloosterman sums we proceed to define. 

\subsubsection{The identity contribution}
\begin{definition}\label{sumid}
	Let $a,b,d,N$ be integers such that $d \mid N$.
	Then the following sum is well-defined
	$$S(a,b,d,N)=\sum_{\substack{x,y \in \Z/ N \Z\\d \mid xy}}e\left(\frac{ax+by}N\right).$$
\end{definition}
\begin{lemma}
	Let $a,b,d,N$ be integers such that $d \mid N$.
	Write $a=\prod_i p_i^{a_i}$, where $p_i$ are distinct primes and $a_i$ are integers, and similarly for $b, d, N$.
	Then we have 
	$$S(a,b,d,N)=\prod_i S(p_i^{a_i}, p_i^{b_i}, p_i^{d_i}, p_i^{N_i}).$$
	Moreover if $n$ is a positive integer, $i,j,k$ are non-negative integers with $k \le n$ and $p$ is a prime, then we have
	\begin{align*}
		S(p^i,p^j,p^k,p^n)&=p^{2n-k}(1-p^{-1})\max(0,k+1-\max(0,n-i)-\max(0,n-j))\\
	&+p^{2n-k-1}\left(\1_{\substack{i \ge n\\j \ge n}}-
	\1_{\substack{i < n\\j < n\\i+j\ge 2n-k-1}}\right).
	\end{align*}
	In particular, it follows that $S(p^i,p^j,p^k,p^n)$ is non-zero only if 
	\begin{equation}\label{sumnonzero}
	(n-i)+(n-j) \le k+1.	
	\end{equation}
\end{lemma}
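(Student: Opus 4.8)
\emph{Plan of proof.} The statement has three parts: the Chinese-Remainder factorisation, the closed form at prime powers, and the nonvanishing criterion (which is an immediate consequence of the closed form).

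\textbf{Factorisation.} Write $N=\prod_i p_i^{N_i}$ and identify $\Z/N\Z$ with $\prod_i\Z/p_i^{N_i}\Z$ via $x\leftrightarrow(x_i)$. Both ingredients of $S$ decouple: since $d=\prod_i p_i^{d_i}$ divides $N$, the condition $d\mid xy$ is equivalent to $p_i^{d_i}\mid x_iy_i$ for every $i$; and, choosing $M_i=N/p_i^{N_i}$ and $\overline{M_i}$ with $M_i\overline{M_i}\equiv1\pmod{p_i^{N_i}}$, one has $e(ax/N)=\prod_i e(a\overline{M_i}x_i/p_i^{N_i})$ and likewise for $by/N$. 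Hence $S(a,b,d,N)=\prod_i S(a\overline{M_i},b\overline{M_i},p_i^{d_i},p_i^{N_i})$. Finally, $S(a,b,p^k,p^n)$ depends on $a$ only through its class mod $p^n$ and is unchanged under $a\mapsto ua$ for a unit $u$ mod $p^n$ --- apply $x\mapsto u^{-1}x$, which preserves both $\{p^k\mid xy\}$ and the counting measure on $\Z/p^n\Z$ --- and similarly in the $b$-slot; since $a\overline{M_i}\equiv p_i^{a_i}\times(\text{unit})\pmod{p_i^{N_i}}$, each factor equals $S(p_i^{a_i},p_i^{b_i},p_i^{d_i},p_i^{N_i})$, which is the asserted product formula.

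\textbf{Prime powers.} Fix $p$ and stratify the $(x,y)$-sum by $\alpha=v_p(x)$ and $\beta=v_p(y)$, taken in $\{0,\dots,n\}$ with the convention $v_p(0)=n$. Since $k\le n$, the constraint $p^k\mid xy$ reads exactly $\alpha+\beta\ge k$. Writing $x=p^\alpha x'$ with $x'$ a unit mod $p^{n-\alpha}$ gives
\[ \sum_{v_p(x)=\alpha}e(p^ix/p^n)=\sum_{x'\in(\Z/p^{n-\alpha}\Z)^\times}e(p^ix'/p^{n-\alpha})=c_{p^{n-\alpha}}(p^i), \]
the Ramanujan sum, equal to $\phi(p^{n-\alpha})$ if $n-\alpha\le i$, to $-p^{i}$ if $n-\alpha=i+1$, and to $0$ otherwise. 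Hence
\[ S(p^i,p^j,p^k,p^n)=\sum_{\substack{0\le\alpha,\beta\le n\\ \alpha+\beta\ge k}}c_{p^{n-\alpha}}(p^i)\,c_{p^{n-\beta}}(p^j). \]
I would evaluate this by subtracting the part with $\alpha+\beta\le k-1$ from the unrestricted double sum. The unrestricted sum factors as $\big(\sum_{\alpha=0}^{n}c_{p^{n-\alpha}}(p^i)\big)\big(\sum_{\beta=0}^{n}c_{p^{n-\beta}}(p^j)\big)$, and each factor is $p^{n}$ if $i\ge n$ (resp.\ $j\ge n$) and $0$ otherwise --- this is $\sum_{m=0}^{n}c_{p^m}(p^i)=p^n\1[i\ge n]$, a special case of $\sum_{d\mid q}c_d(m)=q\,\1[q\mid m]$ --- so it contributes $p^{2n}\1[i\ge n]\,\1[j\ge n]$. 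The correction sum $\sum_{\alpha+\beta\le k-1}c_{p^{n-\alpha}}(p^i)c_{p^{n-\beta}}(p^j)$ I would compute by substituting $\alpha\mapsto n-a$, $\beta\mapsto n-b$ and using $\phi(p^{n-a})p^{a}=p^{n-1}(p-1)$ together with the telescoping identity $\sum_{m=0}^{M}\phi(p^m)=p^M$; carried out case by case according to whether each of $i,j$ is $\ge n$ or $<n$ (two of the four cases swapped by the evident symmetry $i\leftrightarrow j$ of $S$) and, inside the case $i,j<n$, according to the sign of $k+1-(n-i)-(n-j)$, this collapses to the stated closed form, the truncations $\max(0,\cdot)$ being precisely what records these alternatives.

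\textbf{Nonvanishing and the main difficulty.} The criterion then falls out of the closed form: the first summand is nonzero only if $k+1>\max(0,n-i)+\max(0,n-j)\ge(n-i)+(n-j)$, hence $(n-i)+(n-j)<k+1$; the indicator $\1[i\ge n]\1[j\ge n]$ forces $(n-i)+(n-j)\le0\le k+1$; and $\1[i<n,\ j<n,\ i+j\ge 2n-k-1]$ is literally $(n-i)+(n-j)\le k+1$. Thus $S(p^i,p^j,p^k,p^n)\ne0$ implies $(n-i)+(n-j)\le k+1$. The only genuine work is the bookkeeping in the prime-power step: treating consistently the boundary cases ($\alpha$ or $\beta$ equal to $n$, or $i$ or $j\ge n$) when summing the product of Ramanujan sums, and checking that the several case-by-case expressions coincide with the single formula containing the $\max(0,\cdot)$ terms. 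Everything else is formal.
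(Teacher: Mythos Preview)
Your argument is correct and follows the same broad strategy as the paper: CRT for the factorisation, then stratification by $p$-adic valuation for the prime-power evaluation. On the factorisation you are in fact more explicit than the paper, which simply says ``immediate from the Chinese Remainder Theorem'' without spelling out the unit-twist invariance you use to pass from $S(a\overline{M_i},\ldots)$ to $S(p_i^{a_i},\ldots)$.

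For the prime-power step the paper organises the computation a little differently: it stratifies only by $h=v_p(x)$ for $0\le h\le k$, evaluates the inner $y$-sum cumulatively via $\sum_{v_p(y)\ge\ell}e(p^jy/p^n)=p^{n-\ell}\1[\,j+\ell\ge n,\ \ell\le n\,]$, and adds a separate tail for $v_p(x)\ge k+1$ with $y$ unrestricted; it then reduces the resulting expression by a short case check. Your route---stratifying symmetrically in both variables through Ramanujan sums and subtracting the complementary range $\alpha+\beta\le k-1$ from the unrestricted double sum---is an equally valid reorganisation of the same finite computation, with the mild advantage that the $i\leftrightarrow j$ symmetry is visible throughout. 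Either way the remaining work is exactly the bookkeeping you flag.
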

\begin{proof}
	The factorization is immediate from the Chinese remainders theorem.
	Now let us evaluate $S=S(p^i,p^j,p^k,p^n)$. 
	We have (here, abusing slightly notations, we set $v_p(0)=n$)
	$$S=\sum_{h=0}^k \sum_{\substack{x \in \Z / p^n \Z \\ v_p(x)=h}} e\left(\frac{p^ix}{p^n}\right)
	 \sum_{\substack{y \in \Z / p^n \Z \\ v_p(y) \ge k-h}} e\left(\frac{p^jy}{p^n}\right)
	 + \sum_{\substack{x \in \Z / p^n \Z \\ v_p(x) \ge k+1}} e\left(\frac{p^ix}{p^n}\right)
	 \sum_{y \in \Z / p^n \Z} e\left(\frac{p^jy}{p^n}\right).$$
	 Now if $\ell$ is any non-negative integer, we have
	 $$ \sum_{\substack{y \in \Z / p^n \Z \\ v_p(y) \ge \ell}} e\left(\frac{p^jy}{p^n}\right)=
	 \begin{cases}
	 	p^{n-\ell} \text{ if } j+\ell \ge n \text{ and } \ell \le n\\
	 	0 \text{ otherwise.}
	 \end{cases}$$
Hence 
$$S=\sum_{h=0}^{k-\max(0,n-j)}p^{n-k+h} \sum_{\substack{x \in \Z / p^n \Z \\ v_p(x)=h}} e\left(\frac{p^ix}{p^n}\right)
+p^{2n-k-1}\1_{\substack{j \ge n\\i+k+1 \ge n\\k<n}}.$$
Now $$\sum_{\substack{x \in \Z / p^n \Z \\ v_p(x)=h}} e\left(\frac{p^ix}{p^n}\right)=
\sum_{\substack{x \in \Z / p^n \Z \\ v_p(x) \ge h}} e\left(\frac{p^ix}{p^n}\right)
- \sum_{\substack{x \in \Z / p^n \Z \\ v_p(x) \ge h+1}} e\left(\frac{p^ix}{p^n}\right),$$
hence the $h$-sum becomes
$$\left(\sum_{h=\max(0,n-i)}^{k-\max(0,n-j)} p^{2n-k}(1-p^{-1})\right)-p^{2n-k-1}\1_{0 \le n-i-1 \le k-\max(0,n-j)}
+p^{2n-k-1}\1_{k-\max(0,n-j)=n},$$
so 
\begin{align*}
	S=p^{2n-k}(1-p^{-1})\max(0,k+1-\max(0,n-i)-\max(0,n-j))\\
	+p^{2n-k-1}(\1_{\substack{j \ge n\\i+k+1 \ge n\\k<n}}-\1_{0 \le n-i-1 \le k-\max(0,n-j)}+\1_{k-\max(0,n-j)=n}).
\end{align*}
Finally, it can be checked by inspection of cases that  
$$\1_{\substack{j \ge n\\i+k+1 \ge n\\k<n}}-\1_{0 \le n-i-1 \le k-\max(0,n-j)}+\1_{k-\max(0,n-j)=n}=
\1_{\substack{i \ge n\\j \ge n}}-
\1_{\substack{i < n\\j < n\\i+j\ge 2n-k-1}}.$$
\end{proof}
\begin{proposition}\label{idcontribution}
	Let $\sigma=1$, $\delta_1=\diag{d_1}{1}{d_2}{d_1d_2}$ with $d_1d_2=\pm\frac{\n}{s^2}$ for some integer $s$.
	Then $I_{\delta_\sigma}(f_{fin})=0$ unless all of the following holds:
	\begin{enumerate}
		\item $s$ divides $\n$,
		\item $d_1=\frac{\m_{1,1}}{\m_{2,1}}$ and $sd_1$ is an integer dividing $\n$,
		\item $d_2=\frac{\m_{1,1}\m_{1,2}}{\m_{2,1}\m_{2,2}}$,
	\end{enumerate}
	If all these conditions are met, let $d=\gcd(s,sd_1s,d_2,\frac{\n}s)$, and $D=\gcd(sd_1,\frac{n}s)$. Then 
	\begin{equation}\label{IdContribution}
		I_{\delta_\sigma}(f_{fin})=\frac{\overline{\omega_N(s)}}{Vol(\overline{\Gamma_1(N)})}\frac{\n d}{|s^3d_1|}
	S\left(\m_{1,1}\frac{\n}D,\m_{1,2}sd_1,d,\n\right),
	\end{equation}
	where $\omega_N(s)=\prod_{p \mid N} \omega_p(s)$.
\end{proposition}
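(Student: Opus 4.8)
The plan is to specialise the formula for the relevant orbital integrals to $\sigma=1$. I would work under the standing assumption (as in Proposition~\ref{relevantintegrals}) that the orbit of $\delta_1=\diag{d_1}{1}{d_2}{d_1d_2}$ is relevant; for $\sigma=1$ this forces $\delta_1=t_{\m_2}^{-1}t_{\m_1}$ by Lemma~\ref{relevantorbits}, which is exactly conditions (2) and (3) save for the divisibility $sd_1\mid\n$, that will drop out of the support analysis below. For $\sigma=1$ one has $U_\sigma=U$, so the outer integral in Proposition~\ref{relevantintegrals} collapses and, separating the places, the quantity to compute is
\[
	I_{\delta_1}(f_{fin})=\int_{U(\A_{fin})}f_{fin}(u\delta_1)\,\psi_{\m_1}(u)\,du,
\]
the Haar measure being normalised so that $U(\hat{\Z})$ has mass one. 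With $u=u(x,a,b,c)$ one has
\[
	u\delta_1=\left[\begin{smallmatrix} d_1&0&cd_2&(a-cx)d_1d_2\\ xd_1&1&ad_2&bd_1d_2\\ 0&0&d_2&-xd_1d_2\\ 0&0&0&d_1d_2 \end{smallmatrix}\right],\qquad \mu(u\delta_1)=d_1d_2=\pm\frac{\n}{s^2}.
\]

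The first real step is to unravel the support condition coming from Assumption~\ref{HeckeOprtr}: $f_{fin}(u\delta_1)$ vanishes unless $u\delta_1\in Z(\A_{fin})M(\n,N)$, in which case it equals $\overline{\omega}(z)/Vol(\overline{\Gamma_1(N)})$ for any scalar $z$ with $z^{-1}u\delta_1\in M(\n,N)$. Comparing multipliers forces $v_p(z_p)=-v_p(s)$ at every $p$, whereupon $z^{-1}u\delta_1\in M(\n,N)$ becomes equivalent to integrality of $su\delta_1$ together with the congruence and multiplier conditions built into $M(\n,N)$. Reading off the diagonal of $su\delta_1$ yields $s\mid\n$ (condition (1)) and $sd_1,sd_2\in\Z$, whence $sd_1\mid\n$ in (2) via $(sd_1)(sd_2)=\pm\n$; the off-diagonal entries confine $c,x$ and $a,b$ to explicit fractional ideals of $\A_{fin}$, the only coupled condition being $s(a-cx)d_1d_2\in\hat{\Z}$. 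Since $\gcd(N,\n)=1$ and $s\mid\n$ we get $\gcd(s,N)=1$; I would use this both to meet the mod-$N$ congruences of $M(\n,N)$ --- which only concern the $(2,2)$-entry, the rest sitting in positions that already vanish in $u\delta_1$ --- via the residual freedom in $z_p$ at $p\mid N$, and to check that $\overline{\omega}(z)$ is independent of the admissible $z$ and equals $\overline{\omega_N(s)}$, using that $\omega$ is trivial on $\Q^\times$ and on $(1+N\hat{\Z})\cap\hat{\Z}^\times$ and unramified away from $N$.

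With the support understood, the next step is the integral itself. The variable $b$ is confined to a fractional ideal and is invisible to $\psi_{\m_1}$, so its integral is a volume factor. For $a$, the constraints $sad_2\in\hat{\Z}$ and $s(a-cx)d_1d_2\in\hat{\Z}$ make the $a$-integral non-zero precisely when $cx$ lies in the sum of the two corresponding ideals --- a divisibility condition on $cx$ --- and then contribute a further volume factor. What remains is a finite exponential sum over $x,c$ in finite cyclic quotients of $\A_{fin}$, with that divisibility constraint and summand $\theta(\m_{1,1}x+\m_{1,2}c)$. A change of variables renormalising both $x$ and $c$ to run modulo $\n$ turns the summand into $e\!\left(\frac{\m_{1,1}(\n/D)\,x+\m_{1,2}\,sd_1\,y}{\n}\right)$ and the divisibility constraint into $d\mid xy$, with $D=\gcd(sd_1,\n/s)$ and $d=\gcd(s,s^2d_1,d_2,\n/s)$, so that the sum becomes $S(\m_{1,1}(\n/D),\m_{1,2}sd_1,d,\n)$ in the notation of Definition~\ref{sumid}; collecting the two volume factors with the index factors from the renormalisation produces the prefactor $\n d/|s^3d_1|$, and multiplying by $\overline{\omega_N(s)}/Vol(\overline{\Gamma_1(N)})$ gives~(\ref{IdContribution}).

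I expect the main obstacle to be exactly this last bout of bookkeeping: identifying precisely which fractional ideals are cut out by integrality of the entries of $su\delta_1$ (while tracking the sign in $d_1d_2=\pm\n/s^2$ and the fact that $d_1,d_2$ need not individually be integers), pinning down the divisibility modulus that emerges from the $a$-integral, and renormalising $x,c$ so that the coefficients and the modulus match Definition~\ref{sumid} on the nose while the accumulated volume and index factors collapse to $\n d/|s^3d_1|$. By contrast, the factorisation of the adelic integral over primes and the reduction of the central-character value to $\overline{\omega_N(s)}$ should be routine once $\gcd(s,N)=1$ is in hand.
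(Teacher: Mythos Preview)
Your proposal is correct and follows essentially the same route as the paper: both reduce to $\int_{U(\A_{fin})} f_{fin}(su\delta_1)\psi_{\m_1}(u)\,du$, read off the integrality constraints from the entries of $su\delta_1$, integrate out the uncoupled variables $b$ and $a$, and identify the remaining $(x,c)$-sum with $S(\cdot,\cdot,d,\n)$ after the obvious renormalisation. The only cosmetic differences are the order in which $a$ and $(x,c)$ are handled (the paper does $(x,c)$ first for fixed $a$ and then sums over $a$, whereas you integrate $a$ first to produce the divisibility condition on $cx$), and you are a touch more explicit than the paper in justifying why the central-character value collapses to $\overline{\omega_N(s)}$.
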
 
\begin{remark}
	The integer $s$ is only determined up to sign. However, expression~(\ref{IdContribution}) does not depend on the sign of $s$, 
	since $S(a,b,d,\n)=S(a,-b,d,\n)$ and $\omega_N(-1)=\omega(-1)=1$ as $\omega_p(-1)=1$ for all $p \nmid N$.
\end{remark}
\begin{remark}
	The two pair of integers $\m_1$ and $\m_2$ essentially play symmetric roles in our formula.
	More precisely, for our choice of test function $f$, the operator $\omega_N(\n)^{\frac12}R(f)$ is self-adjoint.
	Thus exchanging $\m_1$ and $\m_2$ amounts to take the complex conjugate of the spectral side and multiply it by $\omega_N(\n)$.
	Hence the geometric side, and in particular the identity contribution, should enjoy the same symmetries.
	Proposition~\ref{idcontribution} says that the identity element has a non-zero contribution only if there is an integer $t$ dividing $\n$
	with $\frac{\n}t=\pm\frac{\m_{1,2}}{\m_{2,2}}t$ and such that $s=\frac{\m_{2,1}}{\m_{1,1}}t$ is also an integer dividing $\n$.
	This condition is indeed symmetric, as interchanging $\m_1$ and $\m_2$ amounts to replace $t$ with $\frac{\n}t$ and $s$ with $\frac{\n}s$.
	In addition, we have 
	$S\left(\m_{1,1}\frac{\n}{\gcd(t,\frac{\n}s)},\m_{1,2}t,d,\n\right)=S\left(\m_{2,1}\frac{\n}{\gcd(s,\frac{\n}t)},\m_{2,2}\frac\n{t},d,\n\right)$.
	Finally, using that $|s^3d_1|=\left|\n^3\frac{\m_{2,1}^4\m_{2,2}^3}{\m_{1,1}^4\m_{1,2}^3}\right|^{\frac12}$,
	multiplying $\frac{\n }{|s^3d_1|}$ by the factor $\frac1{|\m_{1,1}^4\m_{1,2}^3|}$ that comes from the Archimedean part in Proposition~\ref{ArchWithConj}
	gives $\n^{-\frac12}(\m_{1,1}\m_{2,1})^{-2}|\m_{1,2}\m_{2,2}|^{-\frac32}$.
\end{remark}
\begin{remark}
	In the case $\n=1$ we must have $s=\pm1$ and hence $\m_{1,1}=\pm\m_{2,1}$. Together with the condition 
	$d_1d_2=\pm\frac{\m_{1,1}^2\m_{1,2}}{\m_{2,1}^2\m_{2,2}}=\frac{\n}{s^2}$ this also gives $\m_{1,2}=\pm\m_{2,2}.$
\end{remark}
\begin{remark}
	Using condition~(\ref{sumnonzero}) we find that the contribution from the identity element is non-zero only if for 
	all prime $p \mid \n$ we have $$v_p(s) \le v_p(\m_{2,1})+v_p(\m_{2,1})+\min(0,v_p(\m_{2,1})-v_p(\m_{1,1}))+1,$$
	which in turn implies that for all prime $p$ we have 
	$$v_p(\n) \le 2 \min(v_p(\m_{1,1}),v_p(\m_{2,1}))+v_p(\m_{1,2})+v_p(\m_{2,2})+1.$$
\end{remark}
\begin{proof}
The finite part of the orbital integral corresponding to the identity element reduces to
\begin{align*}
	I_{\delta_\sigma}(f_{fin})= \int_{U(\A_{fin})}f( u \delta){\psi_{\m_1}(u)}du
	=\int_{U(\A_{fin})}f(s u \delta){\psi_{\m_1}(u)}du.
\end{align*}
Assume it is non-zero.
Note that by Lemma~\ref{Supportdelta} we have $\mu(su\delta)=\n$.
Then by Assumption~\ref{HeckeOprtr}, $su\delta \in Supp(f)$ if and only if $su\delta \in \Mat_4(\hat{\Z})$.
In particular, each entry of $s\delta$ must be an integer.
Furthermore by Lemma~\ref{relevantorbits} we must have 
$\delta=\diag{d_1}{1}{d_2}{d_1d_2}$ with  $d_1=\frac{\m_{1,1}}{\m_{2,1}}$, $d_2=\frac{\m_{1,1}\m_{1,2}}{\m_{2,1}\m_{2,2}}$.
So we learn that $sd_1= s\frac{\m_{1,1}}{\m_{2,1}} \in \Z$, $s \mid \n$, and $sd_1 \mid \n$.
Now let us examine the non-diagonal entries of $su\delta$.
Write $u=\left[
\begin{smallmatrix}
	1 	& 	& c	& a-cx	\\
	x	& 1	& a	& b		\\
	&	& 1	& -x	\\
	&	& 	& 1
\end{smallmatrix}\right]$.
Then the following conditions must hold:
\begin{enumerate}
	\item $sd_1x \in \hat{\Z}$ and $\frac{\n}{s}x \in \hat{\Z}$,
	\item $c' \doteq \frac{\n}{sd_1}c \in \hat{\Z}$,
	\item $a' \doteq \frac{\n}{sd_1}a \in \hat{\Z}$,
	\item $\frac{\n}{s}(a-cx) \in \hat{\Z}$,
	\item $b' \doteq \frac{\n}{s}b \in \hat{\Z}$.
\end{enumerate}
Condition~(1) is equivalent to $x \in \frac1{D}\hat{\Z}$, where $D=\gcd(sd_1, \frac{\n}s)$ (note that $sd_1 \mid sD$).
Set $x'=Dx$.
Then condition~(4) gives
$d_1a'-\frac{d_1}{D}c'x' \in \hat{\Z}$.
Combined with conditions~(1),~(2) and~(3), this is equivalent to 
$ c'x' \equiv Da' \mod \frac{D}{d_1}$.
Now, $\psi_{\m_1}(u)=\theta_{fin}(\m_{1,1}x+\m_{1,2}c)$ and $f(su\delta)=\frac{\overline{\omega_N(s)}}{Vol(\overline{\Gamma_1(N)})}$. 
Therefore integration with respect to $b$ gives
 $Vol\left(\frac{s}{\n}\hat{\Z}\right)\frac{\overline{\omega_N(s)}}{Vol(\overline{\Gamma_1(N)})}=
 \frac{\n}s\frac{\overline{\omega_N(s)}}{Vol(\overline{\Gamma_1(N)})}.$
Next, changing variables $x=\frac1Dx'$ and $c=\frac{sd_1}\n c'$, for fixed $a$ the $x,c$-integral is 
$$I(a)=\frac{\overline{\omega_N(s)}}{Vol(\overline{\Gamma_1(N)})}\frac{\n^2D}{s^2d_1}\int\int_{ c'x' \equiv Da' \mod \frac{D}{d_1}}
\theta_{fin}\left(\m_{1,1}\frac{x'}{D}+\m_{1,2}\frac{sd_1}\n c'\right)dx'dc'.$$
Since $D \mid sd_1$ and $\m_{1,2}\frac{s^2d_1^2}\n=\m_{2,2}$, and since $\theta_{fin}$ is trivial on $\hat{\Z}$ 
the integrand is constant on cosets $x'+sd_1\hat{\Z}$ and $c' + sd_1\hat{\Z}$.
As $sd_1 \mid sD$, it is also constant on cosets $x'+sD\hat{\Z}$ and $c' + sD\hat{\Z}$.
Therefore we get 
\begin{align*}
	I(a)&=\frac{\overline{\omega_N(s)}}{Vol(\overline{\Gamma_1(N)})}\frac{\n^2}{|Dd_1s^4|}\sum_{\substack{x,y \in \Z / sD \Z \\ xy \in Da'+\frac{D}{d_1} \hat{\Z}}} 
	e\left(\frac{\m_{1,1}x}D+\frac{\m_{1,2}sd_1y}{\n}\right)
\end{align*}
Finally the $a$ integrand depends only on $a' \mod \frac{D}{d_1} \hat{\Z}$, thus, setting $d=\gcd\left(D,\frac{D}{d_1}\right)=\gcd(s,sd_1sd_2,\frac{\n}s)$ we get
\begin{align*}
	I_{\delta_\sigma}(f_{fin})&=\frac{\overline{\omega_N(s)}}{Vol(\overline{\Gamma_1(N)})}
	\frac{\n^3 }{|s^5D^2d_1|}\sum_{a \in \Z / \frac{D}{d_1} \Z}\sum_{\substack{x,y \in \Z / sD \Z \\ xy \in Da+\frac{D}{d_1} \hat{\Z}}} 
	e\left(\frac{\m_{1,1}x}D+\frac{\m_{1,2}sd_1y}{\n}\right)\\
	&=\frac{\overline{\omega_N(s)}}{Vol(\overline{\Gamma_1(N)})}\frac{\n^3 }{|s^5D^2d_1|}d
	\sum_{\substack{x,y \in \Z / sD \Z \\ xy \in d \Z}} 
	e\left(\frac{\m_{1,1}x}D+\frac{\m_{1,2}sd_1y}{\n}\right)\\
	&=\frac{\overline{\omega_N(s)}}{Vol(\overline{\Gamma_1(N)})}\frac{\n }{|s^3d_1|}d
		\sum_{\substack{x,y \in \Z / \n \Z \\ xy \in d \Z}}  
	e\left(\frac{\m_{1,1}x}D+\frac{\m_{1,2}sd_1y}{\n}\right)\\
\end{align*}
\end{proof}

\subsubsection{The contribution from the longest Weyl element}
The following lemma makes it explicit how to compute the Bruhat decomposition for elements in the cell of the 
long Weyl element. 
One could do the same for each element of the Weyl group, but, as it is straightforward calculations, we only
include this case for the sake of clarity in latter arguments.
\begin{lemma}\label{uoppu}
	Let $\F$ be a field, and let $g \in \GSp_4(\F)$.
	Assume
	$$g=\left[
		\begin{smallmatrix}
		1 	& 		&   c_1			& 	a_1	\\
		x_1	& 	1	&	a_1+c_1x_1	& 	b_1			\\
			& 		&	 1			& 	-x_1		\\
			&		&				&	1			\\
	\end{smallmatrix}
	\right]
	J
	\diag{t_1}{t_2}{t_3t_1^{-1}}{t_3t_2^{-1}}
	\left[
	\begin{smallmatrix}
		1 	& 		&   c_2	& 	a_2-c_2x_2	\\
		x_2	& 	1	&	a_2	& b_2			\\
		& 		&	 1	& 	-x_2		\\
		&		&		&	1			\\
	\end{smallmatrix}
	\right]
	= \mat{A}{B}{C}{D} = 
	\bigmat{\block{a_{11}}{a_{12}}{a_{21}}{a_{22}}}
	{B}
	{\block{c_{11}}{c_{12}}{c_{21}}{c_{22}}}
	{\block{d_{11}}{d_{12}}{d_{21}}{d_{22}}} .$$
	Set
	\begin{alignat*}{1}
		\Delta_1=\mat{a_{11}}{a_{12}}{c_{21}}{c_{22}},& \quad \Delta_2=\mat{c_{12}}{d_{11}}{c_{22}}{d_{21}}.
	\end{alignat*}
	Then	
	\begin{alignat*}{5}
		t_3=\mu(g), & \quad t_2=- c_{22}, & \quad t_1t_2=\det(C),
	\end{alignat*}
\begin{alignat*}{5}
	x_1=-\frac{c_{12}}{c_{22}},& \quad x_2=\frac{c_{21}}{c_{22}}, & \quad c_1=\frac{\det(\Delta_1)}{\det(C)},& \quad c_2=-\frac{\det(\Delta_2)}{\det(C)},
\end{alignat*}
	\begin{alignat*}{5}
		a_{1} =\frac{a_{12}}{c_{22}} & \quad a_2 = \frac{d_{21}}{c_{22}}&\quad b_1 =\frac{a_{22}}{c_{22}} & \quad b_2 =\frac{d_{22}}{c_{22}}.
	\end{alignat*}
	Moreover, if $g= \mat{A}{B}{C}{D} \in \GSp_4(\F)$ with $C=\mat{c_{11}}{c_{12}}{c_{21}}{c_{22}}$ satisfies 
	$\det(C) \neq 0$ and $c_{22} \neq 0$ then~$g \in UJ T U$.
\end{lemma}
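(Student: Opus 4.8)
The plan is to establish the first, computational, part by a direct $2\times2$-block multiplication, and then to deduce the converse by a reconstruction argument based on the similitude relations. First I would write everything in $2\times2$ blocks: with $u_1=\left(\begin{smallmatrix}A_1&B_1\\0&D_1\end{smallmatrix}\right)$, $u_2=\left(\begin{smallmatrix}A_2&B_2\\0&D_2\end{smallmatrix}\right)$, where $A_i=\left(\begin{smallmatrix}1&0\\x_i&1\end{smallmatrix}\right)$, $D_i=\trans{A_i}^{-1}$, $B_1=\left(\begin{smallmatrix}c_1&a_1\\a_1+c_1x_1&b_1\end{smallmatrix}\right)$, $B_2=\left(\begin{smallmatrix}c_2&a_2-c_2x_2\\a_2&b_2\end{smallmatrix}\right)$, and with $Jt=\left(\begin{smallmatrix}0&t_3T^{-1}\\-T&0\end{smallmatrix}\right)$ for $T=\operatorname{diag}(t_1,t_2)$, one computes
\[
g=u_1Jtu_2=\begin{pmatrix}-B_1TA_2 & -B_1TB_2+t_3A_1T^{-1}D_2\\ -D_1TA_2 & -D_1TB_2\end{pmatrix}.
\]
Reading off the lower-left block $C=-D_1TA_2$ gives $c_{22}=-t_2$, $c_{12}=x_1t_2$, $c_{21}=-x_2t_2$ and $\det C=\det D_1\det T\det A_2=t_1t_2$; together with $\mu(g)=\mu(u_1)\mu(J)\mu(t)\mu(u_2)=t_3$ this produces the formulas for $t_1,t_2,t_3,x_1,x_2$. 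Reading off $A=-B_1TA_2$ and $D=-D_1TB_2$ gives $a_{12}=-a_1t_2$, $a_{22}=-b_1t_2$, $d_{21}=-a_2t_2$, $d_{22}=-b_2t_2$, hence $a_1,b_1,a_2,b_2$. Finally a short determinant computation with these entries gives $\det\Delta_1=a_{11}c_{22}-a_{12}c_{21}=c_1t_1t_2=c_1\det C$ and $\det\Delta_2=c_{12}d_{21}-d_{11}c_{22}=-c_2t_1t_2=-c_2\det C$, which yields the formulas for $c_1,c_2$. All of this is routine; the only things to watch are the shifted parametrization of $u_1$ (the entry $a_1+c_1x_1$ in position $(2,3)$) and the signs.

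For the converse, given $g=\left(\begin{smallmatrix}A&B\\C&D\end{smallmatrix}\right)\in\GSp_4(\F)$ with $\det C\neq0$ and $c_{22}\neq0$, I would define $t_1,t_2,t_3,x_1,x_2,a_1,a_2,b_1,b_2,c_1,c_2$ by the formulas of the lemma (all well-defined, and $t_1,t_2,t_3\neq0$ since $t_2=-c_{22}$, $t_1=-\det C/c_{22}$, $t_3=\mu(g)$), build $u_1,t,u_2$ out of them, and set $g'=u_1Jtu_2\in UJTU$. Applying the first part to $g'$ shows that $g$ and $g'$ agree in the ten entries $a_{11},a_{12},a_{22},c_{11},c_{12},c_{21},c_{22},d_{11},d_{21},d_{22}$ and have the same multiplier $t_3$. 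It then remains to see that these ten entries, the multiplier, and membership in $\GSp_4$ determine all sixteen entries, and this is the one genuinely conceptual step: from $\trans{g}Jg=\mu(g)J$ one gets that $\trans{A}C$ is symmetric and that $\trans{C}B=\trans{A}D-\mu(g)I_2$, while from $gJ\trans{g}=\mu(g)J$ one gets that $C\trans{D}$ is symmetric. Since $c_{22}\neq0$, symmetry of $\trans{A}C$ expresses $a_{21}$ through $a_{11},a_{12},a_{22}$ and $C$; symmetry of $C\trans{D}$ expresses $d_{12}$ through $d_{11},d_{21},d_{22}$ and $C$; and since $C$ is invertible, $B=\trans{C}^{-1}(\trans{A}D-\mu(g)I_2)$ is determined by $A,C,D,\mu(g)$. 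Applying these identities to both $g$ and $g'$ yields $g=g'$, hence $g\in UJTU$.

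The main obstacle is simply the volume of elementary matrix algebra and sign-bookkeeping in the first part; conceptually the only real content is the last observation, namely that invertibility of $C$ lets one recover the $B$-block (and the two redundant entries $a_{21},d_{12}$) of a symplectic similitude from $A$, $C$, $D$ and the multiplier, which is what makes ten scalar entries enough to pin down $g$.
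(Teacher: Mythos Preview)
Your proof is correct and follows essentially the same route as the paper: a direct block computation to extract the parameters from the entries of $g$, followed by a uniqueness argument using the similitude relations $\trans{A}C=\trans{C}A$, $C\trans{D}=D\trans{C}$, and $\trans{A}D-\trans{C}B=\mu(g)I_2$ to recover $a_{21}$, $d_{12}$, and $B$. The paper phrases the converse as ``there exists at most one $g$ with the specified values'' rather than explicitly building $g'$ and proving $g=g'$, but this is only a cosmetic difference; your systematic use of $2\times 2$ block notation throughout is arguably a little tidier than the paper's entry-by-entry expressions.
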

\begin{proof}
	The first claims follow by computing explicitly
	\begin{alignat*}{3}
		C&=\mat{1}{-x_1}{}{1}\mat{-t_1}{}{}{-t_2}\mat{1}{}{x_2}{1}=\mat{-t_1+t_2x_1x_2}{t_2x_1}{-t_2x_2}{-t_2},\\
		\Delta_1&=\mat{c_1}{a_1}{}{1}\mat{-t_1}{}{}{-t_2}\mat{1}{}{x_2}{1}, & \quad
		\Delta_2&=\mat{1}{-x_1}{}{1}\mat{-t_1}{}{}{-t_2}\mat{}{c_2}{1}{a_2},\\
		D&=\mat{1}{-x_1}{}{1}\mat{-t_1}{}{}{-t_2}\mat{c_2}{a_2-c_2x_2}{a_2}{b_2},&\quad
		A&=\mat{c_1}{a_1}{a_1+c_1x_1}{b_1}\mat{-t_1}{}{}{-t_2}\mat{1}{}{x_2}{1}.
	\end{alignat*}
	To prove the last claim, it suffices to show that provided $\det{C} \neq 0$ and $c_{22} \neq 0$,
	there exist at most one $g \in \GSp_4(\F)$ with the specified values for $\mu(g)$, $C$, $a_{12}, a_{22},$  $d_{21}, d_{22}$,  
	$\det(\Delta_1)$ and $\det(\Delta_2)$.
	Since $c_{22} \neq 0$, the values of $a_{12}$, $c_{21}$ and $\det(\Delta_1)=a_{11}c_{22}-c_{21}a_{12}$ determine the value of 
	$a_{11}$. The equation $\trans{A}C=\trans{C}A$ then gives $a_{12}c_{11}+a_{22}c_{21}=a_{11}c_{12}+a_{21}c_{22}$,
	which determines the value of $a_{21}$ hence of $A$.
	The same reasoning using  $\det(\Delta_1)$ and $C\trans{D}=D\trans{C}$ instead similarly fixes $D$.
	Finally the equation $\trans{A}D-\trans{C}B=\mu(g)$ fixes $B$ since we are assuming $C$ is invertible.
\end{proof}
\begin{definition}\label{KloosLWE}
	Let $s,d,m$ be three non-zero integers and define 
	$$C_J(s,d,m)= \{g=\mat{A}{B}{C}{D} :	\det(C)=d, c_{22}=-s, \mu(g)=m\}$$
and
	$$\Gamma_J(N,s,d,m)= \Gamma_1(N) \cap \Mat_4(\Z) \cap C_J(s,d,m).$$
	For $g=\mat{A}{B}{C}{D} \in \GSp_4$,
	let $\Delta_1=\mat{a_{11}}{a_{12}}{c_{21}}{c_{22}}$
	and $\Delta_2=\mat{c_{12}}{d_{11}}{c_{22}}{d_{21}}$.
	Then, for $\m_1,\m_2$ two pair of non-zero integers, we define the following generalized twisted Kloosterman sum
	\begin{align*}
		\Kloos_J(\m_1,\m_2,s,d,m)&=\\
		\sum_{g \in U(\Z) \back \Gamma_J(N,s,d,m) / U(\Z)}&\overline{\omega_N}(a_{22})
		e\left(\frac{\m_{11}c_{12}+\m_{21}c_{21}}{s}+\frac{\m_{12}\det(\Delta_1)+\m_{22}\det(\Delta_2)}{d}\right).
	\end{align*}
\end{definition}
\begin{remark}
	Using Lemma~\ref{uoppu}, we can see that $\Kloos_J(\m_1,\m_2,s,d,m)$ is well defined. 
	Indeed, matrices in $\Gamma_J(\n,N,d,s)$ are of the form $$g=u(x_1,a_1,b_1,c_1)J\diag{\frac{d}s}{s}{m\frac{s}{d}}{\frac{m}s}u(x_2,a_2,b_2,c_2).$$
	Then $\frac{c_{12}}s=x_1$,$\frac{c_{21}}s=-x_2$, $\frac{\det(\Delta_1)}{d}=c_1$ and $\frac{\det(\Delta_2)}{d}=-c_2$.
	Now multiplying g on the left (resp. on the right) by an element of $U(\Z)$ does not change the classes of $x_1$ and $c_1$ (resp. $x_2$ and $c_2$)
	in $\R / \Z$. 
\end{remark}

\begin{proposition}\label{KloosJ}
	Let $\sigma=J$, $\delta_1=\diag{d_1}{1}{d_2}{d_1d_2}$
	with $d_1d_2=\pm\frac{\n}{s^2}$ for some integer $s$.
	Then we have $I_{\delta_\sigma}(f_{fin})=\frac{1}{Vol(\overline{\Gamma_1(N)})}\Kloos_J(\m_1,\m_2,s,d_1s^2,s^2d_1d_2)$.
\end{proposition}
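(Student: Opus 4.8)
The plan is to specialize the relevant orbital integral to the long Weyl element $\sigma = J$ and then to unfold the support of $f_{fin}$ using the explicit Bruhat decomposition of Lemma~\ref{uoppu}. Since $J$ is the long Weyl element, $U_J = \{1\}$, so $U_\sigma(\A)\backslash U(\A) = U(\A)$; by Proposition~\ref{relevantintegrals} — together with its factorization $I_{\delta_\sigma}(f) = I_{\delta_\sigma}(f_\infty)I_{\delta_\sigma}(f_{fin})$, in which $t_1$ and $t_2$ occur only in the Archimedean factor — the finite orbital integral is
$$I_{\delta_J}(f_{fin}) = \int_{U(\A_{fin})}\int_{U(\A_{fin})} f_{fin}(u\,\delta_J\,u_1)\,\psi_{\m_1}(u)\,\overline{\psi_{\m_2}(u_1)}\,du\,du_1, \qquad \delta_J = J\delta_1.$$
First I would clear denominators by replacing the argument with the scalar multiple $s\,u\,\delta_J\,u_1 = u\,J(s\delta_1)\,u_1$, where $s\delta_1 = \diag{sd_1}{s}{sd_2}{sd_1d_2}$; this leaves the integral unchanged because $f_{fin}$ transforms under $Z(\A_{fin})$ by $\prod_p\overline{\omega_p}$ and $\prod_p\omega_p(s) = 1$, since $\omega$ is trivial on $Z(\Q)$ while $\omega_\infty$ is trivial (Assumption~\ref{testfunction}).

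Next I would apply Lemma~\ref{uoppu} to $g := s\,u\,\delta_J\,u_1$. Writing $u = u(x_1,a_1,b_1,c_1)$, $u_1 = u(x_2,a_2,b_2,c_2)$ and matching the torus part $\diag{t_1}{t_2}{t_3t_1^{-1}}{t_3t_2^{-1}}$ of Lemma~\ref{uoppu} with $s\delta_1$ (so $t_1 = sd_1$, $t_2 = s$, $t_3 = s^2d_1d_2$), the lemma shows that every such $g$ satisfies $\mu(g) = s^2d_1d_2$, $c_{22} = -s$ and $\det(C) = d_1 s^2$, i.e. $g \in C_J(s, d_1 s^2, s^2d_1d_2)$; conversely, since here $\det(C)\ne 0$ and $c_{22}\ne 0$, each element of $C_J(s, d_1 s^2, s^2 d_1 d_2)$ admits a unique such factorization, with unipotent parameters recovered by $x_1 = c_{12}/s$, $x_2 = -c_{21}/s$, $c_1 = \det(\Delta_1)/(d_1 s^2)$, $c_2 = -\det(\Delta_2)/(d_1 s^2)$. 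Plugging these into~\eqref{genchar}, the character weight becomes
$$\psi_{\m_1}(u)\,\overline{\psi_{\m_2}(u_1)} = \theta\!\left(\frac{\m_{1,1}c_{12}+\m_{2,1}c_{21}}{s} + \frac{\m_{1,2}\det(\Delta_1)+\m_{2,2}\det(\Delta_2)}{d_1 s^2}\right),$$
whose finite component — the argument being rational — is precisely the exponential $e(\cdot)$ appearing in Definition~\ref{KloosLWE}.

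Then I would unfold. The integrand is bi-invariant under $U(\hat{\Z}) \subset \prod_p\Gamma_p(N)$, because $f_{fin}$ is $\Gamma_p(N)$-bi-invariant at each $p$ and $\psi_{\m_1},\psi_{\m_2}$ are trivial on $U(\hat{\Z})$ (here $\m_1,\m_2$ are integral). By Assumption~\ref{HeckeOprtr}, $f_{fin}(g)\ne 0$ exactly when $g$ is integral with $\mu(g) = s^2d_1d_2 = \pm\n \in \n\hat{\Z}^\*$ and $g$ is congruent mod $N$ to $a_{22}(g)$ times the defining pattern of $M(\n,N)$, in which case $f_{fin}(g) = \frac{1}{Vol(\overline{\Gamma_1(N)})}\,\overline{\omega_N(a_{22}(g))}$. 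Using $U(\A_{fin}) = U(\Q)U(\hat{\Z})$, the set of such $g$ modulo the two-sided $U(\hat{\Z})$-action is in measure-preserving bijection with the finite set $U(\Z)\backslash\Gamma_J(N,s,d_1 s^2, s^2 d_1 d_2)/U(\Z)$; with the Haar measures normalized so that $U(\hat{\Z})$ has volume one, the integral collapses to $\frac{1}{Vol(\overline{\Gamma_1(N)})}\,\Kloos_J(\m_1,\m_2,s,d_1 s^2, s^2 d_1 d_2)$, which is the claim.

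The delicate point is this last paragraph: one must check that the congruence defining $\mathrm{supp}(f_{fin})$ cuts out exactly the matrices summed over in Definition~\ref{KloosLWE}, contributing the weight $\overline{\omega_N(a_{22})}$ rather than forcing $a_{22}\equiv 1$; that the passage from adelic to integral double cosets is a measure-preserving bijection compatible with the exponential weight; that the double-coset space is finite, so the integral is genuinely a finite sum; and that the various volume factors conspire to leave precisely the stated constant $1/Vol(\overline{\Gamma_1(N)})$. The remaining steps — the reduction via $U_J = \{1\}$, the harmless scalar insertion, and the Bruhat bookkeeping — are routine given Lemma~\ref{uoppu} and Proposition~\ref{relevantintegrals}; an equivalent, place-by-place version of the argument can be run exactly as in the $N = \n = 1$ remark preceding the statement.
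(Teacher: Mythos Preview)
Your proof is correct and follows essentially the same route as the paper: insert the scalar $s$, use Lemma~\ref{uoppu} to parametrize the Bruhat cell and to rewrite $\psi_{\m_1}(u)\overline{\psi_{\m_2}(u_1)}$ in terms of the entries of $g$, then exploit the $U(\hat{\Z})$-bi-invariance of $f_{fin}$ and of the characters to collapse the integral to the finite double-coset sum defining $\Kloos_J$. Your treatment is in fact slightly more explicit than the paper's in justifying why the central twist contributes exactly $\overline{\omega_N}(a_{22})$ and why the adelic-to-integral passage is measure-preserving.
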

\begin{remark}
	The set $\Gamma_J(N,s,d_1s^2,s^2d_1d_2)$ is non-empty only if $N$ divides $s$ and $N^2$ divides $d_1s^2$.
\end{remark}
\begin{proof}
The finite part of the orbital integral corresponding to the longest Weyl element reduces to
\begin{align*}
	I_{\delta_\sigma}(f_{fin})&= \int_{U(\A_{fin})}\int_{U(\A_{fin})}f( u_1 J \delta u_2){\psi_{\m_1}(u_1)}\overline{\psi_{\m_2}(u_2)}du_1du_2\\
	&=\int_{U(\A_{fin})}\int_{U(\A_{fin})}f(s u_1 J \delta u_2){\psi_{\m_1}(u_1)}\overline{\psi_{\m_2}(u_2)}du_1du_2.
\end{align*}
By Assumption~\ref{HeckeOprtr} $su_1 J \delta u_2 \in Supp(f)$ if and only if $su_1J\delta u_2=\mat{A}{B}{C}{D} \in Z(\A_{fin})M(\n,N)$.
In this case, we have $f(s u_1 J \delta u_2)=\frac{\overline{\omega_N}(a_{22})}{Vol(\overline{\Gamma_1(N)})}$, 
and Lemma~\ref{uoppu} shows that
$${\psi_{\m_1}(u_1)}\overline{\psi_{\m_2}(u_2)}=
e\left(-\frac{\m_{11}c_{12}+\m_{21}c_{21}}{c_{22}}+\frac{\m_{12}\det(\Delta_1)+\m_{22}\det(\Delta_2)}{\det(C)}\right).$$
Moreover, $f$ is left and right $U(\hat{\Z})$-invariant, and the characters $\psi_{\m_1}$ and $\psi_{\m_2}$
are trivial on~$\hat{\Z}$.
Therefore, if we consider the map $\varphi: U(\A_{fin}) \times U(\A_{fin}) \to G(\A), (u_1,u_2) \mapsto s u_1 J \delta u_2$, we have
\begin{align*}	
I_{\delta_\sigma}(f_{fin})=
\sum_{U(\hat{\Z}) \back \left(M(\n,N) \cap Im(\varphi)\right) / U(\hat{\Z})}
&\frac{\overline{\omega_N}(a_{22})}{Vol(\overline{\Gamma_1(N)})}\\
& \times
e\left(-\frac{\m_{11}c_{12}+\m_{21}c_{21}}{c_{22}}+\frac{\m_{12}\det(\Delta_1)+\m_{22}\det(\Delta_2)}{\det(C)}\right).
\end{align*}
Now by Lemma~\ref{uoppu}, $Im(\varphi)=C_J(s,d_1s^2,s^2d_1d_2)$.
Therefore, $$U(\hat{\Z}) \back  (M(\n,N) \cap Im(\varphi)) / U(\hat{\Z})$$ may be identified to $U(\Z) \back \Gamma_J(N,s,d_1s^2,s^2d_1d_2) / U(\Z)$.
\end{proof}

\subsubsection{Contribution from \texorpdfstring{$\sigma=s_1s_2s_1$}{s1s2s1}}
\begin{definition}
	Let $s,d,m$ be three non-zero integers and define  
	$$C_{121}(s,d,m)=  \{g=\mat{A}{B}{C}{D}:	\det(C)=0, c_{22}=-s, \det(\Delta_2)=d, \mu(g)=m\}$$
	and 
	$$\Gamma_{121}(N,s,d,m)= \Gamma_1(N) \cap \Mat_4(\Z) \cap C_{121}(s,d,m)$$
	For $g=\mat{A}{B}{C}{D} \in \GSp_4$, let $\Delta_3=\mat{a_{12}}{b_{11}}{c_{22}}{d_{21}}$.
	Then we define the following generalized twisted  Kloosterman sum
	\begin{align*}
		\Kloos_{121}(\m_1,\m_2,s,d,m)&=\\
		\sum_{g \in U(\Z) \back \Gamma_{121}(N,s,d,m) / \overline{U_\sigma}(\Z)}&\overline{\omega_N}(a_{22})
		e\left(\frac{\m_{11}c_{12}+\m_{21}c_{21}}{s}+\frac{\m_{12}\det(\Delta_3)}{d}\right).
	\end{align*}
\end{definition}

By a similar argument as in the case of the long Weyl element, $\Kloos_{121}(\m_1,\m_2,s,d,m)$ is well-defined, and together  with the condition on $\delta$, 
from Lemma~\ref{relevantorbits} we get the following.
\begin{proposition}
	Let $\sigma=s_1s_2s_1$, $\delta_1=\diag{d_1}{1}{d_2}{d_1d_2}$
	with $d_1d_2=\pm \frac{\n}{s^2}$ for some integer $s$ and $d_1\m_{1,2}=d_2\m_{2,2}$.
	Then we have $I_{\delta_\sigma}(f_{fin})=\frac{1}{Vol(\overline{\Gamma_1(N)})}\Kloos_{121}(\m_1,\m_2,s,d_2s^2,s^2d_1d_2)$.
\end{proposition}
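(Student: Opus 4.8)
The plan is to transcribe the proof of Proposition~\ref{KloosJ}, replacing Lemma~\ref{uoppu} by the analogous explicit Bruhat decomposition of the cell $U\sigma TU$ for $\sigma=s_1s_2s_1$. Since the hypothesis $d_1\m_{1,2}=d_2\m_{2,2}$ is exactly the relevance condition furnished by Lemma~\ref{relevantorbits}, Proposition~\ref{relevantintegrals} applies; specialising it to the finite places (the Archimedean torus elements $t_1,t_2$ playing no role there) and using the identification $U_\sigma(\A_{fin})\backslash U(\A_{fin})\cong\overline{U_\sigma}(\A_{fin})$ from the proof of Corollary~\ref{corqoutientmap}, where $\overline{U_\sigma}=U\cap\sigma^{-1}\trans{U}\sigma$, we may write
$$I_{\delta_\sigma}(f_{fin})=\int_{\overline{U_\sigma}(\A_{fin})}\int_{U(\A_{fin})}f_{fin}(u\sigma\delta_1 u_1)\,\psi_{\m_1}(u)\,\overline{\psi_{\m_2}(u_1)}\,du\,du_1.$$
As in the long Weyl element case one then inserts the central scalar $s$: since $f_{fin}=\prod_p f_p$ with each $f_p$ transforming under $Z(\Q_p)$ by $\overline{\omega_p}$, and $\prod_p\omega_p(s)=1$ for the rational integer $s$ by triviality of $\omega$ on $Z(\Q)Z(\R)$, one has $f_{fin}(u\sigma\delta_1 u_1)=f_{fin}(su\sigma\delta_1 u_1)=f_{fin}\bigl(u\sigma\diag{sd_1}{s}{sd_2}{sd_1d_2}u_1\bigr)$, the scalar passing freely through $\sigma$ and $\delta_1$.

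The heart of the matter is the analog of Lemma~\ref{uoppu} for $\sigma=s_1s_2s_1$: one checks by the same elementary matrix algebra that $(u,u_1)\mapsto u\sigma\diag{sd_1}{s}{sd_2}{sd_1d_2}u_1$ is a bijection of $U(\A_{fin})\times\overline{U_\sigma}(\A_{fin})$ onto $C_{121}(s,d_2s^2,s^2d_1d_2)(\A_{fin})$ --- injectivity because a coincidence would force a nontrivial element of $U$ to lie in the opposite unipotent radical $\trans{U}$ --- that the conditions $c_{22}=-s$, $\det C=0$, $\det\Delta_2=d_2s^2$, $\mu(g)=s^2d_1d_2$ cut out this cell, and that the surviving unipotent coordinates of $u$ and $u_1$ are recovered from the entries of $g=\mat{A}{B}{C}{D}$ in such a way that $\psi_{\m_1}(u)\overline{\psi_{\m_2}(u_1)}=e\!\left(\frac{\m_{11}c_{12}+\m_{21}c_{21}}{s}+\frac{\m_{12}\det(\Delta_3)}{d_2s^2}\right)$, with $\Delta_3=\mat{a_{12}}{b_{11}}{c_{22}}{d_{21}}$. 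Then, since $f_{fin}$ is bi-$U(\hat{\Z})$-invariant, $\psi_{\m_1}$ and $\psi_{\m_2}$ are trivial on $U(\hat{\Z})\supseteq\overline{U_\sigma}(\hat{\Z})$, and the Haar measures on $U(\A_{fin})$ and $\overline{U_\sigma}(\A_{fin})$ are normalised so that $U(\hat{\Z})$ and $\overline{U_\sigma}(\hat{\Z})$ have volume $1$, the double integral collapses to a sum over $U(\hat{\Z})\backslash\bigl(C_{121}(s,d_2s^2,s^2d_1d_2)\cap Supp(f_{fin})\bigr)/\overline{U_\sigma}(\hat{\Z})$, which the Chinese remainder theorem identifies with $U(\Z)\backslash\Gamma_{121}(N,s,d_2s^2,s^2d_1d_2)/\overline{U_\sigma}(\Z)$. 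On $Supp(f_{fin})=Z(\A_{fin})M(\n,N)$ the value of $f_{fin}$ is $\overline{\omega_N}(a_{22})/Vol(\overline{\Gamma_1(N)})$, exactly as in Proposition~\ref{KloosJ}: writing $g=z_0 m$ with $m\in M(\n,N)$, the fact that $\mu(g)=\pm\n$ is rational forces $z_0\in\hat{\Z}^\*$, whence $\overline{\omega}(z_0)=\overline{\omega_N}(z_0)=\overline{\omega_N}(a_{22})$ since $z_0\equiv a_{22}\bmod N$ by the congruence defining $M(\n,N)$. The resulting sum is then precisely $Vol(\overline{\Gamma_1(N)})^{-1}\Kloos_{121}(\m_1,\m_2,s,d_2s^2,s^2d_1d_2)$, as claimed.

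The main obstacle is the Bruhat-decomposition bookkeeping: for $\sigma=s_1s_2s_1$ the lower-left block $C$ of the elements of the cell is singular ($\det C=0$, $c_{22}\neq0$), so one cannot recover the Bruhat coordinates from $C$ alone as in the long Weyl element case, and one must determine which combination of entries across all four blocks $A,B,C,D$ inverts the Bruhat map and check that these are exactly the combinations appearing in $\det(\Delta_2)$, $\det(\Delta_3)$ and the phase of $\Kloos_{121}$, together with the (routine) verification that it is $\overline{U_\sigma}$ --- a proper subgroup of $U$, since $U_\sigma\neq\{1\}$ unlike for $\sigma=J$ --- that occurs on the right. Once this matrix computation is carried out, the rest is a routine transcription of the proof of Proposition~\ref{KloosJ}.
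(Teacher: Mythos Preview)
Your proposal is correct and follows exactly the approach the paper itself indicates: the paper's own proof of this proposition consists of the single sentence ``By a similar argument as in the case of the long Weyl element, $\Kloos_{121}(\m_1,\m_2,s,d,m)$ is well-defined, and together with the condition on $\delta$, from Lemma~\ref{relevantorbits} we get the following,'' so your detailed transcription of the proof of Proposition~\ref{KloosJ} with the appropriate modifications (the singular lower-left block, the role of $\det\Delta_2$ and $\det\Delta_3$, and the right quotient by $\overline{U_\sigma}$ rather than $U$) is precisely what is intended.
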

\begin{remark}
	The set $\Gamma_{121}(\n,N,s,d_2s^2,s^2d_1d_2)$ is non-empty only if $N$ divides $s$ and $N^2$ divides $d_2s^2$.
\end{remark}
\subsubsection{Contribution from \texorpdfstring{$\sigma=s_2s_1s_2$}{s2s1s2}}
\begin{definition}
	Let $s,d$ be three non-zero integers and define 
	$$C_{212}(s,d,m)= \{g=\mat{A}{B}{C}{D} : \det(C)=-d, c_{22}=0, c_{21}=-s, \mu(g)=m \}$$
	and
	$$\Gamma_{212}(N,s,d,m)=\Mat_4(\Z) \cap \Gamma_1(N) \cap C_{212}.$$
	We define the following generalized twisted Kloosterman sum
	\begin{align*}
		\Kloos_{212}(\m_1,\m_2,s,d,m)&=\\
		\sum_{g \in U(\Z) \back \Gamma_{212}(N,s,d,m) / \overline{U_\sigma}(\Z)}&\overline{\omega_N}(a_{22})
		e\left(\frac{\m_{11}c_{11}+\m_{22}d_{21}}{s}-\frac{\m_{12}\det(\Delta_1)}{d}\right).
	\end{align*}
\end{definition}
By a similar argument as above, $\Kloos_{212}(\m_1,\m_2,d,s)$ is well defined, and we have the following.
\begin{proposition}
	Let $\sigma=s_1s_2s_1$, $\delta_1=\diag{d_1}{1}{d_2}{d_1d_2}$
	with $d_1d_2=\pm\frac{\n}{s^2}$ for some integer $s$ and $\m_{1,1}=-d_1\m_{2,1}$.
	Then we have $I_{\delta_\sigma}(f_{fin})=\frac{1}{Vol(\overline{\Gamma_1(N)})}\Kloos_{212}(\m_1,\m_2,sd_1,d_1s^2,sd_1d_2)$.
\end{proposition}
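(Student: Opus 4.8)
The proof follows the scheme of Proposition~\ref{KloosJ}. By Lemma~\ref{relevantorbits}, the relevance condition $\m_{1,1}=-d_1\m_{2,1}$ in the statement singles out $\sigma=s_2s_1s_2$, and by Proposition~\ref{relevantintegrals} together with the factorization $I_{\delta_\sigma}(f)=I_{\delta_\sigma}(f_\infty)I_{\delta_\sigma}(f_{fin})$ we have
\[
I_{\delta_\sigma}(f_{fin})=\int_{U_\sigma(\A_{fin})\backslash U(\A_{fin})}\int_{U(\A_{fin})}f_{fin}\big(u\,\delta_\sigma\,u_1\big)\,\psi_{\m_1}(u)\,\overline{\psi_{\m_2}(u_1)}\,du\,du_1 .
\]
Here a direct computation gives $U_\sigma=\{u(x,0,0,0):x\in\A\}$, which is one-dimensional, and as in the proof of Corollary~\ref{corqoutientmap} one identifies $U_\sigma(\A_{fin})\backslash U(\A_{fin})$ with $\overline{U_\sigma}(\A_{fin})=U(\A_{fin})\cap\sigma^{-1}\trans{U}(\A_{fin})\sigma$, so that the outer variable $u$ runs over $\overline{U_\sigma}(\A_{fin})$.

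Next, exactly as in Proposition~\ref{KloosJ}, one may replace $f_{fin}(u\delta_\sigma u_1)$ by $f_{fin}(s\,u\delta_\sigma u_1)$, since $\omega(s)=1$ for the integer $s$ ($\omega_\infty$ being trivial); the scalar $s$ is precisely the one making $\mu(s\,u\delta_\sigma u_1)=s^2\mu(\delta_\sigma)=s^2d_1d_2=\pm\n$ lie in $\n\hat{\Z}^\times$, using $d_1d_2=\pm\n/s^2$ from Lemma~\ref{Supportdelta}. By Assumption~\ref{HeckeOprtr}, the matrix $s\,u\delta_\sigma u_1$ then lies in $Supp(f_{fin})$ if and only if it lies in $M(\n,N)$, in which case $f_{fin}(s\,u\delta_\sigma u_1)=\frac{1}{Vol(\overline{\Gamma_1(N)})}\overline{\omega_N}(a_{22})$, where $a_{22}$ is the $(2,2)$-entry: this is as in Proposition~\ref{KloosJ}, writing $s\,u\delta_\sigma u_1=zm$ with $z\in\hat{\Z}^\times$ and $m$ of the shape prescribed in Assumption~\ref{HeckeOprtr} (so $m_{22}\equiv1\bmod N$), whence $\overline{\omega}(z)=\overline{\omega_N}(z)=\overline{\omega_N}(a_{22})$ because $\omega_p$ is unramified for $p\nmid N$.

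It remains to carry out the analogue of Lemma~\ref{uoppu} for the Bruhat cell $U(s_2s_1s_2)TU$. Writing $\delta_\sigma=(s_2s_1s_2)\diag{d_1}{1}{d_2}{d_1d_2}$ and computing the lower-left block $C$, the multiplier, and the minor $\det(\Delta_1)$ of $s\,u\delta_\sigma u_1$, one checks that the conditions $c_{22}=0$, $c_{21}=-sd_1$, $\det(C)=-d_1s^2$, $\mu=s^2d_1d_2$ cut out exactly $C_{212}(sd_1,d_1s^2,s^2d_1d_2)$, and that the Bruhat coordinates of $u$ and of $u_1$ are recovered from $c_{11},c_{21},d_{21},\det(\Delta_1)$ and $\mu$, so that $\psi_{\m_1}(u)\,\overline{\psi_{\m_2}(u_1)}=e\!\left(\frac{\m_{11}c_{11}+\m_{22}d_{21}}{sd_1}-\frac{\m_{12}\det(\Delta_1)}{d_1s^2}\right)$; the relevance identity $\m_{1,1}=-d_1\m_{2,1}$ is used precisely when re-expressing the $u_1$-character through the matrix entries. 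Finally, since $f_{fin}$ is bi-$U(\hat{\Z})$-invariant and $\psi_{\m_1},\psi_{\m_2}$ are trivial on $U(\hat{\Z})$ (the $\m_i$ being integral), the double integral collapses, via the map $\varphi(u,u_1)=s\,u\delta_\sigma u_1$ with image $C_{212}(sd_1,d_1s^2,s^2d_1d_2)$, to the finite sum over $U(\Z)\backslash\big(M(\n,N)\cap Im(\varphi)\big)/\overline{U_\sigma}(\Z)=U(\Z)\backslash\Gamma_{212}(N,sd_1,d_1s^2,s^2d_1d_2)/\overline{U_\sigma}(\Z)$ of the terms $\frac{1}{Vol(\overline{\Gamma_1(N)})}\overline{\omega_N}(a_{22})\,e(\cdots)$, which is $\frac{1}{Vol(\overline{\Gamma_1(N)})}\Kloos_{212}(\m_1,\m_2,sd_1,d_1s^2,s^2d_1d_2)$.

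The main obstacle is this last step: establishing the explicit Bruhat parametrization of the $s_2s_1s_2$-cell and matching its coordinates with the entries $c_{11},d_{21},\det(\Delta_1)$ appearing in $\Kloos_{212}$, all while tracking the three rationals $s,d_1,d_2$ (linked by $d_1d_2=\pm\n/s^2$ and $\m_{1,1}=-d_1\m_{2,1}$) through the rescaling. A secondary subtlety is that, unlike the long Weyl element, $U_\sigma$ is here nontrivial and the naive representative does not conjugate $U_\sigma$ to itself pointwise ($\sigma^{-1}u(x,0,0,0)\sigma=u(-x,0,0,0)$); one either passes to the sign-twisted representative $(s_2s_1s_2)\diag{1}{-1}{1}{-1}$, for which it does, or carries the resulting sign through the computation, and one must also mind the left/right ordering of the double coset relative to the form of Proposition~\ref{relevantintegrals}.
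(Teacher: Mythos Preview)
Your proposal is correct and follows precisely the approach the paper itself indicates: the paper gives no separate proof here, stating only that the result follows ``by a similar argument as above'' (i.e.\ the argument of Proposition~\ref{KloosJ}), and your write-up executes exactly that scheme---scaling by $s$, invoking Assumption~\ref{HeckeOprtr}, computing the Bruhat coordinates for the $s_2s_1s_2$-cell, and collapsing the integral to the finite sum defining $\Kloos_{212}$. Your observations about the sign in $\sigma^{-1}u(x,0,0,0)\sigma=u(-x,0,0,0)$ and about the statement's header reading $s_1s_2s_1$ while the relevance condition and the Kloosterman sum are those of $s_2s_1s_2$ are well taken; note also that the multiplier should be $s^2d_1d_2$ (as you have it) rather than the $sd_1d_2$ printed in the statement.
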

\begin{remark}
	The set $\Gamma_{212}(\n,N,d_1s^2,d_1s)$ is non-empty only if $N$ divides $d_1s$ and $N^2$ divides $d_1s^2$.
\end{remark}

\section{The final formula}

We now assemble the material from previous sections to obtain our relative trace formula.
Let $N \ge 1$ be an integer.
We define the {\bf adelic congruence subgroup} $\Gamma_1(N)$ to be matrices of the form $g_\infty g_{fin}$
where $g_\infty \in K_\infty$ and $g_{fin} \in \{g \in G(\hat{\Z}): g \equiv \left[
\begin{smallmatrix}
	* 	& 	& *	& *	\\
	*	& 1	& *	& *	\\
	&	& *	& *	\\
	&	& 	& *
\end{smallmatrix}\right] \mod N \}.$
Fix a character $\omega: \Q^\*\R^\* \back \A^\* \to \C$, that we may see as a character of the centre of $G(\A)$.
Assume that $\omega$ is trivial on $(1+N\hat{\Z}) \cap \hat{\Z}^\*$, and define $\omega_N(t)=\prod_{p \mid N}\omega(t_p)$.
For each standard parabolic subgroup  $P=N_PM_P$ (including $G$ itself), consider the space $\H_P$ defined in Section~\ref{DefOfES}.
For each character $\chi$ of the centre of $M_P$ whose restriction to the centre of $G$ coincides with $\omega$, 
let $\Gen_P(\chi)$ be an orthonormal basis consisting of factorizable vectors of the subspaces of functions $\phi$ in $\H_P$ 
that are generic, $\Gamma_1(N)$-fixed, and have central character $\chi$.
Specifically,
\begin{itemize}
	\item If $P=G$ then $\Gen_P(\omega)$ consists in cuspidal elements of $L^2(Z(\R)G(\Q)\back G(\A), \omega)^{\Gamma_1(N)}$,
	\item If $P=B$, each such character $\chi$ may be identified with a triplet  of characters $(\omega_1,\omega_2,\omega_3)$
	satisfying $\omega_1 \omega_2 \omega_3^2=\omega$. 
	Choose a set of representatives $S=\{k_1, \cdots, k_d\}$ of $(K \cap B(\A)) \back K / \Gamma_1(N)$. 
	Then there is a basis $(e_i)_{1 \le i \le d}$
	of $\C^S$ such that functions in $\Gen_P(\omega_1,\omega_2,\omega_3)$ are of the form 
	$$\phi_j^{\text B}(b k_i \gamma) = \chi(b)e_j(k_i)$$ for $b \in B(\A)$, $\gamma \in \Gamma_1(N)$.
	\item If $P=\Pk$, each such character $\chi$ may be identified with a pair  of characters $(\omega_1,\omega_2)$
	satisfying $\omega_1\omega_2=\omega$. 
	Choose a set of representatives $S=\{k_1, \cdots, k_d\}$ of $(K \cap \Pk(\A)) \back K / \Gamma_1(N)$. 
	Keeping notations of~\S~\ref{KES}, for each $i$, consider the compact subgroup of $\GL_2$ given by 
	$C_i=\sigma_{\text K}\left(Stab_{K \cap \Pk(\A)} (k_i)\right)$.
	Then, for each cuspidal automorphic representation $\pi$ of $\GL_2$ with central character $\omega_1$
	and whose Archimedean component is a principal series there is a basis $(u_j)_j=(u_{j,i})_{i,j}$ of $\prod_i \pi^{C_i}$
	such that functions in $\Gen_P(\omega_1,\omega_2)$ are of the form 
	$$\phi_{\pi,j}^{\text K}(p k_i \gamma) = \omega_2(p)u_{j,i}(\sigma_{\text K}(p))$$ for $p \in \Pk(\A)$, $\gamma \in \Gamma_1(N)$.
	In particular each $u_{i,j}$ is a $\GL_2$ adelic Maa{\ss} forms.
	\item If $P=\Ps$, each such character $\chi$ may be identified with a pair  of characters $(\omega_1,\omega_2)$
	satisfying $\omega_1\omega_2^2=\omega$. 
	Choose a set of representatives $S=\{k_1, \cdots, k_d\}$ of $(K \cap \Ps(\A)) \back K / \Gamma_1(N)$. 
	Keeping notations of~\S~\ref{PES}, for each $i$, consider the compact subgroup of $\GL_2$ given by 
	$C_i=\sigma_{\text K}\left(Stab_{K \cap \Ps(\A)} (k_i)\right)$.
	Then, for each cuspidal automorphic representation $\pi$ of $\GL_2$ with central character $\omega_1$
	and whose Archimedean component is a principal series there is a basis $(u_j)_j=(u_{j,i})_{i,j}$ of $\prod_i \pi^{C_i}$
	such that functions in $\Gen_P(\omega_1,\omega_2)$ are of the form 
	$$\phi_{\pi,j}^{\text S}(p k_i \gamma) = \omega_2 \circ \mu (p)u_{j,i}(\sigma_{\text K}(p))$$ for $p \in \Ps(\A)$, $\gamma \in \Gamma_1(N)$.
	In particular each $u_{i,j}$ is a $\GL_2$ adelic Maa{\ss} forms.
\end{itemize}

Now fix an integer $\n>0$ coprime to $N$. Consider 	
$$M(\n,N)=\left\{g \in G(\A_{fin}) \cap \Mat_4(\hat{\Z}) : g \equiv \left[
\begin{smallmatrix}
	* 	& 	& *	& *	\\
	*	& 1	& *	& *	\\
		&	& *	& *	\\
		&	& 	& *
\end{smallmatrix}\right] \mod N, \mu(g) \in \n  \hat{\Z}^\* \right\}.$$
Define the {\bf $\n$-th Hecke operator of level $\Gamma_1(N)$} by
$$T_{\n}\phi(g) =\int_{M(\n,N)}  \phi(gx) dx.$$
Then for every standard parabolic subgroup $P$, for every element $u \in \Gen_P$ and for every $\nu \in i\Lie{a}_P^*$, the Eisenstein series $E(\cdot,u,\nu)$
is an eigenfunction of $T_{\n}$. We shall denote the corresponding eigenvalue by $\lambda_{\n}(u,\nu)$.
Then we have the following.

\begin{theorem}\label{MainTheorem}
	Let $\m_1$, $\m_2$ be two pairs of non-zero integers, $t_1,t_2 \in A^0(\R)$.
	Let $h$ be a Paley-Wiener function on $\Lie{a}_\C$ and let $c$ be the constant appearing in Theorem~\ref{sphericalinversion}.
	Then we have $$c(\Sigma_{cusp}+\Sigma_{B}+\Sigma_K+\Sigma_S)=\frac1{Vol(\overline{\Gamma_1(N)})}(K_1+K_{121}+K_{212}+K_J).$$
	The expression $\Sigma_{cusp}+\Sigma_{B}+\Sigma_K+\Sigma_S$ is given by
	$$\Sigma_{\text{cusp}}=\sum_{u \in \Gen_G(\omega)}
	h(\nu_u)\lambda_{\n}(u)\W_{\psi}(u)(t_1t_{\m_1}^{-1})\overline{\W_{\psi}(u)}(t_2t_{\m_2}^{-1}),$$
	\begin{align*}
		\Sigma_{\text{B}}=\frac18 \sum_{\omega_1\omega_2\omega_3^2=\omega}\sum_{u \in \Gen_B(\omega_1,\omega_2,\omega_3)}& 
		\int_{i\Lie{a}^*}h(\nu)\lambda_{\n}(u,\nu)\\
		&\times \W_{\psi}(E(\cdot, u,\nu ))(t_1t_{\m_1}^{-1})\overline{\W_{\psi}(E(\cdot,u,\nu))}(t_2t_{\m_2}^{-1})d\nu,
	\end{align*}
	\begin{align*}
		\Sigma_{\text{K}}=\frac12 \sum_{\omega_1\omega_2=\omega}\sum_{u \in \Gen_{\Pk}(\omega_1,\omega_2)}&
		\int_{i\Lie{a}_\text{K}^*}h(\nu+\nu_{\text{K}}(s_u))\lambda_{\n}(u,\nu)\\
	&\times	\W_{\psi}(E(\cdot, u,\nu ))(t_1t_{\m_1}^{-1})\overline{\W_{\psi}(E(\cdot,u,\nu))}(t_2t_{\m_2}^{-1})d\nu,
	\end{align*}
\begin{align*}
	\Sigma_{\text{S}}=\frac12 \sum_{\omega_1\omega_2^2=\omega}\sum_{u \in \Gen_{\Ps}(\omega_1,\omega_2)} &
	\int_{i\Lie{a}_\text{S}^*}h(\nu+\nu_{\text{S}}(s_u))\lambda_{\n}(u,\nu)\\
	& \times \W_{\psi}(E(\cdot, u,\nu ))(t_1t_{\m_1}^{-1})\overline{\W_{\psi}(E(\cdot,u,\nu))}(t_2t_{\m_2}^{-1})d\nu,
\end{align*}
	where $\nu_u$ (resp. $s_u$) is the spectral parameter of the representation of $GSp_4(\R)$ (resp. $GL_2(\R)$) attached to $u$,
	$\nu_{\text{K}}$ and $\nu_\text{S}$ are given by Propositions~\ref{KlingenSpectralParameter} and~\ref{SiegelSpectralParameter}.
	On the right hand side, 
	\begin{itemize}
		\item $K_1$ is non-zero only if  there is an integer $t$ dividing $\n$
		with $\frac{\n}t=\frac{\m_{1,2}}{\m_{2,2}}t$ and such that $s=\frac{\m_{2,1}}{\m_{1,1}}t$ is also an integer dividing $\n$, in which case,
		setting $d=\gcd(s,\frac{\n}s,t,\frac{\n}t)$ and 
		\begin{align*}
			T(\n,\m_1,\m_2)=&d \times \overline{\omega_N(s)} \n^{-\frac12} (\m_{1,1}\m_{2,1})^{-2}|\m_{1,2}\m_{2,2}|^{-\frac32}
			\times S\left(\m_{1,1}\frac{\n}{\gcd(t,\frac{\n}s)},\m_{1,2}t,d,\n\right)
		\end{align*} we have 
		$$K_1=T(\n,\m_1,\m_2)
		\int_{\Lie{a}^*}h(-i\nu)W(i\nu, t_{\m_1}^{-1}t_1, \psi_{\one})W(-i\nu,t_{\m_2}^{-1}  t_2,\overline{\psi_{\one})}
		\frac{d\nu}{c(i\nu)c(-i\nu)}.$$
		\item The contribution of the long Weyl element is $$K_J=\sum_{\substack{N \mid s\\N^2 \mid k}}\Kloos_J(\m_1,\m_2,s,k,\n)I_J(h)\left(\frac{k}{s^2},\frac{\n}k\right),$$
		\item The contribution of $s_1s_2s_1$ is non-zero only if $\n\frac{\m_{1,2}}{\m_{2,2}}=b^2$ for some rational number $b$, in which case it is given by
		$$K_{121}=\m_{2,2}\sum_{N \mid kb}\Kloos_{121}(\m_1,\m_2,Nk,bNk,\n)I_{121}(h)\left(\frac{\n}{Nkb},\frac{b}{Nk}\right),$$
		\item The contribution of $s_2s_1s_2$ is given by
		$$K_{212}=\m_{2,1}\sum_{\substack{\m_{2,1}N \mid s\m_{1,1} \\ \m_{2,1}N^2 \mid s^2\m_{1,1}}}\Kloos_{212}\left(\m_1,\m_2,-s\frac{\m_{1,1}}{\m_{2,1}},-s^2\frac{\m_{1,1}}{\m_{2,1}},\n\right)I_{212}(h)\left(-\frac{\m_{1,1}}{\m_{2,1}},-\frac{\n}{s^2}\frac{\m_{2,1}}{\m_{1,1}}\right),$$
	\end{itemize}
where we have defined $I_\sigma(h)$
as the integral
\begin{align*}
	I_\sigma(h)(d_1,d_2)&=\int_{\Lie{a}^*}h(-i\nu)W(i\nu, t_{\m_1}^{-1}t_1, \psi_{\one})\\
	&\times W\left(-i\nu,t_{\m_1}^{-1}\sigma \diag{d_1}{1}{d_2}{d_1d_2}t_{\m_2}u_1 t_{\m_2}^{-1} t_2,\overline{\psi_{\one}}\right)
\frac{d\nu}{c(i\nu)c(-i\nu)}
{\overline{\psi_{\one}(u_1)}}du_1.
\end{align*}
	Moreover, if Conjecture~1 is true then we have
\begin{align*}
	I_\sigma(h)(d_1,d_2)=\int_{\Lie{a^*}}h(-i\nu)
	&K_\sigma\left(-i\nu,t_{\m_1}^{-1}\sigma \diag{d_1}{1}{d_2}{d_1d_2} t_{\m_2} \sigma^{-1},{\psi_{\one}}\right)\\
	&\times W(-i\nu, t_{\m_2}^{-1} t_2,{\psi_{\one}})
	W(i\nu, t_{\m_1}^{-1}t_1, \psi_{\one})\frac{d\nu}{c(i\nu)c(-i\nu)},
\end{align*}
	where the generalised Bessel functions $K_\sigma$ have been defined in \S~\ref{ArchInt}.
\end{theorem}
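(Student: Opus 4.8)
The plan is to evaluate the basic integral $I$ of~\eqref{BasicIntegral} in the two ways prepared above and to equate the answers; beyond the material already assembled, the only real content is choosing the test function and carrying the normalisations through. Given the Paley--Wiener function $h$, Theorem~\ref{PW} produces a smooth, bi-$K_\infty$-invariant, compactly supported function $f_\infty$ on $\Sp_4(\R)$ with $\tilde{f_\infty}=h$; viewed as a $Z(\R)$-invariant function supported on $G^+(\R)$ it is a legitimate Archimedean component in the sense of Assumption~\ref{testfunction}. For the finite part take $f_{\text{fin}}=\prod_p f_p$ as in Assumption~\ref{HeckeOprtr}, attached to the coprime integers $\n$ and $N$. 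With this choice the finite eigenvalue $\lambda_{f_{\text{fin}}}(u,\nu)$ of Proposition~\ref{globalevalue} equals $\lambda_{\n}(u,\nu)$: up to the volume factor and the central twist, $f_{\text{fin}}$ is the characteristic function of $M(\n,N)$, so $R(f_{\text{fin}})$ acts on $\Gamma_1(N)$-fixed vectors as $Vol(\overline{\Gamma_1(N)})^{-1}T_{\n}$; and $T_{\n}$, being right convolution by a bi-$\Gamma_1(N)$-invariant finite-adelic function, commutes with the $G(\A)$-translations used to form Eisenstein series, so it acts on each $E(\cdot,u,\nu)$ with $u\in\Gen_P$ by the scalar $\lambda_{\n}(u,\nu)$.

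For the spectral evaluation I would invoke the proposition computing the spectral side of the trace formula: with this $f$ it gives $I=(\m_{1,1}\m_{2,1})^{-4}|\m_{1,2}\m_{2,2}|^{-3}(\Sigma_{\text{disc}}+\Sigma_B+\Sigma_K+\Sigma_S)$, the absolute convergence needed for the interchange of integrations being Proposition~\ref{ACV}. The lemma showing that no residual automorphic form is generic identifies $\Sigma_{\text{disc}}$ with $\Sigma_{\text{cusp}}$, and substituting $\tilde{f_\infty}=h$, $\lambda_{f_{\text{fin}}}=\lambda_{\n}$ turns the four sums into those displayed in the statement (carrying along the same normalising factor, which will cancel against the geometric side).

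For the geometric evaluation I would expand $K_f$ over $\GmodZ(\Q)$, use the Bruhat decomposition to write $I=\sum_{\delta\in U(\Q)\backslash\GmodZ(\Q)/U(\Q)}I_\delta(f)$ (the sum and integral commuting by local finiteness), and keep only the relevant orbits. By Lemma~\ref{relevantorbits} these are indexed by $\sigma\in\{1,\,s_1s_2s_1,\,s_2s_1s_2,\,J\}$ together with a diagonal representative $\delta_1=\diag{d_1}{1}{d_2}{d_1d_2}$ constrained by Lemma~\ref{relevantorbits}, Lemma~\ref{Supportdelta} (so $d_1d_2=\pm\n/s^2$ with $s\in\Z$) and Remark~\ref{d1d2} ($d_1d_2>0$). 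For each admissible pair Proposition~\ref{relevantintegrals} factors $I_{\delta_\sigma}(f)=I_{\delta_\sigma}(f_\infty)I_{\delta_\sigma}(f_{\text{fin}})$: the Archimedean factor is $c^{-1}\Delta_\sigma(t_{\m_2})|\m_{1,1}^4\m_{1,2}^3|^{-1}I_\sigma(h)(d_1,d_2)$ by Lemma~\ref{InTermsOfW}, and under Conjecture~\ref{interchange} Proposition~\ref{ArchWithConj} rewrites $I_\sigma(h)$ through the Bessel functions $K_\sigma$, yielding the conditional form in the statement; the finite factor is $Vol(\overline{\Gamma_1(N)})^{-1}$ times the appropriate symplectic Kloosterman sum, given by Proposition~\ref{idcontribution} for $\sigma=1$, Proposition~\ref{KloosJ} for $\sigma=J$, and the two analogous propositions for $s_1s_2s_1$ and $s_2s_1s_2$. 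Summing over the admissible $\delta_1$ and reparametrising then produces $K_1,K_{121},K_{212},K_J$: for $\sigma=1$ the sum runs over the auxiliary integer $s$, and one uses $|s^3d_1|=|\n^3\m_{2,1}^4\m_{2,2}^3/(\m_{1,1}^4\m_{1,2}^3)|^{1/2}$ to collapse $\n/|s^3d_1|$ together with $|\m_{1,1}^4\m_{1,2}^3|^{-1}$ into the coefficient $T(\n,\m_1,\m_2)$; for $\sigma=J$ one sets $k=d_1s^2$ and sums over $N\mid s$, $N^2\mid k$; and likewise for the other two, where the extra relevance condition on $\delta_1$ (respectively $d_1\m_{1,2}=d_2\m_{2,2}$ and $\m_{1,1}=-d_1\m_{2,1}$) forces the square and divisibility constraints recorded in $K_{121}$ and $K_{212}$.

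Equating the two expressions for $I$ and clearing the common factor gives $c(\Sigma_{\text{cusp}}+\Sigma_B+\Sigma_K+\Sigma_S)=Vol(\overline{\Gamma_1(N)})^{-1}(K_1+K_{121}+K_{212}+K_J)$. I expect the main obstacle to be purely bookkeeping: one must carry the spherical-inversion constant $c$, the volume $Vol(\overline{\Gamma_1(N)})$, the modulus characters $\Delta_\sigma$, the central twists $\overline{\omega_N}$, and the precise powers of the $\m_{i,j}$ and of $\n$ consistently through both the Archimedean and the finite orbital integrals, and one must verify that the reparametrisation of the torus representatives $\delta_1$ reproduces \emph{exactly} the index sets appearing in $K_J,K_{121},K_{212}$. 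A smaller but genuine point is the claim that $T_{\n}$ acts by a scalar on each Eisenstein series $E(\cdot,u,\nu)$, which follows from the commutation of the Hecke operators at the places dividing $\n$ with the $G(\A)$-action building $E$ and from the factorisability of the vectors in $\Gen_P$; and one should check that the unconditional form of $I_\sigma(h)$ coincides with the conditional one once Conjecture~\ref{interchange} is assumed, which is precisely the content of Proposition~\ref{ArchWithConj}.
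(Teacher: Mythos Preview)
Your proposal is correct and follows essentially the same approach as the paper: the theorem is presented there as the assembly of the preceding results, with no separate proof environment, and your outline reproduces exactly that assembly---choosing $f_\infty$ via Theorem~\ref{PW} so that $\tilde f_\infty=h$, taking $f_{\text{fin}}$ as in Assumption~\ref{HeckeOprtr}, invoking the spectral-side proposition together with Corollary~\ref{KernelSpectralForm} and the lemma on the residual spectrum for one evaluation of $I$, and Proposition~\ref{relevantintegrals}, Lemma~\ref{InTermsOfW}, Proposition~\ref{ArchWithConj}, and the four finite-orbital-integral propositions for the other. Your identification of the remaining work as bookkeeping (tracking $c$, $Vol(\overline{\Gamma_1(N)})$, $\Delta_\sigma(t_{\m_2})$, and the powers of $\m_{i,j}$ and $\n$, and reparametrising the torus representatives to match the index sets in $K_J,K_{121},K_{212}$) is accurate, as is your remark that the scalar action of $T_{\n}$ on $E(\cdot,u,\nu)$ follows from the factorisability of $u\in\Gen_P$ and the commutation of the finite Hecke operator with the intertwining construction of Eisenstein series.
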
  

\begin{bibdiv}
	\begin{biblist}
		
 \bib{ArthurIntro}{article}{
	author={Arthur, James},
	title={An introduction to the trace formula},
	conference={
		title={Harmonic analysis, the trace formula, and Shimura varieties},
	},
	book={
		series={Clay Math. Proc.},
		volume={4},
		publisher={Amer. Math. Soc., Providence, RI},
	},
	date={2005},
	pages={1--263},
	review={\MR{2192011}},
}

\bib{ArthurSpectralExpansion}{article}{
	author={Arthur, James},
	title={A trace formula for reductive groups. I. Terms associated to
		classes in $G({\bf Q})$},
	journal={Duke Math. J.},
	volume={45},
	date={1978},
	number={4},
	pages={911--952},
	issn={0012-7094},
	review={\MR{518111}},
}

\bib{Buttcane}{article}{
	author={Buttcane, Jack},
	title={Bessel functions outside of $GL(2)$},
	status={talk},
	year={2020},
	eprint={https://old.renyi.hu/seminars/ocaf2020/20200605_Buttcane.pdf}
}

\bib{GelbartKnapp}{article}{
	author={Gelbart, S. S.},
	author={Knapp, A. W.},
	title={$L$-indistinguishability and $R$ groups for the special linear
		group},
	journal={Adv. in Math.},
	volume={43},
	date={1982},
	number={2},
	pages={101--121},
	issn={0001-8708},
	review={\MR{644669}},
	doi={10.1016/0001-8708(82)90030-5},
}

\bib{Helgason}{book}{
	author = {Helgason, Sigurdur},
	title = {Groups and Geometric Analysis: Integral Geometry, Invariant Differential Operators, and Spherical Functions},
	series = {Mathematical Surveys and Monographs},
	publisher = {Academic Press},
	year = {1984},
}

\bib{Ishii}{article}{
	author={Ishii, Taku},
	title={On principal series Whittaker functions on ${\rm Sp}(2,{\bf R})$},
	journal={J. Funct. Anal.},
	volume={225},
	date={2005},
	number={1},
	pages={1--32},
	issn={0022-1236},
	review={\MR{2149916}},
	doi={10.1016/j.jfa.2005.03.023},
}

\bib{JacquetLai}{article}{
	author={Jacquet, H.},
	author={Lai, K. F.},
	title={A relative trace formula},
	journal={Compositio Math.},
	volume={54},
	date={1985},
	number={2},
	pages={243--310},
	issn={0010-437X},
	review={\MR{783512}},
}

\bib{Joyner}{article}{
	author={Joyner, David},
	title={On the Kuznetsov-Bruggeman formula for a Hilbert modular surface
		having one cusp},
	journal={Math. Z.},
	volume={203},
	date={1990},
	number={1},
	pages={59--104},
	issn={0025-5874},
	review={\MR{1030708}},
	doi={10.1007/BF02570723},
}

\bib{Kim}{article}{
	author={Kim, Henry H.},
	title={The residual spectrum of ${\rm Sp}_4$},
	journal={Compositio Math.},
	volume={99},
	date={1995},
	number={2},
	pages={129--151},
	issn={0010-437X},
	review={\MR{1351833}},
}

\bib{KL}{article}{
			author={Knightly, A.},
			author={Li, C.},
			title={Kuznetsov's trace formula and the Hecke eigenvalues of Maass
				forms},
			journal={Mem. Amer. Math. Soc.},
			volume={224},
			date={2013},
			number={1055},
			pages={vi+132},
			issn={0065-9266},
			isbn={978-0-8218-8744-8},
			review={\MR{3099744}},
			doi={10.1090/S0065-9266-2012-00673-3},
		}
	
\bib{MiyOda}{article}{
	author={Miyazaki, Takuya},
	author={Oda, Takayuki},
	title={Principal series Whittaker functions on ${\rm Sp}(2;{\bf R})$. II},
	journal={Tohoku Math. J. (2)},
	volume={50},
	date={1998},
	number={2},
	pages={243--260},
	issn={0040-8735},
	review={\MR{1622070}},
	doi={10.2748/tmj/1178224977},
}

\bib{Muic}{article}{
	author={Mui\'{c}, Goran},
	title={Intertwining operators and composition series of generalized and
		degenerate principal series for ${\rm Sp}(4,\mathbb R)$},
	journal={Glas. Mat. Ser. III},
	volume={44(64)},
	date={2009},
	number={2},
	pages={349--399},
	issn={0017-095X},
	review={\MR{2587308}},
	doi={10.3336/gm.44.2.08},
}

\bib{Niwa}{article}{
	author={Niwa, Shinji},
	title={Commutation relations of differential operators and Whittaker
		functions on ${\rm Sp}_2(\mathbf R)$},
	journal={Proc. Japan Acad. Ser. A Math. Sci.},
	volume={71},
	date={1995},
	number={8},
	pages={189--191},
	issn={0386-2194},
	review={\MR{1362994}},
}

\bib{Pitale}{book}{
	author={Pitale, Ameya},
	title={Siegel modular forms},
	series={Lecture Notes in Mathematics},
	volume={2240},
	note={A classical and representation-theoretic approach},
	publisher={Springer, Cham},
	date={2019},
	pages={ix+138},
	isbn={978-3-030-15674-9},
	isbn={978-3-030-15675-6},
	review={\MR{3931351}},
	doi={10.1007/978-3-030-15675-6},
}
	
\bib{RS1}{article}{
	author={Roberts, Brooks},
	author={Schmidt, Ralf},
	title={Some results on Bessel functionals for ${\rm GSp}(4)$},
	journal={Doc. Math.},
	volume={21},
	date={2016},
	pages={467--553},
	issn={1431-0635},
	review={\MR{3522249}},
}
	
\bib{RS2}{book}{
	author={Roberts, Brooks},
	author={Schmidt, Ralf},
	title={Local newforms for GSp(4)},
	series={Lecture Notes in Mathematics},
	volume={1918},
	publisher={Springer, Berlin},
	date={2007},
	pages={viii+307},
	isbn={978-3-540-73323-2},
	review={\MR{2344630}},
	doi={10.1007/978-3-540-73324-9},
}

\bib{SHMKloosterman}{article}{
	author={Siu Hang Man},
	title={Symplectic Kloosterman Sums and Poincar\'e Series},
	year={2020},
	eprint={https://arxiv.org/abs/2006.03036}
}
	
\bib{SHMKuznetsov}{article}{
		author={Siu Hang Man},
		title={A Density Theorem for $Sp(4)$},
		year={2021},
		eprint={https://arxiv.org/abs/2101.09602}
	}

\bib{thesis}{thesis}{
	author={VerNooy, Colin},
	title={$K$-types and Invariants for the Representations of $GSp(4,\R)$},
	date={2019},
	eprint={https://hdl.handle.net/11244/321122},
	type={phd}
}

\bib{Vogan}{article}{
	author={Vogan, David A., Jr.},
	title={Gel\cprime fand-Kirillov dimension for Harish-Chandra modules},
	journal={Invent. Math.},
	volume={48},
	date={1978},
	number={1},
	pages={75--98},
	issn={0020-9910},
	review={\MR{506503}},
	doi={10.1007/BF01390063},
}

\bib{Wallach}{book}{
	author={Wallach, Nolan R.},
	title={Real reductive groups. II},
	series={Pure and Applied Mathematics},
	volume={132},
	publisher={Academic Press, Inc., Boston, MA},
	date={1992},
	pages={xiv+454},
	isbn={0-12-732961-7},
	review={\MR{1170566}},
}
	\end{biblist}
\end{bibdiv}
\end{document}